\newdimen\LineSpace
\tikzset{
	LineSpace/.code={\LineSpace=#1},
	LineSpace=3pt
}
\title{Analysis and controller-design of time-delay systems using \packageName{}. \\ {\large A tutorial and manual.} \\ {\normalsize Version 1.0}}
\author{Pieter Appeltans and Wim Michiels\\Department of Computer Science, NUMA Section, KU Leuven\\
	B-3001 Leuven, Belgium}
\date{\today}
\newcommand{\link}{\url{https://gitlab.kuleuven.be/u0011378/tds-control/-/archive/main/tds-control-main.zip?path=tds-control}}
\newcommand{\R}{\mathbb{R}}
\newcommand{\C}{\mathbb{C}}
\newcommand{\Z}{\mathbb{Z}}
\newcommand{\dins}{d_{\mathrm{INS}}}
\newcommand{\ie}{{\it i.e.,~}}
\newcommand{\eg}{{e.g.,~}}
\newcommand{\hinf}{\mathcal{H}_{\infty}}
\newcommand{\hinfnorm}{$\mathcal{H}_{\infty}$-norm}
\newcommand{\dd}{\mathop{}\!\mathrm{d}}
\DeclareMathOperator{\clos}{clos}
\DeclareMathOperator{\rank}{rank}
\newcommand{\matlabfun}[1]{{\tt \detokenize{#1}}}
\newcommand{\matlab}{\textsc{matlab}}
\newcommand{\packageName}{{\tt TDS-CONTROL}}
\newcommand{\sa}{\mathrm{c}}
\newcommand{\taum}{\tau_{\max}}
\newtheorem{theorem}{Theorem}[chapter]
\newtheorem{proposition}{Proposition}[chapter]
\newtheorem{corollary}{Corollary}[chapter]
\newtheorem{assumption}{Assumption}[chapter]
\theoremstyle{definition}
\newtheorem{definition}{Definition}[chapter]
\newtheorem{remark}{Remark}[chapter]
\crefname{examplex}{example}{examples}
\newenvironment{example}
  {\pushQED{\qed}\examplex}
  {\popQED\endexamplex}
\begin{document}
\maketitle

\clearpage

\begin{abstract}

\packageName{} is an integrated \matlab{} package for the analysis and controller-design of linear time-invariant (LTI) dynamical systems with (multiple) discrete delays, supporting both systems of retarded and neutral type. 
 Firstly, the package offers various functionality for analyzing such systems, like methods for computing the spectral abscissa, the \mbox{H-infinity} norm, the pseudospectral abscissa, and the distance to instability. Furthermore, as \packageName{} is designed with neutral time-delay systems in mind, it has the appealing feature that the sensitivity of certain quantities (such as the spectral abscissa) with respect to infinitesimal delay perturbations can explicitly be taken into account. 
Secondly, \packageName{} also allows to design fixed-order dynamic output feedback controllers. 
The corresponding controller-design algorithms are based on minimizing the spectral abscissa, the H-infinity norm, or a combination of both with respect to the free controller parameters by solving a non-smooth, non-convex optimization problem.
As a strictly negative spectral abscissa is a necessary and sufficient condition for stability, the presented design methods are thus not conservative. 
It is also possible to impose structure on the controller, enabling the design of decentralized and proportional-integral-derivative (PID) controllers. Furthermore, by allowing the plant to be described in delay descriptor form (i.e., the system's dynamics are given in terms of delay differential algebraic equations),  acceleration feedback and Pyragas-type and delay-based controllers can also be considered. \\

\noindent \packageName{} is licensed under  \href{https://www.gnu.org/licenses/gpl-3.0.en.html}{GNU GPL v.3.0} and is available for download from \link{}. \\

\noindent \textbf{Keywords:} time-delay systems, delay differential equations, stability analysis, stabilization, H-infinity norm, robust stability, robust control, controller design\textbf{.}

\end{abstract}
\pagenumbering{roman} 
\clearpage
\setcounter{tocdepth}{1}
\tableofcontents

\clearpage
%
\chapter*{List of Symbols}
\addcontentsline{toc}{chapter}{\protect\numberline{}List of Symbols}
\vspace{-6mm}
	\textbf{Notation related to sets and spaces} 
	\vspace{-4mm}
	\begin{center}
			\begingroup
		\renewcommand*{\arraystretch}{1.2}
	\begin{tabular}{p{0.15\linewidth}p{0.8\linewidth}}
		$\R$ & set of real numbers \\
		$\R^{n}$ & set of real-valued vectors of length $n$  \\
		$\R^{m\times n}$ & set of real-valued matrices with $m$ rows and $n$ columns  \\
		$\C$ & set of complex numbers \\
		$\C^{n}$ & set of complex-valued vectors of length $n$ \\
		$\Lambda$ & spectrum of an LTI differential equation, \ie the set containing its characteristic roots  \\
		$\clos(S)$ & closure of the set $S$ \\
		$X$ & Banach space of continuous functions mapping the interval $[-\tau_{\max},0]$ to $\C^{n}$ equipeped with the supremum norm $\|\cdot\|_{s}$
	\end{tabular}
		\endgroup
	\end{center}
\vspace{2mm}
	\textbf{Notation related to complex numbers}
		\vspace{-4mm}
\begin{center}
				\begingroup
	\renewcommand*{\arraystretch}{1.2}
	\begin{tabular}{p{0.15\linewidth}p{0.8\linewidth}}
		$\Re(\lambda)$ & real part of the complex number $\lambda$ \\
		$\Im(\lambda)$ & imaginary part of the complex number $\lambda$ \\
		$|\lambda|$ & modulus of the complex number $\lambda$ \\
		$\angle(\lambda)$ & argument of the complex number $\lambda$, with range $(-\pi,\pi]$
	\end{tabular}
		\endgroup
\end{center}
\vspace{2mm}	
	\textbf{Notation related to vectors and matrices}
		\vspace{-4mm}
	\begin{center}
		\begingroup
		\renewcommand*{\arraystretch}{1.2}
	\begin{tabular}{p{0.15\linewidth}p{0.8\linewidth}}
		$\|v\|_2$ & Euclidean norm of the vector $v$ \\
		$\|v\|_{\infty}$ & $\infty$-norm of the vector $v$ \\
		$\|A\|_{\mathrm{fro}} $ & Frobenius norm of the matrix $A$\\
		$A^{\top}$ & transpose of the matrix $A$ \\
		$A^{H}$ & Hermitian conjugate of the matrix $A$ \\
		$\rho(A)$ & spectral radius of the square matrix $A$\\
		$0_n$ & vector of length $n$ with all entries equal to zero \\
		$\mathrm{I}_{n}$ & identity matrix of dimension $n$\\
		\end{tabular}
		\endgroup
		\end{center}	
			\vspace{2mm}		
	\textbf{Notation related to time-delay systems}
	\vspace{-4mm}
	\begin{center}
				\begingroup
		\renewcommand*{\arraystretch}{1.15}
	\begin{tabular}{p{0.15\linewidth}p{0.8\linewidth}}
		$c$ & spectral abscissa of an LTI differential equation\\
		$\taum$ & maximal delay value \\
		$\vec{\tau}$ & vector containing all delay values
	\end{tabular}
		\endgroup
\end{center}
		\vspace{2mm}		
	\textbf{Other}
	\vspace{-4mm}
\begin{center}
		\begingroup
	\renewcommand*{\arraystretch}{1.2}
		\begin{tabular}{p{0.15\linewidth}p{0.8\linewidth}}
			$\|\phi\|_s$ & suprenum norm of $\phi \in C([-\tau_{m},0],\C^{n})$, \ie $\|\phi\|_s := \sup_{\theta \in [-\tau_{m},0]} \|\phi(\theta)\|_2$\\
				$\|f\|_{L_2}$ & $L_2$-norm of a $f\in L_2\left([0,\infty);\R^{n}\right)$, \ie $\|f\|_{L_2} := \int_{0}^{+\infty} \|f(t)\|_2^2 \dd t$ 
	\end{tabular}
\endgroup
\end{center}
\chapter*{List of Abbreviations}
\addcontentsline{toc}{chapter}{\protect\numberline{}List of Abbreviations}
\begin{table}[!ht]
    \renewcommand{\arraystretch}{1.2}
	\begin{tabular}{p{0.15\linewidth}p{0.8\linewidth}}
		LTI & Linear Time-Invariant \\
		GEP & Generalized Eigenvalue Problem \\
		ODE & Ordinary Differential Equation\\
		DDE & Delay Differential Equation \\
		PID & Proportional-Integral-Derivative \\
		PIR & Proportional-Integral-Retarded\\
		RDDE & Retarded Delay Differential Equation \\
		NDDE & Neutral Delay Differential Equation \\
		DDAE & Delay Differential Algebraic Equation \\
		SISO & Single Input Single Output
	\end{tabular}
\end{table}

\chapter*{List of Examples}
\addcontentsline{toc}{chapter}{\protect\numberline{}List of Examples}
\begingroup
\renewcommand{\arraystretch}{1.4}
	\begin{tabular}{>{\bfseries}p{0.17\linewidth}p{0.8\linewidth}}
		\multicolumn{2}{l}{\textbf{\large \Cref{sec:stability}}}\\[5pt]
		\Cref{example:retarded} & Stability analysis of a retarded DDE\\
		\Cref{example:warning} & More information on the option \matlabfun{max_size_evp} of \matlabfun{tds_roots}\\
		\Cref{example:cr_quasipolynomial} & Computing the characteristic roots of a quasi-polynomial \\
		\Cref{example:neutral1} & Computing the characteristic roots of a netural DDE \\
		\Cref{example:neutral1b} &  Relation between  the characteristic roots of a netural DDE  and the characteristic roots of its underlying delay difference equation \\
		\Cref{example:neutral_2} & Stability analysis for a neutral DDE with multiple delays \\
		\Cref{example:neutral_3} & Strong stability analysis for a neutral DDE with multiple delays \\
		\Cref{example:roots_NDDE_rhp} & Computing characteristic roots of a NDDE in a given right half-plane \\
		\Cref{example:stability_analysis_DDAE} & (Strong) stability analysis for a delay differential algebraic equation \\
		\Cref{example:roots_qp2} & Computing the characteristic roots of a quasi-polynomial \\
		\Cref{example:computing_transmission_zeros} & Computing the transmission zeros of a SISO time-delay system \\
		\Cref{example:smith_predictor} & Stability analysis of a Smith predictor with delay mismatch\\
		\end{tabular}
	\newline
	\vspace{0.5cm}
	\newline
		\begin{tabular}{>{\bfseries}p{0.17\linewidth}p{0.8\linewidth}}
		\multicolumn{2}{l}{\textbf{\large \Cref{sec:stabilization}}} \\[5pt]
		\Cref{example:stabilization_retarded_2} & Stabilization of a heat-exchanger system \\
		\Cref{example:stab_neutral} & Stabilization of a neutral-like closed-loop system \\
		\Cref{example:robustness_feedback_delay} & Robustness against infinitesimal feedback delay \\
		\Cref{ex:pyragas_feedback} & Design of a Pyragas-type feedback controller \\
		\Cref{ex:delayed_feedback} & Delayed feedback control \\
		\Cref{example:acceleration_feedback} & Acceleration feedback control \\
		\Cref{example:fixed_entries} & Design of structured controllers\\
		\Cref{example:pid_control} & Design of a PID controller \\
		\Cref{ex:decentralized} & Design of decentralized, distributed and overlapping controllers for networked systems \\
		\Cref{example:SISO_transfer_function} & Obtaining a state-space representation from a SISO transfer function 
				\end{tabular}
	\begin{tabular}{>{\bfseries}p{0.17\linewidth}p{0.8\linewidth}}
		\multicolumn{2}{l}{\textbf{\large \Cref{chapter:performance_robust_control}}} \\[5pt]
		\Cref{example:hinfnorm1} & Computing the H-infinity norm of a time-delay system \\
		\Cref{example:hinfnorm2} & Sensitivity of the \hinfnorm{} norm with respect to infinitesimal delay perturbations \\
		\Cref{example:strong_hinfnorm} & Computing the strong \hinfnorm{} \\
		\Cref{example:Hinf_design} & $\hinf$-controller design \\
		\Cref{ex:measurement_noise} & Structured $\hinf$-controller design \\
		\Cref{example:mixed_sensitivity} & Mixed sensitivity controller design \\
		\Cref{example:MOR} & Model-order reduction \\
		\Cref{example:mixed_performance} & Minimization of a mixed performance criterium \\
		\Cref{example:pseudo_definition} & \multirow{2}{\linewidth}{Computing the pseudospectral abscissa and the distance to instability of an uncertain time-delay system }\\[-1.2mm]
		\Cref{example:pseudo_cont} & 
	\end{tabular}
\endgroup
\clearpage
\pagenumbering{arabic} 

\chapter{Introduction}
Delays inherently appear in the study of numerous dynamical systems, especially in control applications~\cite{delays_csm}. In feedback systems, for example, delays arise from the time needed for collecting measurements and computing the control action. Time-lags may however also pop up within (the models for) physical systems themselves, such as in metal cutting~\cite{Insperger2000} or lasers~\cite{erneux2009}, or be purposely introduced to improve performance, see for example the delayed resonator \cite{Olgac1994DelayedResonator} and the proportional-integral-retarded (PIR) control architecture~\cite{ramirez2015}. Yet, as we will see throughout this manual, the analysis and control of such systems is typically more involved than for their delay-free counterparts. It is therefore not surprising that the study of time-delay systems has attracted significant interest  over the last 70 years, as witnessed by the wide variety of books and monographs devoted to the topic \cite{bellman1963dde,Kolmanovskii1986FDE,hale1993,bookdelay}.

Throughout the years, various methodologies have been developed for the analysis and controller-design of time-delay systems. Broadly speaking, these methods can be divided into two categories: the time-domain framework and the frequency-domain framework. Methods in the former category are based on extensions of Lyapunov's second method \cite{krasovskii1963stability,razumikhin1956} and (typically) give rise to analysis and synthesis problems in terms of linear matrix inequalities, see for instance \cite{fridman2014introduction}. The latter category contains the spectrum-based approaches described in \cite{bookdelay}. In contrast to the time-domain approaches, in which is conservatism is introduced by choice of the Lyapynov-Krasovskii functional, the frequency-domain framework allows to provide necessary and sufficient conditions for stability and performance. For example, as a strictly negative spectral abscissa is a necessary and sufficient condition for stability. However, an advantage of the the time-domain framework is that its results and tools can more easily be extended to non-linear time delay systems. Next, with respect to controller-design, the time-domain based methods are typically restricted to unstructured controllers with an order equal to or larger than that of the plant. In contrast, the frequency-domain approach allows to design fixed-order and structured controllers. 


In recent years, several efficient numerical algorithms have been developed to analyze and control linear time-invariant (LTI) time-delay systems within the frequency-domain framework including, among others, \cite{breda2014,vanbiervliet2008,gumussoy2011,roots,borgioli2019novel}. Yet, an integrated framework is lacking. \packageName{} aims at integrating these methods in an easy-to-use and well-documented \matlab{} package for frequency-domain based analysis and controller-design methods aimed at both expert and non-expert users. More specifically, \packageName{} can deal with a wide variety of LTI time-delay systems with point-wise (discrete) delays, namely:
\begin{enumerate}
\setlength{\itemsep}{2pt}
    \item time-delay systems of \textbf{retarded} type,
    \item time-delay systems of \textbf{neutral} type, and
    \item \textbf{delay descriptor systems}, \ie time-delay systems for which the dynamics described by \textbf{delay-differential algebraic equations}.
\end{enumerate}
For the system classes listed above, \packageName{} offers the following functionality:
\begin{enumerate}
    \item \textbf{Stability analysis} (\Cref{sec:stability})
    \begin{itemize}
    	\item \matlabfun{tds_roots} computes the characteristic roots of a time-delay system in a given right half-plane or rectangular region.
    	\item \matlabfun{tds_sa} and \matlabfun{tds_strong_sa} compute the (strong) spectral abscissa of a time-delay system.
    \end{itemize}
    \item \textbf{Stabilization} (\Cref{sec:stabilization})
    \begin{itemize}
    	\item \matlabfun{tds_stabopt_static} and \matlabfun{tds_stabopt_dynamic} synthesize stabilizing static, dynamic and structured output feedback controllers.
    \end{itemize}
    \item \textbf{Performance, robust stability analysis and robust control} (\Cref{chapter:performance_robust_control})
    \begin{itemize}
    \item \matlabfun{tds_hinfnorm} computes the (strong) \hinfnorm{} of a time-delay system.
    \item \matlabfun{tds_hiopt_static} and \matlabfun{tds_hiopt_dynamic} synthesize static, dynamic and structured output feedback controllers based on the robust control framework.
    \item  \matlabfun{tds_psa} computes the pseudospectral abscissa of an uncertain retarded delay differential equation with bounded, structured, real-valued uncertainties on both the system matrices and the delays.
    \item \matlabfun{tds_dist_ins} computes the distance to instability of an uncertain retarded delay differential equation with bounded, structured, real-valued uncertainties on both the system matrices and the delays.
    \end{itemize}

\end{enumerate}
The design functions described above allow to synthesize fixed-order dynamic output feedback controllers (meaning that the order of the controller is specified by the user and can be smaller than the order of the plant). Note that this also includes static output feedback controllers, which are essentially dynamic controllers of order zero. Furthermore, it is also possible to design structured controllers such as delay-based, acceleration feedback, decentralized, and PID controllers (see \Cref{subsec:structured_controllers}). To this end, the presented software package employs a direct optimization approach, \ie finding a suitable controller by directly optimizing the spectral abscissa, the \hinfnorm{}, or a combination of both with respect to the controller parameters. The controller-design part of \packageName{} can thus be seen as an extension of \verb|HIFOO| \cite{HIFOO1,HIFOO2} and \verb|hinfstruct| \cite{hinfstruct} to time-delay systems. As mentioned before, in contrast to time-domain methods, this approach does not introduce conservatism and, for example, a stabilizing controller can be computed whenever it exists.
 This comes however at the cost of having to solve a (small) non-smooth, non-convex optimization problem. \packageName{} therefore relies on the software package \verb|HANSO| \cite{HANSO,HANSO2}, which implements a hybrid algorithm for non-smooth, non-convex optimization. \\

Throughout this manual, we will refrain from proving theoretical results. For such proofs and more background information on time-delay systems, we refer the interested readers to the popular handbooks and monographs on the topic including, among others, \cite{hale1993,gu2003stability,bookdelay,fridman2014introduction}. Instead we will focus on illustrating the presented result using the software package while in the process demonstrating its core working principles and functionality. As such, this manual contains numerous examples and code-snippets, which we encourage the reader to try out themselves. The code for these examples is also available in the \matlabfun{examples} folder. \\

The remainder of this manual is organized as follows. \Cref{sec:stability} will more rigorously introduce the considered class of time-delay systems, illustrate how these systems can be represented in \packageName{} and demonstrate how the exponential stability of their null solution can be analyzed. Special attention will be paid to time-delay systems of neutral type and to the effect of (infinitesimal) delay perturbations on their stability. \Cref{sec:stabilization} deals with designing  stabilizing output feedback controllers. Various examples will illustrate the design of both unstructured and structured control architectures. \Cref{sec:performance} deals with performance and robust stability analysis, thereby introducing measures such as the \hinfnorm{}, the pseudospectral abscissa and the distance to instability. This chapter also demonstrates how \packageName{} can be used to design controllers that minimize the \mbox{\hinfnorm{}} of the resulting closed-loop system. \Cref{sec:installation} will guide you through the installation of the \packageName{} package. Finally, \Cref{sec:documentation} contains more technical background information on the key functions of \packageName{} and their documentation.

\chapter{Stability analysis using a spectrum-based approach}
\label{sec:stability}
This chapter introduces the considered time-delay systems and discusses the stability \mbox{analysis} of their null solution within the eigenvalue-based framework. Hereby we will differentiate between delay differential equations of retarded and neutral type. Furthermore, we will also briefly touch upon delay differential algebraic equations (DDAEs), which combine delay differential and delay algebraic equations. For each system class, we will briefly introduce the necessary notation and the properties, show how these systems can be represented within \packageName{}, and how the exponential stability of their null solution can be examined. To this end, we will first recall some well-known concepts and results for ordinary differential equations (ODEs). Subsequently, we consider retarded delay differential equations (RDDEs) and we show that the obtained stability results are quite similar to the ODE case. Next, we will consider neutral delay differential equations (NDDEs), for which the stability analysis is more involved. Therefore, we will first restrict our attention to NDDEs with a single delay, before considering the multiple delay case. In the latter case, we will see that stability might be fragile with respect to the delays, in the sense that an exponentially stable system might be destabilized by an arbitrary small perturbation on the delays. To resolve this fragility problem, we will consider strong stability, which requires that stability is preserved for all sufficiently small perturbations (on the delays). Thereafter \Cref{subsec:stability_DDAE} introduces the considered class of DDAEs, which (as we will see) encompasses both retarded and neutral systems. Next, to be able to use the stability analysis tool of \packageName{} a state-space representation for the system is necessary. \Cref{subsec:quasi_polynomial} will therefore explain how such a model can be obtained from a frequency-domain representation of the system within \packageName{}. Finally, we will consider several stability analysis problems for which \packageName{} can be used.
\section{Ordinary differential equations}
To familiarize ourselves with the necessary concepts and to highlight the differences between systems with and without delays, we will first consider the following ordinary differential equation: 
\begin{equation}
\label{eq:ode}
\dot{x}(t) = A x(t) \text{ for }t\in [0,+\infty)
\end{equation}
with the state variable $x(t)$ belonging to $\C^{n}$ for all $t\in [0,+\infty)$ and the state matrix $A$ belonging to $\R^{n\times n}$. It is well-known that \eqref{eq:ode} has a unique forward solution once an initial state $x(0)$ is specified. 
Furthermore, by plugging the sample solution $t \mapsto v e^{\lambda t}$ (with $\lambda\in\C$ and $v\in\C^{n}\setminus \{0_{n}\}$) into \eqref{eq:ode}, we obtain the following \textbf{characteristic equation}
\begin{equation}
\label{eq:ode_chf}
\det \big(\lambda \mathrm{I}_{n} - A \big) = 0
\end{equation}
with $\mathrm{I}_{n}$ the identity matrix of dimension $n$, that ``characterizes'' the behavior of the system. As the function at the left-hand side is a polynomial of degree $n$, \eqref{eq:ode_chf} has $n$ solutions, the so-called \textbf{\emph{characteristic roots}}. In this work, the set containing all characteristic roots of a linear-time invariant differential equation, \ie its \textbf{\emph{spectrum}}, will be denoted by $\Lambda$. For example, for \eqref{eq:ode} the corresponding spectrum is given by  
    \begin{equation}
    \label{eq:spectrum_ode}
    \Lambda(A) := \{\lambda \in \C: \det \big(\lambda \mathrm{I}_{n} - A \big) = 0 \}.
    \end{equation}
In \matlab{}, the set in \eqref{eq:spectrum_ode} can be computed using
\begin{lstlisting}
cr = eig(A);
\end{lstlisting}

Let us now examine the relation between these characteristic roots and the exponential stability of the null solution of \eqref{eq:ode}. To this end let us first formally introduce the concept of exponential stability. 
\begin{definition}
    The null solution of \eqref{eq:ode} is (globally)\footnote{For LTI systems, local and global exponential stability coincide. } \textbf{\emph{exponentially stable}} if and only if there exist constants $C>0$ and $\gamma > 0$ such that
    \begin{equation}
    \label{eq:exponential_decay}
    \|x(t;x_0)\|_2 \leq C e^{-\gamma t} \|x_0\|_2 \text{ for all } t\geq 0 \text{ and any } x_0 \in \C^{n},
    \end{equation}
    with $t\mapsto x(t;x_0)$ the unique solution of \eqref{eq:ode} starting from the initial condition $x(0) = x_0$. Or in other words, exponential stability requires that all solutions of \eqref{eq:ode} decay exponentially fast to 0 as $t\rightarrow \infty$. In the remainder of this work, we will (with a slight abuse of terminology) refer to a system as exponential stable if its null solution is exponential stable.
\end{definition}
 Next, we recall the following well-known relation between the exponential stability of \eqref{eq:ode} and the location of its characteristic roots.
\begin{theorem}
\label{th:stability_ode}
The null solution of \eqref{eq:ode} is globally exponentially stable if and only if all its characteristic roots lie in the open left half-plane, \ie
\[
\Re(\lambda) < 0 \text{ for all } \lambda \in \Lambda(A).
\]
\end{theorem}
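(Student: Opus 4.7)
The plan is to reduce the entire statement to bounds on the matrix exponential $e^{At}$, using the elementary fact that the unique solution of \eqref{eq:ode} is $x(t;x_0) = e^{At}x_0$, and then to analyze $e^{At}$ via the Jordan canonical form of $A$. This splits into the two implications of the theorem in the natural way.

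For the sufficiency direction (characteristic roots in the open left half-plane $\Rightarrow$ exponential stability), I would write $A = P J P^{-1}$ with $J$ block diagonal, each block a Jordan block $J_{k}(\lambda)$ associated with some $\lambda \in \Lambda(A)$. Then $e^{At} = P e^{Jt} P^{-1}$, and on each Jordan block the entries of $e^{J_k(\lambda)t}$ are of the form $\frac{t^{j}}{j!}e^{\lambda t}$ for $0 \leq j \leq k-1$. Choosing any $\gamma$ with $0 < \gamma < -\max_{\lambda \in \Lambda(A)}\Re(\lambda)$, which is strictly positive by hypothesis, the polynomial factor $t^{j}$ is absorbed by $e^{(\Re(\lambda)+\gamma)t}$ as $t \to \infty$. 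This yields a uniform bound $\|e^{At}\|_2 \leq C e^{-\gamma t}$ for some $C > 0$, from which \eqref{eq:exponential_decay} follows by $\|x(t;x_0)\|_2 \leq \|e^{At}\|_2 \|x_0\|_2$.

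For the necessity direction (exponential stability $\Rightarrow$ roots in the open left half-plane), I would argue by contrapositive: suppose there exists $\lambda_0 \in \Lambda(A)$ with $\Re(\lambda_0) \geq 0$, and let $v \in \C^{n}\setminus\{0_{n}\}$ be a corresponding eigenvector. Then the initial condition $x_0 = v$ yields the exact solution $x(t;v) = e^{\lambda_0 t}v$, so $\|x(t;v)\|_2 = e^{\Re(\lambda_0)t}\|v\|_2 \geq \|v\|_2$ for all $t \geq 0$. This contradicts any inequality of the form \eqref{eq:exponential_decay} with $\gamma>0$, since the right-hand side tends to $0$.

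I do not anticipate a real obstacle here; the only mildly delicate point is handling the polynomial prefactors coming from non-semisimple eigenvalues in the sufficiency direction, which is dispatched by the standard observation that $t^{j}e^{-\varepsilon t}$ is bounded on $[0,\infty)$ for every $\varepsilon > 0$ and every $j \in \N$. The contrast with the delay case (which is the motivation for recalling this result) is that here $\Lambda(A)$ is automatically finite, so the supremum of the real parts is attained and strict negativity of all roots immediately yields a uniform exponential decay rate, a property that will require a separate argument for retarded and neutral equations later in the chapter.
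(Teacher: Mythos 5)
Your proposal is correct and complete: the Jordan-form bound $\|e^{At}\|_2 \leq C e^{-\gamma t}$ for the sufficiency direction and the eigenvector-based contrapositive for necessity are exactly the standard argument. The paper itself offers no proof of this theorem (it explicitly refrains from proving theoretical results and recalls this one as well known from the ODE literature), so there is nothing to contrast with; your write-up supplies precisely the argument the paper implicitly invokes, and your closing remark about $\Lambda(A)$ being finite correctly identifies the point that fails to carry over to the delay case.
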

As a consequence, to asses exponential stability, it suffices to only consider the rightmost characteristic root(s). Let us therefore introduce the notion of the spectral abscissa.
\begin{definition}
\label{def:spectral_abscissa}
The \textbf{\emph{spectral abscissa}} of \eqref{eq:ode} is equal to the real part of its rightmost characteristic root(s) and will be denoted by $c$, \ie{} 
\[
\sa(A) := \max \{\Re(\lambda) : \lambda \in  \Lambda(A)\}.
\]
\end{definition}
The subsequent result now directly follows from \Cref{th:stability_ode}.
\begin{corollary}
The null solution of \eqref{eq:ode} is globally exponentially stable if and only if its spectral abscissa is strictly negative.
\end{corollary}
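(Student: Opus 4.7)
The plan is to derive the corollary directly from Theorem~\ref{th:stability_ode} by unpacking the definition of the spectral abscissa. The statement is essentially a reformulation of the theorem using a single scalar quantity, so the argument is a short logical chain of equivalences rather than a substantive new result.

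First I would note that, because the characteristic equation \eqref{eq:ode_chf} is a polynomial of degree $n$, the spectrum $\Lambda(A)$ is a nonempty finite subset of $\C$. This guarantees that the maximum in Definition~\ref{def:spectral_abscissa} is attained and that $\sa(A)$ is a well-defined real number. (For the time-delay setting treated later in the chapter, this is the step that will need extra care, since the spectrum may be infinite; but for an ODE it is immediate.)

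Next, I would chain the following equivalences. By Theorem~\ref{th:stability_ode}, the null solution of \eqref{eq:ode} is globally exponentially stable iff $\Re(\lambda) < 0$ for every $\lambda \in \Lambda(A)$. Since $\Lambda(A)$ is finite and nonempty, the condition ``$\Re(\lambda) < 0$ for all $\lambda \in \Lambda(A)$'' is equivalent to $\max\{\Re(\lambda) : \lambda \in \Lambda(A)\} < 0$, which by Definition~\ref{def:spectral_abscissa} is precisely $\sa(A) < 0$. Concatenating the two equivalences yields the corollary.

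There is no real obstacle here; the only subtlety is the existence of the maximum, which follows from finiteness of the spectrum of $A$. I would therefore keep the proof to two or three sentences, citing Theorem~\ref{th:stability_ode} and Definition~\ref{def:spectral_abscissa} explicitly so that the reader sees the corollary as a direct relabeling of the theorem in terms of the scalar quantity $\sa(A)$.
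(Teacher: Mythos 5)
Your proposal is correct and matches the paper, which simply states that the corollary follows directly from \Cref{th:stability_ode}; your only addition is to make explicit the (immediate) observation that $\Lambda(A)$ is finite and nonempty so the maximum in \Cref{def:spectral_abscissa} is attained. This is a faithful, slightly more detailed version of the same argument.
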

To conclude this section, we recall that the spectral abscissa, if strictly negative, bounds the exponential decay rate of the solutions of \eqref{eq:ode} for arbitrary initial conditions (in the sense of\eqref{eq:exponential_decay}). Next, we will see that a lot of these concepts and results directly carry over to LTI delay differential equations of retarded type.

\section{Delay differential equations of retarded type}
\label{sec:stability_retarded}

In this section we consider LTI delay differential equations of retarded type with point-wise delays. Such systems have the following general form:
\begin{equation}
\label{eq:ol_ret}
\dot{x}(t) = \sum_{k=1}^{m_A} A_k\, x(t-h_{A,k}) \text{ for }t\in [0,+\infty)
\end{equation}
in which $x(t)\in\C^{n}$ represents the state variable at time $t$, the delays $h_{A,1}, \dots,h_{A,m}$ are non-negative, and the matrices $A_1,\dots,A_{m_A-1}$, and $A_{m_A}$ belong to $\R^{n\times n}$. Furthermore, for ease of notation, let us introduce the vector $\vec{\tau}$ containing all delay values and the scalar $\taum$ denoting the maximal delay value, \ie 
\[
\vec{\tau} = \left[
h_{A,1}, \dots, h_{A,m_A}\right] \text{ and }
\taum = \max\left\{h_{A,1},\dots,h_{A,m_A}\right\}.
\]
Notice that in contrast to the ordinary differential equation in \eqref{eq:ode}, the evolution of the state variable $x$ at time $t$ not only depends on its current value but also on its value at discrete instances in the past. As a consequence, the knowledge of $x(0)$ does not suffice to (uniquely) forward simulate \eqref{eq:ol_ret}. Indeed, to correctly formulate an initial value problem associated with \eqref{eq:ol_ret}, we require an intial function segment $\phi(t)$ that gives the value of the state variable for all $t\in[-\taum,0]$. More specifically, given a function segment $\phi(t)$ 
the initial value problem associated with \eqref{eq:ol_ret} consists of finding a unique solution $t \mapsto x(t;\phi)$ that satisfies 
\[ x(t;\phi) = \phi(t) \text{ for all } t\in[-\taum,0]
\]
and 
\[
\dot{x}(t) = \sum_{k=1}^{m_A} A_k\, x(t-h_{A,k}) \text{ for all }t\in [0,+\infty).
\]
As shown in \cite[Chapters 1 and 2]{hale1993}, the retarded delay differential equation \eqref{eq:ol_ret} admits an unique solution for all $\phi$ belonging to the Banach space $X:=\mathrm{C}\left([-\taum,0],\C^{n}\right)$, \ie the set consisting of the continuous functions that map the interval $[-\taum,0]$ to $\C^{n}$ and that is equipped with the suprenum norm 
\[ \|\phi\|_{s} = \sup_{\theta \in [-\taum,0]} \|\phi(\theta)\|_2.
\] The state of \eqref{eq:ol_ret} at time $t$ is thus the function segment $x_{t} \in X$ with 
\[x_{t}(\theta;\phi) := x(t+\theta;\phi) \text{ for } \theta \in [-\taum,0]. \] 
Note that in constrast to the ODE case, the state is now infinite dimensional.

To examine exponential stability of the null solution of \eqref{eq:ol_ret}, we again plug the sample solution $t\mapsto e^{\lambda t}v$ into the differential equation. We now obtain the following characteristic equation
\begin{equation}
\label{eq:RDEVP}
\det\left(\lambda \mathrm{I}_n  - \sum_{k=1}^{m_A} A_k e^{-\lambda h_{A,k}}\right) = 0.
\end{equation}
Notice that the characteristic function is no longer a polynomial but rather a  quasi-polynomial. Furthermore, the spectrum of \eqref{eq:ol_ret}, \ie{} the set
\[
\Lambda(A_{1},\dots,A_{m_A},\vec{\tau}) := \big\{\lambda \in \C : \det\left(\lambda \mathrm{I}_n  - \textstyle\sum_{k=1}^{m_A} A_k e^{-\lambda h_{A,k}}\right) = 0\big\},
\]
 generally contains infinitely many points. In contrast to the ODE case, it is thus not possible to compute all characteristic roots of \eqref{eq:ol_ret}. However the following important result from \cite[Theorem~1.5]{gu2003stability} states that any right half-plane only contains finitely many characteristic roots.
\begin{proposition}
\label{prop:finite_number}
For each $r\in \R$, the right half-plane $\{z\in \C : \Re(z) \geq r \}$ only contains finitely many (counting multiplicity) characteristic roots of \eqref{eq:ol_ret}.
\end{proposition}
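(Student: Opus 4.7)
The plan is to combine a growth bound on the characteristic function with the fact that zeros of a nonzero entire function have no accumulation point in $\C$. First I would observe that the characteristic function
\[
f(\lambda) := \det\Bigl(\lambda\mathrm{I}_n - \sum_{k=1}^{m_A} A_k e^{-\lambda h_{A,k}}\Bigr)
\]
is entire, since each $e^{-\lambda h_{A,k}}$ is entire and the determinant is a polynomial in its matrix entries. Moreover $f$ is not identically zero: its leading behavior as $|\lambda|\to\infty$ along the positive real axis is $\lambda^n$ plus lower order terms, so $f(\lambda)\to\infty$ there.

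Next I would reduce the problem to counting zeros in a compact set. Fix $r\in\R$. For any $\lambda$ with $\Re(\lambda)\geq r$, and using that $h_{A,k}\geq 0$,
\[
|e^{-\lambda h_{A,k}}| = e^{-\Re(\lambda)h_{A,k}} \leq e^{-r h_{A,k}} \leq e^{|r|\,\taum} =: K_r,
\]
so setting $M_r := K_r\sum_{k=1}^{m_A}\|A_k\|_2$ gives a uniform bound $\bigl\|\sum_{k=1}^{m_A} A_k e^{-\lambda h_{A,k}}\bigr\|_2\leq M_r$ on the closed right half-plane $\{\Re(\lambda)\geq r\}$. Hence whenever $|\lambda|>M_r$, the matrix $\lambda \mathrm{I}_n - \sum_{k} A_k e^{-\lambda h_{A,k}}$ is strictly diagonally dominant in an operator-norm sense and therefore invertible, so $f(\lambda)\neq 0$. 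Thus every characteristic root in the half-plane $\{\Re(\lambda)\geq r\}$ actually lies in
\[
K := \{\lambda\in\C : \Re(\lambda)\geq r,\ |\lambda|\leq M_r\},
\]
which is compact.

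Finally I would invoke the identity theorem for holomorphic functions: the zero set of a nonzero entire function has no accumulation point in $\C$, and hence is discrete. A discrete subset of a compact set is finite, so $f$ has only finitely many zeros in $K$ (counted with multiplicity, using that each zero has finite order). Combined with the containment established above, this yields the claim.

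The only step that requires any care is the uniform bound in the second paragraph; once that uniform boundedness of $\sum_k A_k e^{-\lambda h_{A,k}}$ on right half-planes is in hand, the rest is a standard application of the identity theorem. The argument does not extend as is to neutral systems, because there $|\lambda|$ does not dominate the delay terms as $|\lambda|\to\infty$ inside a right half-plane; this is precisely the reason the excerpt will need to treat NDDEs separately.
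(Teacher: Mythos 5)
Your proof is correct and is essentially the standard argument for this fact: the paper itself does not prove the proposition but defers to \cite[Theorem~1.5]{gu2003stability}, whose proof rests on exactly the two ingredients you use, namely a uniform bound on $\sum_k A_k e^{-\lambda h_{A,k}}$ over the closed half-plane $\{\Re(\lambda)\geq r\}$ (confining the roots there to a compact set) and the discreteness of the zero set of the nonzero entire characteristic function. Your closing remark is also on point: the argument breaks down for neutral equations precisely because the term $\lambda\sum_k H_k e^{-\lambda h_{H,k}}$ is not dominated by $\lambda \mathrm{I}_n$ at large $|\lambda|$ in a right half-plane, which is why the paper treats NDDEs separately via \Cref{prop:neutral_chain}.
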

The spectral abscissa of \eqref{eq:ol_ret} can thus be defined in a similar way as for an ODE.
\begin{definition}
\label{def:spectral_abscissa_ret}
The spectral abscissa of \eqref{eq:ol_ret} is equal to the real part of its rightmost characteristic root(s), \ie{}
\begin{equation}
\label{eq:sa_retarded}
\sa(A_{1},\dots,A_{m_A},\vec{\tau}) := \max \big\{\Re(\lambda) : \lambda \in \Lambda(A_{1},\dots,A_{m_A},\vec{\tau})\big\}.
\end{equation}
\end{definition}

Now we can examine the asymptotic behavior of the solutions of \eqref{eq:ol_ret}. We again start with a definition for exponential stability.
\begin{definition}[{\cite[Definition~1.5]{bookdelay}}]
\label{def:exp_stability}
    The null solution of \eqref{eq:ol_ret} is (globally) exponentially stable if and only if there exist constants $C>0$ and $\gamma > 0$ such that
    \[
    \|x(t;\phi)\|_2 \leq C e^{-\gamma t} \|\phi\|_s \text{ for all } \phi \in X.
    \]
    Or in other words, the null solution of \eqref{eq:ol_ret} is exponentially stable if (and only if) all solution of \eqref{eq:ol_ret} converge exponentially fast to 0 for $t\rightarrow \infty$.
\end{definition}
\noindent As in the previous section, exponential stability can be assessed in terms of the spectral abscissa.
\begin{theorem}[{\cite[Theorem~1.5]{gu2003stability}}]
\label{th:exp_stab}
The null solution of \eqref{eq:ol_ret} is exponentially stable if and only if 
\[
\sa(A_{1},\dots,A_{m_A},\vec{\tau}) < 0.
\]
\end{theorem}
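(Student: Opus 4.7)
The plan is to reformulate exponential stability in terms of the solution semigroup on $X = \mathrm{C}\left([-\taum,0],\C^{n}\right)$, and then to establish the \emph{spectrum-determined growth} property relating the growth bound of this semigroup to the spectral abscissa. Concretely, define $T(t):X \to X$ by $(T(t)\phi)(\theta) := x(t+\theta;\phi)$ for $\theta \in [-\taum,0]$. Standard existence-uniqueness results for \eqref{eq:ol_ret} imply that $\{T(t)\}_{t\geq 0}$ is a strongly continuous semigroup on $X$, and its infinitesimal generator $\mathcal{A}$ has spectrum equal to $\Lambda(A_{1},\dots,A_{m_A},\vec{\tau})$ (all elements of which are eigenvalues). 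Exponential stability in the sense of \Cref{def:exp_stability} is equivalent to the growth bound $\omega_0 := \inf\bigl\{\omega \in \R : \exists\, M\geq 1 \text{ with } \|T(t)\| \leq M e^{\omega t}\ \forall t\geq 0\bigr\}$ being strictly negative, so it suffices to prove the identity $\omega_0 = \sa(A_{1},\dots,A_{m_A},\vec{\tau})$.

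For the inequality $\omega_0 \geq \sa(A_{1},\dots,A_{m_A},\vec{\tau})$, I would exhibit an explicit non-decaying solution. By \Cref{prop:finite_number} one can pick $\lambda_0 \in \Lambda(A_{1},\dots,A_{m_A},\vec{\tau})$ with $\Re(\lambda_0) = \sa(A_{1},\dots,A_{m_A},\vec{\tau})$, and $v\in \C^{n} \setminus \{0_n\}$ in the kernel of $\lambda_0 \mathrm{I}_n - \sum_{k=1}^{m_A} A_k e^{-\lambda_0 h_{A,k}}$. Then $t\mapsto e^{\lambda_0 t} v$ is a classical solution of \eqref{eq:ol_ret} with history $\phi(\theta) := e^{\lambda_0 \theta} v \in X$ whose norm grows exactly like $e^{\Re(\lambda_0) t}$, forcing $\omega_0 \geq \Re(\lambda_0)$. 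In particular, if $\sa \geq 0$ this already contradicts exponential stability.

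The reverse inequality $\omega_0 \leq \sa(A_{1},\dots,A_{m_A},\vec{\tau})$ is the main technical hurdle, and I would handle it via the Laplace transform. Taking the one-sided Laplace transform of \eqref{eq:ol_ret}, while carefully accounting for the contribution of the history $\phi$ on $[-\taum,0]$, yields $\hat{x}(\lambda) = \Delta(\lambda)^{-1} R(\lambda;\phi)$, where $\Delta(\lambda) := \lambda \mathrm{I}_n - \sum_{k=1}^{m_A} A_k e^{-\lambda h_{A,k}}$ and $\lambda \mapsto R(\lambda;\phi)$ is entire and depends linearly and continuously on $\phi\in X$. Recovering $x(t;\phi)$ by Bromwich inversion along a vertical line $\Re(\lambda) = \gamma$, with $\sa(A_{1},\dots,A_{m_A},\vec{\tau}) < \gamma < 0$, and bounding the resulting contour integral would give $\|x(t;\phi)\|_2 \leq C e^{\gamma t}\|\phi\|_s$, whence exponential stability. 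The delicate step is to shift the contour from a line $\Re(\lambda)=\gamma_0$ sufficiently far to the right (where the Laplace integral converges absolutely) down to $\Re(\lambda)=\gamma$: this shift requires $\Delta(\lambda)^{-1}$ to be holomorphic on $\{\Re(\lambda) \geq \gamma\}$, which follows from \Cref{prop:finite_number} and the choice $\gamma > \sa(A_{1},\dots,A_{m_A},\vec{\tau})$, plus a uniform resolvent estimate.

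The crux of the argument is therefore the bound on $\Delta(\lambda)^{-1}$ along vertical strips, together with the decay $\|\Delta(\lambda)^{-1}\| = O(|\lambda|^{-1})$ as $|\Im(\lambda)|\to\infty$. This estimate hinges on the \emph{retarded} structure of \eqref{eq:ol_ret}: since no derivative of $x$ appears at delayed arguments, $\bigl\|\sum_{k=1}^{m_A} A_k e^{-\lambda h_{A,k}}\bigr\|$ stays uniformly bounded on every strip $\{c_1 \leq \Re(\lambda) \leq c_2\}$, so $\lambda \mathrm{I}_n$ dominates $\Delta(\lambda)$ for $|\Im(\lambda)|$ large and $\Delta(\lambda)^{-1}$ behaves like $(\lambda \mathrm{I}_n)^{-1}$ at infinity. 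This is precisely the property that breaks down for neutral equations, and it foreshadows why the stronger notion of strong stability is required in the neutral sections of this chapter.
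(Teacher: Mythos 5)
The paper itself offers no proof of this theorem: it is stated as a citation of \cite[Theorem~1.5]{gu2003stability}, consistent with the manual's announced policy of deferring all proofs to the handbooks. Your outline is, however, essentially the standard argument given in those references (semigroup reformulation, spectrum-determined growth, exponential eigensolutions for the lower bound, Laplace inversion of the resolvent for the upper bound), and the structure is sound: the necessity direction via the solution $t\mapsto e^{\lambda_0 t}v$ is complete once you note that $\Lambda\neq\emptyset$ (the characteristic function is $\lambda^{n}+(\text{lower-order terms})$, so it has zeros) and that \Cref{prop:finite_number} turns the supremum into an attained maximum; and the equivalence of \Cref{def:exp_stability} with negativity of the growth bound is routine since $\|x_t\|_s\leq C e^{-\gamma(t-\taum)}\|\phi\|_s$ follows from the pointwise bound.

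The one step you wave at too quickly is the Bromwich inversion: the estimate $\|\Delta(\lambda)^{-1}\|=O(|\lambda|^{-1})$ on vertical lines gives only conditional, not absolute, convergence of the contour integral, so "bounding the resulting contour integral" is not immediate. The standard repairs are either the splitting
\begin{equation*}
\Delta(\lambda)^{-1}=\frac{1}{\lambda}\,\mathrm{I}_{n}+\frac{1}{\lambda}\,\Delta(\lambda)^{-1}\Bigl(\sum_{k=1}^{m_A}A_k e^{-\lambda h_{A,k}}\Bigr),
\end{equation*}
whose second term is $O(|\lambda|^{-2})$ on the strip and hence absolutely integrable while the first inverts explicitly, or the classical detour through the fundamental (matrix) solution $X(t)$ with Laplace transform $\Delta(\lambda)^{-1}$, followed by the variation-of-constants formula $x(t;\phi)=X(t)\phi(0)+\sum_{k}A_k\int_{-h_{A,k}}^{0}X(t-\theta-h_{A,k})\phi(\theta)\,\dd\theta$ to transfer the bound $\|X(t)\|\leq Ce^{\gamma t}$ to arbitrary initial segments $\phi\in X$. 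With either repair your argument closes, and your closing observation --- that the uniform boundedness of $\sum_k A_k e^{-\lambda h_{A,k}}$ on vertical strips is exactly the retarded-type property that fails for neutral equations --- correctly identifies why this theorem does not survive unchanged in \Cref{sec:neutral_stability}.
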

Combining this result with \Cref{prop:finite_number}, it follows that we can asses exponential stability by checking the location of the characteristic roots.
\begin{corollary}
The null solution of \eqref{eq:ol_ret} is exponentially stable if and only if all characteristic roots of \eqref{eq:ol_ret} lie in the open left half-plane, \ie
\[
\Re(\lambda) <0 \text{ for all } \lambda \in  \Lambda(A_{1},\dots,A_{m_A},\vec{\tau}).
\]
\end{corollary}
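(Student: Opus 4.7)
The plan is to deduce this corollary directly from \Cref{th:exp_stab} and \Cref{prop:finite_number}. By \Cref{th:exp_stab} it suffices to show that the condition $\sa(A_1,\dots,A_{m_A},\vec\tau)<0$ is equivalent to the condition $\Re(\lambda)<0$ for every $\lambda\in\Lambda(A_1,\dots,A_{m_A},\vec\tau)$. I would split this into the two implications.

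First I would dispatch the easy direction: if the spectral abscissa is strictly negative, then for any characteristic root $\lambda$, by \Cref{def:spectral_abscissa_ret} we have $\Re(\lambda)\le \sa(A_1,\dots,A_{m_A},\vec\tau)<0$. This uses nothing beyond the definition of the maximum.

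The reverse implication is the one where some care is needed, because the spectrum is in general an infinite set and a supremum of strictly negative numbers can equal zero. This is exactly where \Cref{prop:finite_number} enters. I would argue as follows: assume every $\lambda\in\Lambda$ satisfies $\Re(\lambda)<0$, and apply \Cref{prop:finite_number} with $r=0$ to conclude that the closed right half-plane $\{z\in\C:\Re(z)\ge 0\}$ contains only finitely many characteristic roots; combined with the assumption, it contains none. Hence every characteristic root lies in the open half-plane $\{z\in\C:\Re(z)<0\}$. Next apply \Cref{prop:finite_number} with, say, $r=-1$: the closed half-plane $\{z\in\C:\Re(z)\ge -1\}$ contains only finitely many roots, so the maximum in \Cref{def:spectral_abscissa_ret} is actually attained (either as the maximum over this finite set, or bounded above by $-1$ if the set is empty). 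Therefore $\sa(A_1,\dots,A_{m_A},\vec\tau)$ equals $\Re(\lambda^\star)<0$ for some root $\lambda^\star$ (or is $\le -1$), which is strictly negative.

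With the equivalence between the spectral-abscissa condition and the location condition established, \Cref{th:exp_stab} closes the proof. The only conceptual obstacle is the one addressed above, namely excluding an accumulation of characteristic roots at the imaginary axis; this is handled entirely by \Cref{prop:finite_number}, so no additional analytical machinery is required.
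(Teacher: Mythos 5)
Your proposal is correct and follows exactly the route the paper indicates: the corollary is obtained by combining \Cref{th:exp_stab} with \Cref{prop:finite_number}, the latter being needed precisely to rule out a chain of roots accumulating at the imaginary axis so that ``all roots in the open left half-plane'' genuinely forces $\sa(A_1,\dots,A_{m_A},\vec{\tau})<0$. The paper leaves these details implicit, but your filled-in argument is exactly what is intended.
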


Next, let us illustrate how one can represent a RDDE of the form \eqref{eq:ol_ret} in \packageName{} and compute the associated characteristic roots in a given right half-plane. 
\begin{example}
\label{example:retarded}
Consider the following RDDE from \cite[Section~6.1]{Verheyden2008}
\begin{equation}
\label{eq:rdde1}
\dot{x}(t) = \begin{bmatrix}
-1 & 0 & 0 & 0 \\
0 & 1 & 0 & 0 \\
0 & 0 & -10 & -4 \\
0 & 0 & 4 & -10 
\end{bmatrix}x(t) + \begin{bmatrix}
3 & 3 & 3 & 3\\
0 & -1.5 & 0 & 0 \\
0 & 0 & 3 & -5 \\
0 & 5 & 5 & 5
\end{bmatrix}x(t-1).
\end{equation}
To represent this RDDE in \packageName{}, we will use the function \matlabfun{tds_create(A,hA)}. The first argument of this function (\verb|A|) should be a cell array containing the state matrices. The second argument (\verb|hA|) should be a vector with the state delays.
\begin{lstlisting}
A0 = [-1 0 0 0;0 1 0 0;0 0 -10 -4;0 0 4 -10];
A1 = [3 3 3 3;0 -1.5 0 0;0 0 3 -5;0 5 5 5];
tau = 1;
rdde = tds_create({A0,A1},[0 tau]); 
\end{lstlisting}
The output (\verb|rdde|) is an object of type \matlabfun{tds_ss_retarded} that stores the relevant information of the RDDE, such as the system matrices, the delay values, the dimensions of the state vector $x$ and the number of delays. We can inspect these properties using the \matlab{} Command Window.
\begin{lstlisting}
rdde = 

LTI Retarded Delay Differential Equation with properties:

	A: {[4x4 double]  [4x4 double]}
	hA: [0 1]
	mA: 2
	n: 4
\end{lstlisting}
Next, we will use the  function \matlabfun{tds_roots(tds,r,options)} to compute its characteristic roots in a given right half-plane. This function takes two mandatory arguments. The first argument is the \matlabfun{tds_ss_retarded}-object that we have just created, while the second argument is a real number that specifies the desired right half-plane, $\{z\in \C : \Re(z)\geq \verb|r| \}$. (Recall from \Cref{prop:finite_number} that such a right half-plane can only contain finitely many characteristic roots.) The third argument is optional and allows to specify additional options, some of which we will discuss later in this manual. If you want more information on the interface of this function or the available options you can use the \matlab{} \verb|help| command
\begin{lstlisting}
>> help tds_roots
\end{lstlisting}
or take a look at \Cref{sec:tds_roots}. We are now ready to compute the characteristic roots of \eqref{eq:rdde1} in the right half-plane $\{\lambda \in \C: \Re(\lambda)\geq -1.5\}$.
\begin{lstlisting}
cr = tds_roots(rdde,-1.5); 
\end{lstlisting}
After executing this command, the variable \verb|cr| contains the desired characteristic roots and the following text is printed to the \matlab{} output window
\begin{lstlisting}
Degree of the spectral discretisation is 19.
\end{lstlisting}
As explained below, the degree of the spectral discretization gives an indication of the quality of the approximation and the size of the region in which the characteristic roots are well approximated. To plot these characteristic roots, we use the function \verb|tds_eigenplot|. 
\begin{lstlisting}
tds_eigenplot(cr); 
xlim([-1.5 1])
\end{lstlisting}
\Cref{fig:EV_ret} shows the result. We observe that the null solution of \eqref{eq:rdde1} is unstable as three characteristic roots lie in the closed right half-plane.
\begin{figure}[!h]
    \centering
    \includegraphics[width=0.7\linewidth]{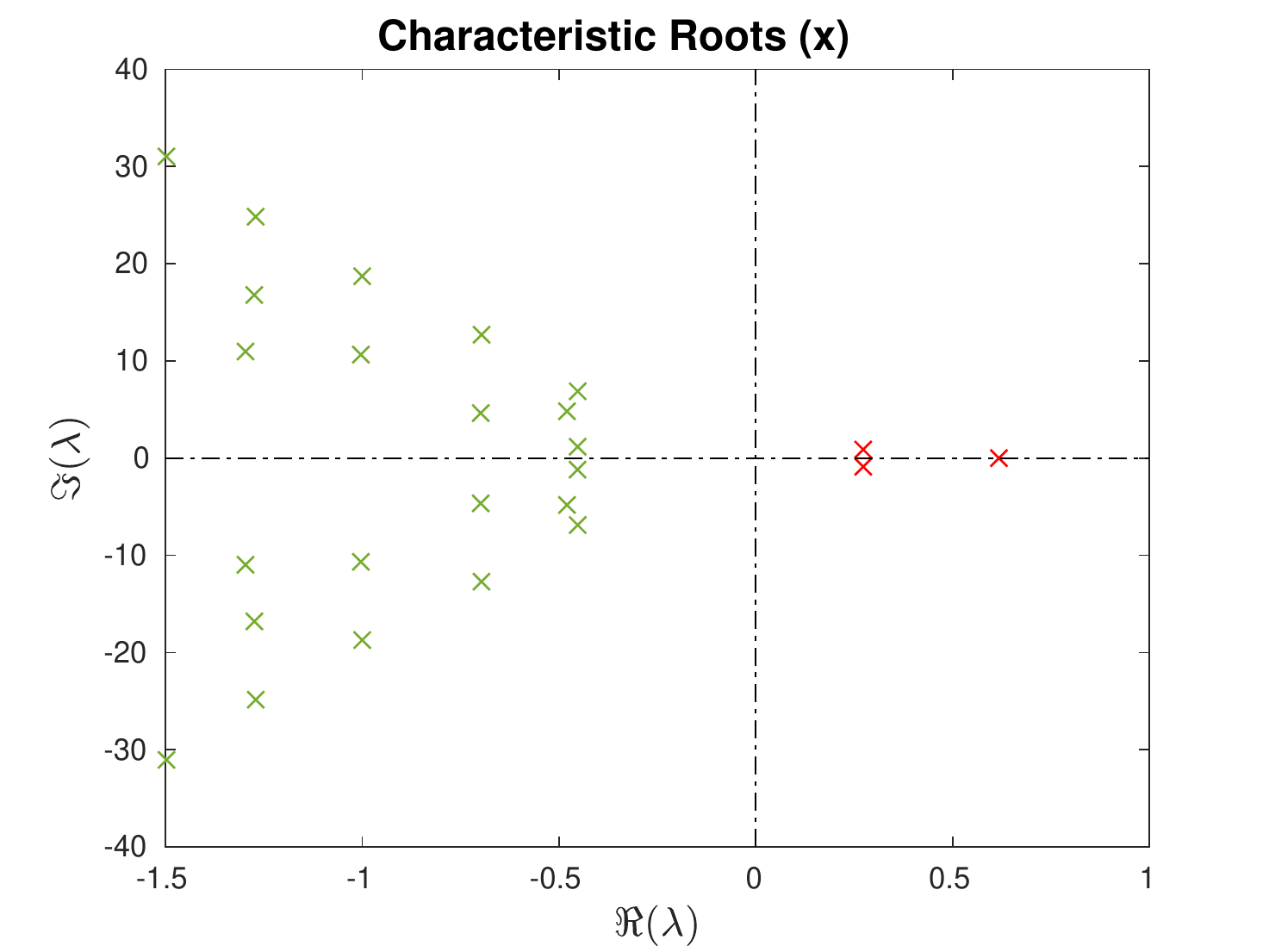}
    \caption{The characteristic roots of \eqref{eq:rdde1} in the region $[-1.5,1] \times \jmath [-40,40]$.}
    \label{fig:EV_ret}
\end{figure}

 Alternative, the function \matlabfun{tds_sa} can be used to directly compute the spectral abscissa.
 \begin{lstlisting}
>> tds_sa(rdde,-1.5)

ans =

0.6176
 \end{lstlisting}
 This function call will first compute all characteristic roots in the right half-plane $\{\lambda \in \C: \Re(\lambda)\geq -1.5\}$ and subsequently return the real part of the rightmost root(s).
\end{example}

To compute all characteristic roots of \eqref{eq:ol_ret} in a specified right half-plane, the function \matlabfun{tds_roots} implements the method from \cite{roots}. More specifically the following two-step approach is used. 
\begin{enumerate}
	\item Compute the eigenvalues of a generalized eigenvalue problem (GEP) of the form 
	\begin{equation}
	\label{eq:discrete_evp}
	\Sigma_N\,v  =  \lambda \Pi_N \,v
	\end{equation}
	with $\Sigma_N$ and $\Pi_N$ belonging to $\R^{(N+1)n\times (N+1)n}$. The matrices $\Sigma_N$ and $\Pi_N$ are constructed in such a way that eigenvalues of \eqref{eq:discrete_evp} form an approximation for the desired characteristic roots. More specifically, the GEP \eqref{eq:discrete_evp} corresponds to a spectral discretization of the infinitesimal generator of the solution operator underlying the delay differential equation. (For more details on spectral discretisation methods for computing the characteristic roots of time-delay systems, we refer the interested reader to \cite{breda2014}.) For a sufficiently high discretization degree (N), these eigenvalues form a good approximation for the characteristic roots of \eqref{eq:ol_ret} in the desired right half-plane. Note that the dimensions of the matrices $\Sigma_N$ and $\Pi_N$ depend on the parameter $N$, \ie the degree of the spectral discretisation. This $N$ is related to the quality of the spectral approximation: the larger $N$, the better individual characteristic roots are approximated and the larger the region in the complex plane in which the characteristic roots are well approximated. However, the larger $N$, the larger the size of the generalized eigenvalue problem and hence the longer the computation time.  An appealing feature of the method in \cite{roots} is that it provides an empirical method to automatically determine a good value for $N$ such that all characteristic roots in the considered right half-plane are sufficiently well captured while keeping the generalized eigenvalue problem as small as possible.
	\item  Improve these approximations using Newton's method.
\end{enumerate}
 For more information and an overview of the available options of \matlabfun{tds_roots}, we refer the interested reader to \Cref{sec:tds_roots}. Below we will discuss one important aspect of the \matlabfun{tds_roots}-function in more detail by means of an example. 

\begin{example}
	\label{example:warning}
	Consider again the differential equation from \eqref{eq:rdde1} but now we will compute the characteristic roots in the right half-plane $\{\lambda \in \C : \Re(\lambda) \geq -4.5 \}$.
	\begin{lstlisting}
cr = tds_roots(rdde,-4.5);
	\end{lstlisting}
	The function \matlabfun{tds_roots} now gives the following warning.
	\begin{lstlisting}[basicstyle=\ttfamily\color{red},language=,breakindent=0pt, breaklines]
Warning: (tds_roots): Size of the generalized EVP would exceed its maximum
value. Discretization around 0.000e+00 + 0*1j with N = 149 instead (size of
new eigenvalue problem: 600x600). As a consequence, not all characteristic
roots in the specified right half-plane might be found. To make sure that
all desired characteristic roots are found,either increase r (i.e., shift
the desired right half-plane to the right) or the option max_size_evp. For
more information consult the online documentation of this function.
	\end{lstlisting}
	\Cref{fig:rdde2} shows the computed characteristic roots. We see that some of the characteristic roots to the left of $-3.5$ are missing. This behavior can be understood as follows. As computing the eigenvalues of a large matrix (pencil) is time-consuming, \packageName{} will check that the dimensions of the GEP \eqref{eq:discrete_evp} does not become to large. More specifically, if $(N+1)n$ would exceed $\matlabfun{options.max_size_evp}$, the degree of the spectral discretisation is lowered such that for the new degree the inequality $(N+1)n \leq \matlabfun{options.max_size_evp}$ is satisfied. As a consequence, it is possible that certain characteristic roots in the specified right half-plane are no longer sufficiently well approximated by the eigenvalues of the GEP \eqref{eq:discrete_evp}. To resolve this warning, one can either move the chosen right half-plane to the right (which lowers the degree of the spectral discretization that is necessary to accurately capture all desired characteristic roots) or increase the option \matlabfun{max_size_evp} at the cost of an increase in the computation time. In most situations, the first approach is preferred as the rightmost eigenvalues have the largest effect on the asymptotic behavior of solutions of \eqref{eq:ol_ret}. To conclude this example, let us illustrate the second approach. To increase the maximal size of the generalized eigenvalue problem to 2000, we can use the function \matlabfun{tds_roots_options}.
	\begin{lstlisting}
options = tds_roots_options('max_size_evp',2000);
cr = tds_roots(rdde,-4.5,options);
	\end{lstlisting}
	In \Cref{fig:rdde3} we see that all characteristic roots in the desired right half-plane are now found. This comes however at a significant increase in the computation time. 
	\begin{figure}
		\centering
		\begin{subfigure}{0.49\linewidth}
			\includegraphics[width=\linewidth]{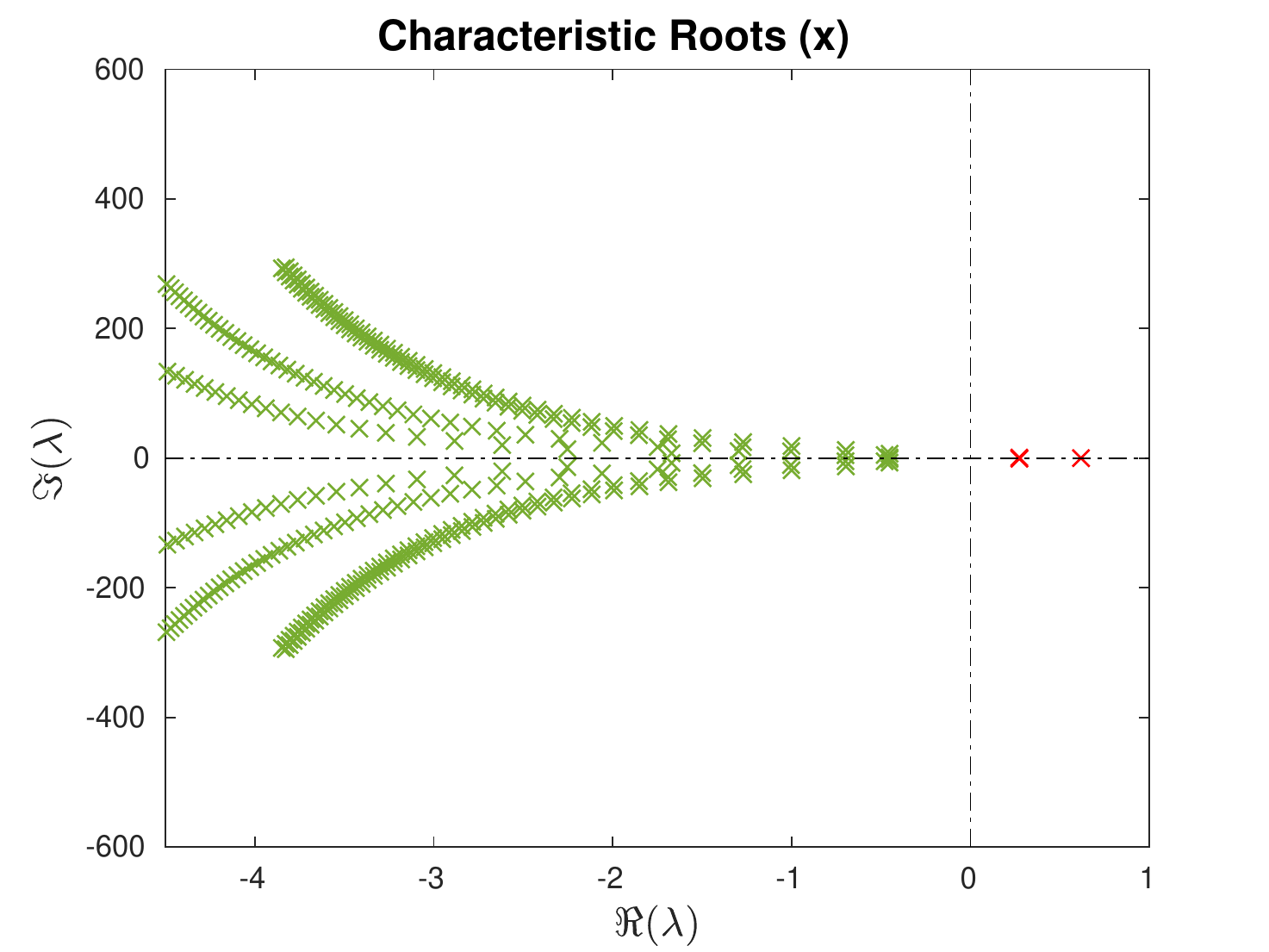}
			\caption{}
			\label{fig:rdde2}
		\end{subfigure}
		\begin{subfigure}{0.49\linewidth}
			\includegraphics[width=\linewidth]{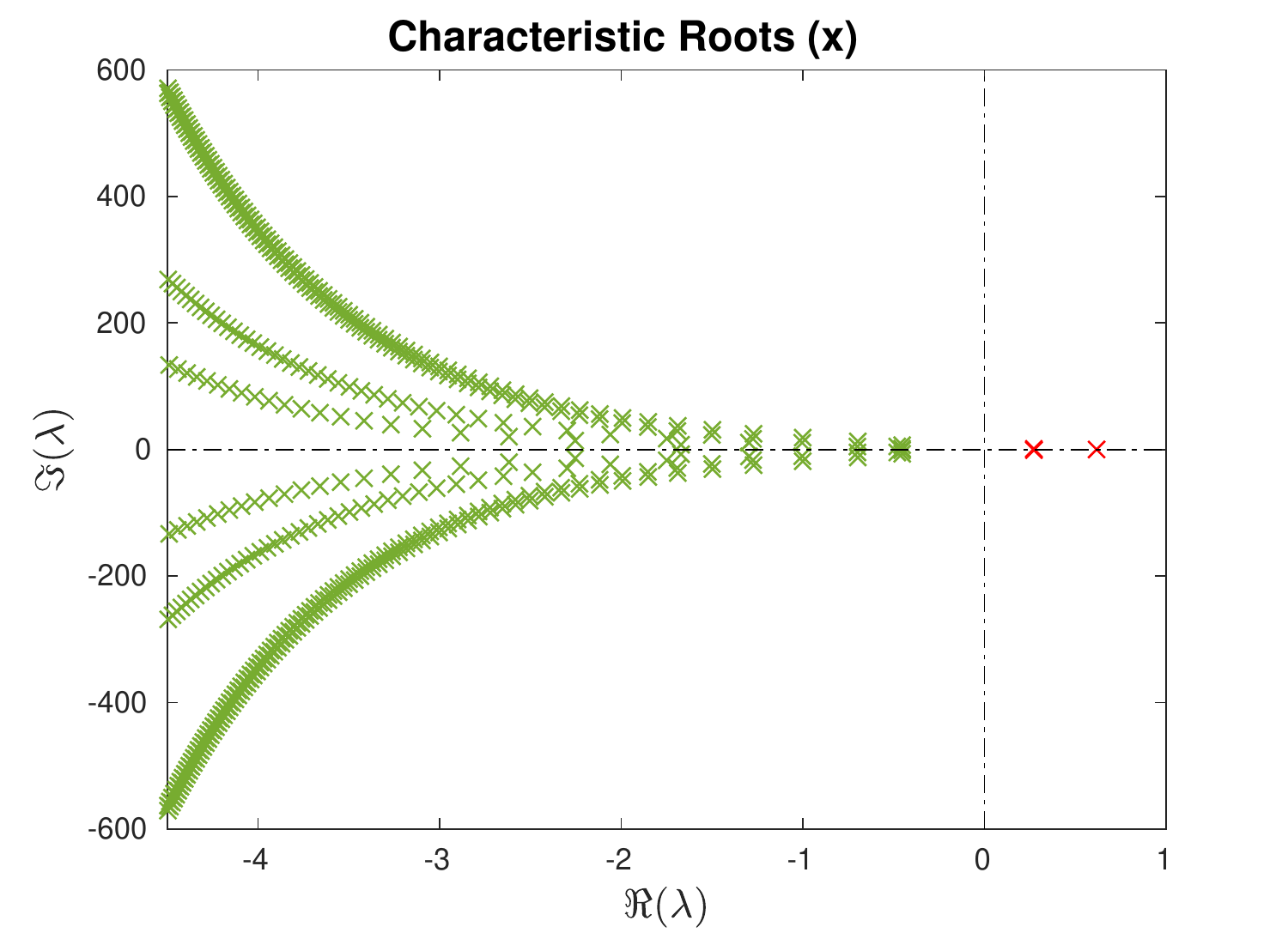}
			\caption{}
			\label{fig:rdde3}
		\end{subfigure}
		\caption{The characteristic roots of \eqref{eq:rdde1} in the right half-plane $\{\lambda \in \C : \Re(\lambda) \geq -4.5 \}$ obtained with the default options (left) and for \matlabfun{max_size_evp} increased to 2000 (right).}
		\label{fig:roots_warning}
	\end{figure}
\end{example}
\begin{remark}
Notice the scale difference between the real and imaginary axis in \Cref{fig:rdde3}. While the real part of the depicted characteristic roots is bounded to the interval $[-4.5,1]$, the imaginary part ranges from $-600$ to $600$. This observation can be related to the fact that the characteristic roots with large modulus  lie along a finite number of asymptotic curves which have an exponential trajectory \cite{bellman1963dde}. In light of the discussion above, it is therefore possible that the required number of discretization points rises (very) quickly and, as a consequence, the size of the generalized eigenvalue problem,  when $r$ is decreased.
\end{remark}

As mentioned before, the characteristic roots of \eqref{eq:ol_ret} corresponds to the zeros of the characteristic function at the left-hand side of \eqref{eq:RDEVP}. In some cases we are readily given this quasi-polynomial. In such a case we can use the function \matlabfun{tds_create_qp} to create a corresponding state-space representation of the form \eqref{eq:ol_ret}. Subsequently, we can again use \matlabfun{tds_roots} to compute the desired characteristic roots.
\begin{example}
	\label{example:cr_quasipolynomial}
Consider the following quasi-polynomial
\[
\lambda^2+\omega^2 - k\,e^{-\lambda 0.1}
\]
which corresponds to a second-order oscillator controlled by delayed position feedback. First we use \matlabfun{tds_create_qp} to create an equivalent state-space representation. 
\begin{lstlisting}
omega = 2; k = 3; tau = 0.1; 
qp = tds_create_qp([1 0 omega^2;0 0 -k],[0 tau]);
\end{lstlisting}
This function takes two input arguments. More specifically, the first argument should be a matrix defining the coefficients of the quasi-polynomial, while the second arguments gives the delays. For more details, we refer the interested reader to \Cref{subsec:quasi_polynomial}. Next, we inspect the resulting \matlabfun{tds_ss_retarded}-object in the \matlab{} command line.
\begin{lstlisting}
qp = 

LTI Retarded Delay Differential Equation with properties:

    A: {[2x2 double]  [2x2 double]}
   hA: [0 0.1000]
   mA: 2
    n: 2
\end{lstlisting}
We see that the obtained state-space representation is given by
\[
\begin{bmatrix}
\dot{x}_1(t) \\
\dot{x}_2(t)
\end{bmatrix} = \begin{bmatrix}
0 & 1 \\ -\omega^2 & 0
\end{bmatrix} \begin{bmatrix}
x_1(t) \\
x_2(t)
\end{bmatrix}
+
\begin{bmatrix}
0 & 0 \\ k & 0
\end{bmatrix}
\begin{bmatrix}
x_1(t-0.1) \\ x_2(t-0.1)
\end{bmatrix}.
\]
Finally, we use the function \matlabfun{tds_roots} to compute the characteristic roots with $\Re(\lambda)\geq -2$ and to check whether the considered system is exponentially stable. 
\begin{lstlisting}
z = tds_roots(qp,-2);
tds_eigenplot(z)
\end{lstlisting}
\end{example}
Next, we will consider neutral delay differential equations. We will see that the stability analysis of such systems is more involved than in the retarded case. For example, \Cref{prop:finite_number} is no longer true, meaning that a right half-plane might contain infinitely many characteristic roots. Furthermore, we will see that for NDDE with multiple delays stability might be sensitive to arbitrarily small perturbations on the delays.
\section{Delay differential equations of neutral type}
\label{sec:neutral_stability}
In this section we will consider LTI delay differential equations of neutral type with point-wise delays. Such systems can be written in the following standard form:
\begin{equation}
\label{eq:ol_neutral}
\dot{x}(t) = \sum_{k=1}^{m_A} A_k x(t-h_{A,k}) - \sum_{k=1}^{m_H} H_k \dot{x}(t-h_{H,k})
\end{equation}
in which $x(t) \in \C^{n}$  is the state variable at time $t$, the delays $h_{A,1},\dots,h_{A,m_A}$ are non-negative and the delays $h_{H,1},\dots,h_{H,m_H}$ are positive. The state matrices $A_1, \dots, A_{m_A}$ and $H_1,\dots, H_{m_H}$ belong to $\R^{n\times n}$. Compared to the RDDE in \eqref{eq:ol_ret}, the evolution of the state variable $x$ at time $t$ now also depends on its time-derivative at discrete instances in the past. 

As in the previous section, we can associate an initial value problem with \eqref{eq:ol_neutral} by considering function segments in the Banach space $X$. (Note that the maximal delay $\taum$ is now equal to $\max\{h_{A,1},\dots,h_{A,m_A},h_{H,1},\dots,h_{H,m_H}\}$.) Given such an initial function segment, it can be shown that \eqref{eq:ol_neutral} has a unique forward solution (assuming the initial function segment satisfies a certain differentiability condition). For more details we refer the interested reader to \cite[Chapter 1.2]{bookdelay} or \cite[Chapter 1.7]{hale1993}. 

Now, let us again focus on the exponential stability of (the null solution of) \eqref{eq:ol_neutral}. (The definition of this concept carries over from \Cref{def:exp_stability}.) As before, the associated characteristic equation can be obtained by plugging the sample solution $t\mapsto v e^{\lambda t}$ into the differential equation:
\begin{equation}
\label{eq:NDEVP}
\det\Bigg(\lambda \left(\mathrm{I}_n+\sum_{k=1}^{m_H} H_k e^{-\lambda h_{H,k}}\right) - \sum_{k=1}^{m_A} A_k e^{-\lambda h_{A,k}} \Bigg) = 0.
\end{equation}

The spectrum of \eqref{eq:ol_neutral} is thus given by:
\begin{multline*}
\Lambda(A_1,\dots,A_{m_A},H_{1},\dots,H_{m_H},\vec{\tau}) =\\ \textstyle \left\{\lambda\in\C: \det\left(\lambda \left(\mathrm{I}_n+\sum_{k=1}^{m_H} H_k e^{-\lambda h_{H,k}}\right) - \sum_{k=1}^{m_A} A_k e^{-\lambda h_{A,k}} \right) = 0 \right\}.
\end{multline*}

As illustrated below, \Cref{prop:finite_number} does not carry over from the retarded case, in the sense that a right half-plane might contain infinitely many charactersitic roots. 
\begin{example}
	\label{example:neutral1}
	Consider the following NDDE 
	\begin{equation}
	\label{eq:ex_neutral1}
	\dot{x}(t) 
	= \begin{bmatrix}
	-0.6 & -0.45 \\
	0.1 & -1.2
	\end{bmatrix} x(t) + \begin{bmatrix}
	-0.15 & 0.075\\
	0.225 & -0.75
	\end{bmatrix} x(t-1)
	-
	\begin{bmatrix}
	3 & -1.5 \\
	2.5 & -1
	\end{bmatrix}\dot{x}(t-1).
	\end{equation}
To represent this system, we use the function \verb|tds_create_neutral(H,hH,A,hA)|. Similarly as for \verb|tds_create|, the system matrices (\verb|H| and \verb|A|) should be passed as cell arrays, while the delays (\verb|hH| and \verb|hA|) should be passed as vectors. 
\begin{lstlisting}
H = [3 -1.5;2.5 -1];
A0 = [-0.6 -0.45;0.1 -1.2]; A1 = [-0.15 0.075;0.225 -0.75];
tau = 1; 
ndde = tds_create_neutral({H},[tau],{A0,A1},[0 tau]);
\end{lstlisting}
The output argument \matlabfun{ndde} is now a \matlabfun{tds_ss_neutral} object. We can again use the \matlab{} Command Window to inspect this object. 
\begin{lstlisting}
ndde = 

LTI Neutral Delay Differential Equation with properties:

    H: {[2x2 double]}
   hH: 1
   mH: 1
    A: {[2x2 double]  [2x2 double]}
   hA: [0 1]
   mA: 2
    n: 2
\end{lstlisting}
Next, we compute the characteristic roots of \eqref{example:neutral1} in the rectangular region $[-3,1]\times\jmath[-60,60]$ using the function \matlabfun{tds_roots}. As before, this function takes as first argument the \matlabfun{tds_ss}-object from which we want to compute the characteristic roots. The second argument is however no longer a scalar indicating the desired right half-plane but rather a vector of length 4 containing \matlabfun{[realmin realmax imagmin imagmax]}. 
\begin{lstlisting}
[crn] = tds_roots(ndde,[-3 1 -60 60]);
tds_eigenplot(crn);
xlim([-3 1])
ylim([-60,60])
\end{lstlisting}
\Cref{fig:neutral1_roots} shows the result. Notice that the spectrum now contains two vertical chains, \ie, sets of characteristic roots whose real parts remain bounded, yet whose imaginary parts tend to infinity.
\begin{figure}[!ht]
	\centering
	\includegraphics[width=0.6\linewidth]{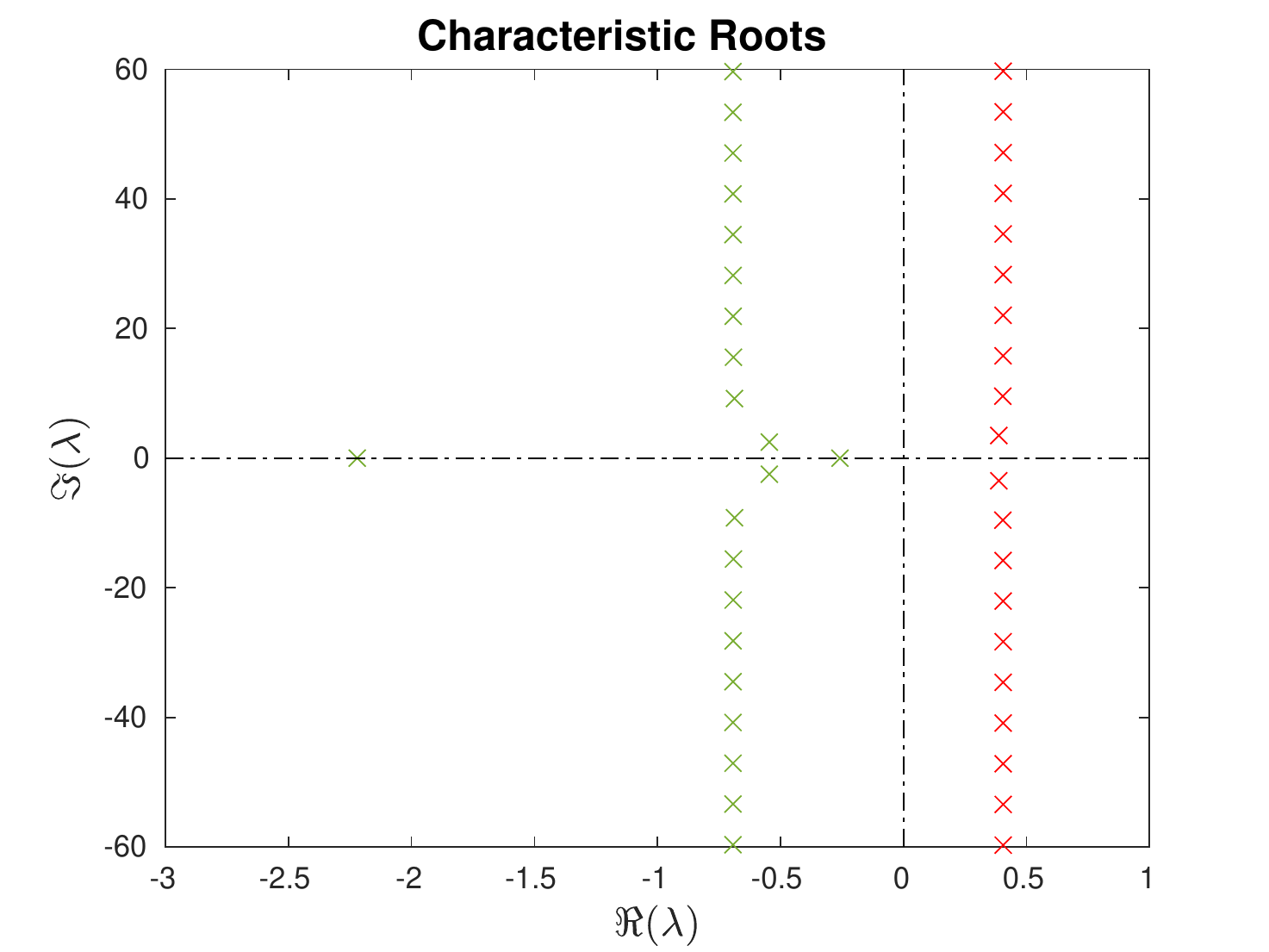}
	\caption{The characteristic roots of \eqref{eq:ex_neutral1} in the rectangular region $[-3,1]\times\jmath[-60,60]$.}
	\label{fig:neutral1_roots}
\end{figure}
\end{example}

As a consequence, the definition of the spectral abscissa must be modified as the maximum in \eqref{eq:sa_retarded} might not be attained.
\begin{definition}
\label{def:spectral_abscissa_neutral}
The spectral abscissa of \eqref{eq:ol_neutral} is equal to the supremum of the real part of its characteristic roots, or in other words,
\begin{equation*}
\sa(A_{1},\dots,A_{m_A},H_{1},\dots,H_{m_H},\vec{\tau}) =  \sup \big\{\Re(\lambda) : \lambda \in \Lambda(A_{1},\dots,A_{m_A},H_{1},\dots,H_{m_H},\vec{\tau}) \big\}.
\end{equation*}

\end{definition}
As for ODEs and RDDEs, the spectral abscissa can be used to asses the (global) exponential stability of \eqref{eq:ol_neutral}. More specifically, the null solution of \eqref{eq:ol_neutral} is exponentially stable if and only if its spectral abscissa is strictly negative. However, where for ODEs and RDDEs, the null solution is exponentially stable if and only if all characteristic roots lie in the open left half-plane, this last condition is not sufficient for NDDEs. There exist neutral time-delay systems for which all characteristic roots lie to the left of the imaginary axis, yet whose null solution is not exponentially stable \cite{verriest2006}. It can however be shown that a NDDE is exponentially stable if (and only if) its characteristic roots lie in the open left half-plane bounded away from the imaginary axis, \ie the spectrum can not contain a vertical chain of characteristic roots approaching the imaginary axis (from the left).

It should be clear that the stability analysis of a NDDE is more complex than for a RDDE. We will therefore first consider the single delay case. Afterwards, we will consider neutral systems with multiple delays and focus on the effect of infinitesimal delay perturbations.

\subsection{Single delay}
Let us first consider the case in which the NDDE contains only one unique delay. In this case \eqref{eq:ol_neutral}
reduces to
\begin{equation}
\label{eq:neutral_1delay}
\dot{x}(t) + H_1 \dot{x}(t-\tau) = A_0 x(t) + A_1 x(t-\tau)
\end{equation}
with $\tau$ a positive delay. As we will see below, the following associated delay difference equation
\begin{equation}
\label{eq:neutral_diff_1d}
x(t) + H_1 x(t-\tau) = 0.
\end{equation}
and the corresponding characteristic equation
\begin{equation*}
\label{eq:neutral_diff_evp_1d}
\det\left(\mathrm{I}_n + H_1 e^{-\lambda \tau}\right) = 0
\end{equation*}
play an imortant role in the stability analysis of \eqref{eq:neutral_1delay}. 
\begin{example}
\label{example:neutral1b}
Consider again the NDDE \eqref{eq:ex_neutral1}. The associated delay difference equation of the form \eqref{eq:neutral_diff_1d} is given by
\begin{equation}
\label{eq:difference1}
x(t) 
+
\begin{bmatrix}
3 & -1.5 \\
2.5 & -1
\end{bmatrix}x(t-1) = 0.
\end{equation}
We use the function \matlabfun{get_delay_difference_equation} to obtain a representation for this delay difference equation from the \matlabfun{ndde}-variable that we created before.
\begin{lstlisting}
delay_diff = get_delay_difference_equation(ndde);
\end{lstlisting}
The result is now a \matlabfun{tds_ss_ddae}-object (see below)
\begin{lstlisting}
delay_diff = 

LTI Delay Difference Equation with properties:

    E: [2x2 double]
    A: {[2x2 double]  [2x2 double]}
   hA: [0 1]
   mA: 2
    n: 2
\end{lstlisting}
with
\[
\texttt{E} = \begin{bmatrix}
0 & 0 \\ 0 & 0
\end{bmatrix}\text{, } \texttt{A{1}} = \begin{bmatrix}
1 & 0 \\
0 & 1
\end{bmatrix} \text{, and }  \texttt{A{2}} = \begin{bmatrix}
3 & -1.5 \\
2.5 & -1
\end{bmatrix}
\]
Note that the first term in \eqref{eq:difference1}, the term associated with $x(t)$, is explicitly represented in \matlabfun{delay_diff} via \verb|A{1}|. Next we analytically compute the characteristic roots of \eqref{eq:difference1}. To this end note that
\begin{align*}
\det\left(\mathrm{I}_{n} + H_1 e^{-\lambda} \right) &= \det\left(\mathrm{I}_n + Z V Z^{H} e^{-\lambda \tau}\right) \\
&= \det(Z) \det\left(\mathrm{I}_n + V e^{-\lambda \tau}\right) \det\left(Z^{H}\right),
\end{align*}
with $Z V Z^{H}$ the Schur decomposition of $H_1$, \ie $Z$ is an orthogonal matrix and $V$ is an upper-triangular matrix whose diagonal elements are given by the eigenvalues of $H_1$ (that, in this case, are equal to 1.5 and 0.5). The characteristic equations of \eqref{eq:difference1} can thus be reduced to 
\[
\left(1+1.5 e^{-\lambda h}\right)\left(1+0.5 e^{-\lambda h}\right) = 0.
\]
meaning that the corresponding characteristic roots are given by
\begin{equation}
\label{eq:difference_roots}
\frac{1}{\tau} \big(\ln{1.5}+\jmath (2l+1)\pi \big) \text{ and } \frac{1}{\tau} \big(\ln{0.5}+\jmath (2l+1)\pi \big) \text{ for }l\in \Z
\end{equation}
with $\ln 1.5 \approx 0.405$ and $\ln 0.5 \approx -0.693$.
Let us now verify this result by computing the characteristic roots of \eqref{eq:difference1} using \packageName{}.

\begin{lstlisting}
[crd] = tds_roots(delay_diff,[-3 1 -60 60]);
tds_eigenplot(crd);
xlim([-3 1])
ylim([-60,60])
\end{lstlisting}
\Cref{fig:neutral1_difference} shows that the computed characteristic roots indeed correspond to \eqref{eq:difference_roots}. Furthermore notice that the location of vertical chains in \Cref{fig:neutral1_roots} correspond to those in \Cref{fig:neutral1_difference}.   

\begin{figure}
    \centering
    \includegraphics[width=0.6\linewidth]{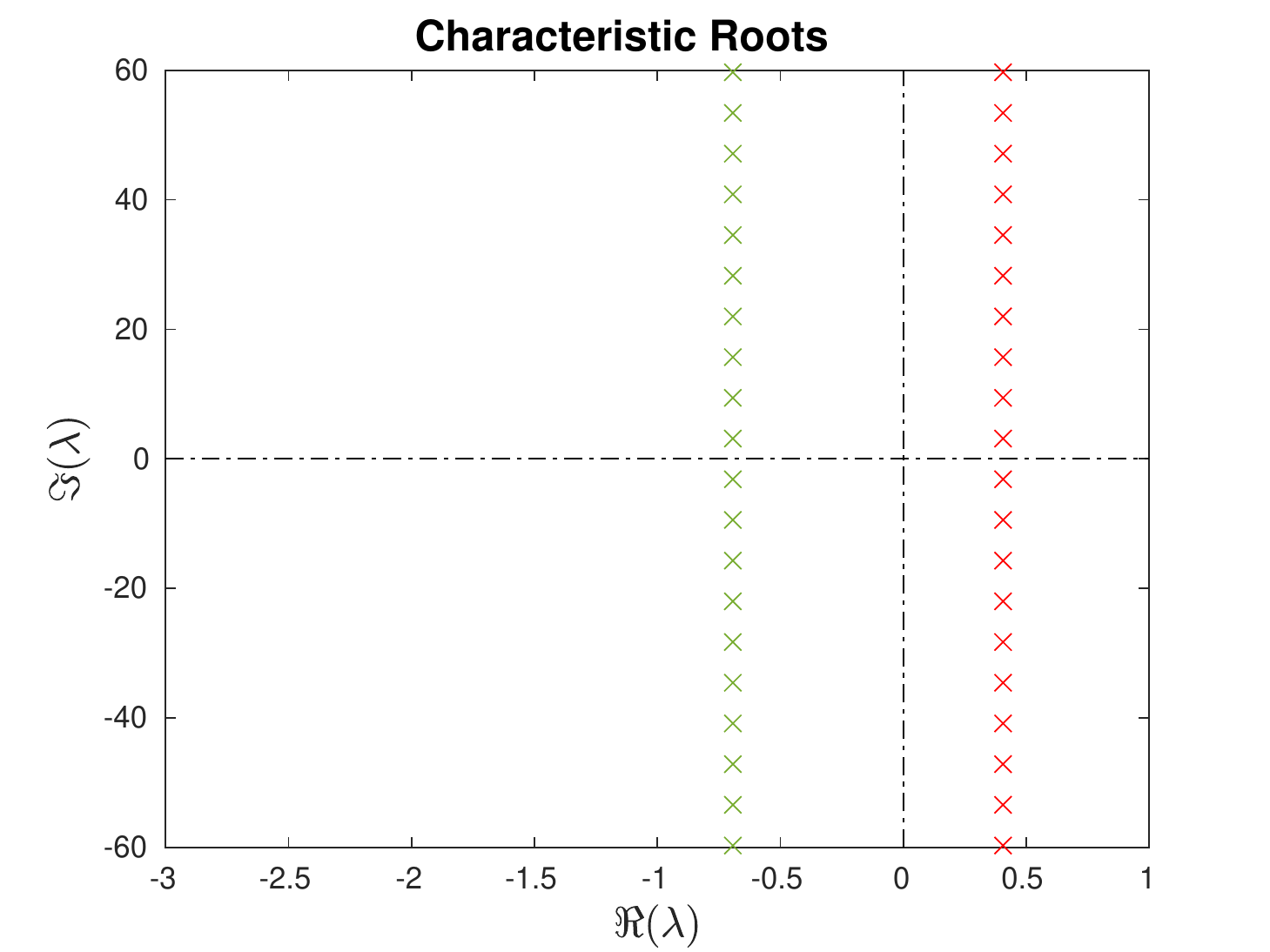}
    \caption{The characteristic roots of \eqref{eq:difference1} in the region $[-3,1]\times\jmath[-60,60]$.}
    \label{fig:neutral1_difference}
\end{figure}
\end{example}

This relation between the characteristic roots of a NDDE and the characteristic roots of its associated delay difference equation, given by
\begin{equation}
\label{eq:neutral_difference}
x(t) + \sum_{k=1}^{m_H} H_k x(t-h_{H,k}) = 0,
\end{equation}
can be formalized as in the following proposition.
\begin{proposition}
\label{prop:neutral_chain}
Consider the set
\begin{equation*}
\textstyle Z_D := \left\{\Re(\lambda) : \det\big(\mathrm{I}_n + \sum_{k=1}^{m_H} H_{k} e^{-\lambda h_{H,k}} \big) = 0 \right\},
\end{equation*}
then for any $\zeta\in \clos(Z_D)$ there exists a sequence of characteristic roots of the NDDE \eqref{eq:ol_neutral}, $\{\lambda_l\}_{l\geq1}$, that satisfies
\begin{equation}
\label{eq:ev_chain}
\lim_{l\rightarrow \infty} \Re(\lambda_l) = \zeta \text{ and } \lim_{l \rightarrow \infty} |\Im(\lambda_l)| = \infty.
\end{equation}
\end{proposition}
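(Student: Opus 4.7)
The strategy is to relate the characteristic function of \eqref{eq:ol_neutral} to that of its associated delay difference equation along vertical strips of bounded real part, and then to exploit the almost-periodic structure of the difference characteristic function along vertical lines. Denote the neutral characteristic function by $F(\lambda) := \det\bigl(\lambda M(\lambda) - N(\lambda)\bigr)$ with
\[
M(\lambda) := \mathrm{I}_n + \sum_{k=1}^{m_H} H_k e^{-\lambda h_{H,k}}, \qquad N(\lambda) := \sum_{k=1}^{m_A} A_k e^{-\lambda h_{A,k}}.
\]
On any vertical strip $\{\alpha \le \Re \lambda \le \beta\}$ both $M$ and $N$ are uniformly bounded, so expanding the determinant column by column gives $F(\lambda) = \lambda^n \det M(\lambda) + O(|\lambda|^{n-1})$ as $|\Im \lambda| \to \infty$ inside the strip. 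Hence the zeros of $F$ with large modulus in such a strip should be close to zeros of $g(\lambda) := \det M(\lambda)$, which is precisely the characteristic function of the associated delay difference equation \eqref{eq:neutral_difference}.

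The key intermediate step is to produce, for each $\zeta \in \clos(Z_D)$, a sequence of zeros $\{\mu_l\}$ of $g$ with $\Re(\mu_l) \to \zeta$ and $|\Im(\mu_l)| \to \infty$. First I would handle the case $\zeta \in Z_D$: fix a zero $\mu_{\star}$ of $g$ with $\Re \mu_{\star} = \zeta$, and note that along the vertical line $\Re \lambda = \zeta$ the vector $(e^{-\lambda h_{H,1}}, \dots, e^{-\lambda h_{H,m_H}})$ traces out a Bohr almost-periodic curve on a torus. Kronecker's simultaneous approximation theorem in the rationally independent case, and plain periodicity in the commensurate case, then provide translations $t_l \to \infty$ for which $g(\mu_{\star} + \jmath t_l + \cdot)$ converges uniformly on compact sets to $g(\mu_{\star} + \cdot)$. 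A Rouché argument on a small disc around $\mu_{\star} + \jmath t_l$ yields an actual zero $\mu_l$ of $g$ inside that disc, and a diagonal extraction gives the desired chain. For $\zeta \in \clos(Z_D)\setminus Z_D$ one approximates $\zeta$ by a sequence in $Z_D$ and applies a further diagonal extraction to the chains already constructed.

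Finally, around each $\mu_l$ I would apply Rouché a second time, now comparing $\lambda^{-n} F(\lambda)$ with $g(\lambda)$ on a disc of sufficiently small radius centred at $\mu_l$. The perturbation is of size $O(|\mu_l|^{-1})$ while $|g|$ is uniformly bounded below on the boundary of any sufficiently small disc around one of its isolated zeros, so for $l$ large enough the two functions have the same number of zeros inside the disc; this produces a zero $\lambda_l$ of $F$ close to $\mu_l$. Since the radius can be chosen to shrink with $l$, the resulting sequence $\{\lambda_l\}$ satisfies \eqref{eq:ev_chain}.

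The main obstacle is the almost-periodicity step: showing that the zero set of $g$ genuinely accumulates, in real part, on the entirety of $\clos(Z_D)$ along sequences escaping to imaginary infinity. In the commensurate-delay case this is immediate from the periodicity of $g$ after rescaling, but in the fully incommensurate case it requires Kronecker's theorem together with the local continuity of isolated zeros of $g$ under small perturbations of its exponential coefficients, for which I would appeal to \cite{hale1993,bookdelay}. Handling higher-order zeros of $g$ and justifying the uniform lower bound for $|g|$ along the chosen chain of discs are the main technicalities to verify carefully.
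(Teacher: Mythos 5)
Your proposal is correct and follows essentially the same route as the paper, which does not give a self-contained argument but defers to \cite[Proposition~1.28]{bookdelay} and only sketches the heuristic that for $|\lambda|\gg\Re(\lambda)$ the neutral characteristic equation collapses onto that of the delay difference equation; your factorization $F(\lambda)=\lambda^{n}\det M(\lambda)+O(|\lambda|^{n-1})$ on vertical strips is the rigorous form of exactly that reduction. The two ingredients you flag as technicalities are indeed the whole content of the cited proof and both go through: recurrence of $t\mapsto(e^{-\jmath t h_{H,1}},\dots,e^{-\jmath t h_{H,m_H}})$ to the identity on the closure of its orbit in the torus holds without any commensurability assumption, and the uniform lower bound for $|\det M|$ on the boundary circles transfers from a fixed disc around $\mu_{\star}$ via the same almost-periodic approximation, after which both Rouch\'e steps are routine.
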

\begin{proof}
	See \cite[Proposition~1.28]{bookdelay}. Intuitively this result can be understood as follows. For $\lambda \neq 0$, the characteristic equation \eqref{eq:NDEVP} is equivalent with
	\[
	\det \left(\left(I + \sum_{k=1}^{m_H} H_k e^{-\lambda h_{H,k}} \right)- \dfrac{1}{\lambda} \left( \sum_{k=1}^{m_A} A_k e^{-\lambda h_{A,k}} \right)\right) = 0
	\]
	If $|\lambda| \gg \Re(\lambda)$, the equation above reduces to 
	\begin{equation}
	\label{eq:neutral_diff_evp}
	\det \left(\mathrm{I}_{n} + \sum\limits_{k=1}^{m_H} H_k e^{-\lambda h_{H,k}} \right) = 0.
	\end{equation} The characteristic roots of \eqref{eq:ol_neutral} with large modulus but small real part, must therefore lie close to the characteristic roots of \eqref{eq:neutral_diff_evp}. 
\end{proof}
In conclusion, in contrast to the RDDE case, the spectrum of a NDDE contains vertical chains whose location is determined by the spectrum of its associated delay difference equation. Next, we will focus on NDDE with multiple delays. We will see that for such systems the stability analysis is even more involved.
\subsection{Multiple delays}
\label{subsec:neutral_multiple_delays}
Next, we will consider NDDEs with multiple delays. In the following example we will see that the exponential stability of such systems might be fragile with respect to delay perturbations, in the sense that a NDDE with a strictly negative spectral abscissa can be destabilized by an arbitrarily small perturbation on the delays. 
\begin{example}
\label{example:neutral_2}
Consider the following NDDE
\begin{equation}
\label{eq:neutral_example2}
\dot{x}(t) = \frac{1}{4} x(t) - \frac{1}{3} x(t-\tau_1) + \frac{3}{4} \dot{x}(t-\tau_1) - \frac{1}{2} \dot{x}(t-\tau_2)
\end{equation}
with $\tau_1 = 1$ and $\tau_2 = 2$. The associated delay difference equation is by
\begin{equation}
\label{eq:difference_example2}
x(t)-\frac{3}{4}x(t-\tau_1) + \frac{1}{2}x(t-\tau_2) = 0.
\end{equation}
which (for $\tau_1 = 1$ and $\tau_2 = 2$) has roots at
\[
-\left(\ln\sqrt{2} \pm \jmath \left( \arctan\left(\sqrt{23}/3\right) + 2\pi l \right) \right) \text{ for } l\in \Z
\]
with $\ln\sqrt{2} \approx -0.34$ (for more details see \cite[Example 1.19]{bookdelay}\footnote{Note there is a $\sqrt{\cdot}$ missing in Eq. (1.43).}). Let us now use \packageName{} to compute the characteristic roots of \eqref{eq:neutral_example2}. As for the single delay case, we use the function \matlabfun{tds_create_neutral} to create a representation for \eqref{eq:neutral_example2}. Next, we use \matlabfun{tds_roots} to compute the characteristic roots in the rectangle $[-0.9,0.2]\times\jmath[-500,500]$. As we consider a large region in the complex plane, we increase the \matlabfun{max_size_evp} to 1500.
\begin{lstlisting}
ndde = tds_create_neutral({-3/4,1/2},[1 2],{1/4,-1/3},[0 1]);

cD = -log(sqrt(2));

options=tds_roots_options('max_size_evp',1500);
cr = tds_roots(ndde,[-0.9 0.2 -500 500],options);
tds_eigenplot(cr,'Markersize',4);
hold on
plot([cD cD],ylim,'b--')
xlim([-0.9 0.2])
ylim([-500,500])
\end{lstlisting}
\Cref{fig:neutral2_roots} shows the obtained characteristic roots which includes a sequence of the form \eqref{eq:ev_chain} with $\zeta=-\ln\sqrt{2}$. Because the spectral abscissa is strictly negative, we conclude that \eqref{eq:neutral_example2} is exponentially stable for $\tau_1 = 1$ and $\tau_2=2$. 
\begin{figure}[!h]
    \centering
    \includegraphics[width=0.6\linewidth]{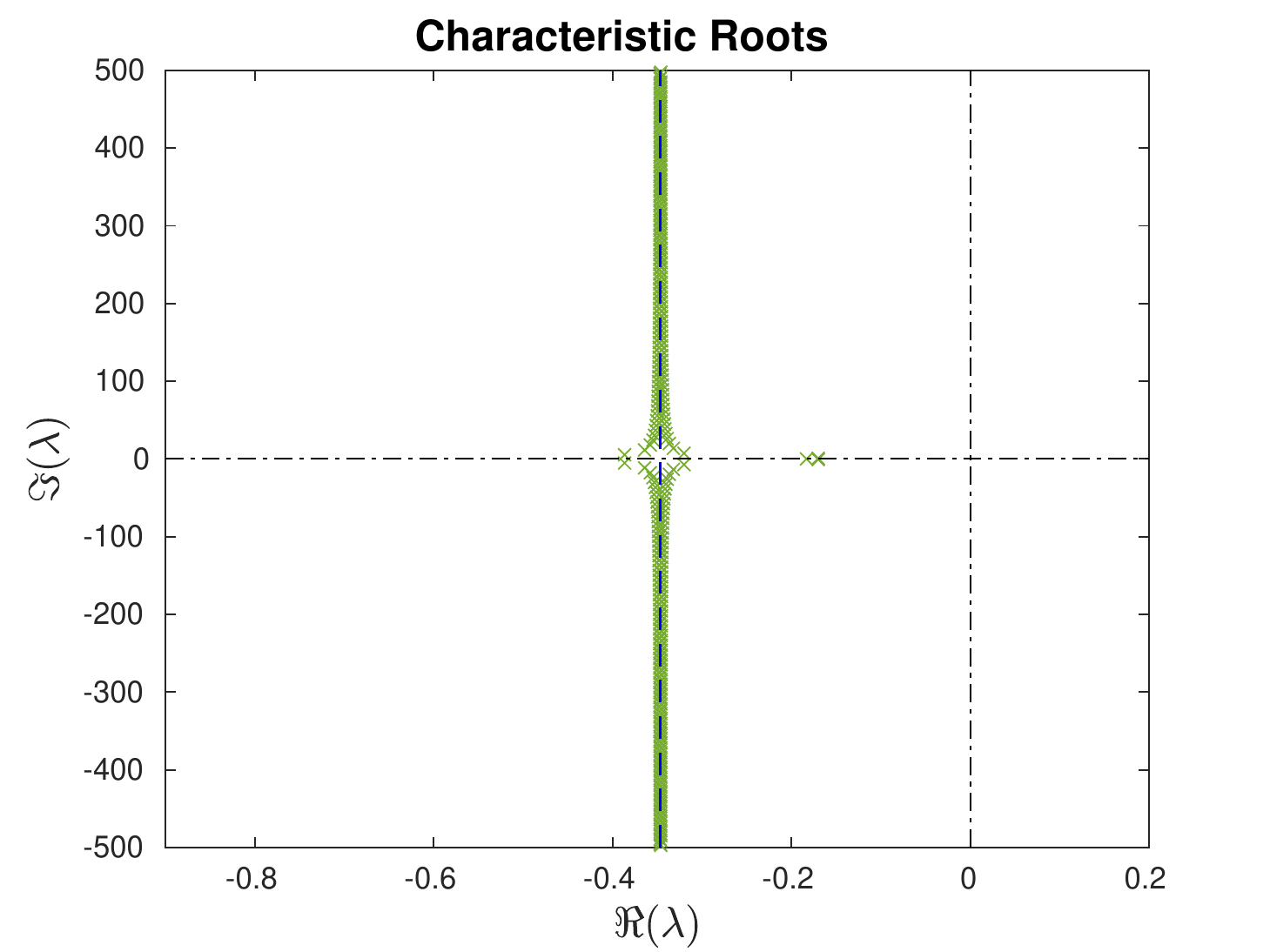}
    \caption{Characteristic roots of \eqref{eq:neutral_example2} for $\tau_1=1$ and $\tau_2 = 2$ in the region $[-0.9,0.2]\times\jmath[-500,500]$.}
    \label{fig:neutral2_roots}
\end{figure}
Let us now examine the effect of a small perturbation to the second delay, by taking \eg $\tau_2=2.05$. 
\begin{lstlisting}
ndde.hH(2) = 2.05; % Change second delay

cr2 = tds_roots(ndde,[-0.9 0.2 -500 500],options);
tds_eigenplot(cr2,'Markersize',4);
hold on
plot([cD cD],ylim,'b--')
xlim([-0.9 0.2])
ylim([-500,500])
\end{lstlisting}
Taking a look at \Cref{fig:neutral2_roots2}, we see that the spectrum has changed significantly. Furthermore, as the spectral abscissa is no longer negative, \eqref{eq:neutral_example2} is no longer exponentially stable. 
\begin{figure}[!h]
    \centering
    \includegraphics[width=0.6\linewidth]{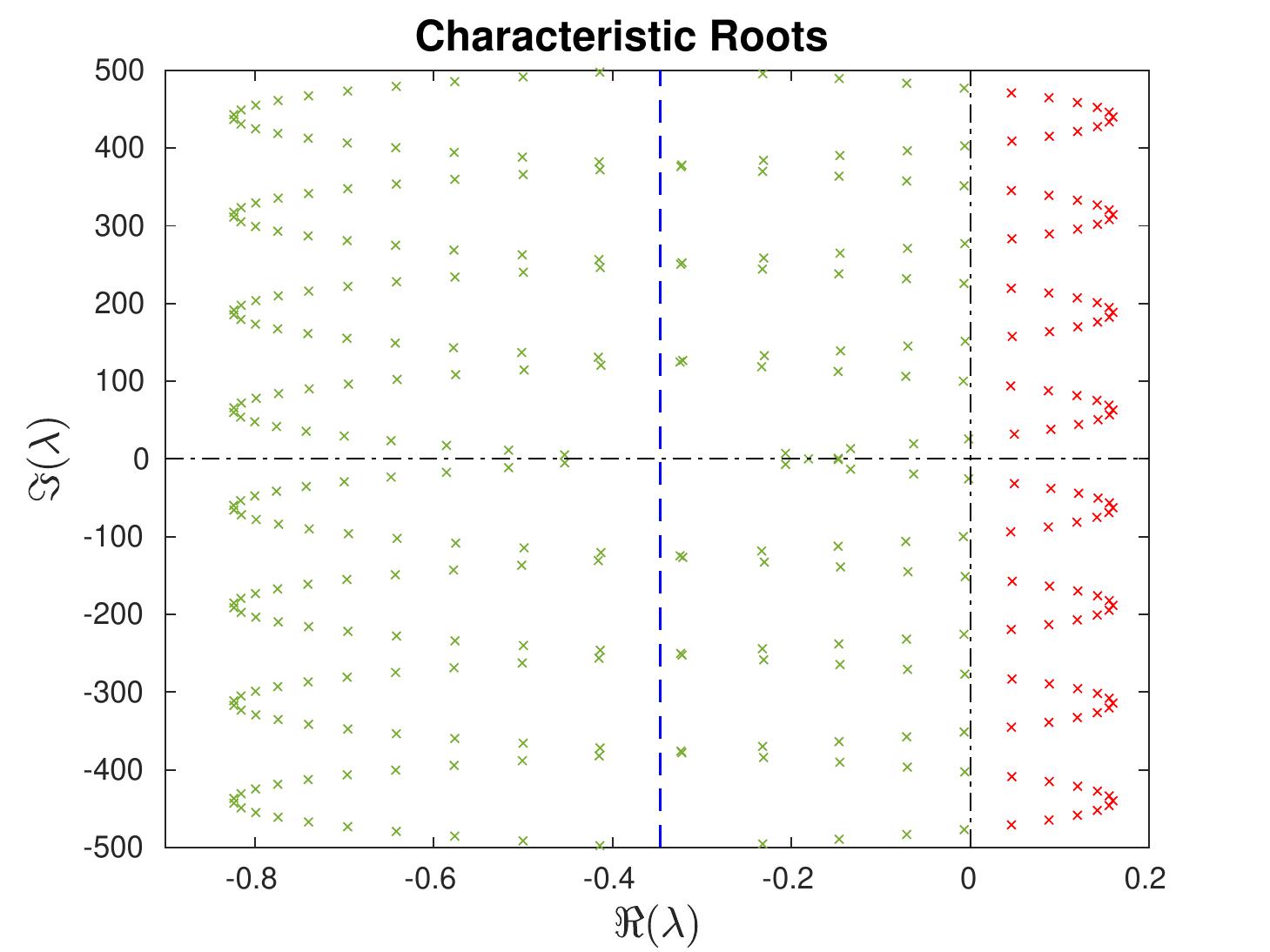}
    \caption{Characteristic roots of \eqref{eq:neutral_example2} for $\tau_1=1$ and $\tau_2 = 2.05$ in the region $[-0.9,0.2]\times\jmath[-500,500]$.}
    \label{fig:neutral2_roots2}
\end{figure}

Even if we further decrease the size of the delay perturbation, by taking \eg $\tau_2=2.005$, the system remains unstable as demonstrated in \Cref{fig:neutral2_roots3} which can be generated using the code below. 
\begin{lstlisting}
ndde.hH(2) = 2.005; % Change second delay

cr3 = tds_roots(ndde,[-0.9 0.2 -500 500],options);
tds_eigenplot(cr3,'Markersize',4);
hold on
plot([cD cD],ylim,'b--')
xlim([-0.9 0.2])
ylim([-500,500])
\end{lstlisting}
\begin{figure}[!h]
    \centering
    \begin{tikzpicture}[      
        every node/.style={anchor=south west,inner sep=0pt},
        x=1mm, y=1mm,
      ]   
     \node (fig1) at (0,0)
       { \includegraphics[width=0.6\linewidth]{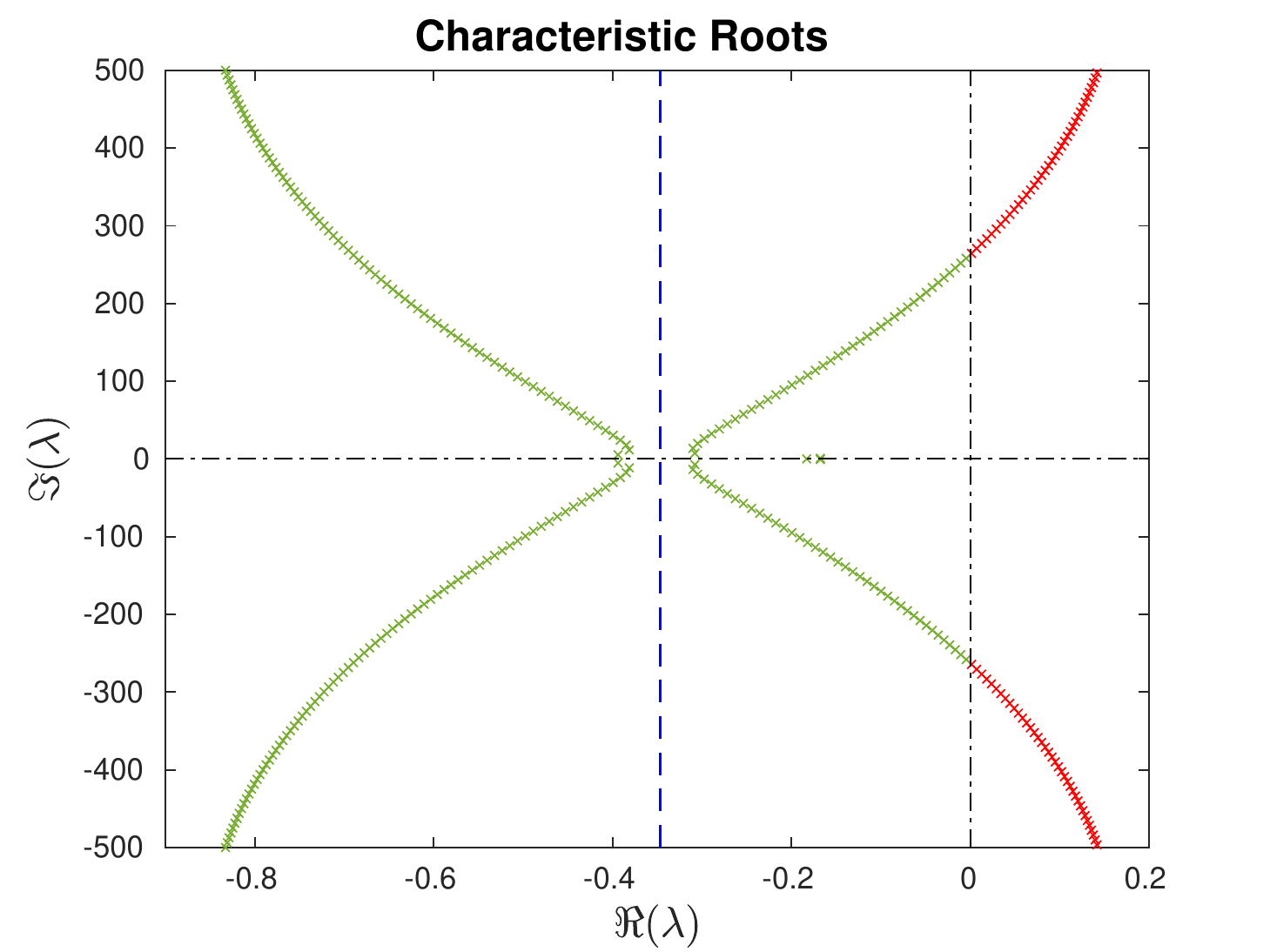}};
    \node[] (tt) at (82,60) {};
    \node[] (bt) at (82,49) {};
    \node[] (tb) at (82,17) {};
    \node[] (bb) at (82,6) {};
    \draw[-stealth,ultra thick,red] (bt) -- (tt);
    \draw[-stealth,ultra thick,red] (tb) -- (bb);
\end{tikzpicture}
    \caption{Characteristic roots of \eqref{eq:neutral_example2} for $\tau_1=1$ and $\tau_2 = 2.005$ in the region $[-0.9,0.2]\times\jmath[-500,500]$.}
    \label{fig:neutral2_roots3}
\end{figure}
Note that the destabilization mechanism that we observed above can be related to a similar phenomenon for the underlying delay difference equation (see \Cref{fig:neutral2_difference3}).
\pagebreak
\begin{lstlisting}
delay_diff = get_delay_difference_equation(ndde);

cr4 = tds_roots(delay_diff,[-0.9 0.2 -500 500],options);
tds_eigenplot(cr4,'Markersize',4);
hold on
plot([cD cD],ylim,'b--')
xlim([-0.9 0.2])
ylim([-500,500])
\end{lstlisting}
\begin{figure}[!h]
	\centering
	\includegraphics[width=0.6\linewidth]{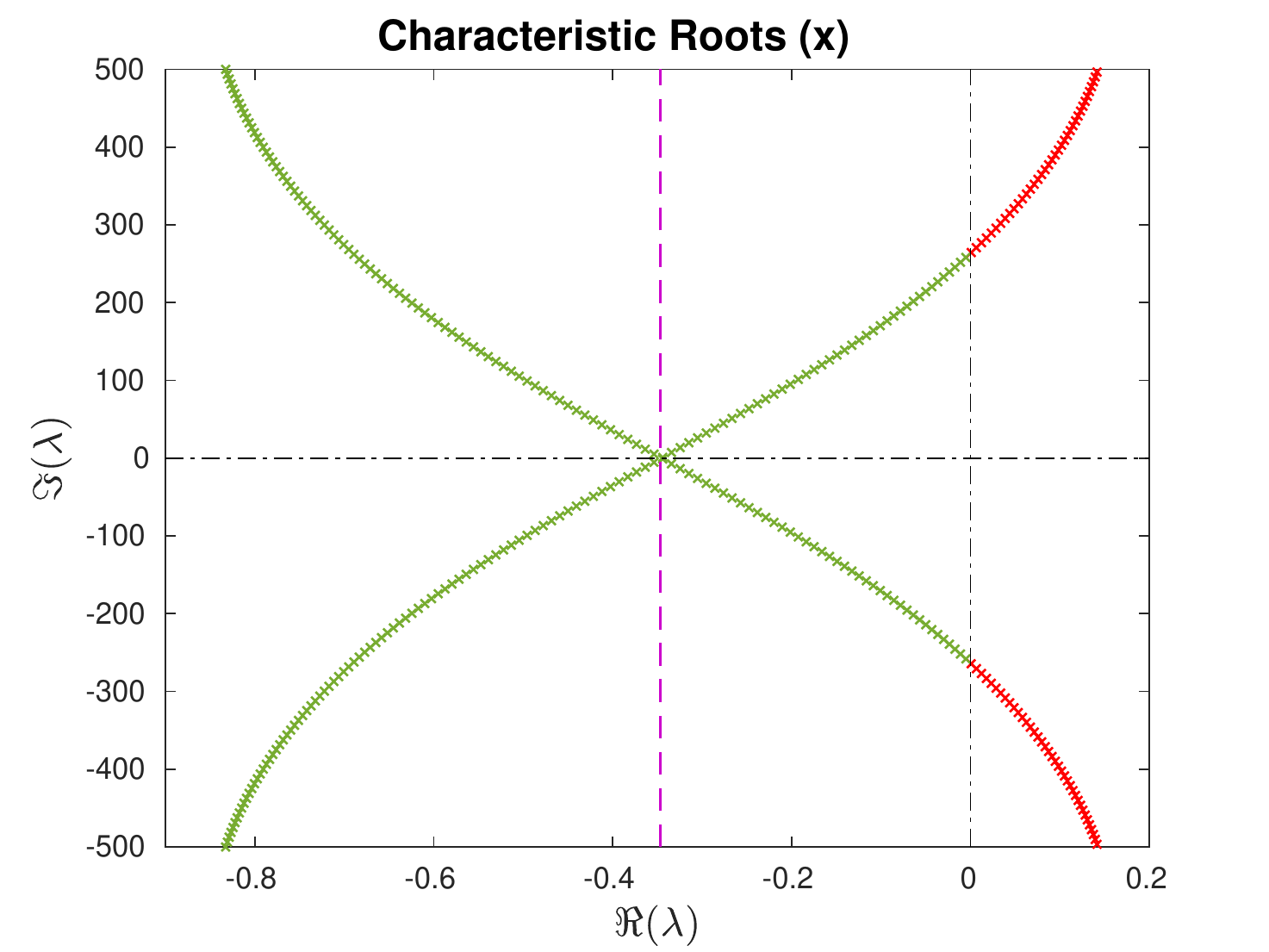}
	\caption{Characteristic roots of \eqref{eq:difference_example2} for $\tau_1=1$ and $\tau_2 = 2.005$ in the region $[-0.9,0.2]\times\jmath[-500,500]$.}
	\label{fig:neutral2_difference3}
\end{figure}
\end{example}
\begin{remark}
	\label{remark:sensitivity}
	It can be shown that the NDDE \eqref{eq:neutral_example2} is unstable for certain delay values arbitrary close to $\tau_2 = 2$. Note that this observation is not in contradiction with the continuous variation of the individual characteristic roots with respect to changes on both the entries of the system matrices and the delay values. If follows from \Cref{fig:neutral2_roots2,fig:neutral2_roots3} that the larger the frequency (\ie the absolute value of the imaginary part) of a characteristic root, the larger is its sensitivity (\ie its derivative) with respect to delay changes. In fact, this derivative grows unbounded along the characteristic root chain.  If the size of the delay perturbation is decreased, the characteristic roots that shift to the right half-plane must be more sensitive and thus have a larger imaginary part (in absolute value). If the size of the delay perturbation goes to zero, the imaginary part of the destabilizing roots must go to infinity (as indicated by the arrows in \Cref{fig:neutral2_roots3}).
\end{remark}

The exponential stability of a NDDE might thus be fragile with respect to the delays, in the sense that there might exist arbitrarily small destabilizing delay perturbations. Let us therefore consider the notion of \emph{\textbf{strong exponential stability}} as introduced in \cite{hale2002strong}. 
\begin{definition}
    \label{def:strong_stability}
    The null solution of \eqref{eq:ol_neutral} is strongly exponentially stable if it is exponentially stable and remains stable when subjected to sufficiently small variations on the delays. More formally, the null solution of \eqref{eq:ol_neutral} is strongly exponentially stable if there exists an $\epsilon>0$ such that exponentially stable is preserved for all delay values $\vec{\tau}_{\epsilon} \in \mathcal{B}(\vec{\tau},\epsilon) \cap \R_{+}^{m}$ in which $\mathcal{B}(\vec{\tau},\epsilon)$ is an open ball of radius $\epsilon>0$ centered at $\vec{\tau}$ and $m$ is the length of $\vec{\tau}$. In other words, strong exponential stability requires a non-zero delay margin. 
\end{definition}
To asses strong exponential stability, we will consider the strong spectral abscissa (for both \eqref{eq:ol_neutral} and \eqref{eq:neutral_difference}), which is defined as the smallest upper bound for the spectral abscissa that is insensitve to small delay changes. 
\begin{definition}[{\cite[Definition~1.31]{bookdelay}}]
\label{def:strong_spectral_abscissa}
The strong spectral abscissa of \eqref{eq:ol_neutral} is defined as follows
\begin{multline*}
C(A_{1},\dots,A_{m_A},H_{1},\dots,H_{m_H},\vec{\tau}) := \\ \lim_{\epsilon\searrow 0+} \sup \left\{\sa(A_{1},\dots,A_{m_A},H_{1},\dots,H_{m_H},\vec{\tau}_{\epsilon}): \text{ for }  \vec{\tau}_{\epsilon} \in \mathcal{B}(\vec{\tau},\epsilon) \cap \R_{+}^{m}  \right\}.
\end{multline*}
Similarly, the strong spectral abscissa of \eqref{eq:neutral_difference} is defined as
\begin{equation*}
C_{D}(H_{1},\dots,H_{m_H},\vec{\tau}) :=  \lim_{\epsilon\searrow 0+} \sup \left\{\sa_D(H_{1},\dots,H_{m_H},\vec{\tau}_{\epsilon}) \text{ for } \vec{\tau}_{\epsilon} \in \mathcal{B}(\vec{\tau},\epsilon) \cap \R_{+}^{m_H} \right\}.
\end{equation*}  
\end{definition}
In contrast to the nominal spectral abscissa, the strong spectral abscissa is continuous with respect to both the entries of the state matrices and the delays \cite[Theorem~1.44]{bookdelay}. The following proposition now gives a necessary and sufficient condition for strong exponential stability.
\begin{proposition}
\label{proposition:strong_stability_alt}
The null solution of \eqref{eq:ol_neutral} is strongly exponentially stable if and only if its strong spectral abscissa is strictly negative. The same holds for \eqref{eq:neutral_difference}.
\end{proposition}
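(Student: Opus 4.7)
The plan is to prove both implications separately, relying on \Cref{th:exp_stab} as the link between exponential stability and the location of characteristic roots, together with the structural relationship between an NDDE and its underlying delay difference equation encoded in \Cref{prop:neutral_chain}. The forward direction is essentially a direct unpacking of definitions, while the reverse direction requires a genuine structural argument and will be the main source of difficulty.

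For the forward implication, I would assume $C(A_1,\dots,A_{m_A},H_1,\dots,H_{m_H},\vec{\tau}) < 0$. Since $C$ is defined as the limit as $\epsilon \searrow 0^+$ of a monotone-decreasing family of suprema, I can extract an $\epsilon_0 > 0$ such that the supremum of $\sa$ over $\mathcal{B}(\vec{\tau},\epsilon_0) \cap \R_{+}^{m}$ is already strictly negative. Hence $\sa(\vec{\tau}_{\epsilon}) < 0$ for every perturbed delay vector in this ball, including $\vec{\tau}$ itself, and by the neutral analogue of \Cref{th:exp_stab} each perturbed NDDE is exponentially stable. This is precisely strong exponential stability in the sense of \Cref{def:strong_stability}. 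The corresponding argument for \eqref{eq:neutral_difference} with $C_D$ in place of $C$ is identical.

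For the reverse implication, I would take a strongly exponentially stable NDDE, so that there exists $\epsilon_1 > 0$ with $\sa(\vec{\tau}_{\epsilon}) < 0$ for every $\vec{\tau}_{\epsilon} \in \mathcal{B}(\vec{\tau},\epsilon_1) \cap \R_{+}^{m}$. The main obstacle is that this pointwise negativity does not by itself preclude the supremum from tending to $0$ as $\epsilon \searrow 0^+$, which is exactly what $C(\vec{\tau}) < 0$ rules out. To bridge this gap I would invoke the decomposition $C(\vec{\tau}) = \max\{\sa(\vec{\tau}),\, C_D(\vec{\tau})\}$: nominal exponential stability already gives $\sa(\vec{\tau}) < 0$, so it suffices to show $C_D(\vec{\tau}) < 0$. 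I would argue this by contraposition. If $C_D(\vec{\tau}) \geq 0$, then for every $\epsilon > 0$ there exists $\vec{\tau}_{\epsilon} \in \mathcal{B}(\vec{\tau},\epsilon) \cap \R_{+}^{m_H}$ for which the delay difference equation has characteristic roots with real parts approaching zero from the right; by \Cref{prop:neutral_chain}, these produce vertical chains of NDDE characteristic roots accumulating at nonnegative real values under the same perturbation, yielding destabilizing roots in every neighborhood of $\vec{\tau}$ and contradicting the hypothesis.

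The hardest step is this last one: quantifying how small delay perturbations of \eqref{eq:neutral_difference} shift the real parts of its characteristic roots up to $C_D(\vec{\tau})$, so that the transfer via \Cref{prop:neutral_chain} produces a genuine instability rather than merely a spectrum that touches but does not cross the imaginary axis. The standard route, following Hale and Verduyn Lunel, is to express $C_D$ as a supremum of the form $\log \rho\big(\sum_{k=1}^{m_H} H_k e^{-\jmath \theta_k}\big)$ over $\theta \in [0,2\pi]^{m_H}$ and then use a Kronecker-type simultaneous Diophantine approximation argument to realize this supremum through admissible perturbed delays $\vec{\tau}_{\epsilon}$. For the ``same holds'' clause concerning \eqref{eq:neutral_difference} in isolation, this second half of the argument stands on its own, since no transfer via \Cref{prop:neutral_chain} is required.
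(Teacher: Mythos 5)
The paper itself offers no proof of this proposition: it is quoted from \cite{bookdelay} (the manual explicitly refrains from proofs), so there is nothing internal to match your argument against. Judged on its own terms, your forward direction is correct and complete: monotonicity of $\epsilon \mapsto \sup_{\mathcal{B}(\vec{\tau},\epsilon)} \sa$ turns $C<0$ into the existence of a whole ball on which $\sa<0$, and the neutral analogue of \Cref{th:exp_stab} does the rest. Note, though, that a much shorter complete derivation is available from results the paper already states: \Cref{proposition:strong_stability} gives strong exponential stability $\Leftrightarrow$ $\gamma_0<1$ and $c<0$, while \Cref{proposition:strong_spectral_abscissa} gives $C=\max\{c,C_D\}$ with $C_D$ the zero crossing of the decreasing function $r\mapsto\gamma(r)-1$, so $C_D<0\Leftrightarrow\gamma(0)<1$ and the equivalence $C<0\Leftrightarrow$ strong exponential stability is pure bookkeeping. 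That is almost certainly the intended logical order in the source: the Kronecker-type work is done once, in proving the $\gamma_0$ characterization, and everything else is derived from it. Your route instead re-derives part of that machinery through \Cref{prop:neutral_chain}.

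The one place where your argument as written has a real hole is the contraposition step. From $C_D(\vec{\tau})\geq 0$ and monotonicity you only get $\sup_{\vec{\tau}_\epsilon\in\mathcal{B}(\vec{\tau},\epsilon)} \sa_D(\vec{\tau}_\epsilon)\geq 0$ for every $\epsilon$; unless that supremum is attained (or at least realized with $\sa_D(\vec{\tau}_\epsilon)\geq 0$ by some admissible $\vec{\tau}_\epsilon$), each individual perturbed system could still satisfy $\sa_D(\vec{\tau}_\epsilon)<0$ and hence, after transfer through \Cref{prop:neutral_chain}, yield an NDDE with strictly negative spectral abscissa --- no contradiction with strong stability. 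Producing, inside every $\epsilon$-ball, a perturbed delay vector whose difference equation actually has spectral abscissa $\geq 0$ is exactly the Diophantine/density argument you defer to Hale and Verduyn Lunel, so you have correctly located the crux but not closed it; you should state explicitly that what must be shown is attainment, not mere approach. By contrast, your separate worry about a spectrum that ``touches but does not cross'' the imaginary axis is already resolved by the definitions in the paper: the spectral abscissa of a neutral equation is a supremum (\Cref{def:spectral_abscissa_neutral}), and exponential stability requires it to be strictly negative, so a root chain merely accumulating at $\Re(\lambda)=0$ from the left already destroys exponential stability --- no crossing is needed.
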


Next, we present an alternative condition for strong exponential stability, based on \cite[Propisitons~1.30 and 1.43, and Theorem~1.32]{bookdelay}, which will be useful in \Cref{sec:stabilization}.
\begin{proposition}{\ }
\label{proposition:strong_stability}
\begin{itemize}
	\item The null solution of the delay difference equation \eqref{eq:neutral_difference} is strongly exponentially stable if and only if the inequality
	\[
	\gamma_0(H_1,\dots,H_{m_H},\vec{\tau}) := \max_{\vec{\theta} \in [0,2\pi)^{m_H}} \rho\left(\textstyle\sum\limits_{k=1}^{m_H} H_k e^{\jmath \theta_k}\right) < 1
	\]
	holds.
	\item  The null solution of the NDDE \eqref{eq:ol_neutral} is strongly exponentially stable if and only if
	\begin{itemize}
		\item the inequality $\gamma_0(H_1,\dots,H_{m_H},\vec{\tau})<1$ holds, and
		\item the spectral abscissa, $c(A_{1},\dots,A_{m_A},H_{1},\dots,H_{m_H},\vec{\tau})$ is strictly negative.
	\end{itemize}
\end{itemize}

\end{proposition}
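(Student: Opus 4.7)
The plan is to establish the delay-difference equation case (first bullet) first, and then bootstrap to the NDDE case (second bullet) via Proposition~\ref{prop:neutral_chain} combined with continuity of any finite portion of the NDDE spectrum in the delays.

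For the first bullet, I observe that $\lambda = r + \jmath \omega$ is a characteristic root of \eqref{eq:neutral_difference} if and only if $-1$ is an eigenvalue of $\sum_k H_k e^{-r h_{H,k}} e^{-\jmath \omega h_{H,k}}$. Setting $\theta_k = -\omega h_{H,k} \bmod 2\pi$, this becomes the question of whether the one-parameter curve $\omega \mapsto (\theta_k)_k$ in $[0,2\pi)^{m_H}$ meets the locus
\[
G_r := \bigl\{\vec\theta \in [0,2\pi)^{m_H} :\, -1 \in \sigma\bigl(\textstyle\sum_k H_k e^{-r h_{H,k}} e^{\jmath \theta_k}\bigr)\bigr\}.
\]
Under arbitrarily small delay perturbations the delays can be chosen rationally independent, and Kronecker's density theorem then makes the curve dense in the full torus. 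A rotation trick (absorbing a common phase into the $\theta_k$) shows that $G_r$ is non-empty if and only if $\max_{\vec\theta} \rho\bigl(\sum_k H_k e^{-r h_{H,k}} e^{\jmath \theta_k}\bigr) \geq 1$. Together with the continuity of the strong spectral abscissa in the delays, this identifies
\[
C_D(H_1,\dots,H_{m_H},\vec\tau) = \sup\Bigl\{r \in \R : \max_{\vec\theta} \rho\bigl(\textstyle\sum_k H_k e^{-r h_{H,k}} e^{\jmath \theta_k}\bigr) \geq 1\Bigr\}.
\]
A plurisubharmonicity (Vesentini-type) argument then shows that the maximum of $(c_1,\dots,c_{m_H}) \mapsto \rho(\sum_k c_k H_k)$ over the closed polydisk $\{|c_k|\le 1\}$ is attained on the distinguished boundary; for $r \geq 0$ the factors $e^{-r h_{H,k}} e^{\jmath \theta_k}$ lie in this polydisk, so the supremum above is bounded above by $\gamma_0$. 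Hence $C_D < 0$ if and only if $\gamma_0 < 1$, which combined with Proposition~\ref{proposition:strong_stability_alt} completes the first bullet.

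For the second bullet, necessity of both conditions is immediate: strong exponential stability forces $c<0$ by definition, and since Proposition~\ref{prop:neutral_chain} shows that every point of $\clos(Z_D)$ is accumulated by characteristic roots of the NDDE, it also forces $C_D < 0$, i.e.\ $\gamma_0 < 1$ by the first bullet. For sufficiency, assuming $c<0$ and $\gamma_0 < 1$, I would apply the first bullet uniformly on a neighbourhood of $\vec\tau$ to obtain an $\eta>0$ and a $\delta>0$ such that $C_D < -\eta$ on the $\delta$-ball around $\vec\tau$. A standard consequence of Proposition~\ref{prop:neutral_chain} is then that, uniformly over this ball, the NDDE has only finitely many characteristic roots in the strip $\Re(\lambda) \geq -\eta$, and these depend continuously on the delays. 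Since the nominal spectral abscissa is strictly negative, continuity preserves this on a (possibly smaller) neighbourhood, yielding strong exponential stability.

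The hardest part is the first bullet: the Kronecker density step, the rotation trick, and the plurisubharmonicity bound each require a careful complex-analytic argument, and even establishing the continuity of the strong spectral abscissa in the delays is itself non-trivial. Once the first bullet is in hand, the second follows fairly mechanically from Proposition~\ref{prop:neutral_chain} together with continuity of the finite part of the NDDE spectrum that lies to the right of $C_D$.
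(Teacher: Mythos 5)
The paper does not actually prove this proposition: it is imported verbatim from \cite[Propositions~1.30 and 1.43, and Theorem~1.32]{bookdelay}, in line with the manual's stated policy of refraining from proofs. Your sketch is therefore best compared with the standard argument in that reference, and it does essentially reconstruct it: Kronecker--Weyl density of the curve $\omega\mapsto(-\omega h_{H,1},\dots,-\omega h_{H,m_H})$ on the torus after an arbitrarily small perturbation making the delays rationally independent (legitimate here precisely because one is computing the \emph{strong} spectral abscissa), the identification of $C_D$ with the zero crossing of $r\mapsto\gamma(r)-1$ as in \Cref{proposition:strong_spectral_abscissa}, the maximum-principle bound $\gamma(r)\le\gamma_0$ for $r\ge 0$, and then \Cref{proposition:strong_stability_alt}, \Cref{prop:neutral_chain} and the finiteness of the NDDE spectrum to the right of $C_D$ for the second bullet. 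The route is the right one.

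One intermediate claim is stated incorrectly, although the conclusion you draw from it survives. You assert that, for fixed $r$, the locus $G_r$ is non-empty \emph{if and only if} $\max_{\vec\theta}\rho\bigl(\sum_k H_k e^{-rh_{H,k}}e^{\jmath\theta_k}\bigr)\ge 1$. Only the ``only if'' direction holds pointwise in $r$. For a counterexample to the converse, take $m_H=1$ and $H_1=2\mathrm{I}_n$: then $\rho\bigl(2e^{-rh}e^{\jmath\theta}\mathrm{I}_n\bigr)=2e^{-rh}$ exceeds $1$ for every $r<\ln 2/h$, yet $-1\in\sigma\bigl(2e^{-rh}e^{\jmath\theta}\mathrm{I}_n\bigr)$ forces $2e^{-rh}=1$, so $G_r$ is empty for every $r\neq\ln 2/h$. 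The rotation trick (adding a common phase $\alpha$ to all $\theta_k$ multiplies the matrix by $e^{\jmath\alpha}$) only lets you rotate an eigenvalue of modulus \emph{exactly one} onto $-1$; to reach modulus one you must additionally slide $r$ upward, using that $\gamma$ is continuous, non-increasing in $r$ (by the same polydisk argument) and tends to $0$ as $r\to+\infty$. The correct statement is that $G_{r'}\neq\emptyset$ for some $r'\ge r$ whenever $\gamma(r)\ge 1$, which is exactly what the supremum characterization $C_D=\sup\{r:\gamma(r)\ge1\}$ requires. With that repair, and granting the genuinely nontrivial but standard ingredients you flag (continuity of the strong spectral abscissa in the delays, and continuity plus uniform finiteness of the NDDE roots in any half-plane $\Re(\lambda)\ge C_D+\epsilon$, cf.\ \Cref{prop:finite_number_roots}), your argument goes through.
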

\begin{remark}
 The value of $\gamma_0(H_1,\dots,H_{m_H},\vec{\tau})$ does not depend on the delays. As a consequence, the strong exponential stability of the delay difference equation \eqref{eq:neutral_difference} is independent of the delay values.
\end{remark}
\begin{remark} 
The proposition above implies that strong exponential stability of the underlying delay difference equation is a necessary condition for the exponential stability of a NDDE.
\end{remark}

Finally, we will briefly describes how the strong spectral abscissa of \eqref{eq:ol_neutral} and \eqref{eq:neutral_difference} can be computed. We start with a mathematical characterization of these quantities, amenable from a computation perspective, based on \cite[Theorem~1.32 and Proposition~1.43]{bookdelay}.
\begin{proposition}
\label{proposition:strong_spectral_abscissa}
The strong spectral abscissa of the delay difference equation \eqref{eq:neutral_difference} corresponds to the unique zero crossing of the monotonously decreasing function
\begin{equation}
	\label{eq:CD_zero_crossing}
    r \mapsto \gamma(r;H_1,\dots,H_{m_H},\vec{\tau}) - 1 
\end{equation}
with
\begin{equation}
    \label{eq:gamma_r}
    \gamma(r;H_1,\dots,H_{m_H},\vec{\tau}) := \max_{\vec{\theta} \in [0,2\pi)^{m_H}} \rho\left(\textstyle\sum\limits_{k=1}^{m_H} H_k e^{-r h_{H,k}} e^{\jmath \theta_k}\right).
\end{equation}
The strong spectral abscissa of the NDDE in \eqref{eq:ol_neutral} is given by
\begin{equation}
\begin{multlined}
\label{eq:CN_expression}
C\left(A_{1},\dots,A_{m_A},H_{1},\dots,H_{m_H},\vec{\tau}\right) = \\\qquad \qquad \qquad \max \big\{c(A_{1},\dots,A_{m_A},H_{1},\dots,H_{m_H},\vec{\tau}),\,C_D(H_{1},\dots,H_{m_H},\vec{\tau})\big\}.
\end{multlined}
\end{equation}
\end{proposition}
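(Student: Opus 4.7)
The plan is to establish both parts by reducing to Proposition \ref{proposition:strong_stability}, using shifts of the complex variable and the structural relation between the NDDE spectrum and its associated delay difference equation.

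For the delay difference equation part, I would introduce the substitution $x(t) = e^{rt} y(t)$ in \eqref{eq:neutral_difference}. Dividing by $e^{rt}$ yields the rescaled delay difference equation
\[
y(t) + \sum_{k=1}^{m_H} \bigl(H_k e^{-r h_{H,k}}\bigr) y(t-h_{H,k}) = 0,
\]
whose characteristic roots are precisely $\lambda - r$ for each characteristic root $\lambda$ of \eqref{eq:neutral_difference}. Since $C_D$ is (by \Cref{proposition:strong_stability_alt}) the threshold determining strong exponential stability, the rescaled equation is strongly exponentially stable iff $r > C_D(H_1,\ldots,H_{m_H},\vec{\tau})$. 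Applying \Cref{proposition:strong_stability} to the rescaled equation, this in turn is equivalent to $\gamma_0\bigl(H_1 e^{-r h_{H,1}}, \ldots, H_{m_H} e^{-r h_{H,m_H}}, \vec{\tau}\bigr) < 1$, which, comparing with \eqref{eq:gamma_r}, equals $\gamma(r; H_1,\ldots,H_{m_H},\vec{\tau}) < 1$. Hence $C_D$ is characterized by the value of $r$ at which $\gamma(r;\cdot)$ crosses $1$.

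To conclude that this crossing is unique, I would verify that $r \mapsto \gamma(r;\cdot)$ is monotonously decreasing by expressing it as $\gamma(r;\cdot) = \max_{|z_k| = e^{-r h_{H,k}}} \rho\bigl(\sum_k H_k z_k\bigr)$ and invoking the maximum-modulus principle for the spectral radius of the entire matrix-valued function $z \mapsto \sum_k H_k z_k$, which implies that the maximum over the polydisc $\{|z_k| \leq e^{-r h_{H,k}}\}$ is attained on its distinguished boundary (the torus); since enlarging $r$ shrinks the polydisc, the maximum can only decrease. Combined with continuity and the limits $\gamma(r;\cdot) \to 0$ as $r \to +\infty$ and $\gamma(r;\cdot) \to +\infty$ as $r \to -\infty$ (provided at least one $H_k$ is nonzero; otherwise the problem is trivial), the crossing exists and is unique.

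For the NDDE formula \eqref{eq:CN_expression}, the lower bound $C \geq \max\{c, C_D\}$ follows from two observations. First, $C \geq c$ is immediate from \Cref{def:strong_spectral_abscissa} since the nominal delay vector is in every neighborhood $\mathcal{B}(\vec{\tau},\epsilon)$. Second, $C \geq C_D$ because \Cref{prop:neutral_chain} produces vertical chains of characteristic roots of the NDDE accumulating at the real parts in $\operatorname{clos}(Z_D)$; perturbing the delays arbitrarily slightly shifts these chains so that the spectral abscissa of the perturbed NDDE can be pushed arbitrarily close to $C_D$ of the perturbed difference equation, which in the limit $\epsilon \downarrow 0$ reaches $C_D(H_1,\ldots,H_{m_H},\vec{\tau})$ by \Cref{def:strong_spectral_abscissa} applied to the difference equation. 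For the matching upper bound $C \leq \max\{c, C_D\}$, the idea is that under small delay perturbations only two mechanisms can produce rightmost characteristic roots: continuous movement of the finitely many existing characteristic roots with $\Re(\lambda) \geq C_D(H_1,\ldots,\vec{\tau}) + \delta$ (which stays near $c$), and characteristic roots emerging in the strip determined by the chains (whose real parts are controlled by the strong spectral abscissa of the difference equation, continuous in the delays). The hard part is establishing this upper bound rigorously—specifically, ruling out that arbitrarily small delay perturbations create new characteristic roots well to the right of both $c$ and $C_D$; this requires the chain-structure analysis and uniform bounds from [bookdelay, Theorem 1.32], which I would invoke rather than reprove.
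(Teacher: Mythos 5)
Your proposal is correct in outline, but it takes a genuinely different (and more informative) route than the manual, which deliberately refrains from proving this proposition and simply refers to \cite[Theorem~1.32 and Proposition~1.43]{bookdelay}. Your reconstruction --- rescaling $x(t)=e^{rt}y(t)$ so that the shifted delay difference equation has coefficients $H_k e^{-rh_{H,k}}$, applying the criterion $\gamma_0<1$ of \Cref{proposition:strong_stability} to it, and deriving monotonicity of $r\mapsto\gamma(r;\cdot)$ from the maximum principle for the plurisubharmonic function $\log\rho\left(\sum_k H_k z_k\right)$ on shrinking polydiscs --- is essentially the standard argument behind the cited results, so the two approaches agree in substance; yours buys a self-contained derivation of the first claim from \Cref{proposition:strong_stability}, at the cost of three fine points worth tightening. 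First, when the delays are perturbed the coefficients $H_ke^{-rh_{H,k}}$ of the rescaled equation move as well, so identifying the strong spectral abscissa of the rescaled equation with $C_D-r$ requires invoking the delay-independence of $\gamma_0$ and its continuity in the matrix entries. Second, the polydisc argument as stated gives only non-strict decrease, whereas uniqueness of the zero crossing needs strict decrease wherever $\gamma>0$; this follows from the same plurisubharmonicity, since the maximum over the smaller polydisc is attained in the interior of the larger one. Third, in degenerate cases where $\rho\left(\sum_k H_kz_k\right)\equiv 0$ (e.g.\ a single nilpotent $H_1$) there is no crossing and $C_D=-\infty$, a situation the proposition implicitly excludes. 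Your treatment of \eqref{eq:CN_expression} --- obtaining $C\ge\max\{c,C_D\}$ from \Cref{prop:neutral_chain} together with the definition of the strong spectral abscissa, and deferring the matching upper bound to \cite{bookdelay} --- is honest about where the hard analysis lives and is consistent with what the manual itself does for the entire proposition.
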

The strong spectral abscissa of a NDDE is thus equal to the maximum of its nominal spectral abscissa and the strong spectral abscissa of the underlying delay difference equation. Furthermore, it follows from \Cref{prop:neutral_chain}, that the spectrum of \eqref{eq:ol_neutral} contains a vertical root chain that approaches $C_D(H_1,\dots,H_{m_H},\vec{\tau})$ either for the nominal delay values or for a certain (arbitrarily small) delay perturbation. Next, let us consider the spectrum of \eqref{eq:ol_neutral} for $\Re(\lambda) > C_D(H_1,\dots,H_{m_H},\vec{\tau})$. The following result follows from \cite[Proposition~1.29]{bookdelay}.
\begin{proposition}
\label{prop:finite_number_roots}
For any $\epsilon>0$, the NDDE \eqref{eq:ol_neutral} has only a finite number of characteristic roots in the right half-plane
\[
\{\lambda \in \C : \Re(\lambda) \geq C_D(H_1,\dots,H_{m_H},\vec{\tau}) + \epsilon \}
\]
(even when infinitesimal delay perturbations are considered).
\end{proposition}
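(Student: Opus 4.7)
The plan is to reduce the full characteristic equation \eqref{eq:NDEVP} to a perturbed version of the difference-equation characteristic equation, so that on the half-plane $\{\Re(\lambda)\geq C_D+\epsilon\}$ the only possible accumulation of roots is at infinity, and then to show that this accumulation cannot happen. For $\lambda\neq 0$, equation \eqref{eq:NDEVP} is equivalent to
\[
\det\!\left(M(\lambda) - \tfrac{1}{\lambda}P(\lambda)\right) = 0,
\]
with $M(\lambda) := \mathrm{I}_n + \sum_{k=1}^{m_H} H_k\, e^{-\lambda h_{H,k}}$ and $P(\lambda) := \sum_{k=1}^{m_A} A_k\, e^{-\lambda h_{A,k}}$. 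Setting $r := C_D(H_1,\dots,H_{m_H},\vec{\tau})+\epsilon$, the aim is to show that on $\{\Re(\lambda)\geq r\}$ the matrix $M(\lambda)$ is uniformly invertible while $\|P(\lambda)\|$ stays bounded, so the perturbation term is $O(1/|\lambda|)$.

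The first and main step is uniform invertibility of $M(\lambda)$ on $\{\Re(\lambda)\geq r\}$. By \Cref{proposition:strong_spectral_abscissa}, $\gamma(\cdot;H_1,\dots,H_{m_H},\vec{\tau})$ is monotonically decreasing and crosses $1$ exactly at $C_D$, so $\gamma(r;H_1,\dots,H_{m_H},\vec{\tau})<1$. Writing $\lambda=\sigma+\jmath\omega$ with $\sigma\geq r$, the matrix $\sum_{k=1}^{m_H} H_k e^{-\lambda h_{H,k}}$ coincides with $\sum_{k=1}^{m_H} H_k e^{-\sigma h_{H,k}}e^{\jmath\theta_k}$ for $\theta_k=-\omega h_{H,k}$, and hence has spectral radius at most $\gamma(\sigma;\cdot)\leq\gamma(r;\cdot)<1$. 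This yields pointwise invertibility of $M(\lambda)$. For uniform boundedness of $\|M(\lambda)^{-1}\|$, I would argue by contradiction: a sequence $\lambda_n$ with $\|M(\lambda_n)^{-1}\|\to\infty$ yields, after extracting a subsequence of the norm-bounded family $\sum H_k e^{-\lambda_n h_{H,k}}$, a limit matrix $N_\ast$ of which $-1$ must be an eigenvalue. Continuity of the spectral radius then forces $\rho(N_\ast)\leq\gamma(r;\cdot)<1$, a contradiction.

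The remaining steps are direct. For $\Re(\lambda)\geq r$ and $h_{A,k}\geq 0$, one has $\|P(\lambda)\|\leq\sum_{k=1}^{m_A}\|A_k\|\,e^{-r h_{A,k}}$, a constant independent of $\lambda$, so $\|M(\lambda)^{-1}P(\lambda)/\lambda\|=O(1/|\lambda|)$. Consequently there exists $R>0$ such that $M(\lambda)-P(\lambda)/\lambda$ is invertible whenever $\Re(\lambda)\geq r$ and $|\lambda|>R$, excluding characteristic roots there. On the compact set $\{\lambda:\Re(\lambda)\geq r,\,|\lambda|\leq R\}$, the entire characteristic function from \eqref{eq:NDEVP} is not identically zero (its leading term $\lambda^n$ dominates along the positive real axis), hence has only finitely many zeros there.

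For the robustness statement, I would observe that $\gamma(r;\vec{\tau})$, the uniform bound on $\|M^{-1}\|$, the bound on $\|P\|$, and the threshold $R$ all depend continuously on $\vec{\tau}$; since $\gamma(r;\vec{\tau})<1$ is strict, the same argument applies with a common $R$ and a common finite root-count bound for every delay vector $\vec{\tau}_\varepsilon$ in a sufficiently small open neighborhood of $\vec{\tau}$. The main obstacle is the uniform-invertibility step: the pointwise spectral-radius bound $\rho\bigl(\sum H_k e^{-\lambda h_{H,k}}\bigr)<1$ does not directly give a uniform norm bound on $M(\lambda)^{-1}$, so one must exploit the almost-periodic (in $\Im(\lambda)$) structure together with continuity of the spectrum to rule out blow-up of $\|M(\lambda)^{-1}\|$ as $|\Im(\lambda)|\to\infty$.
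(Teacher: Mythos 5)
Your proof is correct. Note first that the manual does not actually prove this proposition --- it defers to \cite[Proposition~1.29]{bookdelay} --- but the informal justification it gives for \Cref{prop:neutral_chain} rests on exactly your decomposition $\det\bigl(M(\lambda)-\lambda^{-1}P(\lambda)\bigr)=0$ together with the remark that the $\lambda^{-1}P(\lambda)$ term becomes negligible for large $|\lambda|$; what you supply is the rigorous version of that heuristic on the half-plane $\{\Re(\lambda)\geq C_D+\epsilon\}$. The three ingredients you add are all sound: the uniform invertibility of $M(\lambda)$ follows from the monotonicity of $r\mapsto\gamma(r)$ in \Cref{proposition:strong_spectral_abscissa} (so that $\gamma(C_D+\epsilon)<1$) combined with your compactness argument, where the key point --- which you state correctly --- is that $\rho(N_n)\leq\gamma(\sigma_n)\leq\gamma(r)<1$ holds for \emph{every} term of the sequence, so continuity of the spectral radius caps $\rho(N_\ast)$ strictly below $1$ while singularity of $\mathrm{I}_n+N_\ast$ would force $\rho(N_\ast)\geq 1$; the bound on $\|P(\lambda)\|$ uses only $h_{A,k}\geq 0$; and the residual compact set contributes finitely many zeros because the characteristic function is entire and nontrivial. (The case $\lambda=0$, excluded from your decomposition, lies in that compact set, so nothing is lost.) The robustness clause is also handled adequately: since $C_D$ and $\gamma(r;\cdot)$ depend continuously on $\vec{\tau}$ and the inequality $\gamma(C_D(\vec{\tau})+\epsilon;\vec{\tau})<1$ is strict, the constants in your estimates can be chosen uniformly over a neighborhood of $\vec{\tau}$, which is precisely what ``even under infinitesimal delay perturbations'' requires. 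Compared with invoking the reference, your argument is self-contained and elementary (Bolzano--Weierstrass plus continuity of $\sigma_{\min}$ and $\rho$), at the cost of being specific to the half-plane statement rather than yielding the finer root-chain localization of \cite[Proposition~1.29]{bookdelay}.
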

To compute the strong spectral abscissa of a NDDE one can thus use the following two-step approach. First, compute the strong spectral abscissa of the underlying delay difference equation by means of the characterization in \Cref{proposition:strong_spectral_abscissa}. Next, compute all characteristic roots of \eqref{eq:ol_neutral} in the right half-plane 
\[
\{\lambda \in \C : \Re(\lambda) \geq C_D(H_1,\dots,H_{m_H},\vec{\tau}) + \epsilon \}
\] for some sufficiently small $\epsilon>0$. In \packageName{}, this approach is implemented in the \matlabfun{tds_strong_sa}-function, as illustrated in the following example.

\begin{example}
\label{example:neutral_3}
Let us return to \Cref{example:neutral_2}. Recall that \eqref{eq:neutral_example2} was exponential stable for the nominal delays $\tau_1 = 1$ and $\tau_2 = 2$. However, for arbitrarily small perturbations on the second delay, the system became unstable. Let us therefore compute its strong spectral abscissa using \matlabfun{tds_strong_sa(tds,r)}. 
\begin{lstlisting}
>> C = tds_strong_sa(ndde,-0.2)

C =
    0.1614
\end{lstlisting}
As expected, the strong spectral abscissa is positive. Furthermore, the obtained value is close to the spectral abscissa in \Cref{fig:neutral2_roots2} for $\tau_1 = 1$ and $\tau_2 = 2.05$. Alternatively, we can compute $\gamma(0;H_{1},\dots,H_{m_H})$ using the function \matlabfun{tds_gamma_r(tds,r)}.
\begin{lstlisting}
>> gamma0 = tds_gamma_r(ndde,0)

gamma0 =

    1.2500
\end{lstlisting}
\end{example}
\noindent\textbf{Important:} If we want to compute the characteristic roots of a NDDE in a given right half-plane
\[
\{\lambda \in \C: \Re(\lambda) \geq r\}
\]
with $r$ is smaller than $C_D(H_1,\dots,H_{m_H},\vec{\tau})$, the following warning will be given.
\begin{lstlisting}[basicstyle=\ttfamily\color{orange},language=,breakindent=0pt, breaklines]
Warning (tds_roots): Case: gamma(r) >= 1 (i.e., CD>r).
Spectral discretization with N = 30 (lowered if maximum size of eigenvalue problem is exceeded).
\end{lstlisting}
The logic behind this warning is that if $r$ is smaller than $C_D(H_1,\dots,H_{m_H},\vec{\tau})$, the considered right half-plane contains a root chain of the form \eqref{eq:ev_chain} (or there exists an arbitrarily small perturbation on the delays for which it does). As such the function \matlabfun{tds_roots} can no longer determine the necessary degree for the spectral discretisation to capture all characteristic roots in the considered right half-plane. The function \matlabfun{tds_roots} will in that case use the default value $N = 30$.
\begin{example}
	\label{example:roots_NDDE_rhp}
Consider again the NDDE in \eqref{eq:neutral_example2}. Now we use the function \matlabfun{tds_roots} to compute the characteristic roots with real part larger than $-0.6$.
\begin{lstlisting}
[cr] = tds_roots(ndde,-0.6);
tds_eigenplot(cr)
\end{lstlisting} 
We get the aforementioned warning and observe that only a limited number of the characteristic roots of the vertical chain are found (see \Cref{fig:neutral2_roots4}). To capture more characteristic roots we manually increase the degree of the spectral discretization $N$ using the option \matlabfun{fix_N}. The result is shown in \Cref{fig:neutral2_roots5}. 
\begin{lstlisting}
options = tds_roots_options('fix_N',200);
[cr2] = tds_roots(ndde,-0.6,options);
tds_eigenplot(cr2)
\end{lstlisting}
\begin{figure}
	\centering
	\begin{subfigure}{0.49\linewidth}
		\includegraphics[width=\linewidth]{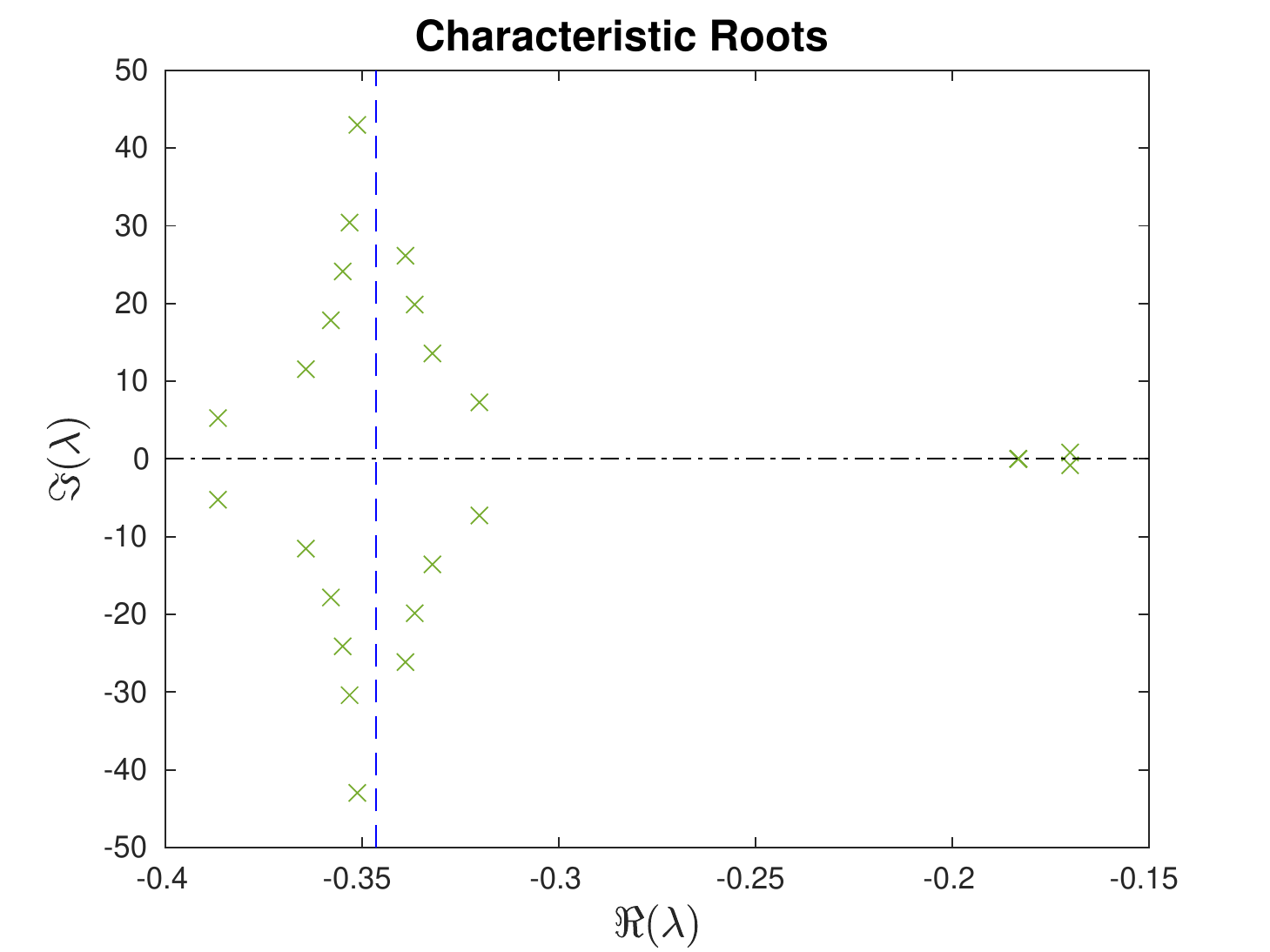}
		\caption{$N = 30$}
		\label{fig:neutral2_roots4}
	\end{subfigure}
	\begin{subfigure}{0.49\linewidth}
		\includegraphics[width=\linewidth]{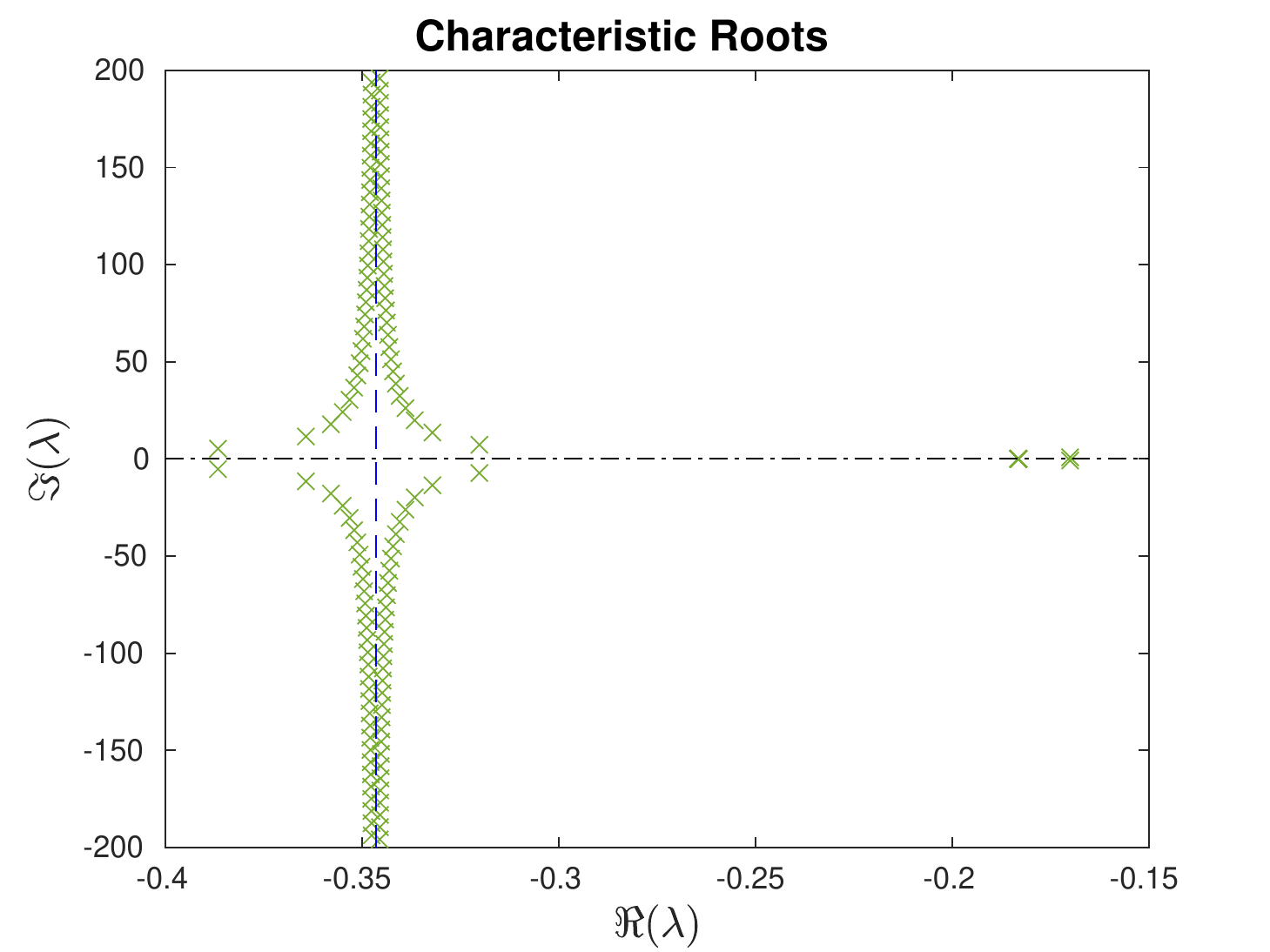}
		\caption{$N = 200$}
		\label{fig:neutral2_roots5}
	\end{subfigure}
	\caption{Characteristic roots of \eqref{eq:neutral_example2} for $\tau_1=1$ and $\tau_2 = 2$ in the right half-plane $\{\lambda\in\C : \Re(\lambda) \geq -0.6\}$ for several values of the degree of the spectral discretisation $N$.}
\end{figure}
\end{example}

In this section we saw that the stability analysis of a neutral delay differential is more involved than for retarded delay differential equations. Next we will examine a class of delay differential algebraic equations. We will see that this system class encompass time-delay systems of both retarded an neutral type.

\section{Delay differential algebraic equations}
\label{subsec:stability_DDAE}

In this paragraph we consider LTI delay differential algebraic equations with point-wise delays. Such differential equations take the following general form.
\begin{equation}
\label{eq:ol_ddae}
E \dot{x}(t) =  \sum_{k=1}^{m_A} A_k x(t-h_{A,k}) \text{ for } t\in [0,+\infty)
\end{equation}
with $x(t)\in\C^{n}$ the state variable at time $t$, $h_{A,1} = 0$ and the remaining state delays $h_{A,2},\dots,h_{A,m_A}$ non-negative. The matrices $A_1,\dots,A_m$ and $E$ belong to $\R^{n\times n}$. A DDAE differs from a RDDE in the fact that the matrix $E$ can be singular, \ie $\rank(E) \leq n$. As a consequence DDAEs can combines delay differential and delay algebraic equations. To see why this is true, let us introduce the orthogonal matrices  
\[\mathbf{U} = \begin{bmatrix}
U^{\bot} & U
\end{bmatrix} \text{ and } \mathbf{V}= \begin{bmatrix}
V^{\bot} & V
\end{bmatrix}\]
with $U\in\R^{n\times \nu}$ and $V\in\R^{n\times \nu}$ (in which $\nu := n-\rank(E)$) orthogonal matrices whose columns form a basis for, respectively, the left and right null space of $E$, and $U^{\bot}\in\R^{n\times(n-\nu)}$ and $V^{\bot}\in\R^{n\times(n-\nu)}$ orthogonal matrices whose columns span the orthogonal complement of the column space of $U$ and $V$, respectively. Premultiplying \eqref{eq:ol_ddae} with $\mathbf{U}$ and by introducing the change of variables
$
x = \mathbf{V} \begin{bmatrix}
x_1 \\
x_2
\end{bmatrix}
$
we obtain
\begin{equation}
\label{eq:ddae_transformed}
\begin{bmatrix}
E^{(11)} & 0 \\
0 & 0 
\end{bmatrix}
\begin{bmatrix}
\vphantom{E^{(11)}} \dot{x}_1(t) \\
\dot{x}_2(t)
\end{bmatrix} = 
\begin{bmatrix}
A_1^{(11)} & A_1^{(12)} \\
A_1^{(21)} & A_1^{(22)}
\end{bmatrix}
\begin{bmatrix}
\vphantom{A_0^{(12)}}x_1(t) \\
\vphantom{A_0^{(21)}}x_2(t)
\end{bmatrix}
+ \sum_{k=2}^{m_A}
\begin{bmatrix}
A_k^{(11)} & A_k^{(12)} \\
A_k^{(21)} & A_k^{(22)}
\end{bmatrix}
\begin{bmatrix}
x_1(t-h_{A,k}) \\
x_2(t-h_{A,k})
\end{bmatrix}
\end{equation}
with $E^{(11)}\in \R^{(n-\nu)\times (n-\nu)}$  invertible. The DDAE \eqref{eq:ol_ddae} can thus be transformed into a coupled system of $n-\nu$ delay differential equations and $\nu$ delay difference (delay algebraic) equations.

\begin{remark}
Delay differential equations of both retarded and neutral type fit in the DDAE framework. The RDDE \eqref{eq:ol_ret} can be written in form \eqref{eq:ol_ddae} by choosing $E=I_n$, while the NDDE
\[
\dot{x}(t) = A_0 x(t) + \sum_{k=1}^{m} A_k x(t-\tau_k) - \sum_{k=1}^{m} H_k \dot{x}(t-\tau_k)
\]
 is equivalent to the following DDAE:
\[
\begin{bmatrix}
0 & I_n \\
0 & 0
\end{bmatrix}
\begin{bmatrix}
\dot{x}(t) \\
\dot{\xi}(t) 
\end{bmatrix}
=
\begin{bmatrix}
A_1 & 0 \\
I_n & -I_n
\end{bmatrix}
\begin{bmatrix}
x(t) \\
\xi(t)
\end{bmatrix}
+ \sum_{k=1}^{m}
\begin{bmatrix}
A_k & 0 \\
H_k & 0
\end{bmatrix}
\begin{bmatrix}
x(t-\tau_k) \\
\xi(t-\tau_k)
\end{bmatrix},
\]
in which we introduced the auxiliary variable 
\[
\xi(t) =x(t) + \sum_{k=1}^{m} H_k x(t-\tau_k).
\] 
In \packageName{}, a NDDE can automatically be transformed into this DDAE form using the function \matlabfun{to_ddae}. To illustrate this, let us again consider the \matlabfun{tds_ss_neutral}-object \matlabfun{ndde} that we have created in \Cref{example:neutral_2}:
\begin{lstlisting}
>> ndde=tds_create_neutral({-3/4,1/2},[1 2],{1/4,-1/3},[0 1]);
>> ddae=ndde.to_ddae()

ddae = 

LTI Delay Differential Algebraic Equation with properties:

    E: [2x2 double]
    A: {[2x2 double]  [2x2 double]  [2x2 double]}
   hA: [0 1 2]
   mA: 3
    n: 2
\end{lstlisting} 
with
\[
\texttt{E} = \begin{bmatrix}
0 & 1 \\ 0 & 0
\end{bmatrix}\text{, } \texttt{A{1}} = \begin{bmatrix} 0.25 & 0 \\ 1 & -1 \end{bmatrix}\text{, } \texttt{A{2}} = \begin{bmatrix}
-1/3 & 0 \\ -0.75 & 0
\end{bmatrix}\text{, and } \texttt{A{2}} = \begin{bmatrix}
0 & 0 \\ 0.5 & 0
\end{bmatrix}.
\]
\end{remark}

Without putting restrictions on the system matrices, also a delay differential equation of advanced type (\cite[Section~3.3]{bellman1963dde}) can be transformed into a DDAE of form \eqref{eq:ol_ddae}. In the remainder of this manual we will therefore make the following assumption, which excludes DDAEs with advanced or impulsive solutions~\cite{fridman2002h}.
\begin{assumption}
\label{assumption:index_1}
The matrix $A_1^{(22)} = U^{T} A_1 V$ (as defined in \Cref{eq:ddae_transformed}) is invertible.
\end{assumption}

Before discussing the stability analysis of DDAEs, we again briefly mention the existence and uniqueness of solutions for the associated initial value problem (under \Cref{assumption:index_1}). Given any initial function segment $\phi\in X$ satisfying the condition
\[
 \sum_{k=1}^{m_A} \mathbf{U}^{T} A_k \phi(-h_{A,k}) = 0,
\]
there exists a unique forward solution of \eqref{eq:ol_ddae} on the interval $[0,t_0]$ for any $t_0>0$, see for example \cite{fridman2002stability}. Now we can move on to the stability analysis of its null solution. The characteristic equation associated with \eqref{eq:ol_ddae} is given by
\begin{equation}
\label{eq:DDAE_EVP}
\textstyle
\det\left(\lambda E - \sum\limits_{k=1}^{m_{A}} A_k e^{-\lambda h_{A,k}}\right) =0
\end{equation}
As the considered class of DDAEs encompasses NDDEs, it sshould not be supprising that certain properties from \Cref{sec:neutral_stability} carry over. For example, the exponential stability of \eqref{eq:ol_ddae} might be fragile with respect to the delays. When working with DDAEs it is thus better to focus on strong exponential stability (which assures that stability is preserved for sufficiently small perturbations on the delays). Next, let us therefore focus on the delay difference equation underlying \eqref{eq:ol_ddae}, which is now given by
\begin{equation}
\label{eq:ddae_difference}
A_1^{(22)} x_2(t) + \sum_{k=2}^{m_A} A_k^{(22)} x_2(t-h_{A,k}) = 0,
\end{equation}
with $A_1^{(22)}$, \dots, $A_{m_A}^{(22)}$ as defined in \eqref{eq:ddae_transformed} and the corresponding characteristic function
\begin{equation}
\label{eq:ddae_diff_cf}
\det\left(A_1^{(22)} + \sum_{k=2}^{m_A} A_k^{(22)} e^{-\lambda h_{A,k}}\right).
\end{equation}
Based on this characteristic equation, we can distinguish between two cases.
\begin{itemize}
    \item If \eqref{eq:ddae_diff_cf} does not depend on $\lambda$, the DDAE is \emph{essentially retarded}. In this case its spectrum has a similar outlook as that of a RDDE, meaning that it does not contain vertical chains of the form \eqref{eq:ev_chain}, the spectral abscissa can be defined as in \Cref{def:spectral_abscissa_ret} and is a continuous function of both the entries of the system matrices and the delays.
    \item If \eqref{eq:ddae_diff_cf} does depend on $\lambda$, the DDAE is \emph{essentially neutral}. In this case, its spectrum contains roots sequences of the form \eqref{eq:ev_chain} and exponential stability might be sensitivity to infinitesimal perturbations to the delays. In this case, strong exponential stability and the strong spectral abscissa can again be characterized in terms of \Cref{proposition:strong_stability,proposition:strong_stability_alt,proposition:strong_spectral_abscissa} with  the function $\gamma(r)$ now defined as
    \begin{equation}
    \label{eq:gamma_r_ddae}
        \gamma(r;A_{1}^{(22)},\dots,A_{m_A}^{(22)},\vec{\tau}) := \max_{\vec{\theta} \in [0,2\pi)^{m_{A}-1}} \rho\left(\sum_{k=2}^{m_{A}} \left(A_1^{(22)}\right)^{-1} A_k^{(22)} e^{-rh_{A,k}} e^{\jmath \theta_k}\right).
    \end{equation}
    
\end{itemize}

\noindent To conclude this section, let us now illustrate how one can deal with DDAEs in \packageName{}.

 \begin{example}
 \label{example:stability_analysis_DDAE}
 DDAEs typically arise in the context of interconnected systems as they avoid the explicit elimination of input and output channels which might be cumbersome or even impossible in the presence of delays \cite{gumussoy2011}. Below we will illustrate this approach using a simple example. For more examples, see \cite[Section~2]{gumussoy2011}. \\

We will consider the feedback interconnection of the following time-delay system
 \begin{equation*}
 \left\{
     \begin{array}{rcl}
        \dot{x}(t) & = & \underbrace{\begin{bmatrix}
        0.2 & 0.1 \\
        -0.5 & 1
        \end{bmatrix}}_{A_0}\,x(t) + \underbrace{\begin{bmatrix}
        0.5 & 0.3 \\
        0.1 & -0.1
        \end{bmatrix}}_{A_{1}}\, x(t-1) + \underbrace{\begin{bmatrix} 1 & 0 \\ 0 & 1 \end{bmatrix}}_{B_{1,1}} u(t-1) \\[25pt]
         y(t) & = & \underbrace{\begin{bmatrix}
         1 & 1
         \end{bmatrix}}_{C_{1,0}}x(t) +\underbrace{\begin{bmatrix}
         0.01 & 0.01
         \end{bmatrix}}_{D_{11,1}} \, u(t-1),
     \end{array}
 \right.
 \end{equation*}
 and the dynamic output feedback controller,
 \begin{equation*}
     \left\{
     \begin{array}{rcl}
        \dot{x}_c(t) & = & \underbrace{-3.48}_{A_c}\,x_c(t) + \underbrace{3.1}_{B_c} \, y(t) \\[18pt]
         u(t) & = & \underbrace{\begin{bmatrix}
         1.79 \\ -0.09
         \end{bmatrix}}_{C_c}\,x_c(t) + \underbrace{\begin{bmatrix}
         -1.86 \\ -1.4
         \end{bmatrix}}_{D_c} y(t),
     \end{array}
 \right.
 \end{equation*}
 Explicitly eliminating the input and output channels $u$ and $y$ is not trivial due to the delayed direct feed-through term. However, by introducing the auxiliary variables $\zeta_y$ and $\zeta_u$ the closed-loop system can represented in terms of the following DDAE
 \begin{equation*}
     \label{eq:ddae_example}
     \setlength{\arraycolsep}{4pt}
 \begin{bmatrix}
 \mathrm{I}_2 & 0 & 0 & 0 \\
 0 & 0 & 0 & 0 \\
 0 & 0 & 1 & 0 \\
 0 & 0 & 0 & 0
 \end{bmatrix}
 \begin{bmatrix}
 \dot{x}(t) \\
 \dot{\zeta}_y(t) \\
 \dot{x}_c(t) \\
 \dot{\zeta}_u(t)
 \end{bmatrix} = 
\begin{bmatrix}
 A_0 & 0 & 0 & 0 \\
 C_{1,0} & -1 & 0 & 0 \\
 0 & B_c & A_c & 0 \\
 0 &D_c & C_c & -\mathrm{I}_2
 \end{bmatrix}
 \begin{bmatrix}
 x(t) \\
 \zeta_y(t) \\
 x_c(t) \\
 \zeta_u(t)
 \end{bmatrix}
 +
\begin{bmatrix}
 A_1 & 0 & 0 & B_{1,1} \\
 0 & 0 & 0 & D_{11,1} \\
 0 & 0 & 0 & 0 \\
 0 & 0 & 0 & 0
 \end{bmatrix}
 \begin{bmatrix}
 x(t-1) \\
 \zeta_y(t-1) \\
 x_c(t-1) \\
 \zeta_u(t-1)
 \end{bmatrix}.
 \end{equation*}
To create a representation for this system we use the  \matlabfun{tds_create_ddae(E,A,hA)}-function. The first argument of this function should be the $E$ matrix, the second argument should be a cell array containing the matrices $A_1$, \dots, $A_{m_A}$, and the final argument should be an array with the corresponding delays $h_{A,1}$, \dots, $h_{A,m_A}$. 
 \begin{lstlisting}
A0 = [0.2 0.1;-0.5 1]; A1 = [0.5 0.3;0.1 -0.1];
B = eye(2); C = [1 1]; D = [0.01 0.01];
tau = 1;

Ac = -3.48; Bc = 3.1; Cc = [1.79;-0.09]; Dc = [-1.86;-1.4];

n = size(A0,1);p = size(B,2);q = size(C,1);nc = size(Ac,1);

E_CL = zeros(n+q+nc+p);
E_CL(1:n,1:n) = eye(n);
E_CL(n+q+(1:nc),n+q+(1:nc)) = 1;

A_CL_0 = zeros(n+q+nc+p);
A_CL_0(1:n,1:n) = A0; A_CL_0(n+(1:q),1:n) = C; 
A_CL_0(n+(1:q),n+(1:q)) = -eye(q);
A_CL_0(n+q+(1:nc),n+(1:q)) = Bc;
A_CL_0(n+q+(1:nc),n+q+(1:nc)) = Ac;
A_CL_0(n+q+nc+(1:p),n+(1:q)) = Dc;
A_CL_0(n+q+nc+(1:p),n+q+(1:nc)) = Cc;
A_CL_0(n+q+nc+(1:p),n+q+nc+(1:p)) = -eye(p);

A_CL_1 = zeros(n+q+nc+p);
A_CL_1(1:n,1:n) = A1;
A_CL_1(1:n,n+q+nc+(1:p)) = B;
A_CL_1(n+(1:q),n+q+nc+(1:p)) = D;

CL = tds_create_ddae(E_CL,{A_CL_0,A_CL_1},[0 tau]);
\end{lstlisting}
We can now use the tools from the previous section to analyze (strong) exponential stability. Let us start by taking a look at the underlying delay difference equation:
\begin{lstlisting}
>> [delay_diff] = get_delay_difference_equation(CL)

delay_diff = 

Delay Difference Equation with properties:
    E: [3x3 double] 
    A: {[3x3 double]  [3x3 double]}
   hA: [0 1]
   mA: 2
    n: 3
\end{lstlisting}
with
\[
\texttt{E} =
\begin{bmatrix}
0 & 0 & 0 \\
0 & 0 & 0 \\
0 & 0 & 0
\end{bmatrix} \text{, }
\texttt{A\{1\}} =
\begin{bmatrix}
-1 & 0 & 0 \\
-1.86 & -1 & 0 \\
-1.4 & 0 & -1
\end{bmatrix} \text{ and }
\texttt{A\{2\}} = \begin{bmatrix}
0 & 0.01 & 0.01 \\
0 & 0 & 0 \\
0 & 0 & 0
\end{bmatrix}.
\]
We conclude that the considered DDAE is essentially neutral. To compute the strong spectral abscissa of this delay difference equation, we use the \verb|tds_CD|-function:
\begin{lstlisting}
>> CD = tds_CD(delay_diff)

CD =

-3.4234
\end{lstlisting}
It follows from \Cref{prop:neutral_chain} that the spectrum contains a vertical chain of roots approaching $\Re(\lambda) = -3.4234$, see \Cref{fig:ddae_roots_rect} which was generated using the following code.
\begin{lstlisting}
>> options = tds_roots_options('max_size_evp',2000);
>> l_rect = tds_roots(CL,[-4 0.5 -500 500],options);
>> tds_eigenplot(l_rect);
>> xlim([-4,0.5])
>> ylim([-500 500])
>> hold on
>> plot([CD CD],ylim,'b--')
\end{lstlisting}
On the other hand, \Cref{prop:finite_number_roots} implies that the right half-plane $\{\lambda \in \C: \Re(\lambda) \geq -3 \}$ only contains finitely many characteristic roots, see \Cref{fig:ddae_roots_rhp} which was generated using the following code.
\begin{lstlisting}
>> l_rhp = tds_roots(CL,-3)
>> tds_eigenplot(l_rhp);
>> xlim([-3,0.5])
\end{lstlisting}
 Next, we use the function \matlabfun{tds_strong_sa} to compute the strong spectral abscissa.
\begin{lstlisting}
>> tds_strong_sa(CL,-4)

ans =

   -0.2845
\end{lstlisting}
We conclude that the closed-loop system is strongly exponentially stable.
\begin{figure}[!ht]
    \centering
    \begin{subfigure}{0.49\linewidth}
   	    \includegraphics[width=\linewidth]{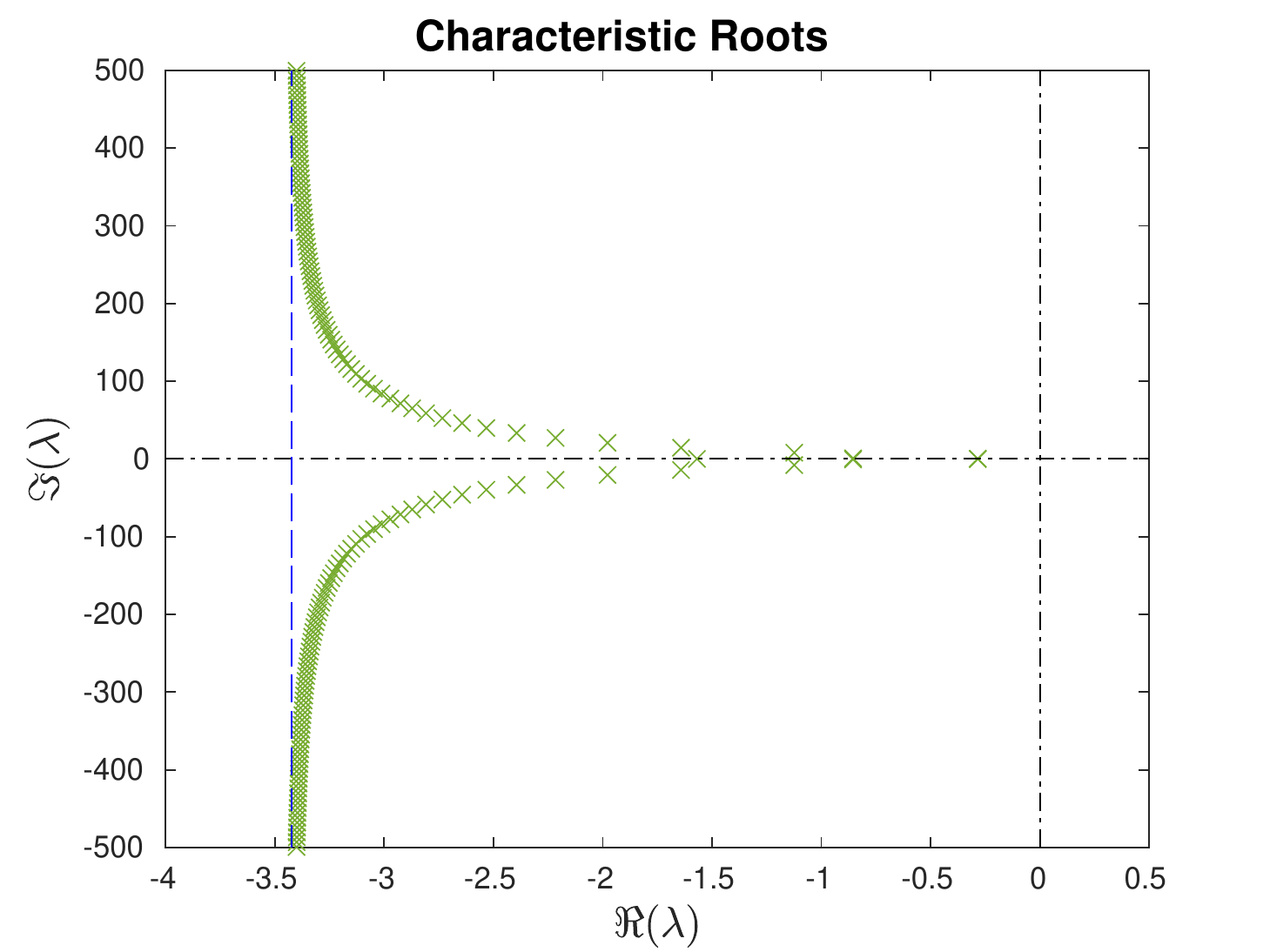}
		\caption{Rectangle $[-4, 0.5] \times \jmath [-500, 500]$}
		\label{fig:ddae_roots_rect}
    \end{subfigure}
    \begin{subfigure}{0.49\linewidth}
   	    \includegraphics[width=\linewidth]{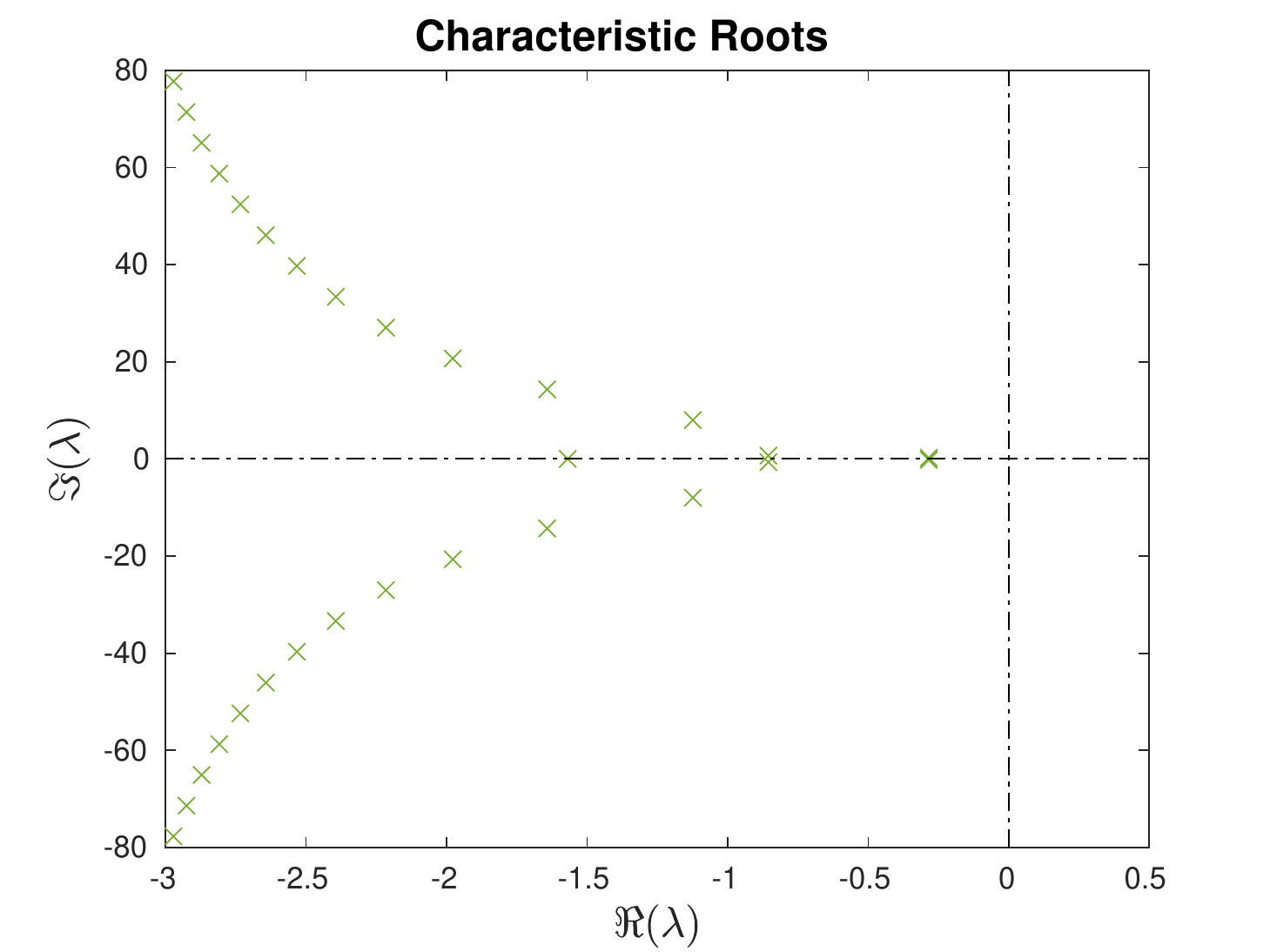}
   	    \caption{Right half-plane $\left\{\lambda\in\C:\Re(\lambda)\geq -3\right\}$}
   	    \label{fig:ddae_roots_rhp}
    \end{subfigure}
    \caption{The characteristic roots of the closed-loop system.}
    \label{fig:ddae_roots}
\end{figure}
 \end{example}  
\begin{remark}
\label{remark:createCL}
Instead of manually creating the system matrices of the closed-loop system, we can uses the function \matlabfun{tds_create_cl(P,K)} which takes two input arguments: a representation for the open-loop plant (\matlabfun{P}) and one for the controller (\matlabfun{K}). Let us demonstrate this functionality for the example above.
\end{remark}
\begin{lstlisting}
P = tds_create({A0,A1},[0,1],{B},[1],{C},[0],{D},[1]);
K = ss(Ac,Bc,Cc,Dc);

CL2 = tds_create_cl(P,K); 
\end{lstlisting}
If we inspect the resulting object using the Matlab Command Window, we see that it indeed corresponds to the DDAE system we derived before.

In this section and the sections before, it was assumed that a state-space representation for the considered systems was available. However, in some cases only a frequency-domain representation for the system, such as its characteristic polynomial, is available. In such a situation, one can use the function \matlabfun{tds_create_qp} to obtain a corresponding state-space representation, as explained in the next section.

\section{Computing the roots of a quasi-polynomial}
\label{subsec:quasi_polynomial}
The software package \packageName{} can also handle a system description in the frequency domain, as it contains functionality to automatically convert such a description to a \matlabfun{tds_ss}-object. As an illustration we will compute the roots of a quasi-polynomial using the functions \matlabfun{tds_create_qp} and \matlabfun{tds_roots}. The function \matlabfun{tds_create_qp} will create a \matlabfun{tds_ss}-object that represents a retarded or neutral DDE whose characteristic roots corresponds to the roots of the quasi-polynomial. Subsequently, the function \matlabfun{tds_roots} can be used to compute the roots of the quasi-polynomial in a desired region.

\begin{example}
	\label{example:roots_qp2}
Let the quasi-polynomial be given by
\[
\lambda^2 -2 \lambda + 1 + (-2\lambda+2) e^{-\lambda} + e^{-2\lambda}.
\]
The function \matlabfun{tds_create_qp(P,D)} allows to create a \matlabfun{tds_ss}-representation for quasi-polynomials of the form
\begin{equation}
\label{eq:quasi_polynomial}
\begin{array}{c}
\left(p_{1,1} \lambda^{n} + \dots + p_{1,n} \lambda + p_{1,n+1}\right) e^{-\lambda \tau_1}     + \\
\left(p_{2,1} \lambda^{n} + \dots + p_{2,n} \lambda + p_{2,n+1}\right) e^{-\lambda \tau_2}     + \\
\vdots \\       
\left(p_{m,1} \lambda^{n} + \dots + p_{m,n} \lambda + p_{m,n+1}\right) e^{-\lambda \tau_m}
\end{array}
\end{equation}
with $\tau_1,\dots,\tau_m$ non-negative delay values and $p_{k,i} \in \R$ for $k=1,\dots,m$ and $i=1,\dots,n+1$ the coefficients. To avoid advanced systems, there must be at least one $k$ such that $\tau_k = 0$ and the corresponding leading coefficient, $p_{k,1}$ should be non-zero. The first input argument of \matlabfun{tds_create_qp}, \matlabfun{P}, should be a matrix containing the coefficients of the quasi-polynomal with the element on the $k$\textsuperscript{th} row and $i$\textsuperscript{th} column equal to $p_{k,i}$. The second argument should be a vector containing the delays. In this case we thus have.
\begin{lstlisting}
qp = tds_create_qp([1 -2 1;0 -2 2;0 0 1],[0 1 2])
\end{lstlisting}
The return argument \matlabfun{qp} is a \matlabfun{tds_ss_redarded}-object whose characteristic function corresponds to the given quasi-polynomial.
\begin{lstlisting}
qp = 

Retarded Delay Differential Equation with properties:

    A: {1x3 cell}
   hA: [0 1 2]
   mA: 3
    n: 2
\end{lstlisting}
with 
\begin{equation}
\label{eq:matrices_qp}
\texttt{A\{1\}} = \begin{bmatrix}
0  &  1 \\
-1  &  2
\end{bmatrix}\text{, }
\texttt{A\{2\}} = 
\begin{bmatrix} 
0 & 0 \\ 
-2 & 2
\end{bmatrix}
\text{, and }
\texttt{A\{3\}} = 
\begin{bmatrix}
0 & 0 \\
-1 & 0
\end{bmatrix}.
\end{equation}
Next, we use the function \matlabfun{tds_roots} to compute the roots of the quasi-polynomial in the region of interest. Below we compute all roots in the right half-plane $\{z\in \C: \Re(z) \geq -4 \}$. As the quasi-polynomial corresponds to the characteristic function of a RDDE, \packageName{} can automatically determine the required degree for the spectral discretisation.
\begin{lstlisting}
cr = tds_roots(qp,-4);
\end{lstlisting}
\end{example}

\noindent \textbf{Important:} Explicitly constructing the characteristic quasi-polynomial of a DDE should generally be avoided in case the original system description is already in state-space form. Firstly, it typically reduces the accuracy of the computed characteristic roots. Secondly, it might introduce additional delays in the \matlabfun{tds_ss}-representation that were not present in the original DDE. For example, consider the following RDDE,
\begin{equation}
\label{eq:qp_compact}
\dot{x}(t) = \begin{bmatrix}
1 & 0 \\
0 & 1
\end{bmatrix} x(t) + \begin{bmatrix}
1 & 0 \\
0 & 1
\end{bmatrix} x(t-1).
\end{equation}
The corresponding characteristic function is equal to
\[
\lambda^2 -2 \lambda + 1 + (-2\lambda+2) e^{-\lambda} + e^{-2\lambda}.
\]
Note that this characteristic function corresponds to the quasi-polynomial considered in \Cref{example:roots_qp2}. However, the \matlabfun{tds_ss_retarded}-object in \eqref{eq:matrices_qp}, obtained using \matlabfun{tds_create_qp}, differs significantly from \eqref{eq:qp_compact}. For example, where the original DDE has only one delay, while \matlabfun{qp} has two. This is because the function \matlabfun{tds_create_qp} can not determine whether the term $e^{-2\lambda}$ in the quasi-polynomial originates from a separate delay or from the product of two $e^{-\lambda}$ terms. This difference in representation has an effect on the computations inside the \matlabfun{tds_roots}-function. For example, to compute all characteristic roots of \matlabfun{qp} in the right half-plane $\{z\in \C: \Re(z) \geq -4 \}$, a spectral discretisation of degree 169 is needed. On the other hand, if we manually create a \matlabfun{tds_ss_retarded}-representation for the RDDE \eqref{eq:qp_compact},
\begin{lstlisting}
tds = tds_create({eye(2),eye(2)},[0 1]);
\end{lstlisting}
and call \matlabfun{tds_roots} with the resulting object as an argument, then a spectral discretisation of degree 34 suffices. 
\section{Examples}

\subsection{Computing transmission zeros of a SISO system}
\label{subsec:tzeros}
In this subsection we will demonstrate how one can compute the transmission zeros of a single-input single-output (SISO) time-delay system in \packageName{}. Below we illustrate the procedure for a retarded delay system, but the procedure for a neutral or a delay-descriptor (\ie{} a system governed by a delay differential algebraic equation) system is similar. Let the system be given by
\begin{equation}
\label{eq:siso_system}
\left\{
\begin{array}{rcl}
   \dot{x}(t) &= & \displaystyle\sum\limits_{k=1}^{m} A_{k}\,x(t-\tau_k) + \sum\limits_{k=1}^{m} b_{1,k}\,u(t-\tau_k) \\[18pt]
    y(t) &=& \displaystyle\sum\limits_{k=1}^{m}c_{1,k}\,x(t-\tau_{k})+ \sum\limits_{k=1}^{m} d_{11,k}\,u(t-\tau_k),
\end{array}\right.
\end{equation}
with $x(t)\in\R^{n}$ the state variable, $u(t)\in\R$ the input and $y(t)\in\R$ the output, the delays $\tau_1,\dots,\tau_m$ non-negative real numbers, and the matrices are real-valued and of appropriate dimension. The corresponding transfer function is given by
\begin{equation*}
T(s) = \left(\sum\limits_{k=1}^{m}c_{1,k} e^{-s \tau_{k}}\right) \left(sI - \sum\limits_{k=1}^{m} A_{k} e^{-s \tau_{k}}\right)^{-1} \left(\sum\limits_{k=1}^{m} b_{1,k} e^{-s\tau_{k}}\right)  +  \sum\limits_{k=1}^{m} d_{11,k} e^{-s\tau_{k}}.  
\end{equation*}
For a SISO system, a transmission zero corresponds to a point $z_{0}$ in the complex plane for which the transfer function becomes zero, \ie{} 
$
T(z_0) = 0
$. Below we will see that finding the complex numbers $z_0$ for which the transfer function becomes zero, can be reformulated as finding the characteristic roots of a DDAE. More specifically, $T(z_0) = 0$ if (and only if) there exists a non-zero $v\in\C$ and a $w\in\C^{n}$ such that
\begin{equation}
    \label{eq:evp_tzero}
    \left(
\begin{bmatrix}
z_0 I_n & 0 \\
0 & 0
\end{bmatrix} - \sum_{k=1}^{m} \begin{bmatrix}
A_k & b_{1,k} \\
c_{1,k} & d_{11,k}
\end{bmatrix} e^{-z_0 \tau_k}\right)\begin{bmatrix}
w \\
v
\end{bmatrix} =
\begin{bmatrix}
0 \\ 0
\end{bmatrix}.
\end{equation}
The transmission zeros of \eqref{eq:siso_system} thus correspond to (a subset of) the characteristic roots of a DDAE. However, although system \eqref{eq:siso_system} is retarded, the DDAE associated with \eqref{eq:evp_tzero} might not satisfy \Cref{assumption:index_1}, meaning that the underlying DDAE may be neutral or even advanced (see also \cite[Section~6]{tzero}).
\begin{example}
\label{example:computing_transmission_zeros}
Consider the following time-delay system
\begin{equation}
\label{eq:sys_tzeros}
\left\{
\begin{array}{rcl}
    \dot{x}(t) &=& \begin{bmatrix}
    1 & 0 & 1 \\
    0 & 0 & 0 \\
    0 & 1 & 0
    \end{bmatrix} x(t) + \begin{bmatrix}
    0 & 1 & 0 \\
    0 & 0 & 0 \\
    0 & 0 & 0
    \end{bmatrix} x(t-1) + \begin{bmatrix} 
    0 \\
    1 \\
    0
    \end{bmatrix} u(t) \\[20pt]
    y(t) & = & \begin{bmatrix}
    1 & 0 & 0
    \end{bmatrix} x(t),
\end{array}
\right.
\end{equation}
whose corresponding transfer function is given by 
\[
T(s) = \frac{se^{-s}+1}{s^{2}(s-1)}.
\]
The (finite) transmission zeros of this system are the roots of the (advanced) quasi-polynomial
\[
se^{-s}+1.
\]
To compute the transmission zeros of \eqref{eq:sys_tzeros}, we start with representing the system in \packageName{} using \matlabfun{tds_create}.
\begin{lstlisting}
A0 = [1 0 1;0 0 0;0 1 0];
A1 = [0 1 0;0 0 0;0 0 0];

B = [0;1;0];
C = [1 0 0];
tds = tds_create({A0,A1},[0 1],{B},{C});
\end{lstlisting}
Note that \matlabfun{tds_create} now takes two additional cell arrays as argument, representing the input and output matrices. Next we use \matlabfun{tds_tzeros} to compute the transmission zeros in the region $[-4,4]\times \jmath[-50,50]$.
\begin{lstlisting}
tzeros = tds_tzeros(tds,[-4 4 -50 50]);

plot(real(tzeros),imag(tzeros),'o')
xlim([-4 4])
ylim([-50 50])
\end{lstlisting}
\Cref{fig:tzeros} shows the transmission zeros in the desired region. Note that \matlabfun{tds_tzeros} internally constructs a DDAE whose characteristic roots correspond to the transmission zeros of \eqref{eq:sys_tzeros}.
\begin{figure}
  \centering
  \includegraphics[width=0.6\linewidth]{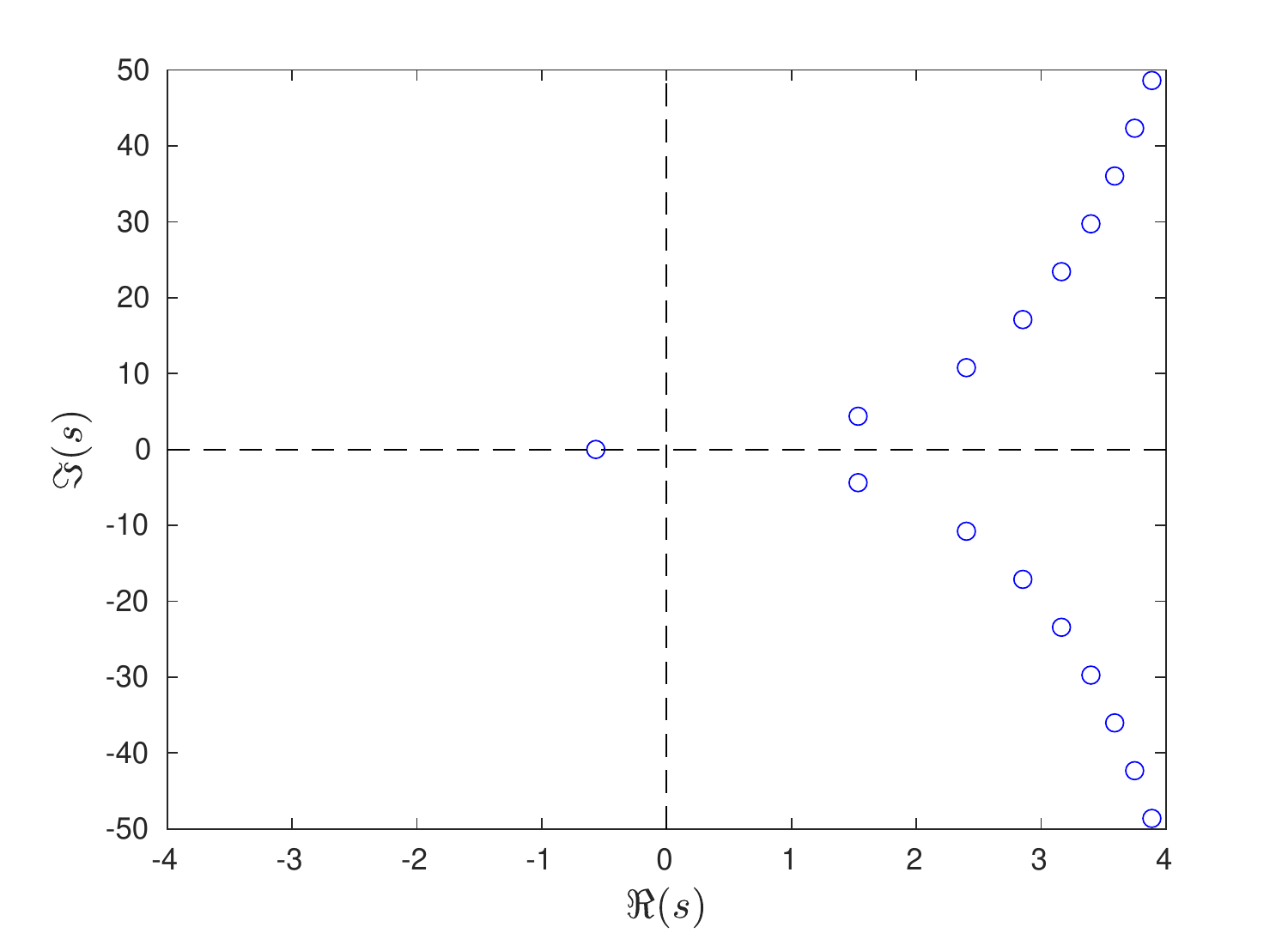}
  \caption{The transmission zeros of \eqref{eq:sys_tzeros} in the rectangular region $[-4,4] \times \jmath [-50,50]$.}
  \label{fig:tzeros}
\end{figure}

\end{example}
\subsection{Stability analysis of a Smith predictor with delay mismatch}
The Smith predictor is a popular scheme for the control of stable systems with a significant feedback delay \cite{smith_predictor}. The idea behind the Smith predictor is to use a controller structure for which the delay is eliminated from the design problem. More specifically, let the plant be described by the transfer function 
\[
H(s) = H_0(s) e^{-s\tau}
\]
with $H_0(s)$ a (stable) rational function and $\tau$ a feedback delay, then the Smith predictor is given by
\[
C'(s) = \frac{C(s)}{1+C(s)H_0(s)-C(s)H_0(s)e^{-s\tau}}.
\]
By taking the feedback interconnection of $H(s)$ and $C'(s)$ we obtain the following expression for the closed-loop transfer function:
\[
H_{cl}(s) = \frac{C(s)H_0(s)}{1+C(s)H_0(s)}e^{-s\tau},
\]
from which it is clear that closed-loop stability can be examined in terms of finitely many characteristic roots. However, for the closed-loop system to be stable in practice, it is important that an accurate estimate for the delay value $\tau$ is available. In the following example we will therefore examine the effect of a delay mismatch on the closed-loop stability. To this end, we note that given the transfer functions
\[
H_0(s) = \frac{B_1(s)}{A_1(s)} \text{ and } C(s) = \frac{B_2(\lambda)}{A_2(s)},
\]
the ``real'' closed-loop system is stable if (and only if) all roots of the following quasi-polynomial lie in the left open half-plane (bounded away from the imaginary axis):
\begin{equation}
\label{eq:smith_delay_mismatch}
F(\lambda) := A_1(\lambda)A_2(\lambda)+B_1(\lambda)B_2(\lambda) + B_1(\lambda)B_2(\lambda)(1-e^{-\lambda\delta})e^{-\lambda\tau},
\end{equation}
with $\delta$ the delay mismatch, see for example \cite[Eq.~(5)]{smith_delay_mismatch}. Below we will apply this result to examine the delay mismatch problem from \cite[Section~4]{smith_delay_mismatch} using \packageName{}.
\begin{example}
	\label{example:smith_predictor}
	In this example, we will investigate the delay mismatch problem for
	\[
	H_0(s) = \frac{1}{s+1} \text{ and } C(s) = \frac{s}{2} + 2.
	\]
	More specifically, we want to determine the regions in the $(\tau,\delta)$-parameter space for which the closed-loop system is stable. To this end, note that for this example the quasi-polynomial \eqref{eq:smith_delay_mismatch} reduces to:
	\begin{equation}
	\label{eq:ex_smith_qp}
	\frac{3}{2}\lambda+3+\left(\frac{\lambda}{2} + 2\right)e^{-\lambda \tau} + \left(\frac{-\lambda}{2}-2\right)e^{-\lambda(\tau+\delta)}.
	\end{equation}
	To examine the stability of this quasi-polynomial in \packageName{}, we start by creating a corresponding state-space representation using the function \matlabfun{tds_create_qp}.
	\begin{lstlisting}
tau = 1; delta = 0.5;
qp = tds_create_qp([1.5 3;0.5 2;-0.5 -2],[0 tau tau+delta])
\end{lstlisting}
	Next we perform a grid search over the $(\tau,\delta)$-parameter space and compute the correspoding strong spectral abscissa. The stability boundaries can now be found by taking the zero-level contour lines. \Cref{fig:ex_smith_pred} shows the result (the computations may take quite some time). 
\begin{lstlisting}
tau_grid = linspace(0,8,201);
delta_grid = linspace(-8,10,451);

options = struct;
options.roots = tds_roots_options('fix_N',20,'quiet',true);

Z = zeros(length(delta_grid),length(tau_grid));
for i2 = 1:length(tau_grid)
    tau = tau_grid(i2);
    for i1=1:length(delta_grid)
        delta = delta_grid(i1);
        if tau+delta<0
        %case: "real" delay can not be negative
            Z(i1,i2) = -inf;
        else
            qp.hH(1) = tau; qp.hH(2) = tau+delta;
            qp.hA(2) = tau; qp.hA(3) = tau+delta;
            Z(i1,i2) = tds_strong_sa(qp,-0.5,options);
        end
    end
end

[X,Y] = meshgrid(tau_grid,delta_grid);
contour(X,Y,Z,[0 0],'b')
hold on
plot(xlim,[0 0],'k--')
plot([0 8],[0 -8],'k-.')
xlabel('\tau')
ylabel('\delta')
	\end{lstlisting}

	\begin{figure}
		\centering
		    \begin{tikzpicture}[      
		every node/.style={anchor=south west,inner sep=0pt},
		x=1mm, y=1mm,
		]   
		\node (fig1) at (0,0)
		{ \includegraphics[width=0.6\linewidth]{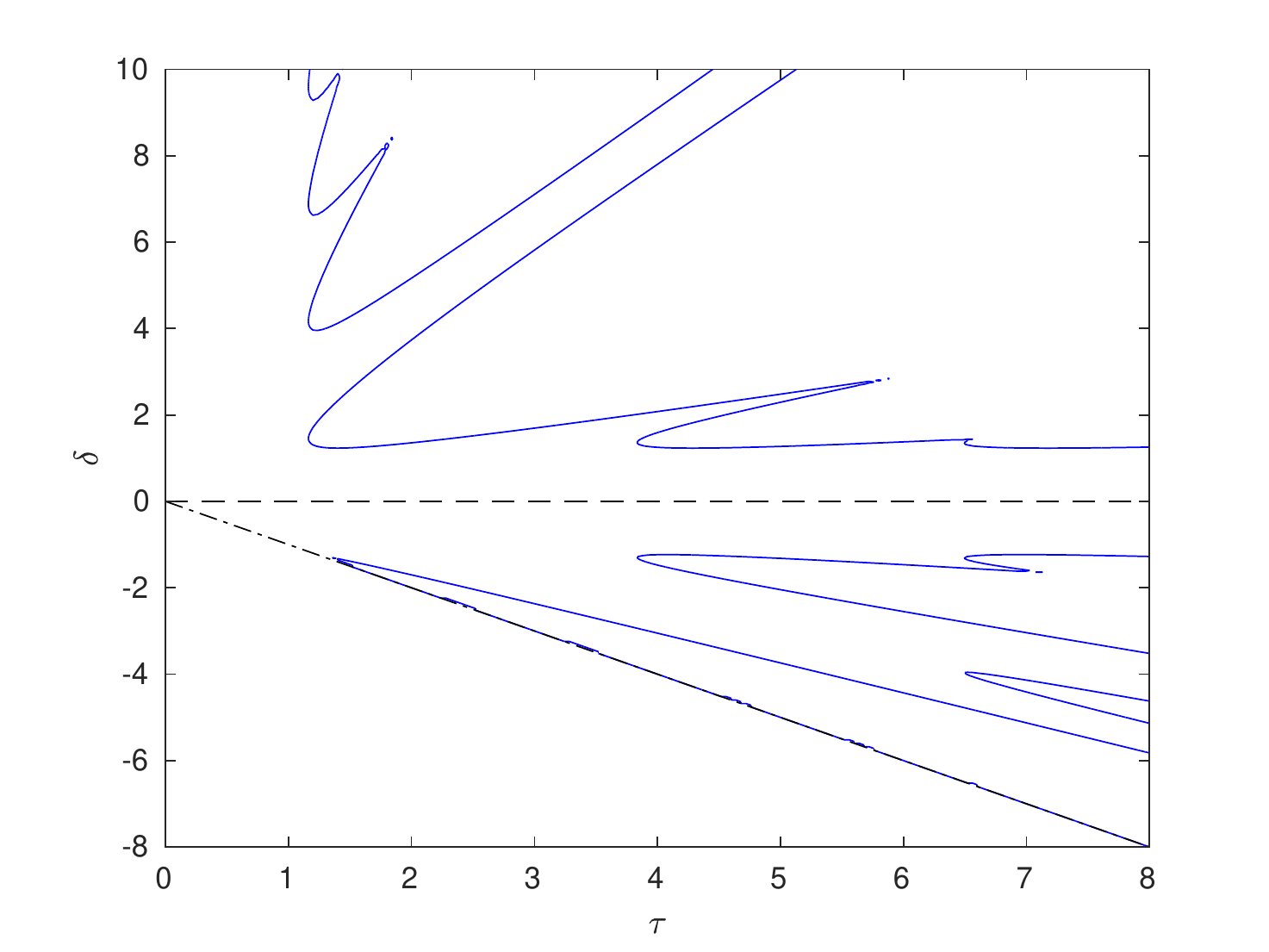}};
		\node[] (text) at (14,30) {\textbf{stable}};
		\node[] (text) at (53,50) {\textbf{unstable}};
		\end{tikzpicture}
		\caption{Stability boundaries of \eqref{eq:ex_smith_qp} in function of $\tau$ and $\delta$.}
		\label{fig:ex_smith_pred}
	\end{figure}
\end{example}
\chapter{Stabilization using a direct optimization approach}
\label{sec:stabilization}
In this section we will discuss how to design (strongly) stabilizing output feedback controllers for linear time-delay systems using \packageName{}.
More specifically, we will consider LTI time-delay systems that admit the following  state-space representation:
\begin{equation}
\label{eq:state_space_system}
P \leftrightarrow
\left\{
\setlength{\arraycolsep}{1pt}
\begin{array}{rclcl}
     E\dot{x}(t) &=& \displaystyle \sum\limits_{k=1}^{m_A} A_k\, x(t-h_{A,k}) &+&   \displaystyle\sum\limits_{k=1}^{m_{B_1}} B_{1,k}\, u(t-h_{B_1,k}) - \displaystyle\sum\limits_{k=1}^{m_H} H_k\, \dot{x}(t-h_{H,k}) \\[13pt]
     y(t)& =&  \displaystyle\sum\limits_{k=1}^{m_{C_1}} C_{1,k} \, x(t-h_{C_1,k}) &+& \displaystyle\sum\limits_{k=1}^{m_{D_{11}}} D_{11,k} \, u(t-h_{D_{11},k})
\end{array}
\right.
\end{equation}
with $x\in \R^{n}$ the state variable, $u \in \R^{p_1}$ the control input (representing the \emph{``actuators''} that steer the system) and  $y \in \R^{q_1}$ the control output (representing the available measurements). The delays $h_{A,1},\dots,h_{A,m_A}$, $h_{B_1,1},\dots,h_{B_1,m_{B_1}}$, $h_{C_1,1},\dots,h_{C_1,m_{C_1}}$, and $h_{D_{11},1},\dots,h_{D_{11},m_{D_{11}}}$ should be non-negative real numbers, while the delays $h_{H,1},\dots,h_{H,m_H}$ should be positive. The matrix $E$ belongs to $\R^{n\times n}$ and is allowed to be singular, while the remaining matrices are also real-valued and have appropriate dimension.

To represent such state-space models in \packageName{}, additional arguments have to be passed to the functions \matlabfun{tds_create}, \matlabfun{tds_create_neutral} and \matlabfun{tds_create_ddae}:
\begin{itemize}
    \item \matlabfun{tds_create(A,hA,B1,hB1,C1,hC1,D11,hD11)} allows to create retarded state-space models, \ie $E = I_n$ and $m_H=0$.
    \item \matlabfun{tds_create_neutral(H,hH,A,hA,B1,hB1,C1,hC1,D11,hD11)} allows to create neutral state-space models, \ie $E=I_n$ and $m_H>0$.
    \item \matlabfun{tds_create_ddae(E,A,hA,B1,hB1,C1,hC1,D11,hD11)} allows to create delay-descriptor systems, \ie $E$ is not necessarily identity and potentially singular, and $m_H = 0$. 
\end{itemize} The arguments \texttt{H}, \texttt{A}, \texttt{B1}, \texttt{C1}, and \texttt{D11} should be cell arrays containing the system matrices, while the arguments \texttt{hH}, \texttt{hA}, \texttt{hB1}, \texttt{hC1}, and \texttt{hD11} should be arrays containing the corresponding delay values. The argument \texttt{E} of \matlabfun{tds_create_ddae} should be a (real-valued) matrix. Finally, in all function calls above, the arguments \texttt{D11} and \texttt{hD11} are optional and can be omitted if no direct feedthrough term is present. \\
\textbf{Note.} In \Cref{subsec:transfer_function} we will show how a state-space representation of the form \eqref{eq:state_space_system} can be obtained  for a SISO \emph{transfer function} using the function \matlabfun{tds_create_tf}.\\

As mentioned in the introduction, \packageName{} allows to design dynamic output feedback controllers with a user-chosen order, denoted below by $n_c$, \ie 
\begin{equation}
\label{eq:dynamic_output_feedback_controller}
   C \leftrightarrow \left\{
    \begin{array}{rcl}
         \dot{x}_c(t) & = &  A_{c} \, x_c(t)+ B_{c} \, y(t)\\[5pt]
         u (t)  & = & C_{c} \, x_c(t) + D_{c} \, y(t)
    \end{array}
    \right.
\end{equation}
with $x_c \in \R^{n_c}$ the state variable of the controller and $A_c$, $B_c$, $C_c$, and $D_c$ real-valued matrices with appropriate dimension. Note that if the controller order $n_c$ is set to $0$, \eqref{eq:dynamic_output_feedback_controller} reduces to the static output feedback law
\begin{equation}
    \label{eq:static_output_feedback_controller}
    C \leftrightarrow u(t) = D_c \, y(t).
\end{equation}

At first sight, the controller structure in \eqref{eq:dynamic_output_feedback_controller} might seem restricting. However, as \eqref{eq:state_space_system} allows for delays in the input, output and direct feedthrough terms and a singular matrix $E$, one can design a broad class of controllers by introducing the necessary auxiliary variables. Furthermore, as the design algorithms in \packageName{} allow to fix the value of certain entries of the controller matrices, it is also possible to design structured controllers, such as PID or decentralized controllers. Below we give an overview of the controller classes that will be considered in this section:
\begin{itemize}
	\item unstructured static and dynamic output feedback controllers in \Cref{example:stabilization_retarded_2,example:stab_neutral},
	\item Pyragas-type feedback controllers in \Cref{ex:pyragas_feedback},
	\item delayed feedback controllers in \Cref{ex:delayed_feedback},
	\item acceleration-based controllers in \Cref{example:acceleration_feedback},
	\item static and dynamic output feedback controllers with fixed entries in the controller matrices in \Cref{example:fixed_entries},
	\item PID controllers in \Cref{example:pid_control},
	\item and decentralized controllers in \Cref{ex:decentralized}.
\end{itemize}
To design such controllers, the functions \matlabfun{tds_stabopt_dynamic} and \matlabfun{tds_stabopt_static} should be used. Both functions take as first argument a \matlabfun{tds_ss}-object that represents the open-loop plant (created using one of the three functions listed above). The function \linebreak \matlabfun{tds_stabopt_dynamic} also takes a second mandatory argument specifying the desired controller order. The remaining four arguments of these two function are optional and should be passed in key-value pairs:
\begin{itemize}
	\item \verb|'options'| allows to specify additional options. The \verb|'options'|-argument should be a \verb|struct| with the necessary fields. A valid \verb|'options'|-structure can be created using the \verb|tds_stabopt_options|-function  (for more details see \Cref{sec:tds_stabopt} or use \verb|help tds_stabopt_options| in the \matlab{} Command Window).
	\item \verb|'initial'|  allows to specify the initial values for the optimization variables. 
	\item \verb|'mask'| allows to specify which entries of the matrices in \eqref{eq:dynamic_output_feedback_controller} or \eqref{eq:static_output_feedback_controller} that should remain fixed to their basis value (specified using the \verb|'basis'| argument), \ie the entries in the controller that should not be touched by the optimization algorithm.
	\item \verb|'basis'| allows to specify the value of the fixed entries of the controller matrices  (indicated using the \verb|'mask'| argument). 
\end{itemize}

Below we will give a high-level description of these controller design algorithms. To this end, we start with a description of the closed-loop system in the form of a DDAE (see \Cref{sec:tds_stabopt}):
\begin{equation}
\label{eq:cl}
E_{cl}\,\dot{x}_{cl}(t) = \sum_{k=1}^{m_{cl}} A_{cl,k}(\mathbf{p})\,x_{cl}(t-\tau_{cl,k})    
\end{equation}
in which the delays $\tau_{cl,1},\dots,\tau_{cl,m_{cl}}$ are non-negative and the entries of the matrices \linebreak $A_{cl,1},\dots,A_{cl,m_{cl}}$ are affine functions of the controller parameters $\mathbf{p}$, \ie the entries of the controller matrices that are not fixed. Note that as a DDAE generally has infinitely many characteristic roots and the number of controller parameters in $\mathbf{p}$ in contrast is finite, it is not possible to fully control the dynamics of the closed-loop system. As a consequence, the considered controller design problem is difficult, exhibiting hard limitations on stabilizability, performance, and robustness. As mentioned before, the controller design routines of \packageName{} therefore employ a direct optimization approach, \ie \matlabfun{tds_stabopt_dynamic} and \matlabfun{tds_stabopt_static} will derive suitable controller parameters by (numerically) minimizing the strong (spectral) abscissa of \eqref{eq:cl} with respect to the control parameters. As a strictly negative (strong) spectral abscissa is a necessary and sufficient condition for (strong) stability, the presented design algorithms are not conservative in the sense that a stabilizing controller can be found whenever it exists. Yet, minimizing the spectral abscissa with respect to the controller parameters is a nonsmooth and nonconvex optimization problem. \packageName{} therefore relies on HANSO v3.0 \cite{HANSO,HANSO2}, a solver for nonsmooth and nonconvex optimization, to tackle these optimization problems. Furthermore, to avoid ending up at a bad local optimum, the optimization procedure is restarted from several initial points.\medskip

\noindent \textbf{Important.} When running the examples below on your own machine, you may obtain different controller parameters, depending on the \matlab{} version and the hardware you are using. This can be explained by the fact that for the considered objective function, the first phase in HANSO (an adaptation of BFGS aimed at finding controller parameters in the neighborhood of a local optimum) may be very sensitive to small perturbations on certain hyper-parameters (even for the same initial values), due to
\begin{itemize}
	\item the nonconvexity of the objective function, implying the existence of multiple local optima (potentially close in terms of spectral abscissa),
	\item the nonsmoothness of the objective function at local minima, implying high sensitivity in their neighborhood, and
	\item the fact that the adopted parametrization of a dynamic controller in terms of the matrices of its state-space representation is only unique up to a similarity transformation.
\end{itemize}
When optimizing performance measures, as in the next section, this phenomenon is much less pronounced.

\medskip

The remainder of this section is structured as follows. First we will discuss the situation in which the DDAE describing the closed-loop system is essentially retarded. Next we will discuss the case in which \eqref{eq:cl} is essentially neutral. Note however that the user does not need to specify the type of the resulting closed-loop system, as this can \emph{automatically} be determined by \packageName{}. Thereafter, we will show that \packageName{} can even be useful for the (robust) controller design of undelayed systems. More precisely, we will see that certain stable ``delay-free'' closed-loop systems may have a delay margin of zero, meaning that they can be destabilized by an arbitrary small feedback delay. \packageName{} in contrast will automatically assume an infinitesimal feedback delay during the design process, thereby avoiding the aforementioned fragility problem. Next, we will demonstrate how \packageName{} can be used to design various structured controllers. For more technical details on the presented design algorithms, we refer to \Cref{sec:tds_stabopt}. \\

\noindent \textbf{Important:} To run the examples described below, make sure that HANSO v3.0 is present in your \matlab{} search path. The code of HANSO v3.0 is included in the folder \matlabfun{hanso3_0}.

\section{Essentially retarded closed-loop systems}
For simplicity we will first consider the case in which the DDAE \eqref{eq:cl} is essentially retarded. For example, when the open-loop system in \eqref{eq:state_space_system} is retarded and does not have any direct feedthrough term. 
\begin{example}
\label{example:stabilization_retarded_2}
Consider the following five dimensional time-delay system that models the heating system described in \cite[Section~5]{vyhlidal2003analysis}
\begin{equation}
\label{eq:heating_system}
    		\left\{
		\begin{array}{lcl}
		\dot{x}_h(t) & = & \frac{1}{T_h} \left( -x_h(t-\eta_h)+K_b x_a(t-\tau_b) + K_u u(t-\tau_u)\right)\\[5pt]
		\dot{x}_a(t) &=& \frac{1}{T_a}\left(-x_a(t)+x_c(t-\tau_e) +K_a\big(x_h(t) -\frac{1+q}{2}x_a(t)-\frac{1-q}{2}x_c(t-\tau_e)\big)\right) \\[5pt]
		 \dot{x}_d(t) &=& \frac{1}{T_d} \left(-x_d(t) + K_d x_a(t-\tau_d)\right)\\[3pt]
		\dot{x}_c(t) &=& \frac{1}{T_c} \left(-x_c(t-\eta_c)+K_c x_d(t-\tau_c)\right)\\[3pt]
		\dot{x}_I(t) &=& x_{c,set}-x_c(t),
		\end{array}
		\right.
\end{equation}
in which the last state can be interpreted as an integrator, used to eliminate the steady-state offset from the target value. As we are interested in the stability of the closed-loop system around the equilibrium point $x_c(t) \equiv x_{c,set}$, we can, without loss of generality, assume that $x_{c,set} = 0$. Furthermore, it is assumed that all states are available for measurement, \ie $C_1=\mathrm{I}_5$. We start with representing this system in \packageName{} using the values for the constants given in the reference above.
\begin{lstlisting}
Th = 14;  Ta = 3;  Td = 3;   Tc = 25;
Kb = 0.24; Ka = 1;  Kd = 0.94; Kc = 0.81; Ku = 0.39;
nh = 6.5; tb = 40; te = 13; td = 18;  
tc = 2.8; nc = 9.2; tu = 13.2;

dA = [0 nh tb te td tc nc]; dB = tu;

A0 = zeros(5,5);
A0(2,1) = Ka/Ta; A0(2,2) = (-Ka-1)/Ta;
A0(3,3) = -1/Td; A0(5,4) = -1;
A1 = zeros(5,5); A1(1,1) = -1/Th;
A2 = zeros(5,5); A2(1,2) = Kb/Th;
A3 = zeros(5,5); A3(2,4) = 1/Ta;
A4 = zeros(5,5); A4(3,2) = Kd/Td;
A5 = zeros(5,5); A5(4,3) = Kc/Tc;
A6 = zeros(5,5); A6(4,4) = -1/Tc;

B = [Ku/Th; 0; 0; 0; 0];

plant=tds_create({A0,A1,A2,A3,A4,A5,A6},dA,{B},dB,{eye(5)},0);
\end{lstlisting}
We can again use the \matlab{} Command Window to inspect the resulting object.
\begin{lstlisting}
plant = 

LTI Retarded Time-Delay State-Space System with properties:

    A: {1x7 cell}
   hA: [0 6.5000 40 13 18 2.8000 9.2000]
   mA: 7
   B1: {[5x1 double]}
  hB1: 13.2000
  mB1: 1
   C1: {[5x5 double]}
  hC1: 0
  mC1: 1
    n: 5
   p1: 1
   q1: 5
\end{lstlisting}
Next, we use \matlabfun{tds_stabopt_static} to design a static output feedback controller of the form \eqref{eq:static_output_feedback_controller}.  As mentioned before, this function takes one mandatory argument, namely the open-loop state-space system. In this case we also pass the additional arguments \verb|options| (with \matlabfun{nstart} equal to 1) and \verb|initial| to specify the initial values for the feedback matrix. Here we will run the optimization routine starting from the initial controller 
\[
D_c = \begin{bmatrix}
    0 & 0 & 0 & 0 & 0 
\end{bmatrix}
\] to assure reproducibility. However, recall that the considered optimization problem is typically non-convex which means that different initial values for the optimization variables might result in a different local optimum. The default behavior of \matlabfun{tds_stabopt_static} is therefore to run the optimization procedure starting from five randomly sampled initial controllers and then return the best result.
\begin{lstlisting}
initial = zeros(1,5);
opt = tds_stabopt_options('nstart',1);
[Dc,cl] = tds_stabopt_static(plant,'options',opt,'initial',initial);
\end{lstlisting}
The default behavior of \matlabfun{tds_stabopt_static} is to print the result of every successful iteration of the optimization procedure to the \matlab{} Command Window.  In this case BFGS converges in 146 steps to a local optimum. The function \matlabfun{tds_stabopt_static} has two important return arguments: the resulting feedback matrix and a \matlabfun{tds_ss}-object that represents the closed-loop system. In this case, the obtained feedback matrix (rounded to four decimal places) is given by
\[
Dc =
\begin{bmatrix}
 -0.1659  & -0.2968  & -0.3612  & -0.3629 &   0.0168
\end{bmatrix}
\]
and the spectral abscissa of the corresponding closed-loop system is equal to $-7.961\times10^{-3}$.

Recall that due to the nonconvex, nonsmooth nature of the optimization problem, the obtained controller may depend on the initial values of the optimization process. To demonstrate this, let us rerun \matlabfun{tds_stabopt_static} starting from the initial controller 
\[
D_c =
\begin{bmatrix}
-1 & -1 & -1 & -1 & 1
\end{bmatrix}.
\]
\begin{lstlisting}
initial2 = [-1 -1 -1 -1 1];
[Dc2,cl2] = tds_stabopt_static(plant,'options',opt,'initial',initial2);
\end{lstlisting}
We now obtain the controller 
\[
D_c = \begin{bmatrix}
-0.35936 & -1.2544 & -3.1696 & -3.9919 & 0.14344
\end{bmatrix}
\]
and the corresponding closed-loop spectral abscissa is equal to $-6.0982\times10^{-2}$. This also means that for certain initial controllers the optimization routine might end up in a local optimum for which the spectral abscissa is positive.
\begin{lstlisting}
initial3 = [10 10 10 10 13];
[Dc3,cl3] = tds_stabopt_static(plant,'options',opt,'initial',initial3);

>> tds_strong_sa(cl3,-0.1)
   1.117611e-02
\end{lstlisting}
as indicated by the following warning
\begin{lstlisting}[language=,basicstyle=\ttfamily\color{orange}]
Warning: Resulting controller is not stabilizing. 
\end{lstlisting}
Note that this warning does not imply that there does not exist a stabilizing controller, it only indicates that starting from the chosen initial values, the optimization procedure ended up in local optima that does not correspond with a strictly negative spectral abscissa. If you encounter this warning, try to increase the number of initial values using the option \verb|'nstart'| and/or the optional argument \verb|'initial'|. If after sampling sufficiently many initial values, still no stabilizing static output feedback controller is found, this might be an indication that there does not exist such a controller and one might consider using a dynamic feedback controller.

Similarly as for static feedback controllers, we can design a dynamic output feedback controller in \packageName{} using \verb|tds_stabopt_dynamic|. This function takes two mandatory arguments: the open-loop system and the desired controller order $n_c$. Let us illustrate the usage of this function for \eqref{eq:heating_system} and $n_c$ equal to 1. As initial controller matrices we use 
\[
A_c=-1\text{, }B_c = \begin{bmatrix}
1 &1 &1 & 1 & 1
\end{bmatrix}\text{, }C_c = 1,
\]
and $D_c$ the static feedback matrix obtained before. 
\begin{lstlisting}
nc = 1;
init_dyn = ss(-1,ones(1,5),1,Dc);
[K_dyn,cl4]=tds_stabopt_dynamic(plant,nc,'options',opt,'initial',init_dyn);
\end{lstlisting}
The resulting controller \verb|K_dyn| is now a \verb|ss|-object from the \matlab{} Control System Toolbox and we can use the \matlab{} Command Window to inspect the values of the matrices in \eqref{eq:dynamic_output_feedback_controller}.
\begin{lstlisting}
>> Ac = K_dyn.A
   -0.6256
>> Bc = K_dyn.B
    6.4688   -5.5782  -20.9171   12.4451    0.4695
>> Cc = K_dyn.C
    0.0442
>> Dc = K_dyn.D 
   -0.9265   -1.5847   -1.8875   -5.5270    0.1415
\end{lstlisting}
The closed-loop spectral abscissa is now decreased to 
\begin{lstlisting}
>> tds_strong_sa(cl4,-0.1)
  -7.188430e-02
\end{lstlisting}
which is to be expected, as the number of free control parameters has increased. 

In the examples above we only used the nonsmooth, nonconvex BFGS-solver of HANSO. The result after BFGS can in some cases be further improved by using the gradient sampling algorithm. Because gradient sampling is computationally expensive, it is disabled by default in \packageName{}. However, by setting the option \verb|'gradient_sampling'| to true, gradient sampling can be enabled. Let us now retake the design of the static output feedback controller.
\begin{lstlisting}
opt2 = tds_stabopt_options('nstart',1,'gradient_sampling',true);
[Dc5,cl5] = tds_stabopt_static(plant,'options',opt2,'initial',initial);
\end{lstlisting}
This results in the feedback matrix
\begin{equation}
\label{eq:gradsamp}
D_c = \begin{bmatrix}
 -0.4838 & -2.3657 & -3.8564 & -4.9999 & 0.2037   
\end{bmatrix}
\end{equation}
and the spectral abscissa of the closed-loop system is equal to $-6.088\times10^{-2}$. \Cref{fig:gradsamp} shows the rightmost characteristic roots of the resulting closed-loop system. Notice that the optimum corresponds to 4 pairs of complex conjugate characteristic roots with the same real part.

\begin{figure}
    \centering
    \includegraphics[width=0.6\linewidth]{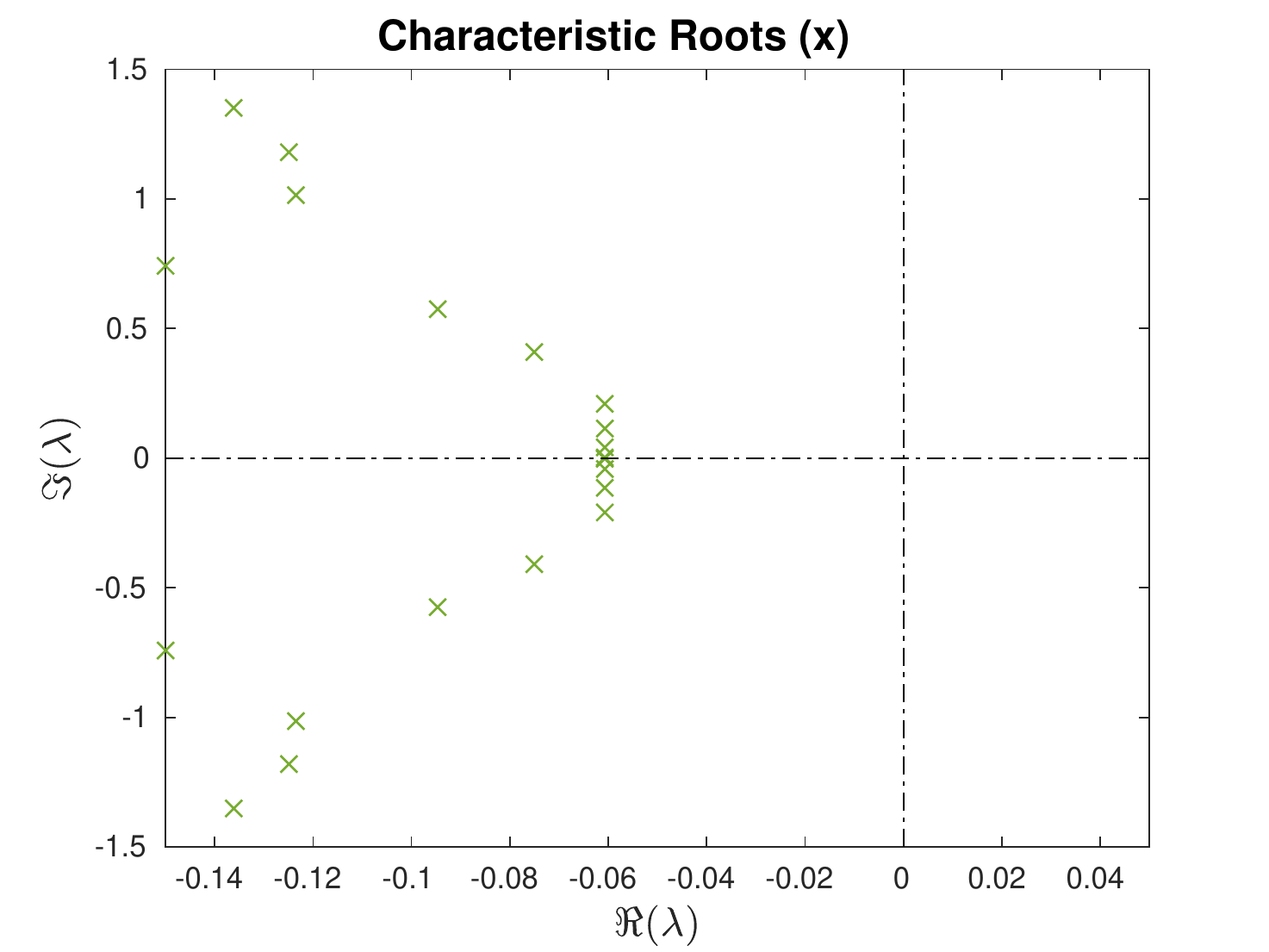}
    \caption{Rightmost characteristic roots of the feedback interconnection of \eqref{eq:heating_system} and \eqref{eq:gradsamp}.}
    \label{fig:gradsamp}
\end{figure}

\textbf{Important:} The result obtained by running the gradient sampling algorithm may differ over multiple runs of the optimization procedure, even if the same initial controller is used. This is due to the random sampling inside the gradient sampling algorithm. For sake of reproducibility, we will therefore refrain from using gradient sampling in the remainder of this manual.
\end{example}
\section{Essentially neutral closed-loop systems}
\label{subsec:neutral_stabilization}
In this section we will consider the case in which the DDAE \eqref{eq:cl} is essentially neutral. From \Cref{sec:neutral_stability,subsec:stability_DDAE} we know that for such systems exponential stability might be sensitive to infinitesimal delay perturbations. \packageName{} will therefore design controllers that preserve exponential stability in the presence of sufficiently small delay perturbations, \ie it will design controllers for which the closed-loop system is strongly exponentially stable.

Recall that in \Cref{sec:neutral_stability} we encountered two necessary and sufficient conditions for strong exponential stability, namely,
\begin{itemize}
    \item \textbf{\Cref{proposition:strong_stability_alt}}: the \emph{strong} spectral abscissa of the system is strictly negative, or,
    \item \textbf{\Cref{proposition:strong_stability}}: the spectral abscissa of the system is strictly negative and the inequality $\gamma(0)<1$ holds with $\gamma(r)$ as defined in \eqref{eq:gamma_r_ddae}.
\end{itemize}
To design a strongly stabilizing controller one thus could consider two approaches.
\begin{itemize}
    \item \textbf{Approach 1:} minimize the \emph{strong} spectral abscissa of the closed-loop systems.
    \item \textbf{Approach 2:} minimize the spectral abscissa of the closed-loop system under the constraint that $\gamma(0;A_{cl,1}^{(22)},\dots,A_{cl,m_{cl}}^{(22)},\vec{\tau}_{cl})$ is strictly smaller than 1.
\end{itemize}
The advantage of the first approach is that it guarantees an exponential decay rate for the trajectories of the closed-loop system, even under sufficiently small (delay) perturbations. However, this approach typically has a higher computational cost, especially when the delay difference equation associated with \eqref{eq:cl} contains more than two terms. Indeed, recall from \Cref{proposition:strong_spectral_abscissa} that computing the strong spectral abscissa of the underlying delay difference equation requires multiple evaluations of the function $r\mapsto \gamma(r)$, which itself requires solving a nonconvex optimization problem. When using the second approach only one evaluation of $\gamma(r)$ is requiered in each step of the optimization routine. Yet, when using the second approach, there is no guarantee for the exponential decay rate of trajectories of the closed-loop system that is robust against arbitrarily small delay perturbations. It is only guaranteed that the strong spectral abscissa is smaller than 0. 

Because the user does not need to indicate whether the closed-loop system is essentially retarded or essentially neutral, the functions \matlabfun{tds_stabopt_static} and \matlabfun{tds_stabopt_dynamic}  automatically selects one of the two approaches mentioned based on the number of terms in the associated delay difference equation in case of an essentially neutral closed-loop system. In the following example, we will here manually select which approach is applied using the option \matlabfun{'method'}:
\begin{itemize}
	\item \verb|'CD'| will apply \textbf{Approach 1}, and
	\item \verb|'barrier'|\footnote{For more details about the used nomenclature, see \Cref{sec:tds_stabopt}.} will apply \textbf{Approach 2},
\end{itemize}
to illustrate the difference between the two approaches.For more information on the implementation of these two approaches, we refer the interested reader to \Cref{sec:tds_stabopt}.
\begin{example}
\label{example:stab_neutral}
Consider the following time-delay system from \cite[Section~7.4.3]{bookdelay}
\begin{equation}
\label{eq:example_stabopt_neutral}
\left\{
\begin{array}{rcl}
\dot{x}(t) &=& \begin{bmatrix}
-0.08 & -0.03 & 0.2 \\
0.2 & -0.04 & -0.005 \\
-0.06 & 0.2 & -0.07
\end{bmatrix} x(t) + \begin{bmatrix}
-0.1 \\
-0.2 \\
\phantom{-}0.1
\end{bmatrix} u(t-5), \\[20px]
y(t) &=& x(t) + \begin{bmatrix}
3 \\ 4 \\ 1
\end{bmatrix}
u(t-2.5) +
\begin{bmatrix}
\phantom{-}0.4\\ -0.4 \\-0.4
\end{bmatrix} u(t-5).
\end{array}
\right.
\end{equation}
To represent this system we can use the following code.
\begin{lstlisting}
A = [-0.08 -0.03 0.2;0.2 -0.04 -0.005;-0.06 0.2 -0.07]; 
B = [-0.1;-0.2;0.1]; C = eye(size(A,1));
D1 = [3;4;1]; D2 = [0.4;-0.4;-0.4];
tau1 = 2.5; tau2 = 5;
sys = tds_create({A},0,{B},tau2,{C},0,{D1,D2},[tau1 tau2]);
\end{lstlisting}
The (strong) spectral abscissa of the open-loop system  --notice that the plant itself is described by an ODE-- is equal to 0.10806.

Next, we will design a strongly stabilizing static controller using Approach 1 starting from a zero feedback gain matrix.
\begin{lstlisting}
initial = zeros(1,3);
options1 = tds_stabopt_options('method','CD','nstart',1);
[Dc1,cl1] = tds_stabopt_static(sys,'options',options1,'initial',initial);
\end{lstlisting}
The resulting feedback matrix (rounded to four decimal places) is equal to
\[
D_c = \begin{bmatrix}
0.0409 &   0.0612 &   0.3837
\end{bmatrix}
\]
and \Cref{tab:stabilization_neutral} gives the quantity $\gamma(0;A_{cl,1}^{(22)},\dots,A_{cl,m_{cl}}^{(22)},\vec{\tau}_{cl})$, the spectral abscissa ($c$), the strong spectral abscissa of the associated delay difference equation ($C_D$) and the strong spectral abscissa ($C$). As this last quantity is strictly negative, we conclude that the closed-loop system is exponentially stable and stability is preserved when considering (sufficiently small) delay perturbations. We also notice that the obtained (local) minimum of $C$ is characterized by the (almost) equality of $c$ and $C_D$. 

Next, we try the second approach:
\begin{lstlisting}
options2 = tds_stabopt_options('method','barrier','nstart',1);
[Dc2,cl2] = tds_stabopt_static(sys,'options',options2,'initial',initial);
\end{lstlisting}
Starting from the zero initial feedback matrix we now do not obtain a strongly stabilizing controller, as indicated by the warning we get. We therefore rerun the optimization procedure starting from 5 random initial feedback matrices:
\begin{lstlisting}
% Create a cell containing five random 1x3-matrices
initial = mat2cell(randn(1,3*5),1,3*ones(1,5)) 
options2 = tds_stabopt_options('method','barrier','nstart',5);
[Dc2,cl2] = tds_stabopt_static(sys,'options',options2,'initial',initial);
\end{lstlisting}
The minimizing feedback matrix is now equal to
\[
D_c = \begin{bmatrix}
0.0253 & 0.1066 & 0.3188
\end{bmatrix}
\]
and \Cref{tab:stabilization_neutral} gives again the value of some quantities related to the stability of the resulting closed-loop system. Compared to Approach 1 the spectral abscissa ($c$) is decreased to $-0.0344$. This comes however at the cost of an increase in the strong spectral abscissa ($C$). This was to be expected as the strong spectral abscissa is now not explicitly taken into account during the optimization process as the constraint $\gamma(0;A_{cl,1}^{(22)},\dots,A_{cl,m_{cl}}^{(22)},\vec{\tau}_{cl})<1$ only guarantees $C_D(A_{cl,1}^{(22)},\dots,A_{cl,m_{cl}}^{(22)},\vec{\tau}_{cl})<0$.
\begin{table}[!ht]
    \centering
	\renewcommand{\arraystretch}{1.2}
    \begin{tabular}{ccc}
    	\toprule
    & Approach 1 & Approach 2 \\ \midrule
        $\gamma(0)$ &  0.9554 & 0.9905 \\
        $c$ & -0.0309 & -0.0344 \\
        $C_D$ & -0.0309 & -0.0066 \\
        $C$ & -0.0309 & -0.0066 \\ \bottomrule
    \end{tabular}
    \caption{The quantity $\gamma(0;A_{cl,1}^{(22)},\dots,A_{cl,m_{cl}}^{(22)},\vec{\tau}_{cl})$, the spectral abscissa ($c$), the strong spectral abscissa of the associated delay difference equation ($C_D$) and the strong spectral abscissa ($C$) of the closed-loop systems obtained by designing a static output feedback controller using Approaches~1~and~2.}
    \label{tab:stabilization_neutral}
\end{table}
\end{example}

Next we will consider a synthesis problem that is related to the design of strongly stabilizing controllers for time-delays systems. More specifically, we will consider the feedback interconnection of a delay-free system and a static output feedback controller. Although the nominal closed-loop system is exponentially stable, it has a delay margin of zero, meaning that it can is destabilized by introducing an arbitrary small feedback delay. First we will illustrate this phenom by analyzing the spectrum of the nominal closed-loop system and the effect of introducing a feedback delay. Next, we will see how  \packageName{} avoids this fragility problem when designing a control law. 
\begin{example}
	\label{example:robustness_feedback_delay}
Consider the following delay-free system
\begin{equation}
\label{eq:fragility}
\left\{
\begin{array}{rcl}
\dot{x}(t) &=& \underbrace{\begin{bmatrix}
1.25 & -0.8 & -0.95 \\
0.175 & -0.4 & -0.125 \\
-1.15 & -0.4 & 0.65
\end{bmatrix}}_A x(t) + \underbrace{\begin{bmatrix}
2\\
0 \\
-2
\end{bmatrix}}_B u(t) \\[30px]
y(t) &=& \underbrace{\begin{bmatrix}
-7 & 25 & -11
\end{bmatrix}}_{C} x(t) + \underbrace{1}_D u(t).
\end{array}
\right.
\end{equation}
and the stabilizing static output feedback law:
\begin{equation}
\label{eq:fragility_feedback}
u(t) = \underbrace{-5}_{D_c} y(t).
\end{equation}
The corresponding characteristic roots are $-1.3622$ and $ -1.9023  \pm \jmath 0.2871$, which confirms that the closed-loop system is indeed exponentially stable. However, let us now introduce a small delay $\tau$ in the feedback loop, \ie $u(t) = D_c y(t-\tau)$  with $\tau>0$. The dynamics of the resulting closed-loop system can be expressed in terms of the following DDAE: 
\[
\begin{bmatrix}
\mathrm{I}_3 & 0 &0 \\
0 & 0 & 0 \\
0 & 0 & 0
\end{bmatrix} \begin{bmatrix}
\dot{x} \\
\dot{y} \\
\dot{u}
\end{bmatrix}(t)
 = 
 \begin{bmatrix}
 A & 0&B \\
 C & -1 & D \\
 0 & 0 & -1
 \end{bmatrix}\begin{bmatrix}
x \\
y \\
u
\end{bmatrix}(t) +
\begin{bmatrix}
 0 & 0&0 \\
 0 & 0 & 0 \\
 0 & D_c & 0
 \end{bmatrix}\begin{bmatrix}
x \\
y \\
u
\end{bmatrix}(t-\tau).
\]
The associated delay difference equation is given by
\begin{equation}
\label{eq:fragility_deldiff}
\begin{bmatrix}
-1 & D \\
0 & -1
\end{bmatrix} \begin{bmatrix}
y \\
u
\end{bmatrix}(t) + 
\begin{bmatrix}
0 & 0 \\
D_c & 0 \\
\end{bmatrix} \begin{bmatrix}
y\\
u
\end{bmatrix}(t-\tau) = 0
\end{equation}
and the spectrum of \eqref{eq:fragility_deldiff} consists of a chain of characteristic roots at
\begin{equation}
    \label{eq:fragility_chain}
\frac{\ln\lvert DD_c \rvert +  \jmath (\angle(DD_c) + 2\pi l)}{\tau} = \frac{\ln 5+\jmath (2l+1)\pi l}{\tau} \text{ for } l \in \Z,
\end{equation}
with $\ln 5 = 1.6094$, meaning that $C_D>0$ for \emph{any} $\tau>0$. We conclude from the theory of the previous section, that the closed-loop system with feedback delay is not asymptotically stable, no matter how small the feedback delay is made. The closed-loop interconnection of \eqref{eq:fragility} and \eqref{eq:fragility_feedback} is said to have a zero-stability margin. \packageName{} avoids this fragility problem by assuming an infinitesimal feedback delay into account during the design process. It now follows from the discussion at the beginning of this chapter that a natural approach to design a non-fragile controller for \eqref{eq:fragility} consists of minimizing the spectral abscissa of the corresponding closed-loop system under the constraint
\[
\gamma(0) = \max_{\theta \in [0,2\pi) } \rho\left(\begin{bmatrix}
-1 & D \\
0 & -1
\end{bmatrix}^{-1}\begin{bmatrix}
0 & 0 \\
D_c & 0 \\
\end{bmatrix} e^{-\jmath\theta}\right) = \lvert DD_c \rvert < 1.
\]
To solve this optimization problem, we can use \matlabfun{tds_stabopt_static} with Approach 2, as \packageName{} always assumes an infinitesimal feedback delay during the design process.
\begin{lstlisting}
A = [1.25 -0.8 -0.95;0.175 -0.4 -0.125;-1.15 -0.4 0.65];
B = [2; 0; -2]; C = [-7 25 -11]; D = 1;   

sys = tds_create({A},[0],{B},[0],{C},[0],{D},[0]);
options = tds_stabopt_options('method','barrier','nstart',1);
[Dc,cl]=tds_stabopt_static(sys,'options',options,'initial',0);
\end{lstlisting}
We now find the feedback gain $D_{c}= -0.9979$, for which the (zero-delay) spectral abscissa is equal to $-0.8279$. Notice that in contrast to the previous example, stability is preserved for a sufficiently small feedback delay as $\gamma(0)=0.9979<1$. For example, for $\tau=0.01$ the strong spectral abscissa is equal to $-0.2022$, as becomes clear by executing the following code.
\begin{lstlisting}
cl.hA(2) = 1e-2;
options = tds_roots_options('fix_N',40);
cr = tds_roots(cl,-1,options);

tds_eigenplot(cr)
hold on
sa = tds_strong_sa(cl,-2)
xlim([-1 0.5])
ylim([-3000 3000])
plot([sa sa],ylim,'b--')
\end{lstlisting}
\end{example}

\section{Structured controllers}
\label{subsec:structured_controllers}
In this section we will discuss how various controller structures can be designed using the \packageName{} framework. We start with designing a Pyragas-type feedback controller. We then design a delayed output feedback controller for an inverted pendulum set-up. Next, we consider an example with acceleration feedback. Subsequently, we will discuss how one can impose structure on the matrices of \eqref{eq:dynamic_output_feedback_controller}, \ie only certain entries of the controller matrices are optimized while the other entries remain fixed to a given value. We will then apply this technique to design PID and decentralized controllers using \packageName{}.

\subsection{Pyragas-type feedback controllers}
In this example we will design a Pyragas-type feedback controller \cite{Pyragas} for a delay-free system. Such controllers have the following standard form
\begin{equation}
\label{eq:pyragas_feedback}
    u(t) = D_c \big(x(t)-x(t-\tau)\big)
\end{equation}
and are frequently used for the stabilization of (unstable) periodic orbits of chaotic systems as they preserve the shape of the cycle when $\tau$ is chosen equal to the period of the orbit. 
\begin{example}
\label{ex:pyragas_feedback}
Consider the following delay-free system
\begin{equation}
\label{eq:example_pyragas}
\dot{x}(t) = \begin{bmatrix}
-3.5 & -6.5 \\
\phantom{-}4.5 & \phantom{-}5.5 
\end{bmatrix} x(t) + \begin{bmatrix}
\phantom{-}1 \\
-1
\end{bmatrix} u(t).
\end{equation}
It follows from \cite[Figure~8.9]{bookdelay} that for $\tau=1$ this system is stabilizable using Pyragas-type state feedback. Let us now design such a stabilizing controller using \packageName{}. To this end, we introduce the following measured output
\[
y(t) = x(t) - x(t-\tau),
\]
which allows to reformulate \eqref{eq:pyragas_feedback} as a (static) output feedback controller. Next we can use the same functionality as before to design our controller.
\begin{lstlisting}
tau = 1;
A = {[-3.5 -6.5; 4.5 5.5]};
B = {[1;-1]};
% introduce the output y(t) = x(t) - x(t-tau)
C = {eye(2),-eye(2)}; hC = [0 tau];
% Create a tds_ss-object representing the open-loop system
sys = tds_create(A,0,B,0,C,hC);
% Design a static output feedback controller for sys
o = tds_stabopt_options('nstart',1);
[Dc,CL]= tds_stabopt_static(sys,'options',o,'initial',{[0 0]});
% Analyse the resuling closed-loop system
C = tds_strong_sa(CL,-2)
l = tds_roots(CL,-4.5);
tds_eigenplot(l)
\end{lstlisting}
The resulting feedback gain is equal to
\[
D_c = \begin{bmatrix}
-0.5917 & 0.5347
\end{bmatrix}
\]
and the (strong) spectral abscissa of the closed-loop system is equal to -0.5234. Notice that for the resulting feedback law the spectral abscissa  corresponds to a pair of complex conjugate roots with multiplicity 2.
\end{example}
\subsection{Delayed feedback control}
In the following example we will illustrate how to design a delayed feedback controller in \packageName{} by extending the output variable in the state-space model for the plant. Delayed feedback control has several applications. Firstly, \cite{Olgac1994DelayedResonator} showed that purposely adding a delay into the control loop can be beneficial for stabilizing oscillations. Secondly, in \cite{ramirez2015} the Proportional-Integral-Retarded (PIR) controller was proposed as an alternative for the Proportional-Integral-Derivative (PID) controller. Such PIR controllers seek to address the sensitivity to high frequency noise of which PID controllers suffer, by replacing the derivative of the output signal by a delayed version of the output signal. 

\begin{example}
\label{ex:delayed_feedback}
Consider the set-up depicted in \Cref{fig:mass_spring_pendulum} that consists of a cart that balances an inverted pendulum and that is connected to two walls using two identical springs. Assuming that the angular displacement $\theta$ remains sufficiently small, the dynamics of this system can be approximated by the following LTI model 
\begin{subequations}
\label{eq:example_pr}
 \begin{empheq}[left={\empheqlbrace\,}]{align}
\label{eq:state}
\begin{bmatrix}
\vphantom{\tfrac{(m+M)g}{Ml}}\dot{x}_1(t) \\
\vphantom{\tfrac{(m+M)g}{Ml}}\dot{x}_2(t) \\
\vphantom{\tfrac{(m+M)g}{Ml}}\dot{x}_3(t) \\
\vphantom{\tfrac{(m+M)g}{Ml}} \dot{x}_4(t)
\end{bmatrix}=&
\underbrace{
\begin{bmatrix}
\vphantom{\tfrac{(m+M)g}{Ml}} 0 & 1 & 0 & 0 \\
\vphantom{\tfrac{(m+M)g}{Ml}} \tfrac{-2k}{M} & 0 & \dfrac{-mg}{M} & 0\\
\vphantom{\tfrac{(m+M)g}{Ml}}0 & 0 & 0 & 1 \\
\tfrac{2k}{Ml} & 0 & \tfrac{(m+M)g}{Ml} & 0
\end{bmatrix}}_{A} \begin{bmatrix}
\vphantom{\tfrac{(m+M)g}{Ml}} x_1(t) \\
\vphantom{\tfrac{(m+M)g}{Ml}} x_2(t) \\
\vphantom{\tfrac{(m+M)g}{Ml}} x_3(t) \\
\vphantom{\tfrac{(m+M)g}{Ml}} x_4(t)
\end{bmatrix} + \underbrace{\begin{bmatrix}
\vphantom{\tfrac{(m+M)g}{Ml}} 0 \\
\vphantom{\tfrac{(m+M)g}{Ml}} \tfrac{1}{M} \\
\vphantom{\tfrac{(m+M)g}{Ml}} 0 \\
\vphantom{\tfrac{(m+M)g}{Ml}} \tfrac{-1}{Ml}
\end{bmatrix}}_{B} u(t)  \\
\label{eq:position_output}
y(t) =& \underbrace{\begin{bmatrix}
1 & 0 & 0 & 0 \\
0 & 0 & 1 & 0
\end{bmatrix}}_{C}
x(t),
\end{empheq}
\end{subequations}
with $k>0$ the spring constant of the two springs, $m>0$ the mass of the pendulum's bob, $M>0$ the mass of the cart,  $x_1=p$ the horizontal displacement of the cart with respect to its equilibrium position, $x_2=\dot{p}$ its horizontal velocity, $x_3=\theta$ the angular displacement of the pendulum, $x_4=\dot{\theta}$ the angular velocity and $u$ a controlled force acting on the cart. As measured output, the position of the cart and the angular displacement of the pendulum are available.

To stabilize the pendulum in the upright position, \ie $x\equiv0$, we will first try a (delay-free) static output feedback law, \ie 
\begin{equation}
\label{eq:static_feedback}
u(t) = \underbrace{\begin{bmatrix} d_1 & d_2 \end{bmatrix}}_{D_c} y(t).
\end{equation}
Plugging \eqref{eq:position_output} and \eqref{eq:static_feedback} into \eqref{eq:state} we find that the characteristic polynomial of the closed-loop system is given by
\[
\lambda^{4}+\lambda^{2} \left(\frac{2k-d_1}{M}-\frac{(M+m)g-d_2}{Ml}\right)-\frac{(2k-d_1)g}{Ml}.
\]
As this polynomial is bi-quadratic, it has at least two roots in the closed right half-plane, independent of the value of the control parameters $d_1$ and $d_2$, meaning taht the considered system can thus not be stabilized using (undelayed) position-angle feedback. A solution could be to resort to full state-feedback. This would however require that the velocities $\dot{p}$ and $\dot{\theta}$ are known. As we assumed that these quantities can not be measured directly, this would require a finite difference approximation, \eg $\dot{p}(t) \approx \frac{p(t)-p(t-\tau)}{\tau}$ with $\tau>0$ a sufficiently small delay. Such an approximation is however sensitive to high frequency measurement noise. We therefore opt for an alternative approach: we will consider the following proportional-delayed proportional feedback law
\begin{equation}
\label{eq:pr_feedback}
u(t) = D_{c,0} y(t) + D_{c,1} y(t-\tau).
\end{equation}
At first sight, this control law does not fit either \eqref{eq:dynamic_output_feedback_controller} or \eqref{eq:static_output_feedback_controller}. However, by defining the extended output
\begin{equation}
\label{eq:extended_position_output}
y_{e}(t) = \begin{bmatrix}
C \\
0
\end{bmatrix} x(t) +  \begin{bmatrix}
0 \\
C
\end{bmatrix} x(t-\tau), 
\end{equation}
feedback law \eqref{eq:pr_feedback} corresponds to the static output feedback law
\[
u(t) = \begin{bmatrix}
    D^{c}_0 & D^{c}_1
\end{bmatrix} y_e(t).
\]
Using this observation, we can now use the following code to design such a controller and verify the stability properties of resulting closed-loop system for the system parameters $k =$ \SI[inter-unit-product =\cdot]{1000}{\newton\per\metre}, $l =$ \SI{0.4}{\metre}, $g =$ \SI[inter-unit-product =\cdot]{9.8}{\meter\per\second\squared}, $m = $ \SI{0.1}{\kilogram}, $M =$ \SI{1}{\kilogram},  and $\tau =$ \SI{0.1}{\second}:
\begin{lstlisting}
% System constants
k = 1000; l = 0.4; g = 9.8; m = 0.1; M = 1; tau = 0.1;
% System matrices
A = [   0     1       0       0;
    -2*k/M    0   -m*g/M      0;
        0     0       0       1;
    2*k/(M*l) 0 (m+M)*g/(M*l) 0];
B = [0;1/M;0;-1/(M*l)];
C = [1 0 0 0;
     0 0 1 0];
% Define the extended output matrices
Ce0 = [C;zeros(size(C))]; Ce1 = [zeros(size(C));C];
% Use tds_create to represent the extended plant 
P = tds_create({A},[0],{B},[0],{Ce0, Ce1},[0 tau]);
%% Design a stabilizing controller
o = tds_stabopt_options('nstart',1);
[Dc,cl]=tds_stabopt_static(P,'options',o,'initial',zeros(1,4));

%% Examine the resulting closed-loop system
cr = tds_roots(cl,-70);
tds_eigenplot(cr);
SA = tds_strong_sa(cl,-2);
\end{lstlisting}
The resulting feedback matrices are given by
\begin{equation}
\label{eq:pr_matrix}
D_c =  10^{3} \times \begin{bmatrix}
\smash[b]{\underbrace{\begin{matrix} 8.2456 & 4.9617
\end{matrix}}_{D_0^c}} & \smash[b]{\underbrace{\begin{matrix} -5.8168 &  -2.5176
\end{matrix}}_{D_1^c}}
\end{bmatrix}
\vspace{10pt}
\end{equation}
and the strong spectral abscissa of the closed-loop system is equal to $-1.4932$.

\begin{figure}[]
\centering
	\begin{tikzpicture}
		\draw (0,3) -- (0,0) -- (5,0) -- (5,3);
		\fill[LineSpace=9pt, pattern=my north west lines,pattern color=gray] (0,3) -- (0,0) -- (5,0) -- (5,3) -- (5.5,3) -- (5.5,-0.5) -- (-0.5,-0.5) -- (-0.5,3) -- cycle;
		
		\draw[decoration={aspect=0.4, segment length=2mm, amplitude=2mm,coil},decorate] (0.15,0.9) -- (1.5,0.9); 
		\node (k1) at (0.75,0.45) {$k$};

		\draw (2,0.3) circle (0.3);
		\draw (3,0.3) circle (0.3);
		\draw (1.5,0.6) rectangle (3.5,1.2);
		\node (M) at (2.5,0.9) {$M$};
		\filldraw (3.25,3) circle (0.15);
		\draw (2.5,1.2) -- (3.25,3);
		\node (m) at (3.35,3.35) {$m$};
		\node [rotate=67.38] (l) at (3.15,2.1) {$l$};
		\draw[dashed] (2.5,1.2) -- (2.5,2.5);
		\draw [domain=67.38:90] plot ({2.5+0.7*cos(\x)}, {1.2+0.7*sin(\x)});
		\node (theta) at (2.72,2.3) {$\theta$};
		\draw (2.5,-0.7) -- (2.5,-1.1);
		\draw [-{>[scale=2]}] (2.5,-0.9) -- (3.5,-0.9);
		\node (x) at (3,-1.3) {$p$};
		\draw (3.5,1.2) -- (3.5,1.6);
		\draw[->] (3.5,1.4) -- (4.3,1.4);
		\node (F) at (4.2,1.65) {$u$};
		
		\draw[decoration={aspect=0.4, segment length=2mm, amplitude=2mm,coil},decorate] (3.65,0.9) -- (5,0.9); 
		\draw (3.5,0.9) -- (3.65,0.9);
		\node (k2) at (4.25,0.45) {$k$};
		\end{tikzpicture}
	\caption{Inverted pendulum set--up in which the cart is connected to two walls using two identical springs.}
	\label{fig:mass_spring_pendulum}
	\end{figure}
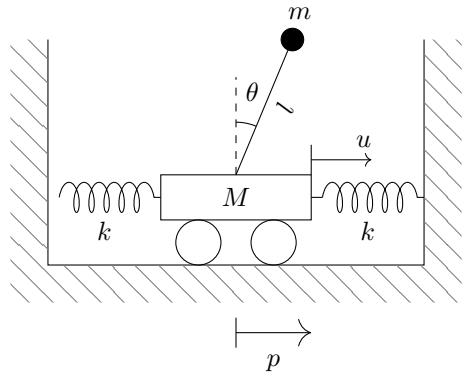

\end{example}

\subsection{Acceleration feedback}
\label{subsec:acceleration_feedback}
As accelerometers are readily-available and cheap \cite{vyhlidalAcceleration}, they are a popular choice for vibration suppression. In the next example we will therefore design an acceleration feedback controller for a mass-spring system.
\begin{example}
\label{example:acceleration_feedback}
Consider the following mass-spring system.
\begin{subequations}
\label{eq:mass_spring}
 \begin{empheq}[left={\empheqlbrace\,}]{align}
     \dot{x}     
     (t) = & \begin{bmatrix}
     0 & 1 \\
     \frac{-k}{m} & 0
     \end{bmatrix} x(t) + \begin{bmatrix}
     0 \\
     \frac{1}{m}
     \end{bmatrix}u(t-\tau) \label{eq:mass_spring_1} \\[7px]
     y(t) = & \begin{bmatrix}
     0 & 1
     \end{bmatrix} \dot{x}(t), \label{eq:mass_spring_2}
\end{empheq}
\end{subequations}
with $k =$ \SI[inter-unit-product =\cdot]{1000}{\newton\per\metre} and $m =$ \SI{1}{\kilogram}. Notice that the measured output now depends on $\dot{x}(t)$ meaning that  \eqref{eq:mass_spring} does not fit in the framework of \eqref{eq:state_space_system}. A first idea is to eliminate $\dot{x}(t)$ in \eqref{eq:mass_spring_2} using \eqref{eq:mass_spring_1} which introduces a direct feedthrough term
\[
y(t) = \begin{bmatrix}
0 & 1
\end{bmatrix} \left(A x(t) + B u(t-\tau)\right) = \begin{bmatrix}
-\frac{k}{m} & 0
\end{bmatrix} x(t) +
\frac{u(t-\tau)}{m}.
\]
Alternative we can use a DDAE reformulation by introducing the auxiliary variable $a$
\begin{equation}
\label{eq:ddae_reformulation_acceleration}
\left\{
\begin{array}{rcl}
\underbrace{
\begin{bmatrix}
1 & 0 & 0 \\
0 & 1 & 0 \\
0 & 1& 0  
\end{bmatrix}
}_{\texttt{Ea}}
\left[
\begin{array}{c}
\multirow{2}{*}{$\dot{x}(t)$}\\
\\
\dot{a}(t)
\end{array}\right] &=&
\underbrace{
\begin{bmatrix}
0 & 1 & 0 \\
\frac{-k}{m} & 0 & 0 \\
0 & 0& 1  
\end{bmatrix}
}_{\texttt{Aa}}
\begin{bmatrix}
\multirow{2}{*}{$x(t)$}\\
\\
a(t)
\end{bmatrix}
+ \underbrace{\begin{bmatrix}
0 \\ \frac{1}{m} \\ 0
\end{bmatrix}}_{\texttt{Ba}}u(t-\tau) \\[15pt]
y(t) &=& \underbrace{\begin{bmatrix}
0 & 0 & 1
\end{bmatrix}}_{\texttt{Ca}}
\begin{bmatrix}
\multirow{2}{*}{$x(t)$}\\
\\
a(t)
\end{bmatrix},
\end{array}
\right.
\end{equation}
in which \verb|Ea|, \verb|Aa|, \verb|Ba|, and \verb|Ca| correspond to the matrices in the code below. Here we will opt for the second approach in order to demonstrate the usage of the function \verb|tds_create_ddae| for systems with in- and outputs. The acceleration feedback controller now reduces to the static output feedback law $u(t) = D_c y(t)$. As the closed-loop system is not stabilizable for $\tau=0$, we artificially introduce a feedback delay $\tau =$ \SI{0.1}{\second}. Note that for $\tau\neq 0$ the DDAE describing the resulting closed-loop system is essentially neutral. To guarantee that the closed loop is \emph{strongly} exponentially stable, we will use \textbf{Approach 1}.
\begin{lstlisting}
% System constants
k = 100; m = 1; tau = 0.1;
% Define transformed system matrices
Ea = [1 0 0;0 1 0;0 1 0];
Aa = [0 1 0;-k/m 0 0;0 0 1];
Ba = [0; 1/m; 0];
Ca = [0 0 1];
% Use tds_create_ddae to represent the DDAE (3.20)
P = tds_create_ddae(Ea,{Aa},0,{Ba},tau,{Ca},0);
% Design a stabilizing controller
o=tds_stabopt_options('nstart',1,'method','CD');
[Dc,cl] = tds_stabopt_static(P,'options',o,'initial',0);
% Examine the resulting closed-loop system
CDDAE = tds_strong_sa(cl,-4);
\end{lstlisting}
The resulting feedback matrix is given by 
\[
Dc = -0.7886
\]
and the strong spectral abscissa of the closed loop is equal to $-1.3061$. In this case, the optimimum corresponds to two pairs of complex conjugate rightmost characteristic roots and $c>C_D$. We conclude that the resulting closed-loop system is strongly stable. 
\end{example}
\subsection{Imposing structure on the controller matrices}
In some cases, only certain entries of the controller matrices in \eqref{eq:dynamic_output_feedback_controller} are allowed to be optimized, while the other entries must remain fixed to a given value. As we will see below, by imposing such structure on the controller matrices, PID and decentralized controllers can be designed using \packageName{}. To this end, we now illustrate the role of the optional arguments \verb|'mask'| and \verb|'basis'| in more detail. The \verb|'mask'| argument is used to indicate which entries  are allowed to be optimized, while the \verb|'basis'| argument allows to specify the default values for the entries that must remain fixed. We will first demonstrate how to use these arguments on an artificial example, before moving on to the more realistic applications of PID and decentralized control.
\begin{example}
\label{example:fixed_entries}
Consider the following time delay system with two input and two outputs:
\begin{equation}
\label{eq:fixed_entries_sys}
\left\{
\setlength{\arraycolsep}{2pt}
\begin{array}{rcl}
\dot{x}(t) &=& \underbrace{\begin{bmatrix}
-3.5 & -2 & 3 & -1.5\\
3 & -6.5 & 3 & -1 \\
4 & -2 & -7 & 2.5 \\
-2.5 & 1.5 & -3.5 & 1 
\end{bmatrix}}_{A_0} x(t) + \underbrace{\begin{bmatrix}
0.25 & 0.5 & -0.25 & 0.25\\
0.5 & -0.5 & 1 & -0.5 \\
1 & -1 & 0.5 & -0.15 \\
-0.15 & -0.15 & 0.1 & 0.2 
\end{bmatrix}}_{A_1} x(t-1)+ \phantom{a} \\[12mm] && \underbrace{\begin{bmatrix}
-1 & 2 \\
-2 & 1\\
3 & -2.5 \\
-4 & 0
\end{bmatrix}}_{B} u(t-5) \\[12mm]
y(t) &=& \underbrace{\begin{bmatrix}
0 & 1 & 1 & 0 \\
1& 0 &0 & 1
\end{bmatrix}}_{C}x(t).
\end{array}
\right.
\end{equation}
We want to stabilize this system using a dynamic output feedback controller of order two (\ie $n_c=2$) but with the restriction that the matrices $B_c$ and $D_c$ are diagonal. We again start with creating a representation for the open-loop system using \verb|tds_create|.
\begin{lstlisting}
A0=[-3.5 -2 3 -1.5;3 -6.5 3 -1;4 -2 -7  2.5;-2.5 1.5 -3.5  1];
A1=[-0.25 0.5 -0.25 0.25;0.5 -0.5 1 -0.5;
    1 -1 0.5 -0.15;-0.15 -0.15 0.1 0.2];
B = [-1 2;-2 1;3 -2.5;-4  0];
C = [0 1 1 0;1 0 0 1];

sys = tds_create({A0,A1},[0 1],{B},5,{C},0);
\end{lstlisting}
Next, we create the \verb|mask| for our controller, \ie an \verb|ss|-object with entries \verb|1| and \verb|0| to indicate which entries are allowed to be optimized and which should remain fixed.
\begin{lstlisting}
mask = ss(ones(2),eye(2),ones(2),eye(2));
\end{lstlisting}
To specify the basis values for the fixed entries, we create a \verb|ss|-object \verb|basis|, whose elements dictate the values of the fixed entries.
\begin{lstlisting}
basis = ss(zeros(2),zeros(2),zeros(2),zeros(2));
\end{lstlisting}
Next we run the optimization procedure starting from the initial controller
\[
A_c = -I_2\text{ and }
B_c = C_c = D_c = I_2.
\]
\begin{lstlisting}
initial = ss(-eye(2),eye(2),eye(2),eye(2));
options_stabopt = tds_stabopt_options('nstart',1);
[K,cl] = tds_stabopt_dynamic(sys,2,'options',options_stabopt,...
                      'initial',initial,'basis',basis,'mask',mask);
\end{lstlisting}
Inspecting the resulting controller using the \matlab{} Command Window, we see that the resulting controller indeed satisfies the specified pattern.
\begin{align*}
	& A_c = \begin{bmatrix}
	-9.2431 & 11.7967 \\
	0.8572 & -3.1225
	\end{bmatrix}\text{, } B_c = \begin{bmatrix}
	-2.5544 & 0 \\
	0 & 0.2507
	\end{bmatrix}\text{, } \\[4mm]
	& C_c = \begin{bmatrix}
	0.8863 & -2.3617 \\
	-15.8203 & 12.1480
	\end{bmatrix}\text{ and } D_c = \begin{bmatrix}
	0.8420 & 0 \\
	0 & 0.0265
	\end{bmatrix}.
\end{align*}
Analyzing the resulting closed-loop system, we see that it is (strongly) exponential stable as its (strong) spectral abscissa is equal to -0.1786. 
\end{example}

\subsection{PID controller}
Proportional-Integral-Derivative (PID) controllers are frequently used in industry to stabilize a set-point. The feedback signal of a PID controller consists of an affine combination of a term proportional to the difference between the output and the desired set point, a term proportional to the accumulation of the error over time (\ie its integral), and a term proportional to the rate of change of this error (\ie its time-derivative):
\begin{equation}
\label{eq:PID}
u(t) = K_p \, y(t)  + K_i \int_{0}^{t} y(s) \, \dd s + K_d \, \dot{y}(t).
\end{equation}
The derivative component improves the responsiveness of the controller as it allows to \mbox{anticipate} the future value of the error. The integrator on the other hand allows to eliminate a steady-state error for a non-zero reference signal. In the following example we will demonstrate how the design of a PID controller can be transformed into the design of a dynamic output feedback controller of the form \eqref{eq:dynamic_output_feedback_controller} by augmenting the state and output variables and by imposing a certain structure on some of the controller matrices.
\begin{example}
\label{example:pid_control}
Consider the following time-delay system:
\begin{equation}
    \label{eq:example_PID}
    \left\{
\begin{array}{rcl}
     \dot{x}(t)  & = & \underbrace{\begin{bmatrix}
         -3 & \phantom{-}2 & \phantom{-}6 \\ -3 & -4 & \phantom{-}1 \\ \phantom{-}1 & -1 & -2
     \end{bmatrix}}_{A_0} x(t) + \underbrace{\begin{bmatrix}
         \phantom{-}0.5 & -0.7 & -0.8 \\ -0.3 & \phantom{-}0.6 & \phantom{-}0.2 \\ -0.2 & \phantom{-}0.6 & -0.4
     \end{bmatrix}}_{A_1} x(t-1) + \underbrace{\begin{bmatrix}
         1 & \phantom{-}0 \\ 1 & \phantom{-}1 \\ 1 & -1
     \end{bmatrix}}_{B} u(t-0.2)  \\[10mm]
     y(t) & = & \underbrace{\begin{bmatrix}
         1 & 1 & 1
     \end{bmatrix}}_{C} x(t).
    \end{array}
\right.
\end{equation}
To express a controller of the form \eqref{eq:PID} as a dynamic output feedback controller, we first need to extend the state-space model \eqref{eq:example_PID}. As in \Cref{example:acceleration_feedback}, we will use a DDAE reformulation of the plant. More specifically, we will introduce the auxiliary variable $\zeta$, representing the output derivative, and the novel output $y_e$
\begin{equation}
\label{eq:PID_extended_ss}
\left\{
\begin{array}{rcl}
     \underbrace{\begin{bmatrix} 
     I & 0 \\
     C & 0
     \end{bmatrix}}_{\texttt{Et}} \begin{bmatrix}
         \dot{x} \\
         \dot{\zeta}
     \end{bmatrix} (t) & = & \underbrace{\begin{bmatrix}
         A_0 & 0 \\
         0 & I
     \end{bmatrix}}_{\texttt{A0t}} \begin{bmatrix}
         x \\
         \zeta
     \end{bmatrix}(t) + \underbrace{\begin{bmatrix}
         A_1 & 0 \\ 0 & 0
     \end{bmatrix}}_{\texttt{A1t}} \begin{bmatrix}
         x \\
         \zeta
     \end{bmatrix}(t-\tau_1) + \underbrace{\begin{bmatrix}
         B \\ 0
     \end{bmatrix}}_{\texttt{Bt}} u(t-\tau_2)  \\[10mm]
     y_e(t) & = & \underbrace{\begin{bmatrix}
         C & 0 \\ 0 & I
     \end{bmatrix}}_{\texttt{Ct}} \begin{bmatrix}
         x \\
         \zeta
     \end{bmatrix}(t), 
\end{array}
\right.
\end{equation}
in which \verb|Et|, \verb|A0t|, \verb|A1t|, \verb|Bt|, and \verb|Ct| correspond to the matrices in the code below. A PID controller now corresponds to a structured dynamic output feedback controller:
\[
\left\{
\begin{array}{rcl}
\dot{x}_c(t) &=& 0\, x_c(t) + \begin{bmatrix}
    I & 0
\end{bmatrix} y_e(t) \\[6pt]
u(t) &=& K_I x_c(t) + \begin{bmatrix}
    K_P & K_D
\end{bmatrix} y_e(t),
\end{array}
\right.
\]
\ie the matrix $A_c$ should remain fixed to the zero matrix and the matrix $B_c$ should always equal $\begin{bmatrix}
    I & 0
\end{bmatrix}$. The entries of the matrices $C_c$ and $D_c$ on the other hand can be freely tuned. To design such a controller in \packageName{} we can use the following code.
\begin{lstlisting}
% System matrices
A0 = [-3 2 6;-3 -4 1;1 -1 -2];
A1 = [0.5 -0.7 -0.8;-0.3 0.6 0.2;-0.2 0.6 -0.4];
B = [1 0;1 1;1 -1]; C = [1 1 1];
tau1 = 1; tau2 = 0.2;
%Define transformed system matrices
n = size(A0,1); p = size(B,2); q = size(C,1);
Et = [eye(n) zeros(n,q);C zeros(q)];
A0t = [A0 zeros(n,q);zeros(q,n) eye(q)];
A1t = [A1 zeros(n,q); zeros(q,n+q)];
Bt = [B; zeros(q,p)]; Ct = [C zeros(q);zeros(q,n) eye(q)];
% Use tds_create_ddae to create a representation for (3.24)
tds = tds_create_ddae(Et,{A0t A1t},[0 tau1],{Bt},tau2,{Ct});
% Define the mask and basis arguments
mask=ss(zeros(q),zeros(q,2*q),ones(p,q),ones(p,2*q));
basis=ss(zeros(q),[eye(q) zeros(q)],zeros(p,q),zeros(p,2*q));
% Design a structured controller
initial = ss(zeros(q),[eye(q) zeros(q)],ones(p,q),ones(p,2*q));
options_stabopt = tds_stabopt_options('nstart',1);
nc = q;
[K,cl] = tds_stabopt_dynamic(tds,nc,'mask',mask,'basis',basis,...
                      'initial',initial,'options',options_stabopt);
\end{lstlisting}
The resulting controller \verb|K| is an \verb|ss|-object with 
\begin{lstlisting}
K.A =

     0

K.B =

     1     0

K.C =

   -5.1159
   -2.5347

K.D =

   -2.3480   -0.2611
    1.6036   -0.2695
\end{lstlisting}
meaning that
\begin{equation}
\label{eq:PID_controller_stab}
K_P = \begin{bmatrix}
    -2.3480 \\ \phantom{-}1.6036
\end{bmatrix}, \, K_I = \begin{bmatrix}
    -5.1159 \\ -2.5347
\end{bmatrix},  \text{ and } K_D = \begin{bmatrix}
    -0.2611 \\ -0.2695
\end{bmatrix}.
\end{equation}
Using \verb|tds_strong_sa|, we find that the strong spectral abscissa of the corresponding closed loop is equal to -1.1563 meaning that the closed-loop system is strongly exponentially stable.

\end{example}

\subsection{Networked systems and decentralized controllers}
In the following example we will consider a controller design problem for a networked system, \ie a system that consists of multiple interacting subsystem. In such a context, centralized control, in which one global controller steers all subsystems --- see \Cref{fig:centralized_control}, is often impractical or even infeasible, due to  the high communication and processing requirements. Therefore, decentralized control, in which each subsystem is controlled by using only local information --- see \Cref{fig:decentralized_control}, is often preferred as it lowers the communication requirements. Furthermore, the local controllers can process the available information in parallel. However, as each controller can only use local information, this comes at the cost of a decrease in performance. Distributed and overlapping controllers form a middle ground between centralized and decentralized control. Each subsystem is still controlled by a local controller, but these local controllers can now exchange information with neighboring controllers or neighboring subsystems, see \Cref{fig:distributed_control}. This allows for a trade-off between communication and processing requirements on the one hand, and performance on the other hand.
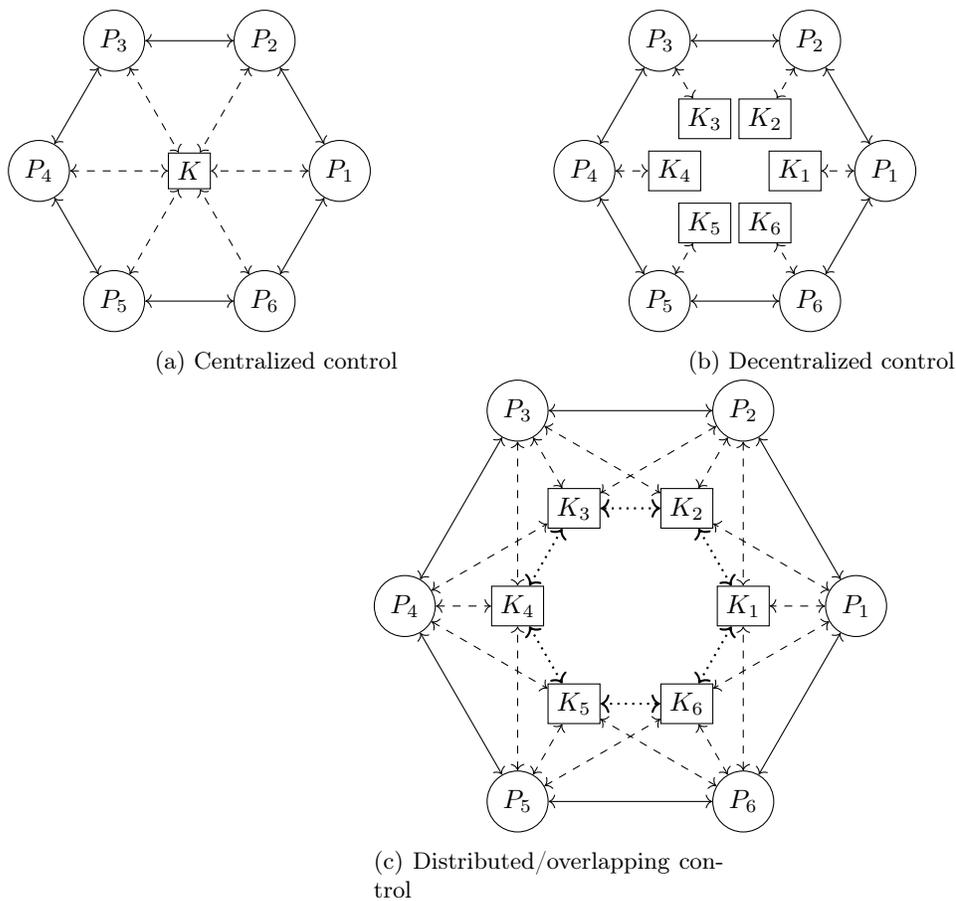
\begin{figure}[t]
    \centering
    \begin{subfigure}{0.49\linewidth}
    \begin{tikzpicture}
      \node[draw,circle] (n1) at (2,0) {$P_1$};
      \node[draw,circle] (n2) at (1,1.732) {$P_2$};
      \node[draw,circle] (n3) at (-1,1.732) {$P_3$};
      \node[draw,circle] (n4) at (-2,0) {$P_4$};
      \node[draw,circle] (n5) at (-1,-1.732) {$P_5$};
      \node[draw,circle] (n6) at (1,-1.732) {$P_6$};
      
      \node[draw,rectangle] (K) at (0,0) {$K$};
      
      \draw[<->] (n1) -- (n2);
      \draw[<->] (n2) -- (n3);
      \draw[<->] (n3) -- (n4);
      \draw[<->] (n4) -- (n5);
      \draw[<->] (n5) -- (n6);
      \draw[<->] (n6) -- (n1);
      
      \draw[<->,dashed] (n1) -- (K);
      \draw[<->,dashed] (n2) -- (K);
      \draw[<->,dashed] (n3) -- (K);
      \draw[<->,dashed] (n4) -- (K);
      \draw[<->,dashed] (n5) -- (K);
      \draw[<->,dashed] (n6) -- (K);
    \end{tikzpicture} 
        \caption{Centralized control}
    \label{fig:centralized_control}
    \end{subfigure}
     \begin{subfigure}{0.49\linewidth}
    \begin{tikzpicture}
      \node[draw,circle] (n1) at (2,0) {$P_1$};
      \node[draw,circle] (n2) at (1,1.732) {$P_2$};
      \node[draw,circle] (n3) at (-1,1.732) {$P_3$};
      \node[draw,circle] (n4) at (-2,0) {$P_4$};
      \node[draw,circle] (n5) at (-1,-1.732) {$P_5$};
      \node[draw,circle] (n6) at (1,-1.732) {$P_6$};
      
      \node[draw,rectangle] (K1) at (0.8,0) {$K_1$};
      \node[draw,rectangle] (K2) at (0.4,0.693) {$K_2$};
      \node[draw,rectangle] (K3) at (-0.4,0.693) {$K_3$};
      \node[draw,rectangle] (K4) at (-0.8,0) {$K_4$};
      \node[draw,rectangle] (K5) at (-0.4,-0.693) {$K_5$};
      \node[draw,rectangle] (K6) at (0.4,-0.693) {$K_6$};
      
      \draw[<->] (n1) -- (n2);
      \draw[<->] (n2) -- (n3);
      \draw[<->] (n3) -- (n4);
      \draw[<->] (n4) -- (n5);
      \draw[<->] (n5) -- (n6);
      \draw[<->] (n6) -- (n1);
      
      \draw[<->,dashed] (n1) -- (K1);
      \draw[<->,dashed] (n2) -- (K2);
      \draw[<->,dashed] (n3) -- (K3);
      \draw[<->,dashed] (n4) -- (K4);
      \draw[<->,dashed] (n5) -- (K5);
      \draw[<->,dashed] (n6) -- (K6);
    \end{tikzpicture} 
        \caption{Decentralized control}
    \label{fig:decentralized_control}
    \end{subfigure}
     \begin{subfigure}{0.32\linewidth}
    \begin{tikzpicture}
      \node[draw,circle] (n1) at (3,0) {$P_1$};
      \node[draw,circle] (n2) at (1.5,2.6) {$P_2$};
      \node[draw,circle] (n3) at (-1.5,2.6) {$P_3$};
      \node[draw,circle] (n4) at (-3,0) {$P_4$};
      \node[draw,circle] (n5) at (-1.5,-2.6) {$P_5$};
      \node[draw,circle] (n6) at (1.5,-2.6) {$P_6$};
      
      \node[draw,rectangle] (K1) at (1.5,0) {$K_1$};
      \node[draw,rectangle] (K2) at (0.75,1.3) {$K_2$};
      \node[draw,rectangle] (K3) at (-0.75,1.3) {$K_3$};
      \node[draw,rectangle] (K4) at (-1.5,0) {$K_4$};
      \node[draw,rectangle] (K5) at (-0.75,-1.3) {$K_5$};
      \node[draw,rectangle] (K6) at (0.75,-1.3) {$K_6$};
      
      \draw[<->] (n1) -- (n2);
      \draw[<->] (n2) -- (n3);
      \draw[<->] (n3) -- (n4);
      \draw[<->] (n4) -- (n5);
      \draw[<->] (n5) -- (n6);
      \draw[<->] (n6) -- (n1);
      
      \draw[<->,dotted,thick] (K1) -- (K2);
      \draw[<->,dotted,thick] (K2) -- (K3);
      \draw[<->,dotted,thick] (K3) -- (K4);
      \draw[<->,dotted,thick] (K4) -- (K5);
      \draw[<->,dotted,thick] (K5) -- (K6);
      \draw[<->,dotted,thick] (K6) -- (K1);
      
      \draw[<->,dashed] (n2) -- (K1);
      \draw[<->,dashed] (n6) -- (K1);
      \draw[<->,dashed] (n1) -- (K2);
      \draw[<->,dashed] (n3) -- (K2);
      \draw[<->,dashed] (n2) -- (K3);
      \draw[<->,dashed] (n4) -- (K3);
      \draw[<->,dashed] (n3) -- (K4);
      \draw[<->,dashed] (n5) -- (K4);
      \draw[<->,dashed] (n4) -- (K5);
      \draw[<->,dashed] (n6) -- (K5);
      \draw[<->,dashed] (n5) -- (K6);
      \draw[<->,dashed] (n1) -- (K6);
      
      \draw[<->,dashed] (n1) -- (K1);
      \draw[<->,dashed] (n2) -- (K2);
      \draw[<->,dashed] (n3) -- (K3);
      \draw[<->,dashed] (n4) -- (K4);
      \draw[<->,dashed] (n5) -- (K5);
      \draw[<->,dashed] (n6) -- (K6);
    \end{tikzpicture} 
        \caption{Distributed/overlapping control}

    \label{fig:distributed_control}
    \end{subfigure}
    
    \caption{Different approaches for controlling networked systems.}
    \label{fig:networked_system}
\end{figure}

\begin{example}[{\cite[Section~5.1]{dileep2020}}]
\label{ex:decentralized}
Consider a networked system that consists of a ring of $N$ interconnected subsystems (similar as in \Cref{fig:networked_system}). The dynamics for the $i$\textsuperscript{th} subsystem  are given by
\begin{equation}
\left\{
\begin{array}{rcl}
    \dot{x}_i(t) &=& \begin{bmatrix}
        0 & 0.5 \\ 0.5 & -3
    \end{bmatrix} x_i(t) + \begin{bmatrix}
        0 \\ 5
    \end{bmatrix} u_i(t-0.1) + 0.5 x_{l,i}(t-0.3) + 0.5 x_{r,i}(t-0.3)\\[15pt]
    y_i(t) &=& \begin{bmatrix}
        1 & 1 \\ 0 & 1
    \end{bmatrix} x_i(t),
    \end{array}
    \right.
\end{equation}
with $x_{l,i}(t)$ the state of its left neighbor and $x_{r,i}(t)$ the state of its right neighbor. To create a representation for the overall system we use the \verb|kron| function (which returns  the Kronecker tensor product of two matrices).
\pagebreak
\begin{lstlisting}
% Number of subsystems
N = 5; 
% Delays and system matrices of the individual subsystems
tau1 = 0.1; tau2 = 0.3;
Ai = [0 0.5;0.5 -3]; Adi = 0.5*eye(2);
Bi = [0;5]; Ci = [1 1;0 1];
% Adjacency matrix of the network
PN = diag(ones(1,N-1),-1) + diag(ones(1,N-1),1);
PN(1,N) = 1; PN(N,1) = 1;
% System matrices of the overall system
A0 = kron(eye(N),Ai); A1 = kron(PN,Adi);
B = kron(eye(N),Bi); C = kron(eye(N),Ci);
% Create open-loop system
P = tds_create({A0,A1},[0 tau2],{B},tau1,{C},0);
\end{lstlisting}
Next we will compare different controller structures for this system. We start with designing a centralized static output feedback controller. As initial controller, we use the values given in \cite[Section~5.1]{dileep2020}.
\begin{lstlisting}
%% STEP 2: Centralized controller
o = tds_stabopt_options('nstart',1);
Kinit = blkdiag([-0.8045 0.6966],[0.8351 -0.2437],...
          [-1.1480 0.1049],[0.7223 2.5855],[0.2157 -1.1658]);
[K1,cl1] = tds_stabopt_static(P,'options',o,'initial',Kinit);
\end{lstlisting}
We find the densely filled feedback matrix
\[
\setlength{\arraycolsep}{3pt}
K = \begin{bmatrix}
 -4.010 & \phantom{-}4.770 & -6.227 & \phantom{-}5.330 & -1.081 & 0.276 & 2.486 & -0.993 & -11.033 & \phantom{-}8.613 \\
 -0.001  &  \phantom{-}1.657 & -11.586 &  \phantom{-}9.225 & -6.228 & \phantom{-}6.175 & \phantom{-}1.599 & -0.682 & -5.337 & \phantom{-}3.453 \\
  \phantom{-}5.321 & -3.217 & -6.622 & \phantom{-}5.387 & -7.855 & \phantom{-}6.30 & \phantom{-}0.323 & \phantom{-}0.974 & -7.508 & \phantom{-}5.130
 \\
   \phantom{-}0.166 & \phantom{-}0.532  & -0.160 & \phantom{-}0.085 & -0.478 & -0.055 & -8.242 & \phantom{-}7.473 & -5.090 & \phantom{-}4.458\\
   -1.129 & \phantom{-}1.674 & -0.242 & \phantom{-}0.391 & \phantom{-}3.320 & -4.119 & -2.007 & \phantom{-}2.357 & -9.736 & \phantom{-}8.144
   
\end{bmatrix}
\]
and the corresponding (strong) spectral abscissa is equal to -5.3382. Next, we design a set of decentralized controllers by imposing a block diagonal structure on the feedback matrix.
\begin{lstlisting}
mask = kron(eye(N),[1 1]);
basis = zeros(N,N*2);
[K2,cl2] = tds_stabopt_static(P,'options',o,'initial',Kinit,...
         'mask',mask,'basis',basis);
\end{lstlisting}
We obtain the feedback matrices
\begin{equation*}
    \begin{array}{lll}
        K_1 = \begin{bmatrix}
           -9.7969 & 9.0638
        \end{bmatrix}, & K_2 = \begin{bmatrix}
            -17.0255 & 15.4809 
        \end{bmatrix}, &
                 K_3 = \begin{bmatrix}
            -7.1284  & 6.0666
        \end{bmatrix}, \\ [7px] K_4 = \begin{bmatrix}
           -8.7850 & 7.7052  
        \end{bmatrix}, &
                K_5 = \begin{bmatrix}
            -15.2369 & 13.5828
        \end{bmatrix},
    \end{array}
\end{equation*}
and the (strong) spectral abscissa of the corresponding closed-loop system is equal to -4.1872. As we have less free parameters in our feedback matrix, it is not surprising that the damping rate decreases. The resulting controller is however easier to implement. Finally, we design an overlapping controller in which each local controller also receives the measurements of the neighboring systems.
\begin{lstlisting}
mask = kron(eye(N),[1 1]) + kron(PN,[1 1]);
basis = zeros(N,N*2);
[K3,cl3] = tds_stabopt_static(P,'options',o,'initial',Kinit,...
	'mask',mask,'basis',basis);
\end{lstlisting}
We obtain the feedback matrix
\begin{equation*}
    \begin{array}{lll}
        K = \begin{bmatrix}
             -6.511 & 5.532 & -2.394 & 1.993 & 0 & 0 & 0 & 0 & -7.581 & 7.459 \\
          -1.459 & 1.377 & -7.959 & 6.919 & -0.892 & 0.899 & 0 & 0 & 0 & 0 \\
          0 & 0 & -10.15 & 10.12 & -11.63 & 10.28 & -4.757  & 4.991 & 0 & 0 \\
          0 & 0 & 0 & 0 & -4.103 & 3.760 & -8.242 & 7.408 & -5.409 & 5.102 \\
          -2.404 & 2.791 &  0 & 0 & 0 & 0 & 1.354 & -1.041 & -9.455 & 7.691
        \end{bmatrix}
    \end{array}
\end{equation*}
and the corresponding strong spectral abscissa is equal to -4.7371.
\end{example}
\section{Obtaining a state-space representation from a SISO transfer function}
\label{subsec:transfer_function}
In this subsection we will briefly discuss how one can create a \matlabfun{tds_ss}-representation for a time-delay system described by a SISO transfer function. More specifically, the function \matlabfun{tds_create_tf} allows to represent transfer functions of the following form
\[
T(s) = \frac{P(s)}{Q(s)} + D(s)
\]
with
\[
\textstyle
P(s) = \sum\limits_{k=1}^{m}\left(\sum\limits_{i=1}^{n_P} p_{k,i}\lambda^{n_P-(i-1)}\right) e^{-s\tau_{k}}\text{, }
Q(s) = \sum\limits_{k=1}^{m}\left(\sum\limits_{i=1}^{n_Q} q_{k,i}\lambda^{n_Q-(i-1)}\right) e^{-s\tau_{k}},
\]
and
\[
\textstyle
D(s) = \sum\limits_{k=1}^{m} d_{k}  e^{-\lambda \tau_{k}}
\]
in which $p_{k,i}$, $q_{k,i}$, and $d_{k}$ are real-valued coefficients and $\tau_{1},\dots,\tau_{m}$ are  (non-negative) delay values. To avoid advanced systems, at least one delay must be equal to zero and the corresponding leading coefficient of $Q(s)$ must be non-zero, \ie let $\tau_k=0$ then $q_{k,1}\neq 0$. Furthermore, the transfer function must be proper, \ie $n_P\leq n_Q$. The quasi-polynomial $D(s)$ allows to specify direct feedthrough terms if $\frac{P(s)}{Q(s)}$ is strictly proper, \ie $n_P<n_D$.

The function \matlabfun{tds_create_tf} either takes three or four arguments:
\begin{itemize}
    \item \verb|tds_create_tf(P,Q,TAU)| with \verb|P| an $m \times (n_P+1)$-dimensional real-valued matrix containing the coefficients $p_{k,i}$, \verb|Q| an $m\times (n_Q+1)$-dimensional real-valued matrix containing the coefficients $q_{k,i}$, and \verb|TAU| an $m$-dimensional vector containing the delays;
    \item \verb|tds_create_tf(P,Q,D,TAU)| with \verb|P| an $m \times (n_P+1)$-dimensional real-valued matrix containing the coefficients $p_{k,i}$, \verb|Q| an $m\times (n_Q+1)$-dimensional real-valued matrix containing the coefficients $q_{k,i}$, \verb|D| an $m$-dimensional real-valued vector containing the coefficients $d_k$ and \verb|TAU| an $m$-dimensional vector containing the delays.
\end{itemize}
The return argument is a \verb|tds_ss|-object that represents, depending on the transfer function, a retarded, neutral or delay-descriptor system. This object can subsequently be used as an input for the controller design functions mentioned above. Let us now illustrate this function using two examples.
\begin{example}
	\label{example:SISO_transfer_function}
As a first example we will consider the following strictly proper transfer function, corresponding to a retarded time-delay system
\[
T(s) = \frac{(2s-1)+3se^{-s}}{(4s^2-2s+1) + (3s-1)e^{-s}}.
\]
Extracting the coefficients of the quasi-polynomials and the delay values, we get
\begin{lstlisting}
P = [2 -1;3 0]; Q = [4 -2 1;0 3 -1]; TAU = [0 1];
sys = tds_create_tf(P,Q,TAU)
\end{lstlisting}
with \matlabfun{sys} a \matlabfun{tds_ss_retarded}-object with inputs and outputs:
\begin{lstlisting}
sys = 

LTI Retarded Time-Delay State-Space System with properties:

    A: {[2x2 double]  [2x2 double]}
   hA: [0 1]
   B1: {[2x1 double]  [2x1 double]}
  hB1: [0 1]
   C1: {[0 1]}
  hC1: 0
\end{lstlisting}
in which some parts of the output were omitted for notational convenience.

Similiarly we can create a representation for the following neutral system with explicit direct feedthrough terms
\[
T(s) = \frac{(6s-2)e^{-s}+se^{-2s}+(s-1)}{(5s^2-3s-2)e^{-2s}+(-2s^2+3s-2)}-2e^{-s}+3 e^{-2s} + 1
\]
using the code
\begin{lstlisting}
P = [6 -2;1 0;1 -1]; Q = [0 0 0;5 -3 -2;-2 3 -2];
D = [-2; 3; 1]; TAU = [1 2 0];
sys = tds_create_tf(P,Q,D,TAU);
\end{lstlisting}
which returns a \matlabfun{tds_ss_neutral}-object
\begin{lstlisting}
sys = 

LTI Neutral Time-Delay State-Space System with properties:

    H: {[2x2 double]}
   hH: 2
    A: {[2x2 double]  [2x2 double]}
   hA: [0 2]
   B1: {[2x1 double]  [2x1 double]  [2x1 double]}
  hB1: [0 1 2]
   C1: {[0 1]}
  hC1: 0
  D11: {[1]  [-2]  [3]}
 hD11: [0 1 2]
\end{lstlisting}
\end{example}

\chapter{Performance criteria and robust control}
\label{chapter:performance_robust_control}

In this chapter, we will discuss how the \hinfnorm{}, the pseudospectral abscissa and the distance to instability of a LTI time-delay system can be computed within \packageName{}. Besides, we will show how a static or dynamic output feedback controller that (locally) minimizes the closed-loop \hinfnorm{} can be designed. First, \Cref{subsec:compute_hinfnorm} will introduce the \hinfnorm{} for time-delay systems. As was the case for the spectral abscissa of a neutral DDE, the \hinfnorm{} of a time-delay system that is not strictly proper, might suffer from a fragility problem, in the sense that the \hinfnorm{} might be discontinuous with respect to the delay values. \packageName{} will therefore consider the strong \hinfnorm{} (introduced in \cite[Definition~4.4]{gumussoy2011}), which is a continuous function of both the entries of the system matrices and the delays. Besides we also will discuss how the \hinfnorm{} can be used for examining the robust stability of feedback systems in the presence of uncertainties and for characterizing input-to-output performance. Next, \Cref{subsec:optimize_hinfnorm} will consider several $\hinf{}$-optimization problems arising in model order reduction and the robust control framework. For more details on robust control theory we refer the interested reader to \cite{zhou1996robust} or the derived textbook \cite{zhou1998essentials}, which discuss various aspects of robust control theory focusing on multiple-input-multiple-output systems without delays. For specific results related to delay systems, we refer to \cite{ozbay2018frequency} and \cite{zhong2006robust}. Finally, \Cref{sec:robust_stability} introduces the pseudospectral abscissa and the distance to instability which allow to examine the robust stability of a dynamical system in the presence of structured uncertainties \cite{hinrichsen2006}.
\label{sec:performance}
\section{Computing the H-infinity norm}
\label{subsec:compute_hinfnorm}
In this subsection we will discuss how the \hinfnorm{} of a (stable) LTI time-delay system can be computed in \packageName{}. More specifically, we will consider systems of the following form
\begin{equation}
\label{eq:open_loop_hinfnorm}
\left\{
\begin{array}{rcl}
    \displaystyle E\dot{x}(t) &=& \displaystyle \sum\limits_{k=1}^{m_A} A_k\, x(t-h_{A,k}) + \sum_{k=1}^{m_{B_2}} B_{2,k}\, w(t-h_{B_2,k}) - \sum_{k=1}^{m_H} H_k \dot{x}(t-h_{H,k}) \\[13pt]
     
     z(t) &=& \displaystyle  \sum\limits_{k=1}^{m_{C_2}} C_{2,k}\, x(t-h_{C_2,k}) + \sum_{k=1}^{m_{D_{22}}} D_{22,k}\, w(t-h_{D,k})
\end{array}
\right.
\end{equation}
with $x\in\R^{n}$ the state variable, $w\in \R^{p_2}$ the performance input (representing \eg external disturbances) and  $z\in\R^{q_2}$ the performance output (representing \eg error signals). As in \eqref{eq:state_space_system}, all matrices are real-valued and of appropriate dimensions; all delays are non-negative (except for $h_{H,1}$,\dots,$h_{H,H}$ which should be positive); and the matrix $E$ can be singular.
\\

\textbf{Note:} In contrast to \eqref{eq:state_space_system} the
input and output channels are now denoted by $w$ and $z$ instead of $u$ and $y$. In  \eqref{eq:state_space_system} the input and output channels ($u$ and $y$) were used for feedback control, while here the input and output channels ($w$ and $z$)  are used to quantify the input-to-output behavior of the system. As these input and output channel serve a difference purpose, we will use a different symbol to denote them. In the next section (\Cref{subsec:optimize_hinfnorm}), where we will design output feedback controllers that (locally) minimize the \hinfnorm{}, we will need both feedback channels, denoted by $u$ and $y$, and performance channels, denoted $w$ and $z$. Accordingly, we use $B_{2,k}$, $C_{2,k}$, and $D_{22,k}$ instead of $B_{1,k}$, $C_{1,k}$, and $D_{11,k}$ to denote the input, output and direct feedthrough matrices. This change in notation has also consequences on the syntax that needs to be used when constructing a \verb|tds_ss|-object for the system, as spelled out below. \\

We can again use the functions \matlabfun{tds_create}, \matlabfun{tds_create_neutral} and \matlabfun{tds_create_ddae} to represent \eqref{eq:open_loop_hinfnorm} in \packageName{}. However, we now need to explicitly indicate that we want to specify the performance input, output and direct feedthrough terms.
\begin{itemize}
    \item \matlabfun{tds_create(A,hA,'B2',B2,hB2,'C2',C2,hC2,'D22',D22,hD22)} allows to create retarded state-space models, \ie $E = I_n$ and $m_{H} = 0$.
    \item \matlabfun{tds_create_neutral(H,hH,A,hA,'B2',B2,hB2,'C2',C2,hC2,'D22',D22,hD22)} allows to create neutral state-space models, \ie{} $E = I_n$ and $m_{H} > 0$.
    \item \matlabfun{tds_create_ddae(E,A,hA,'B2',B2,hB2,'C2',C2,hC2,'D22',D22,hD22)} allows to create delay descriptor systems, \ie $E$ not necessarily identity and potentially singular, and $m_H = 0$. \textbf{Important:} Recall that all DDAEs and delay descriptor systems considered in \packageName{} should satisfy \Cref{assumption:index_1}.
\end{itemize}
Notice that we now have to use the flags \verb|'B2'|, \verb|'C2'|, and \verb|'D22'| to indicate that we want to set the fields related to the inputs $w$ and outputs $z$.  In all function calls above, the arguments \verb|D22| and \verb|hD22| are again optional. Further, the arguments \verb|H|, \verb|A|, \verb|B2|, \verb|C2|, and \verb|D22| should be cell arrays containing the system matrices. The arguments \verb|hH|, \verb|hA|, \verb|hB2|, \verb|hC2|, and \verb|hD22| should be arrays containing the delay values. Finally, the argument \verb|E| of \matlabfun{tds_create_ddae} should be a (real-valued) matrix.

For the \hinfnorm{} computation, \packageName{} will internally transform all representable systems to the following standard form 
\begin{equation}
\label{eq:ol_hinfnorm2}
\left\{
\begin{array}{rcl}
    \displaystyle E_{s}\dot{x}_{s}(t) &=& A_{s,0}\, x_{s}(t) + \displaystyle \sum\limits_{k=1}^{m} A_{s,k}\, x_{s}(t-\tau_{k}) + B_{s} w(t)\\[13pt]
    z(t) &=& \displaystyle  C_{s} x_{s}(t).
\end{array}
\right.
\end{equation}
 with $\vec{\tau} = [\tau_1,\dots,\tau_m]$ the union of all delays and $x_{s}$ the augmented state variable to which additional states are added to eliminate the input and output delays, the direct feedthrough terms and the delayed state-derivatives (\ie the ``neutral'' terms). For ease of notation, we will use this standard form in the following discussion, but note that the software can deal with all systems that can be created using \matlabfun{tds_create}, \matlabfun{tds_create_neutral} and \matlabfun{tds_create_ddae}.
 
Let us start with defining the \hinfnorm{} in terms of the standard form \eqref{eq:ol_hinfnorm2}. To this end, let us first introduce the corresponding transfer function from input $w$ to output $z$, which is given by
\begin{equation}
\label{eq:transferfunction}
T_{zw}(s;\vec{\tau}) = C_{s} \left(s E_{s} - A_{s,0} - \sum\limits_{k=1}^{m} A_{s,k}\, e^{-s \tau_{k}} \right)^{-1} B_{s}.
\end{equation}
This transfer function describes the input-to-output behavior of the system for a zero initial state. Furthermore, if the null solution of the following DDAE
\[
\displaystyle E_{s}\dot{x}_{s}(t) = A_{s,0}\,x_{s}(t) + \displaystyle \sum\limits_{k=1}^{m} A_{s,k}\, x_{s}(t-\tau_{k})
\]
is exponentially stable, the \hinfnorm{} of \eqref{eq:ol_hinfnorm2} is defined as:
\begin{equation}
\label{eq:hinfnorm2}
\|T_{zw}(s;\vec{\tau})\|_{\hinf} := \sup_{\substack{s\in\C\\\Re(s)> 0}} \|T_{zw}(s;\vec{\tau})\|_2 = \sup_{\omega\in\R^{+}} \|T_{zw}(\jmath\omega;\vec{\tau})\|_2, 
\end{equation}
in which the last equality follows from an extension of the maximum modulus principle. The \hinfnorm{} of a stable system is thus equal to the maximal value of the frequency response (\ie the transfer function evaluated on the imaginary axis) measured in spectral norm. On the other hand, we will say that the \hinfnorm{} of an unstable system is equal to $\infty$. \\

\noindent The \hinfnorm{} has several applications in (robust) control theory. 
\begin{itemize}
    \item Firstly, the \hinfnorm{} can be used to measure the performance of a dynamical system. More precisely, it quantifies the `worst-case' suppression (or amplification) from the input signal $w$ to the output $z$ (assuming zero initial conditions), in the sense that
    \[
    \|T_{zw}(s;\vec{\tau})\|_{\hinf} = \limsup_{w\in L_2} \frac{\|z\|_{L_2}}{\|w\|_{L_2}},
    \]
    with $L_2$ the (Hilbert) space of square-integratable functions on the interval $[0,+\infty)$ and $\|\cdot\|_{L_2}$ the associated norm, \ie $f\in L_2$ if $\|f\|_{L_2} := \left(\int_{0}^{+\infty} \|f(t)\|^2_2\, \dd t\right)^{1/2} <\infty$. For more details see \cite[Chapter 4]{zhou1996robust}. By appropriately choosing the performance inputs and outputs, the \hinfnorm{} can thus be used to quantify the reference tracking, the disturbance rejection, and the noise suppression capabilities of the system.  We will further examine this interpretation in \Cref{ex:measurement_noise,example:mixed_sensitivity}.
    
    \textbf{Note.} By viewing the system \eqref{eq:ol_hinfnorm2} as a (linear) operator mapping input signals in $L_2$ to output signals in $L_2$, the \hinfnorm{} can thus be interpreted as an induced norm. 
    \item Secondly, the \hinfnorm{} allows to quantify the effect of uncertainties on the stability of a dynamic system. More specifically, the \hinfnorm{} is equal to the reciprocal of the structured complex distance to instability of an exponentially stable system, \ie 
        \begin{multline*}
    \|T_{zw}\|_{\hinf} = \inf \{\|\Delta \|_2 : \Delta \in \C^{p_2 \times q_2} \text{ such that } \\ 
    E_{s}\dot{x}_{s}(t) = \sum\limits_{k=0}^{m} A_{s,k}\, x_{s}(t-\tau_{k}) + B_{s} \Delta C_{s}\, x_{s}(t)
    \text{ is not exponentially stable }\}^{-1},
    \end{multline*}
    with $1/0 = +\infty$, to be consistent with the definition of the \hinfnorm{} given above. 
    
    The \hinfnorm{} is thus a measure for the robust stability of the system. Note however that the \hinfnorm{} is only a rough estimate for the distance to instability of the system in a practical sense  as it only considers one complex (matrix-valued) uncertainty. In \Cref{sec:robust_stability}, we will see how the structured distance to instability with respect to real-valued uncertainties acting on both the system matrices and the delays can be computed in \packageName{}, which is a better measure for robust stability in practice. 
    
    \item Thirdly, the \hinfnorm{} can be used as an error measure for a reduced-order approximation of a dynamical system \cite[Chapter 7]{zhou1996robust}. For example, consider the following exponentially stable time-delay system
    \[
    \left\{
    \begin{array}{rcl}
         \dot{x}(t)& = &A_0\, x(t) + \sum\limits_{k=1}^{m} A_k\, x(t-\tau_k) + B_2\, w(t)\\
         z(t) & = & C_2\, x(t)
    \end{array}
    \right.
    \]
	and the following delay-free approximation
    \begin{equation}
    \label{eq:MOR_approx}
    \left\{
    \begin{array}{rcl}
    \dot{x}_{a}(t) &=& A_{a}\, x_{a}(t) + B_{a}\, w(t) \\
    z(t) &=& C_{a}\, x_{a}(t)
    \end{array}
    \right.
    \end{equation}
   	The closeness of these two models can then be quantified by
    \begin{equation}
    \label{eq:MOR_mismatch}
    \| T(s) - T_{\mathrm{approx.}}(s) \|_{\hinf},
    \end{equation}
    with $T(s)$ and $T_{\mathrm{approx.}}(s)$ the transfer function of the original system and the approximation, respectively. Note that \eqref{eq:MOR_mismatch} corresponds to the maximal mismatch of the original frequency response and its approximation in spectral norm. We will consider this application in more detail in \Cref{example:MOR}. Furthermore, in the same example we will also see that finding the state-space system of the form \eqref{eq:MOR_approx} that minimizes the mismatch in terms of \eqref{eq:MOR_mismatch} can be reformulated as a $\hinf{}$-controller design problem (which will be considered in \Cref{subsec:optimize_hinfnorm}). As such, \packageName{} can also be used to derive a delay-free approximation for a time-delay system.
    \item Finally, in robust control theory the small gain theorem is frequently used to guarantee stability in the presence of uncertainties \cite[Chapter 9]{zhou1996robust}. Application of this small gain theorem typically gives rise to constraints in terms of the \hinfnorm{} of an appropriately defined transfer function. 
\end{itemize}

To compute the \hinfnorm{} of a time-delay system, \packageName{} provides the function \matlabfun{tds_hinfnorm}. This function takes one mandatory argument, the \matlabfun{tds_ss} object of which we want to compute the \hinfnorm{}. Additional options can be specified using the optional argument \verb|options|. For more details on the available options, see \matlabfun{help tds_hinfnorm} or \Cref{sec:tds_hinfnorm}.

\begin{remark}
	The function \matlabfun{tds_hinfnorm} employs a combination of the BBBS-algorithm \cite{bruinsma1990fast,boyd1990regularity} and Newton's method to compute the \hinfnorm{} of a time-delay system. More specifically, in a first step it uses the BBBS-algorithm to compute the \hinfnorm{} of a finite dimensional approximation of the system (obtained using a similar spectral discretisation approach as was used in \eqref{eq:discrete_evp} to approximate the characteristic roots of a DDE). This result is subsequently refined using Newton corrections. The BBBS-algorithm itself consists of two step that are alternated iteratively. First, it computes the frequency intervals at which the spectral norm of the frequency response is larger than a given level. Next, it updates this level with the maximum value of the spectral norm of the frequency response at the geometric mean of these intervals. This two-step process is repeated until the change in the level is sufficiently small.
\end{remark}

\begin{example}
\label{example:hinfnorm1}
Consider the following single-input single-output (SISO) time-delay system
\begin{equation}
\label{eq:hinfnorm_example1}
\left\{
\begin{array}{rcl}
    \dot{x}(t) &=& \begin{bmatrix}
-4 & 2 \\
-3 & -3
\end{bmatrix}x(t) + \begin{bmatrix}
-2 & -1 \\
3 & -2
\end{bmatrix}x(t-1) + \begin{bmatrix}
1 \\
-1
\end{bmatrix} w(t-2), \\[10px]
    z(t) & = & \begin{bmatrix}
  -2 & 1  
    \end{bmatrix} x(t).
\end{array}
\right.
\end{equation}
As the underlying RDDE is exponentially stable, the asymptotic input-to-output behavior of this system can be characterized in terms of the following transfer function:
\begin{equation}
\label{eq:hinfnorm_example1_tf}
T_{zw}(s) = -\frac{9+3s+5e^{-s\tau_1}}{s^2+7s+18+(4s+5)e^{-s \tau_1}+7e^{-s2\tau_1}}e^{-s\tau_2}.
\end{equation}
The corresponding \hinfnorm{} is found using the following code.
\begin{lstlisting}
% Create a representation for the state-space system 
A0 = [-4 2;-3 -3]; A1 = [-2 -1;3 -2];
B = [1;-1]; C = [-2 1];
tds = tds_create({A0 A1},[0 1],'B2',{B},[2],'C2',{C},[0]);
% Compute the H-infinity norm
[hinf,wPeak] = tds_hinfnorm(tds);
\end{lstlisting}
The function \verb|tds_hinfnorm| returns both the \hinfnorm{} (\verb|hinf|) and the angular frequency at which this maximal gain is achieved (\verb|wPeak|), \ie the $\omega$ that maximizes $\|T_{zw}(\jmath \omega;\vec{\tau})\|_{2}$ (\textbf{note} that \verb|wPeak| might be equal to $\infty$ for transfer functions that are not strictly proper, meaning that the supremum in \eqref{eq:hinfnorm2} is only attained in the limit $\omega\rightarrow \infty$). In this case we find that the \hinfnorm{} is equal to 1.5388 and is attained at $\omega = 3.5571$.

We can use the function \matlabfun{tds_sigma(tds,omega)} to verify the obtained result. This function computes the singular values of the transfer function associated with \matlabfun{tds} at the given angular frequencies \matlabfun{omega}. For a SISO-system, this plot corresponds to the magnitude of the frequency response.
\begin{lstlisting}
% Plot the magnitude of the frequency response
omega = logspace(-2,2,10001);
[sv] = tds_sigma(tds,omega);
semilogx(omega,sv)
hold on
plot(xlim,[hinf hinf])
plot(wPeak,hinf,'x')
\end{lstlisting}
\Cref{fig:ex16_sv_hinfnorm} shows the result and indicates the values \verb|hinf| and \verb|wPeak| obtained above. (Luckily,) we see that the obtained result is indeed correct.
\begin{figure}[!h]
    \centering
    \includegraphics[width=0.6\linewidth]{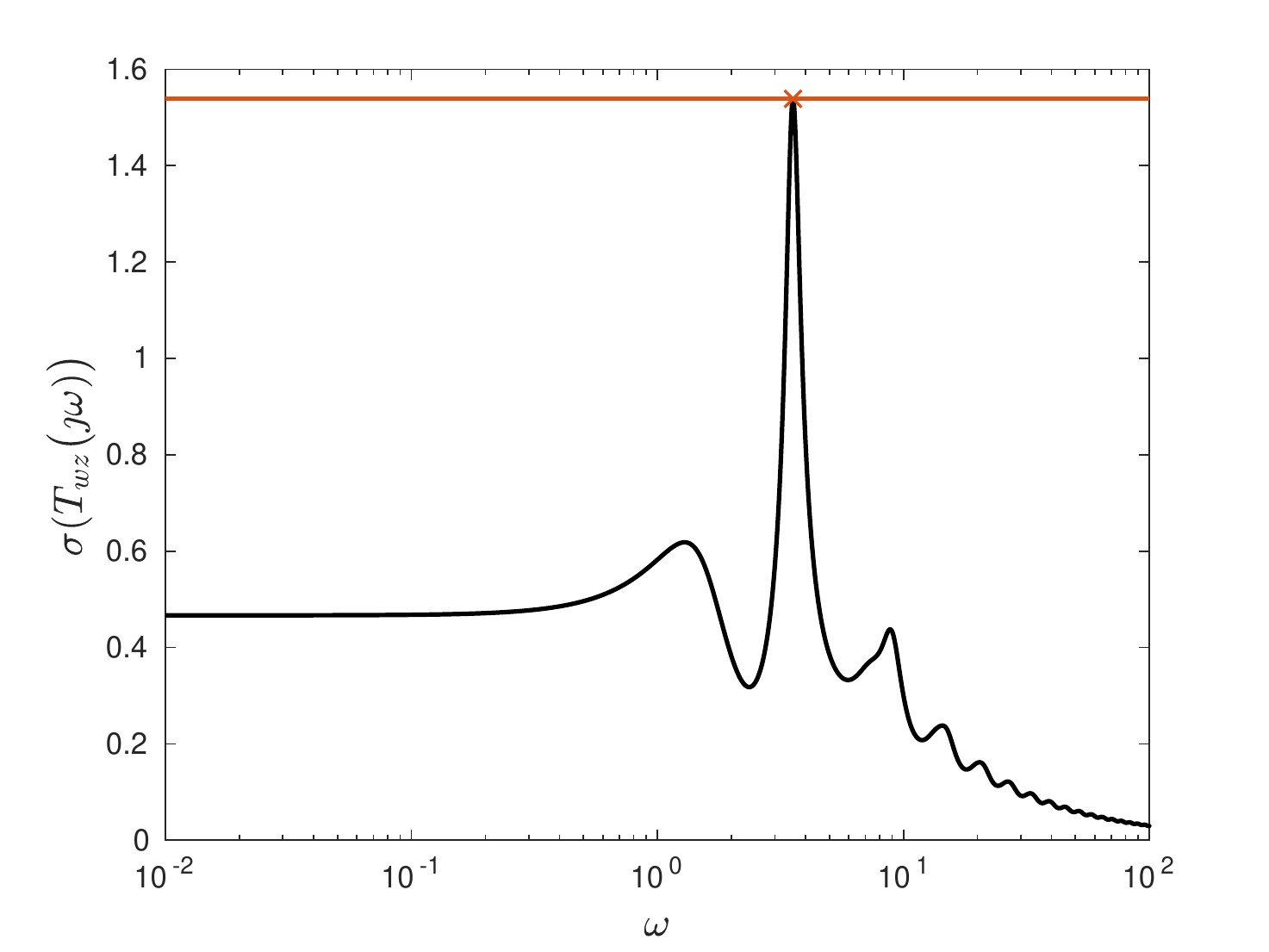}
    \caption{The singular value plot of \eqref{eq:hinfnorm_example1_tf} for $s\in\jmath\,[10^{-2}, \, 10^{2}]$ (black), the value of the \hinfnorm{} (red) and the angular frequency at which the maximal gain is attained (cross).}
    \label{fig:ex16_sv_hinfnorm}
\end{figure}
\end{example}

As mentioned before, the \hinfnorm{} might be discontinuous with respect to the delays. This observation implies that an infinitesimal change on the delays might cause a jump in the value of the \hinfnorm{}, as illustrated in the example below. 
\begin{example}
\label{example:hinfnorm2}
Consider the following single-input single-output (SISO) system
\begin{equation}
\label{eq:hinfnorm_example2}
\left\{
\begin{array}{rcl}
    \dot{x}(t) &=& \begin{bmatrix}
-4 & 2 \\
-3 & -3
\end{bmatrix}x(t) + \begin{bmatrix}
-2 & -1 \\
3 & -2
\end{bmatrix}x(t-\tau_1) + \begin{bmatrix}
1 \\
-1
\end{bmatrix} w(t-\tau_2) \\[13px]
    z(t) & = & \begin{bmatrix}
  -2 & 1  
    \end{bmatrix} x(t) + w(t) + w(t-\tau_1) - 2 w(t-\tau_2)
\end{array}
\right.
\end{equation}
with $\tau_1= 1$ and $\tau_2 = 2$. Notice that the considered system is quite similar to the one in \Cref{example:hinfnorm1}. The only difference is that we have added three direct feedthrough terms. The associated transfer function is now given by:
\begin{equation}
\label{eq:hinfnorm_example2_tf}
T_{zw}(s) = -\frac{9+3s+5e^{-s\tau_1}}{s^2+7s+18+(4s+5)e^{-s \tau_1}+7e^{-s2\tau_1}}e^{-s\tau_2} + 1+e^{-s\tau_1} - 2e^{-s\tau_2},
\end{equation}
which, in contrast to \eqref{eq:hinfnorm_example1_tf}, is not strictly proper. Furthermore, it can be shown that the \hinfnorm{} of \eqref{eq:hinfnorm_example2} is equal to 3.7019 (rounded to 4 decimal places). However, if we make a small perturbation to $\tau_1$, \eg $\tau_1 = 0.995$, the \hinfnorm{} suddenly jumps to approximately 4. Furthermore, even if we reduce the size of the perturbation, \eg $\tau_1 = 0.999$, we still observe a jump in the \hinfnorm{} to approximately 4. To understand what happens, let us use the \matlabfun{tds_sigma}-function to plot the magnitude of the frequency response.
\begin{lstlisting}
% Create a representation for the time-delay system
A0 = [-4 2;-3 -3]; A1 = [-2 1;3 -2];
B = [1;-1]; C = [-2 1];
D0 = 1; D1 = 1; D2 = -2;
tau1 = 1; tau2 = 2;
tds = tds_create({A0 A1},[0 tau1],'B2',{B},tau2,'C2',{C},0,...
       'D22',{D0,D1,D2},[0 tau1 tau2]);
% Plot the magnitude of the frequency response
omega = logspace(-2,3.5,10001);
[sv] = tds_sigma(tds,omega);
figure
semilogx(omega,sv)
\end{lstlisting}
From \Cref{fig:ex17_sv1}, we observe that the frequency responce indeed attains a maximum of approximately $3.7019$ (at $\omega = 2.7509$). Next, we repeat the same procedure for $\tau_1 = 0.995$.
\begin{lstlisting}
% Create a representation for the time-delay system
tau1 = 0.995;
tds2 = tds_create({A0 A1},[0 tau1],'B2',{B},tau2,'C2',{C},0,...
	'D22',{D0,D1,D2},[0 tau1 tau2]);
% Plot the magnitude of the frequency responce
[sv2] = tds_sigma(tds2,omega);
figure
semilogx(omega,sv2)
\end{lstlisting}
and $\tau_1=0.999$ 
\begin{lstlisting}
tau1 = 0.999;
tds3 = tds_create({A0 A1},[0 tau1],'B2',{B},tau2,'C2',{C},0,...
	'D22',{D0,D1,D2},[0 tau1 tau2]);
% Plot the magnitude of the frequency responce
[sv3] = tds_sigma(tds3,omega);
figure
semilogx(omega,sv3)
\end{lstlisting}
\Cref{fig:ex17_sv2,fig:ex17_sv3} show the result. We observe that in both cases the frequency response at high frequencies significantly differs from the one in \Cref{fig:ex17_sv1} and that the \hinfnorm{} in both cases jumps to approximately 4.
\begin{figure}[!h]
    \centering
    \begin{subfigure}{0.49\linewidth}
    \includegraphics[width=\linewidth]{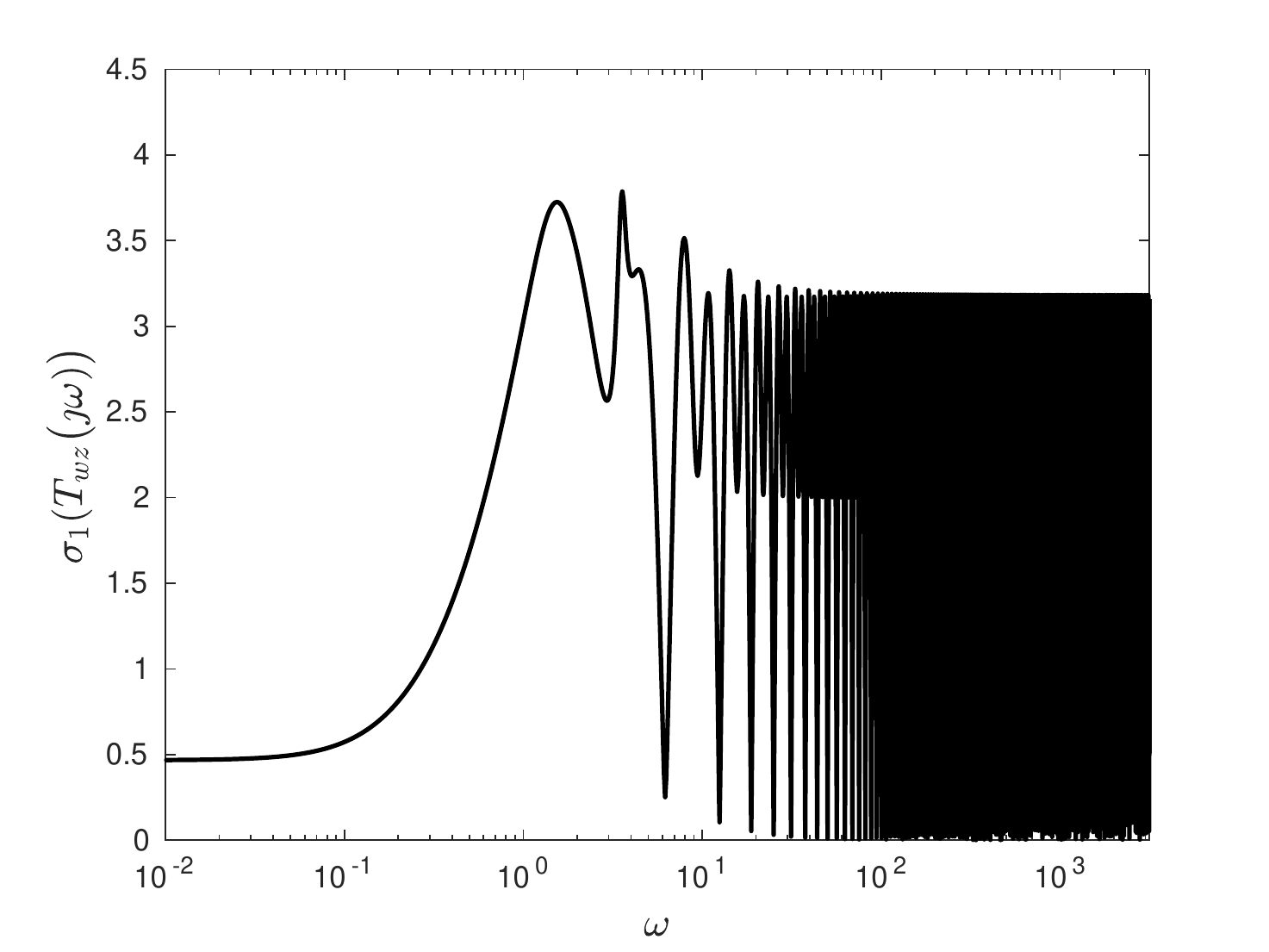}
    \caption{$\tau_1=1$ and $\tau_2=2$.}
    \label{fig:ex17_sv1}
    \end{subfigure}
    \begin{subfigure}{0.49\linewidth}
    \includegraphics[width=\linewidth]{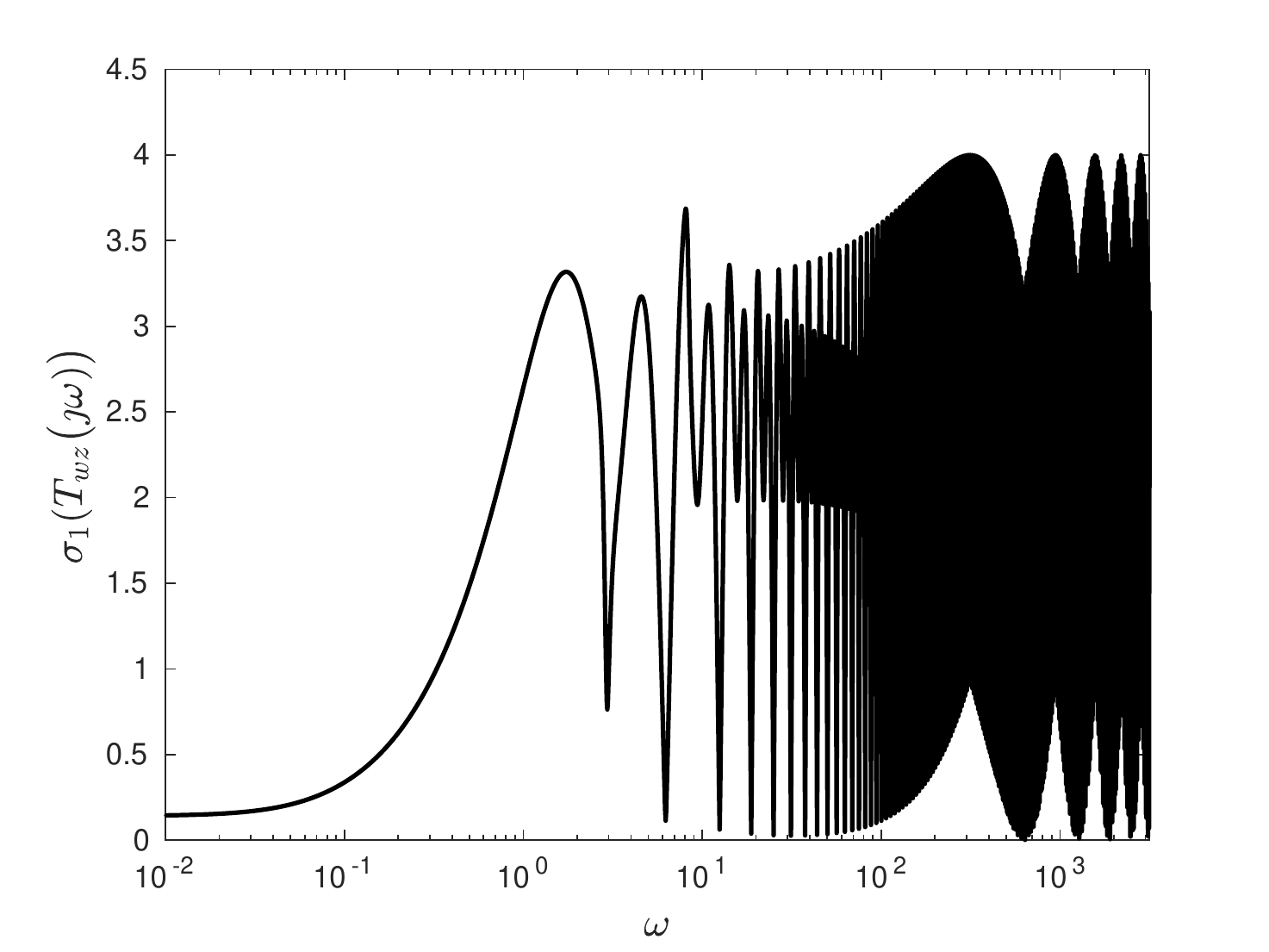}
    \caption{$\tau_1=0.995$ and $\tau_2=2$.}
    \label{fig:ex17_sv2}
    \end{subfigure}
    \begin{subfigure}{0.49\linewidth}
      \begin{tikzpicture}[      
        every node/.style={anchor=south west,inner sep=0pt},
        x=1mm, y=1mm,
      ]   
     \node (fig1) at (0,0)
       { \includegraphics[width=\linewidth]{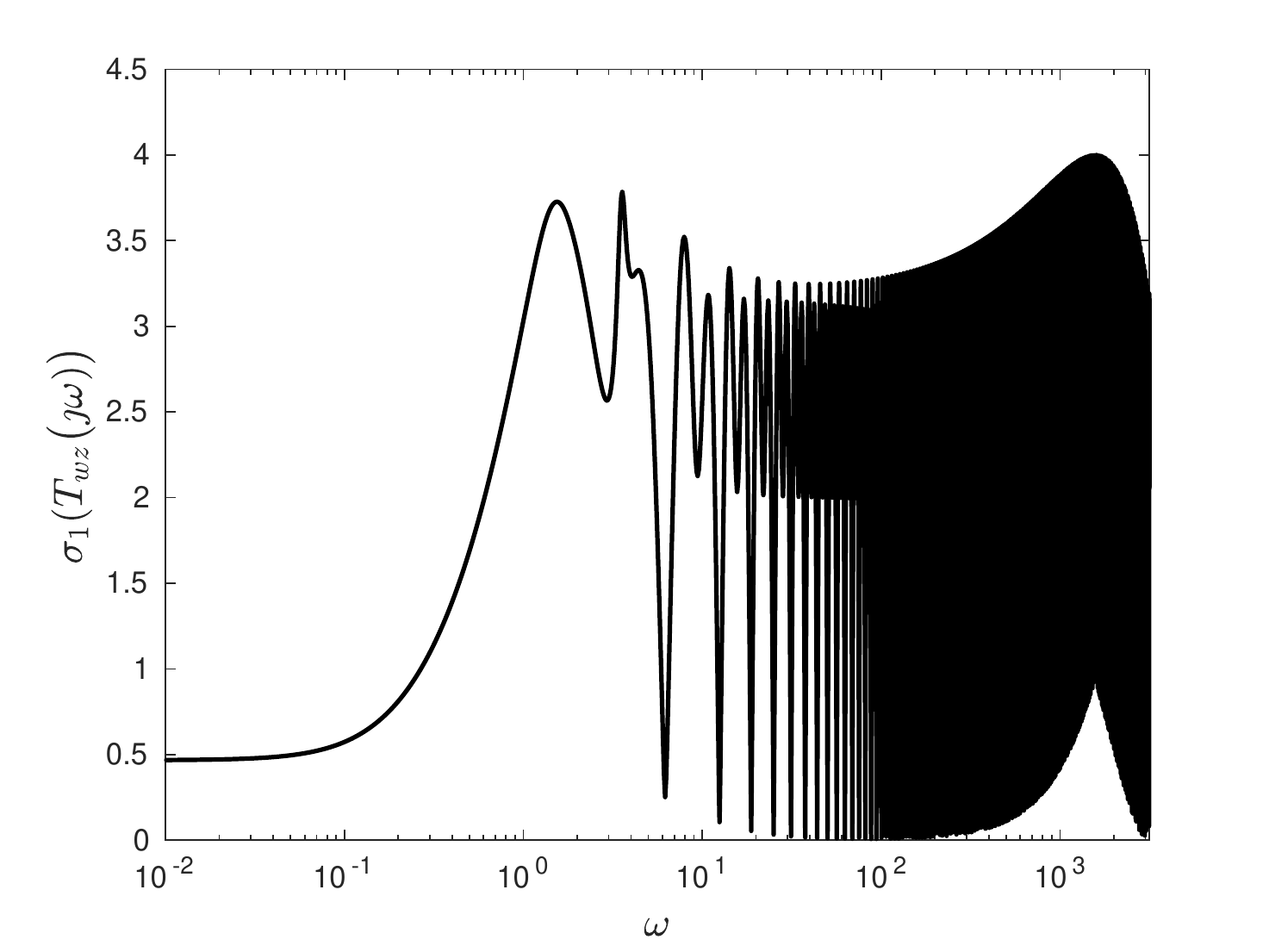}};
    \node[] (tt) at (64,47) {};
    \node[] (bt) at (52,47) {};
    \draw[-stealth,very thick,red] (bt) -- (tt);
\end{tikzpicture}
    \caption{$\tau_1=0.999$ and $\tau_2=2$.}
    \label{fig:ex17_sv3}
    \end{subfigure}
    \caption{The (maximum) singular value plot of \eqref{eq:hinfnorm_example2_tf} for several values of $\tau_1$ and $\tau_2$.}
\end{figure}
\end{example}
We can draw several parallels with \Cref{example:neutral_2} and \Cref{remark:sensitivity}. Firstly, as the spectral abscissa of a NDDE, the \hinfnorm{} of a time-delay system that is not strictly proper might be a discontinuous function of the delay values, even if the system is strongly exponentially stable. However, if we look at a fixed angular frequency, the singular values of the frequency response change in a continuous way when the delay parameters are varied. (Recall that we made a similar observation for the individual characteristic roots of a NDDE.) Nonetheless, we observe from \Cref{fig:ex17_sv2,fig:ex17_sv3} that the larger the angular frequency, the larger the sensitivity (\ie derivative) of the singular values of the frequency response to delay perturbations. In the example above, this derivative grows unbounded as $\omega$ goes to infinity. Furthermore, it can be shown that for any $\epsilon>0$ and $\delta>0$ (no matter how small), there exists a delay perturbation smaller than $\epsilon$ for which the \hinfnorm{} of \eqref{eq:hinfnorm_example2} jumps to at least $4-\delta$. Furthermore as this perturbation bound is decreased, the frequencies at which such a peak in the singular value plot can be attained, moves to infinity (as indicated with the arrow in \Cref{fig:ex17_sv3}). Note again the similarity with \Cref{remark:sensitivity}. Below, we will therefore introduce the strong \hinfnorm{} and the asymptotic transfer function, which have similar roles as the strong spectral abscissa and the delay difference equation underlying the NDDE in \Cref{sec:neutral_stability}.

The asymptotic transfer function of \eqref{eq:ol_hinfnorm2} is given by:
\begin{equation}
\label{eq:asymptotic_tf}
T_{zw,a}(s) := -C_{s}^{(2)} \left(A_0^{(22)} + \sum_{k=1}^{m} A_k^{(22)}e^{-s\tau_k}\right)^{-1} B_{s}^{(2)},
\end{equation}
with
\[
A_k^{(22)} = U^{T}A_k V \text{, } C_{s}^{(2)} =C_{s}V \text{ and } B_{s}^{(2)} = U^{T}B_{s}
\] in which the columns of the matrices $U$ and $V$ span an orthonormal basis for the left and right null space of $E$, respectively, see \Cref{subsec:stability_DDAE}.

This asymptotic transfer function owes its name to the fact that it converges to \eqref{eq:transferfunction} at high frequencies (see \cite[Equations (3.2)-(3.3) and Proposition~3.3]{gumussoy2011}.
\begin{remark}
For time-delay systems of the form
\begin{equation}
\label{eq:state_space_direct_feedthrough}
\left\{
\begin{array}{rcl}
\dot{x}(t) &=& \displaystyle \sum_{k=1}^{m_A} A_k\, x(t-h_{A,k}) + \sum_{k=1}^{m_{B_2}} B_{2,k}\, w(t-h_{B_2,k}) - \sum_{k=1}^{m_H} H_k\, \dot{x}(t-h_{H,k})\\ [12px]
z(t) &=& \displaystyle \sum_{k=1}^{m_{C_2}} C_{2,k}\, x(t-h_{C_2,k}) + \sum_{k=1}^{m_{D_{22}}} D_{22,k} \, w(t-h_{D_{22},k}),
\end{array}
\right.
\end{equation}
the asymptotic transfer function is given by
\[
T_{zw,a}(s) = \sum_{k=1}^{m_{D_{22}}} D_{22,k}e^{-s h_{D_{22},k}}.
\]
For example, the asymptotic transfer function associated with \eqref{eq:hinfnorm_example2}  is equal to 
\begin{equation}
\label{eq:hinfnorm_example2_ta}
T_{zw,a}(s) = 1 + e^{-s\tau_1}-2e^{-s\tau_2}.
\end{equation}
\end{remark}

Next, we define the strong \hinfnorm{} of \eqref{eq:transferfunction} as the smallest upper bound for the \mbox{\hinfnorm{}} which insensitive to (sufficiently) small delay changes.
\begin{definition}[{\cite[Definitions~4.2 and 4.4]{gumussoy2011}}]
The strong \hinfnorm{} of \eqref{eq:transferfunction} is given by
\begin{equation*}
|||T_{zw}(s;\vec{\tau})|||_{\hinf} :=  \lim_{\epsilon\searrow 0+} \sup \{\|T_{zw}(\cdot;\vec{\tau}_{\epsilon})\|_{\hinf}:  \vec{\tau}_{\epsilon} \in \mathcal{B}(\vec{\tau},\epsilon) \cap \R_{+}^{m} \}
\end{equation*}
with $\mathcal{B}(\vec{\tau},\epsilon)$ as defined in \Cref{def:strong_spectral_abscissa} and $T_{zw}(\cdot;\vec{\tau}_{\epsilon})$ the transfer function in \eqref{eq:transferfunction} where the delays are replaced by $\vec{\tau}_{\epsilon}$.
Similarly, the strong \hinfnorm{} of \eqref{eq:asymptotic_tf} is given by
\begin{equation*}
|||T_{zw,a}(s;\vec{\tau})|||_{\hinf} := \lim_{\epsilon\searrow 0+} \sup \{\|T_{zw,a}(\cdot;\vec{\tau}_{\epsilon})\|_{\hinf}:  \vec{\tau}_{\epsilon} \in \mathcal{B}(\vec{\tau},\epsilon) \cap \R_{+}^{m} \}.
\end{equation*}
\end{definition}

In contrast to the nominal \hinfnorm{}, the strong \hinfnorm{} continuously depends on both the entries of the matrices in and the delay values \cite[Thoerem~4.5]{gumussoy2011}.

Next, we will give a characterization for the strong \hinfnorm{} of both \eqref{eq:transferfunction} and \eqref{eq:asymptotic_tf} that is useful from a computational point of view.
\begin{proposition}
\label{prop:strong_hinfnorm}
Under the assumption of strong exponential stability, the strong \hinfnorm{} of the asymptotic tranfer function \eqref{eq:asymptotic_tf} is given by
\begin{equation}
\label{eq:strong_hinfn_asympt}
|||T_{zw,a}|||_{\hinf} = \max_{\vec{\theta}\in[0,2\pi)^{m}} \left\|C_{s}^{(2)} \left(A_0^{(22)} + \sum_{k=1}^{m} A_k^{(22)}e^{-\jmath\theta_k}\right)^{-1} B_{s}^{(2)}\right\|_2.
\end{equation}
For state-space systems of the form \eqref{eq:state_space_direct_feedthrough} this expression reduces to,
\[
|||T_{zw,a}(s)|||_{\hinf} = \max_{\vec{\theta}\in[0,2\pi)^{m_{D_{22}}}} \left\| \sum_{k=1}^{m_{D_{22}}} D_{22,k} e^{-\jmath \theta_k}\right\|_2.
\]
The strong \hinfnorm{} of \eqref{eq:transferfunction} is given by
\[
|||T_{zw}||_{\hinf} = \max\left\{\|T_{zw}\|_{\hinf},|||T_{zw,a}|||_{\hinf}\right\}.
\]
\end{proposition}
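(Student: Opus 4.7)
The plan is to prove the three parts in sequence, relying on a phase-density argument for the asymptotic transfer function and then using the fact that $T_{zw}$ and $T_{zw,a}$ coincide asymptotically at high frequency.

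For the first part, I would prove both inequalities separately. For the upper bound: for any perturbation $\vec{\tau}_{\epsilon} = \vec{\tau} + \vec{\delta}$ with $\|\vec{\delta}\| < \epsilon$ and any $\omega \geq 0$, the quantity $\|T_{zw,a}(\jmath\omega;\vec{\tau}_{\epsilon})\|_2$ equals the expression inside the max in \eqref{eq:strong_hinfn_asympt} evaluated at $\theta_k = -\omega(\tau_k + \delta_k) \bmod 2\pi$; taking the supremum over $\omega$ and then over $\vec{\delta}$ gives a value at most the full max over $\vec{\theta}\in[0,2\pi)^m$, and this bound survives the $\epsilon\searrow 0$ limit. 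For the lower bound, I would use an exact matching argument rather than density: fix a target $\vec{\theta}$, pick $\omega$ large enough that $\pi/\omega < \epsilon$, and for each $k$ choose $n_k \in \Z$ minimizing $|\theta_k + 2\pi n_k - \omega \tau_k|$; then setting $\delta_k = -(\theta_k + 2\pi n_k - \omega\tau_k)/\omega$ gives $|\delta_k| \leq \pi/\omega < \epsilon$ and realizes the target phases exactly. Combined with continuity of the matrix-inverse norm in $\vec{\theta}$, this shows every value in the max is attained as $\epsilon \searrow 0$.

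The second part is essentially a corollary of the first. As noted in the Remark preceding the proposition, for systems of the form \eqref{eq:state_space_direct_feedthrough} the asymptotic transfer function reduces to $\sum_k D_{22,k} e^{-s h_{D_{22},k}}$. On the imaginary axis this has the same "phase-only" structure as the general case, and the identical exact-matching argument gives the stated formula immediately (no matrix inverse needs to be analyzed).

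The third part uses the asymptotic identity $T_{zw}(\jmath\omega;\vec{\tau}) - T_{zw,a}(\jmath\omega;\vec{\tau}) \to 0$ as $\omega \to \infty$ (valid under strong stability; the difference comes from terms containing the invertible block $E^{(11)}$ in the decomposition of \Cref{subsec:stability_DDAE}, which decays like $1/\omega$). For the easy direction, both $\|T_{zw}\|_{\hinf}$ and $|||T_{zw,a}|||_{\hinf}$ are at most $|||T_{zw}|||_{\hinf}$: the first trivially, the second because at high frequency $T_{zw}$ matches $T_{zw,a}$ and so any $\hinf{}$-value attained by perturbed $T_{zw,a}$ is approximated by perturbed $T_{zw}$. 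For the reverse direction, split each frequency axis into $[0,\omega_0]$ and $(\omega_0,\infty)$. On $[0,\omega_0]$, $T_{zw}(\jmath\omega;\vec{\tau}_{\epsilon})$ depends continuously on $\vec{\tau}_{\epsilon}$ uniformly in $\omega$, so the supremum over this interval converges to $\sup_{\omega\in[0,\omega_0]}\|T_{zw}(\jmath\omega;\vec{\tau})\|_2 \leq \|T_{zw}\|_{\hinf}$ as $\epsilon\searrow 0$. On $(\omega_0,\infty)$, using the asymptotic identity together with $\omega_0$ large yields a contribution at most $|||T_{zw,a}|||_{\hinf} + o(1)$.

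The main obstacle I anticipate is the third part, specifically making the high-frequency/low-frequency split rigorous. One has to control $T_{zw} - T_{zw,a}$ uniformly in both $\omega$ and $\vec{\tau}_{\epsilon}$ in the high-frequency regime, which requires a quantitative version of the asymptotic matching that is uniform over small delay perturbations. The lower bound in part 1 is clean because we have $m$ free parameters to match $m$ phases, but part 3 demands these two phenomena be reconciled in the correct order of limits ($\epsilon \searrow 0$ taken after $\omega_0 \to \infty$).
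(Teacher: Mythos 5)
The paper never proves \Cref{prop:strong_hinfnorm}: the manual explicitly refrains from proofs and imports this characterization from \cite{gumussoy2011}, so there is no in-paper argument to compare against. Judged on its own terms, your sketch follows the standard route for results of this type (it is the \hinfnorm{} analogue of the argument behind \Cref{proposition:strong_spectral_abscissa}), and the essential ideas are all present and correct: the upper bound in \eqref{eq:strong_hinfn_asympt} because every frequency-response value of the (perturbed) asymptotic transfer function is one evaluation of the right-hand side at some phase vector, the lower bound by exact phase matching with $m$ free delay perturbations of size $O(1/\omega)$, and the splitting of the frequency axis for the third identity. Two points deserve attention. First, the hypothesis of strong exponential stability is doing silent work in part~1: it is what guarantees (via $\gamma_0<1$ for the underlying delay difference equation) that $A_0^{(22)}+\sum_{k} A_k^{(22)}e^{-\jmath\theta_k}$ is invertible for \emph{every} $\vec{\theta}$ on the torus, so that the maximum in \eqref{eq:strong_hinfn_asympt} is finite and the upper-bound argument is meaningful; your sketch invokes ``continuity of the matrix-inverse norm'' without securing this. (You should also note that the perturbed delays must remain nonnegative, which is immediate for $\omega$ large, and that a zero feedthrough delay can only be perturbed upward --- still enough to reach any phase.)

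Second, the closing remark about the order of limits in part~3 is stated backwards, and taken literally it would break the proof. The correct order is: fix $\omega_0$, let $\epsilon\searrow 0$ so that the supremum over $[0,\omega_0]$ collapses onto the nominal one (hence is at most $\|T_{zw}\|_{\hinf}$) while the supremum over $(\omega_0,\infty)$ is bounded by $|||T_{zw,a}|||_{\hinf}+\eta(\omega_0)$ uniformly in $\epsilon$; only afterwards send $\omega_0\to\infty$ to remove $\eta(\omega_0)$. Taking $\omega_0\to\infty$ first, as you wrote, lets the low-frequency window swallow the whole axis before the perturbation is removed and yields nothing. Your displayed two-interval estimate actually implements the correct order, so this is a slip in the commentary rather than in the argument, but it is exactly the step you flagged as delicate, so get it right. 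The remaining sign slips in the phase identification ($\theta_k\equiv\omega\tau_{\epsilon,k}\bmod 2\pi$ rather than its negative) are harmless since the maximum ranges over the full torus.
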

The strong \hinfnorm{} of \eqref{eq:transferfunction} can thus be computed using a two-step approach. First the strong \hinfnorm{} of the associated asymptotic transfer function is computed by solving the optimization problem in \eqref{eq:strong_hinfn_asympt}. Next, it is verified whether the spectral norm of the frequency response attains a higher value at a finite frequency using the approach mentioned above \Cref{example:hinfnorm1}. This procedure is described in more detail in \cite{gumussoy2011}. 

As small delay perturbations are inevitable in practice, the function \verb|tds_hinfnorm| will always return the strong \hinfnorm{} of a time-delay system. In the following example, we apply this function to the system in \eqref{eq:hinfnorm_example2}.

\begin{example}
	\label{example:strong_hinfnorm}
Let us reconsider \eqref{eq:hinfnorm_example2} from \Cref{example:hinfnorm2} with $\tau_1 = 1$ and $\tau_2 = 2$. The corresponding asymptotic transfer function is given by \eqref{eq:hinfnorm_example2_ta}. It follows from \Cref{prop:strong_hinfnorm} that 
\[
|||T_{zw,a}|||_{\hinf} = \max_{\vec{\theta}\in[0,2\pi)^2} \|1+e^{-\jmath\theta_1}-2e^{-\jmath\theta_2} \|_2 = 4.
\]
\Cref{fig:ex17_sv1} shows that the spectral norm of the frequency response does not attain a higher value, meaning that the strong \hinfnorm{} of \eqref{eq:hinfnorm_example2} is equal to 4. Let us verify this result using the \verb|tds_hinfnorm|-function.
\begin{lstlisting}
>> [hinf,wPeak] = tds_hinfnorm(tds)
hinf =

     4

wPeak =

   Inf
\end{lstlisting}
Notice that \matlabfun{wPeak} now equals $\infty$, indicating that strong \hinfnorm{} of \eqref{eq:hinfnorm_example2_tf} is equal to the strong \hinfnorm{} of the associated asymptotic transfer function in \eqref{eq:hinfnorm_example2_ta}. (Note however that it does not imply that $\lim_{\omega\rightarrow \infty} \sigma_1\left(T_{zw}(\jmath\omega\right)$ exists.)
\end{example}

\begin{remark}
Similar to the strong spectral abscissa of the associated delay-difference equation ($C_D$), the strong \hinfnorm{} of the asymptotic transfer function can be computed separately in \packageName{}. To this end, we first obtain a representation for the asymptotic transfer function using the function \matlabfun{get_asymptotic_transfer_function}. Next, we can apply \matlabfun{tds_hinfnorm} on the resulting object to compute its strong \hinfnorm{}.
\begin{lstlisting}
Ta = get_asymptotic_transfer_function(tds);
[hinfa,wPeaka] = tds_hinfnorm(Ta)
hinfa =

     4

wPeaka =

   Inf
\end{lstlisting}
\end{remark}
In this section, we introduced the \hinfnorm{} and the strong \hinfnorm{} and briefly discussed their applicability. Next, we will discuss how \packageName{} can be used for $\hinf$- synthesis problems. 

\section{Designing controllers that minimize the strong \hinfnorm{}}
\label{subsec:optimize_hinfnorm}
This subsection describes how \packageName{} can be used to design output feedback controllers that (locally) minimize the strong \hinfnorm{} of the corresponding closed-loop system. To this end, we will consider the following state-space representation
\begin{equation}
\label{eq:state_space_hinfnorm_opt}
\left\{
\begin{array}{rcc}
    \displaystyle E\dot{x}(t) &=& \displaystyle \sum\limits_{k=1}^{m_A} A_k\, x(t-h_{A,k}) + \sum_{k=1}^{m_{B_1}} B_{1,k}\, u(t-h_{B_1,k}) + \sum_{k=1}^{m_{B_2}} B_{2,k}\, w(t-h_{B_{2},k}) \\[13pt] && \displaystyle  -\sum_{k=1}^{m_H} H_k \dot{x}(t-h_{H,k}) \\[15pt]
     y(t) &=& \displaystyle  \sum\limits_{k=1}^{m_{C_1}} C_{1,k}\, x(t-h_{C_1,k}) + \sum_{k=1}^{m_{D_{11}}} D_{11,k}\, u(t-h_{D_{11},k}) + \sum_{k=1}^{m_{D_{12}}} D_{12,k}\, w(t-h_{D_{12},k}) \\[15pt]
     z(t) &=& \displaystyle  \sum\limits_{k=1}^{m_{C_2}} C_{2,k}\, x(t-h_{C_{2},k}) +\sum_{k=1}^{m_{D_{21}}} D_{21,k}\, u(t-h_{D_{21},k})+    \sum_{k=1}^{m_{D_{22}}} D_{22,k}\, w(t-h_{D_{22},k})
\end{array}
\right.
\end{equation}
with $x\in \R^{n}$ the state variable, $u\in\R^{p_1}$ the control input (related to the \textit{“actuators”} that steer the system), $y\in\R^{q_1}$ the measured output (representing the available measurements), $w\in\R^{p_2}$ the performance input (representing \eg noise signals), and $z\in\R^{q_2}$ the performance output (representing \eg error signals). As always, the system matrices are  real-valued and have appropriate dimensions, and the delays are non-negative (positive). To create a representation for a time-delay system of the form \eqref{eq:state_space_hinfnorm_opt}, we again use the functions \verb|tds_create|, \verb|tds_create_neutral| and \verb|tds_create_ddae|.
\begin{itemize}
    \item \texttt{tds\_create(A,hA,B1,hB1,C1,hC1,D11,hD11,B2,hB2,C2,hC2,D12,hD12,D21,...}\newline
    \texttt{hD21,D22,hD22)} allows to create retarded state-space models, \ie $E = I_n$ and $m_H=0$.
    \item \texttt{tds\_create\_neutral(H,hH,A,hA,B1,hB1,C1,hC1,D11,hD11,B2,hB2,C2,hC2,...}\linebreak\texttt{D12,hD12,D21,hD21,D22,hD22)} allows to create neutral state-space models, \ie $E=I_n$ and $m_H>0$.
    \item \texttt{tds\_create\_ddae(E,A,hA,B1,hB1,C1,hC1,D11,hD11,B2,hB2,C2,hC2,D12,...}\newline \texttt{hD12,D21,hD21,D22,hD22)} allows to create delay descriptor systems, \ie $E$ is not necessarily the identity matrix can can be  singular and $m_H = 0$. 
\end{itemize}
The arguments \texttt{H}, \texttt{A}, \texttt{B1}, \texttt{C1}, \texttt{D11}, \texttt{B2}, \texttt{C2}, \texttt{D12}, \texttt{D21}, and \texttt{D22} should be cell arrays containing the system matrices. The arguments \texttt{hH}, \texttt{hA}, \texttt{hB1}, \texttt{hC1}, \texttt{hD11}, \texttt{hB2}, \texttt{hC2}, \texttt{hD12},\texttt{hD21}, and \texttt{hD22} should be arrays containing the delay values. The argument \texttt{E} of \matlabfun{tds_create_ddae} should be a (real-valued) matrix.  As often only a limited number of fields are present, it is also possible to specify the necessary fields using key-value pairs. Furthermore, when using the key-value based syntax, zero delays do not need to be specified. For example, to create a representation for the retarded state-space system \[
\left\{
\begin{array}{rcl}
     \dot{x}(t) &= & A_1 x(t-h_{A,1}) + A_2 x(t-h_{A,2}) + B_1 u(t) + B_2 w(t) \\
     y(t) &=&C_1 x(t) + D_{12,1} w(t-h_{D_{12,1}}) + D_{12,2} w(t-h_{D_{12,2}}) \\
     z(t) &=& C_2 x(t)
\end{array}
\right.
\]
the following syntax can be used
\begin{lstlisting}
tds_create(A,hA,B1,[0],C1,[0],'B2',B2,'C2',C2,'D12',D12,hD12)
\end{lstlisting}
with \texttt{A}, \texttt{B1}, \texttt{C1}, \texttt{B2}, \texttt{C2}, and \texttt{D12} cell arrays and \texttt{hA} and \texttt{hD12} arrays.

The considered $\mathcal{H}_{\infty}$-synthesis problem consists of designing a feedback control law of the form \eqref{eq:dynamic_output_feedback_controller} or \eqref{eq:static_output_feedback_controller} (which generates inputs $u$ based on the measurements $y$) such that the (strong) \hinfnorm{} of the resulting closed-loop system from $w$ to $z$ is minimized, as depicted in \Cref{fig:hinf_control}. 
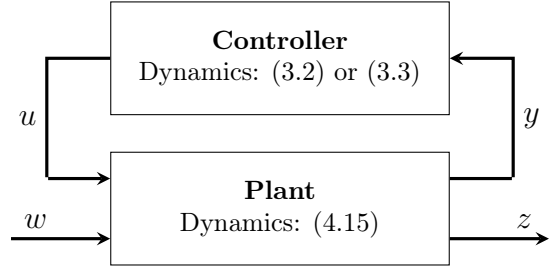
\begin{figure}[!h]
	\centering
	\begin{tikzpicture}
	\node[anchor=center,draw,rectangle,minimum width=4.5cm,minimum height=1.5cm] (plant) at (0,0) {\begin{tabular}{c} \textbf{Plant}\\ Dynamics: \eqref{eq:state_space_hinfnorm_opt}\end{tabular}};
	\node[anchor=center,draw,rectangle,minimum width=4.5cm,minimum height=1.5cm] (controller) at (0,2) {\begin{tabular}{c} \textbf{Controller}\\
		Dynamics: \eqref{eq:dynamic_output_feedback_controller}  or \eqref{eq:static_output_feedback_controller} \end{tabular}
	};
	\node (w) at (-3.7,-0.4) {};
	\node (z) at (3.7,-0.4) {};
	\node[inner sep=0,outer sep=0] (t1) at (-3.1,0.4) {};
	\node[inner sep=0,outer sep=0] (t2) at (3.1,0.4) {};
	\draw[stealth-,very thick] (plant.west)++(0,-0.4) -- (w) node [near end,above] {\large $w$};
	\draw[-stealth,very thick] (plant.east)++(0, -0.4) -- (z) node [near end,above] {\large $z$};
	\draw[-stealth,very thick] (plant.east)++(0, 0.4) -- (t2) -- (t2 |- controller.east) node[midway,right]{\large $y$} -- (controller.east);
	\draw[very thick] (t1) -- (t1 |- controller.west) node[midway,left]{\large $u$} -- (controller.west);
	\draw[stealth-,very thick] (plant.west)++(0,0.4) -- (t1);
	\end{tikzpicture}
	\caption{Set-up for the design of $\mathcal{H}_{\infty}$-controllers.}
	\label{fig:hinf_control}
\end{figure}
 As for stabilization in \Cref{sec:stabilization}, we will employ an optimization approach that consists of directly minimizing the (strong) \hinfnorm{} with respect to the controller parameters. To this end, the functions \matlabfun{tds_hiopt_static} and \matlabfun{tds_hiopt_dynamic} can be used. These functions take the same arguments as respectively \matlabfun{tds_stabopt_static} and \matlabfun{tds_stabopt_dynamic}. More specifically, the first argument of both functions is a \verb|tds_ss| representation for the open-loop plant, while \matlabfun{tds_hiopt_dynamic} takes the desired controller order as a second mandatory argument. The remaining arguments should be passed as key-value pairs. 
\begin{itemize}
    \item \verb|'options'| allows to specify additional options. The \verb|'options'|-argument should be a \verb|struct| with the necessary fields. A valid \verb|'options'|-structure can be created using the \matlabfun{tds_hiopt_options}-function  (for more details see \Cref{sec:tds_hiopt}).
    \item \verb|'initial'| allows to specify the initial values for the optimization variables. The \verb|'initial'|-argument should be a cell array containing the initial controllers.
    \item \verb|'mask'| allows to specify which entries of the matrices in \eqref{eq:dynamic_output_feedback_controller} or \eqref{eq:static_output_feedback_controller} that should remain fixed to their basis value (specified using the \verb|'basis'| argument), \ie the entries that should not be touched by the optimization algorithm. 
    \item \verb|'basis'| allows to specify the value of the entries of \eqref{eq:dynamic_output_feedback_controller} or \eqref{eq:static_output_feedback_controller} that are not optimized (indicated using the \verb|'mask'| argument). 
\end{itemize}
\begin{remark}
The functions \matlabfun{tds_hiopt_static} and \matlabfun{tds_hiopt_dynamic} will test whether the provided initial controllers are strongly stabilizing. If an initial controller is not strongly stabilizing, the functions \matlabfun{tds_stabopt_static} and \matlabfun{tds_stabopt_dynamic} are used to design a strongly stabilizing controller starting from the initial value. If this does not result in a strongly stable controller, this initial controller is skipped in the minimization of the \hinfnorm{}. If no strongly stabilizing controller can be found, an error is thrown.
\end{remark}
\begin{example}[{\cite[Section~7.1]{gumussoy2011}}]
	\label{example:Hinf_design}
Consider the following delay-descriptor system.
\begin{equation}
\label{eq:hiopt_ex1}
    \left\{
    \begin{array}{rcl}
        \begin{bmatrix}
            1 & 0 \\
            0 & 0
        \end{bmatrix} \dot{x}(t) & = & \begin{bmatrix}
            -0.1 & -1 \\ 1 & -1
        \end{bmatrix}x(t) + \begin{bmatrix}
            0 \\ 1
        \end{bmatrix}u(t) + \begin{bmatrix}
            0 \\ 1
        \end{bmatrix}w(t)\\[3mm]
         y(t) &=& \begin{bmatrix}
             0 & 1 \\
             0 & 0
         \end{bmatrix}x(t-\tau_1) + \begin{bmatrix}
             0 & 0 \\ 0 & 1
         \end{bmatrix} x(t-\tau_2)\\[4mm]
         z(t) &=& \begin{bmatrix}
             2 & -1
         \end{bmatrix}x(t)\\
        
    \end{array}
    \right.
\end{equation}
with $\tau_1 = 1$ and $\tau_2 = 2$. As always, we start with creating a \verb|tds_ss| representation.
\begin{lstlisting}
E = [1 0;0 0];
A = {[-0.1 -1;1 -1]}; B1 = {[0;1]}; B2 = {[0;1]};
C1 = {[0 1; 0 0],[0 0;0 1]};
C2 = {[2 -1]};
sys = tds_create_ddae(E,A,0,B1,0,C1,[1 2],'B2',B2,'C2',C2);
\end{lstlisting}
Next we run the optimization algorithm to design a static feedback controller.
\begin{lstlisting}
options = tds_hiopt_options('nstart',1);
K_initial = [0.25 -0.5];
[K,cl] = tds_hiopt_static(sys,'options',options,'initial',K_initial);
\end{lstlisting}
We obtain the following feedback law
\begin{equation}
    \label{eq:hiopt_ex1_static}
u(t) = [-0.3533 -0.1012]\ y(t), 
\end{equation}
and the strong \hinfnorm{} of the corresponding closed-loop system is equal to 1.8333. \Cref{fig:hiopt_ex1_svplot} shows the spectral norm of the closed-loop frequency response in function of $\omega$. We observe that it attains a maximum of 1.833 at $\omega = 0$. However, \Cref{fig:hiopt_ex1_svplot2} shows the same plot after we made a small perturbation to the first delay. At high frequencies, we now see a jump to almost the \hinfnorm{}. Let us therefore compute the strong H-infinity norm of the asymptotic transfer function using the following code.
\begin{lstlisting}
>> Ta = get_asymptotic_transfer_function(cl);
>> hinfa = tds_hinfnorm(Ta)

hinfa =

    1.8331
\end{lstlisting}
We conclude that (for this example) the standard \hinfnorm{} and the strong \hinfnorm{} of the asymptotic transfer function (almost) coincide.
\begin{figure}
    \centering
    \begin{subfigure}{0.49\linewidth}
      \includegraphics[width=\linewidth]{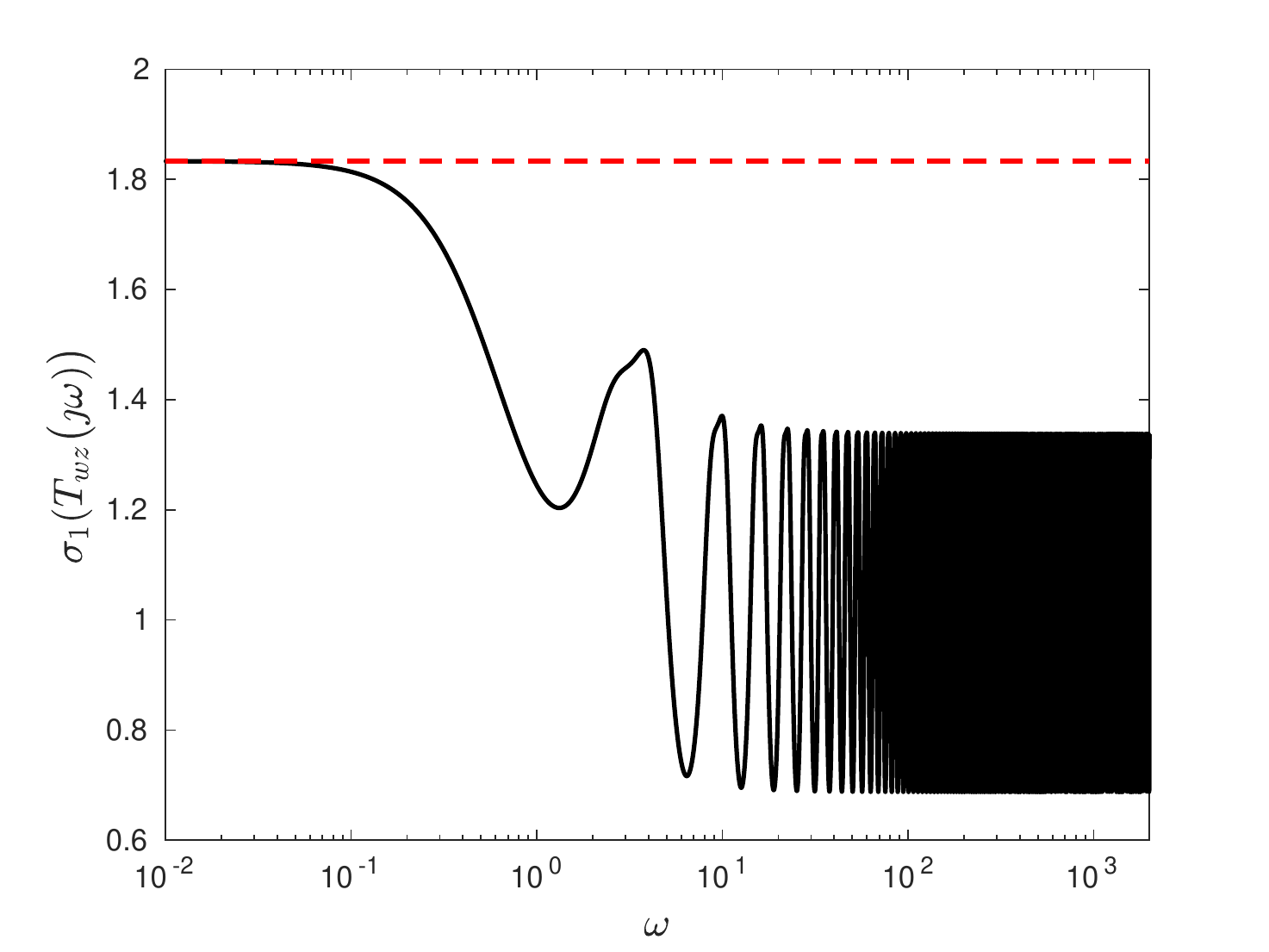}
      \caption{$\tau_1 = 1$ and $\tau_2 = 2$}
      \label{fig:hiopt_ex1_svplot}
    \end{subfigure}
    \begin{subfigure}{0.49\linewidth}
      \includegraphics[width=\linewidth]{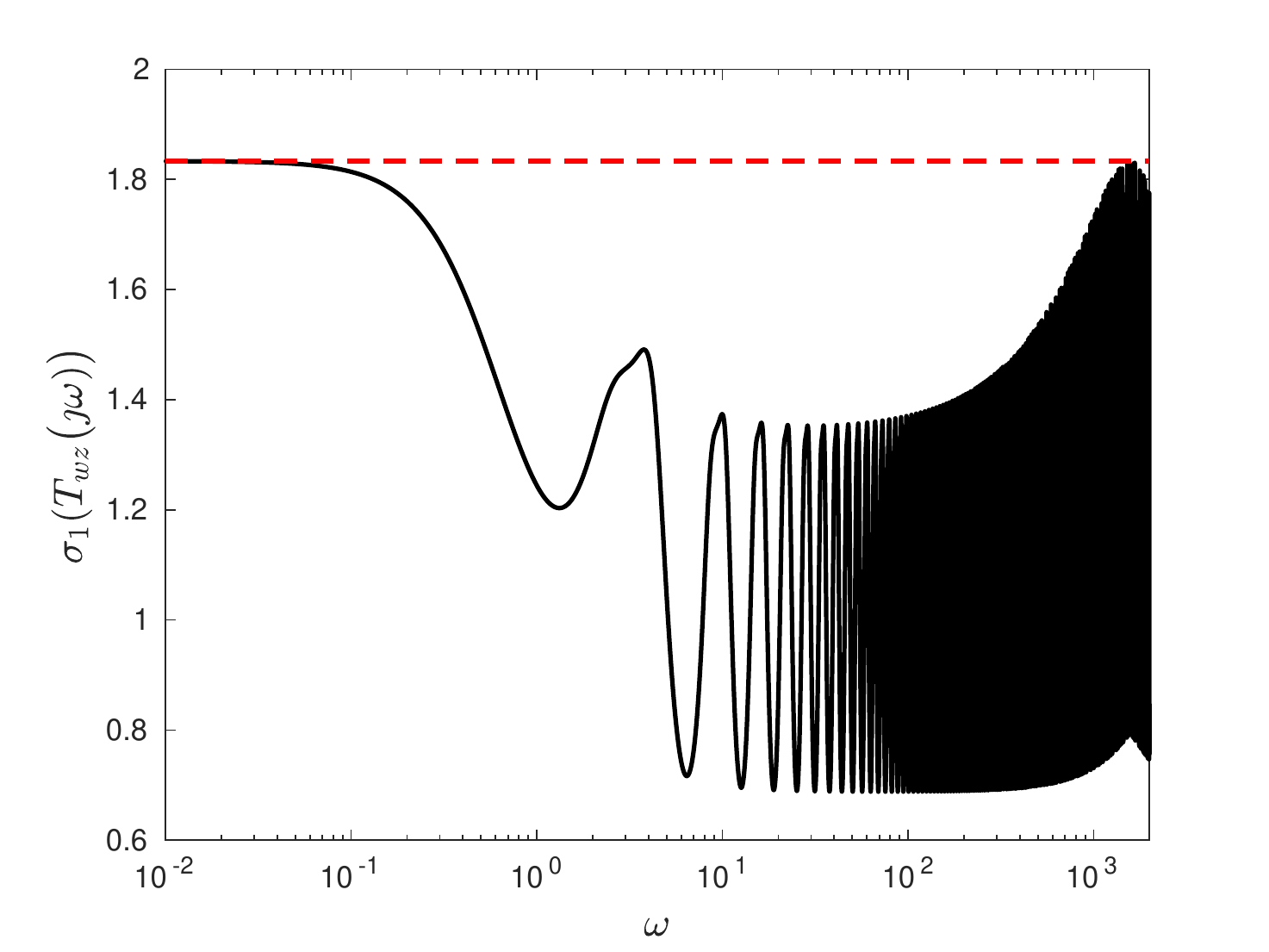}
      \caption{$\tau_1 = 0.999$ and $\tau_2 = 2$}
        \label{fig:hiopt_ex1_svplot2}
    \end{subfigure}
    \caption{Singular value plot of the frequency response of the feedback interconnection of \eqref{eq:hiopt_ex1} and \eqref{eq:hiopt_ex1_static} for different values of $\tau_1$.}
    \label{fig:hiopt_ex1}
\end{figure}
\end{example}

Next, we illustrate how to design a structured controller using \verb|tds_hiopt_dynamic|.
\begin{example}
\label{ex:measurement_noise}
In this academic example we will revisit the set-up from \Cref{example:fixed_entries}, in which we designed a stabilizing dynamic output feedback controller of order two under the constraint that the matrices $B_c$ and $D_c$ are diagonal. Although the obtained controller was stabilizing, we have no indication of the performance of the closed-loop system, \eg ``How well are external disturbances attenuated?''. For the sake of this example, imagine therefore that the output measurements of the plant are disturbed by a noise signal $w$ (see \Cref{fig:example_perf}). 
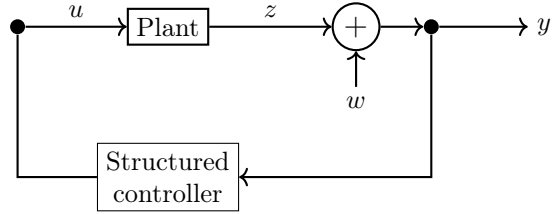
\begin{figure}[!h]
    \center
    \begin{tikzpicture}
      \node[draw,rectangle,thick,align=center] (plant) at (0,1) {Plant};
      \node[draw,thick,circle,inner sep=2pt] (sum) at (2.5,1) {\large $+$ };
      \node[fill,circle,inner sep=2pt] (node) at (3.5,1) {};
      \node[fill,circle,inner sep=2pt] (node2) at (-2,1) {};
      \node (w) at (2.5,0) {$w$};
      \node (y) at (5,1) {$y$};
      \draw[->,thick] (w) -- (sum);
      \draw[->,thick] (sum) -- (node);
      \draw[->,thick] (node) -- (y);
      \node[draw,rectangle,align=center] (controller) at (0,-1) {Structured \\ controller};
      \draw[->,thick] (node) |- (controller);
      \draw[-,thick] (controller) -| (node2);
      \draw[->,thick] (node2) -- node[above] {$u$} (plant);
      \draw[->,thick] (plant) -- node[above]{$z$} (sum);
    \end{tikzpicture}
    \caption{Situation sketch for \Cref{ex:measurement_noise}: the control output is disturbed by measurement noise.}
    \label{fig:example_perf}
\end{figure}
We now are interested in quantifying the effect of this measurement noise on the ``real'' output of the plant ($z$). To this end, let us extend the state-space model in \eqref{eq:fixed_entries_sys} with the  performance input and output channels:
\begin{equation}
\label{eq:hiopt_structured}
\left\{
\begin{array}{rcl}
    \dot{x}(t)&=&
         A_0 
         x (t) + 
         A_1 
         x(t-1) +  B u(t-5)   \\
     y(t) &=&   C 
         x(t) + w(t)\\ 
     z(t) &=& 
         C x(t) , 
\end{array}
\right.
\end{equation}
with the matrices $A_0$, $A_1$, $B$, and $C$ as defined in \eqref{eq:fixed_entries_sys}. In this case, $|||T_{zw}(s;\vec{\tau})|||_{\hinf}$ thus quantifies the worst-case amplification of measurement noise at the output of the plant. For the controller parameters obtained in \Cref{example:fixed_entries} the strong \hinfnorm{} is equal to 187.6219. Next we will design a dynamic output feedback controller (with the same structure) that locally minimizes the closed-loop \hinfnorm{}. To this end, we can use the following code. (For illustrative purposes, the initial controller is chosen such that the optimization procedure converges quickly.)
\begin{lstlisting}
% Create a representation for (4.18)
A0 = [-3.5 -2 3 -1.5;3 -6.5 3 -1;4 -2 -7 2.5;-2.5 1.5 -3.5 1];
A1 = [-0.25 0.5 -0.25 0.25;0.5 -0.5 1 -0.5;...
       1 -1 0.5 -0.15;-0.15 -0.15 0.1 0.2];
B = [-1 2;-2 1;3 -2.5;-4  0];
C = [0 1 1 0;1 0 0 1];
D12 = eye(2);
sys=tds_create({A0,A1},[0 1],{B},5,{C},0,'D12',{D12},'C2',{C});

% Define the mask and basis arguments 
mask = ss(ones(2),eye(2),ones(2),eye(2));
basis = ss(zeros(2),zeros(2),zeros(2),zeros(2));

% Design a structured controller
nc = 2;
initial = ss([-0.25 0.75;-1.5 0.5],diag([0 -0.5]),...
                [-0.1 0.1;-1 3.5],diag([0.01 -1]));
options_stabopt = tds_stabopt_options('nstart',1);
[K,cl] = tds_hiopt_dynamic(sys,nc,'options',options_stabopt,...
             'initial',initial,'basis',basis,'mask',mask);
\end{lstlisting}
The obtained controller matrices are
\[
\begin{array}{ll}
A_c = \begin{bmatrix}
   -0.3029 & 0.7356 \\
   -1.502 & 0.4897
\end{bmatrix}\text{, }&B_c = \begin{bmatrix}
-0.03063 & 0 \\
 0 & -0.271
\end{bmatrix}\text{, }\\[12pt] C_c = \begin{bmatrix}
-0.06593 & 0.04455 \\
-0.9919 & 3.475
\end{bmatrix} \text{, and } &
D_c = \begin{bmatrix}
 0.01272 & 0 \\
 0 & -0.944
\end{bmatrix}.
\end{array}
\]
The corresponding closed-loop system is strongly exponentially stable (the strong spectral abscissa is equal to -0.1046) and the strong \hinfnorm{} is now equal to 3.2424. The controller designed above thus (significantly) reduces the strong \hinfnorm{}, at the cost of an increase in the strong spectral abscissa. 
\end{example}

Below we retake the previous example in a more practical setting. In a typical $\hinf$-controller design problem we not only want to suppress measurement noise, but also have good disturbance rejection and good reference tracking. Furthermore, it is often desirable to limit the control effort to avoid saturating the actuators and to satisfy the physical constraints of the system. For good practical performance, one thus has to take multiple objectives into account at the same. However, considering these different objectives separately can lead to a waterbed effect in which reducing one objective, increases another one. For example, to have good disturbance rejection, a large loop gain is required. But, a large loop gain implies that sensor noise is barely suppressed and that the system is more sensitive to unmodelled dynamics. Therefore, a trade-off between the different control goals needs to be made. To achieve this, we will optimize a weighted combination of these performance objectives. Furthermore, this weighting needs to be frequency-dependent, as different objectives are relevant in different frequency ranges. For example, disturbance signals are typically significant for low frequencies, while measurement noise and modelling uncertainties are typically significant for higher frequencies.

The design approach sketched above gives rise to a so called mixed-sensitivity $\hinf$-synthesis problem, \ie
find a controller that minimizes the following objective function
\begin{equation}
\label{eq:mixed_sensitivity}
\left\|\begin{bmatrix}
W_1 S \\
W_2 KS \\
W_3 T
\end{bmatrix}\right\|_{\hinf}
\end{equation}
with $S := (I-PK)^{-1}$ the sensitivity function, $T:= (I-PK)^{-1}PK$ the complementary sensitivity function, $P$ the transfer function of the plant, $K$ the transfer function of the controller and $W_1$, $W_2$, and $W_3$ appropriately chosen weighting functions. The three components in this objective function, each play a different role. 
\begin{itemize}
	\item By chosing $W_1$ large for low frequencies we obtain a small $S$, which assures good reference tracking and a good disturbance rejection.
	\item By chosing $W_2$ large we obtain a small $KS$, which limits the control effort.
	\item By chosing $W_3$ large for high frequencies we obtain a small $T$, which implies good robustness against model uncertainties and good measurement noise attenuation.
\end{itemize}
 Below we briefly illustrate how such a design problem can be tackled in \packageName{}. For an extensive overview of how such mixed-sensitivity synthesis problems pop up in (robust and optimal) controller design and their interpretation in terms of performance and robust stability, we refer the interested reader to \cite{zhou1996robust} and \cite{ozbay2018frequency}.

\textbf{Note.} In the current implementation of \packageName{} the employed weighting functions need to be stable. (A similar constraint applies to the \matlabfun{mixsyn} function in \matlab{}.)

\begin{example} 
\label{example:mixed_sensitivity}
In this example we again consider the system from \Cref{example:fixed_entries}. We now want to find controller parameters that (locally) minimize \eqref{eq:mixed_sensitivity} for
\[
W_1(s) = 0.25 \frac{s+30}{s+2.5} I_2\text{, } W_2(s) = 0\text{, and }W_3(s) = 2 \frac{s+125}{s+1000}.
\]
 To tackle this design problem in \packageName{}, we consider the set-up depicted in \Cref{fig:mixed_sensitivity}. The objective function now corresponds to the \hinfnorm{} of the closed-loop transfer function from $w$ to $\begin{bmatrix}
z_1 & z_2 & z_3
\end{bmatrix}^{T}$.
\begin{figure}[!h]
    \center
    \begin{tikzpicture}
      \node[draw,rectangle,minimum size=0.8cm,thick] (plant) at (0,1) {$P$};
      \node[draw,thick,circle,inner sep=2pt] (sum) at (2,0) {\large $+$};
      \node[fill,circle,inner sep=2pt] (node3) at (2,1) {};
      \node[fill,circle,inner sep=2pt] (node1) at (2,-1) {};
      \node[fill,circle,inner sep=2pt] (node2) at (-2,-1) {};
      \node[draw,thick,minimum size=0.8cm,rectangle] (W2) at (-3.5,-1) {$W_2$};
      \node[draw,thick,minimum size=0.8cm,rectangle] (W1) at (3.5,-1) {$W_1$};
      \node[draw,thick,minimum size=0.8cm,rectangle] (W3) at (3.5,1) {$W_3$};
      \node (w) at (3,0) {$w$};
      \node (z1) at (5,-1) {$z_1$};
      \node (z2) at (-5,-1) {$z_2$};
      \node (z3) at (5,1) {$z_3$};
      
      \draw[->,thick] (w) -- (sum);
      \draw[->,thick] (sum) -- (node1);
      \node[draw,minimum size=0.8cm,rectangle] (controller) at (0,-1) {$K$};
      
      \draw[->,thick] (node1) -- node[above]{$y$} (controller);
      
      \draw[->,thick] (node1) -- (W1);
      \draw[->,thick] (W1) -- (z1);
      
      \draw[-,thick] (controller) --  (node2);
      \draw[->,thick] (node2) -- (W2);
      \draw[->,thick] (node2) |- node[near end,above]{$u$} (plant);
      \draw[->,thick] (W2) -- (z2);
      
      \draw[-,thick] (plant) --  (node3);
      \draw[->,thick] (node3) --  (sum);
      \draw[->,thick] (node3) --  (W3);
      \draw[->,thick] (W3) -- (z3);
    \end{tikzpicture}
    \caption{Situation sketch for \Cref{example:mixed_sensitivity}.}
    \label{fig:mixed_sensitivity}
\end{figure}
To incroporate the performance outputs $z_1$ and $z_3$\footnote{In this example, we ignore $z_2$ as $W_2(s)=0$, but a similar approach can be used to incorporate this output.}, we need to augment the model with two auxiliary state variables $\xi_1$ and $\xi_3$:
\[
\left\{
\begin{array}{rcl}
    \dot{\xi}_1(t) &=& -2.5 \xi_1(t) + (Cx(t) + w(t)) \\
    z_1(t) &=& 6.875 \xi_1(t) + 0.25 (Cx(t) + w(t))
\end{array}
\right.
\]
and
\[
\left\{
\begin{array}{rcl}
    \dot{\xi}_3(t) &=& -1000 \xi_3(t) + Cx(t)  \\
    z_3(t) &=& -1750 \xi_3(t) + 2 Cx(t).
\end{array}
\right.
\]
Adding these dynamics to \eqref{eq:fixed_entries_sys}, we obtain the following state-space model
\[
\setlength{\arraycolsep}{2pt}
\left\{
\begin{array}{rcl}
\begin{bmatrix}
\dot{x}(t) \\
\dot{\xi}_1(t) \\
\dot{\xi}_3(t)
\end{bmatrix}
 & = & \begin{bmatrix}
A_0 & 0 & 0 \\
C & -2.5 I & 0 \\
C & 0 & -1000 I
\end{bmatrix}
\begin{bmatrix}
x(t) \\
\xi_1(t) \\
\xi_3(t)
\end{bmatrix} + 
\begin{bmatrix}
A_1 & 0 & 0 \\
0 & 0 & 0 \\
0 & 0 & 0
\end{bmatrix} \begin{bmatrix}
x(t-1) \\
\xi_1(t-1) \\
\xi_3(t-1)
\end{bmatrix} + \begin{bmatrix}
B \\
0 \\
0 
\end{bmatrix} u(t-5) + \begin{bmatrix}
0 \\
I \\
0
\end{bmatrix} w(t)\\[15pt]
 y(t) &=& \begin{bmatrix}
     C & 0 & 0 
 \end{bmatrix} \begin{bmatrix}
     x (t)\\
     \xi_1(t) \\
     \xi_3(t)
 \end{bmatrix} + w(t)\\ [15pt]
\begin{bmatrix}
z_1(t)\\
z_3(t)
\end{bmatrix} &=&  \begin{bmatrix}
        0.25 C & 6.875I & 0 \\
        2 C & 0 & -1750 I
        \end{bmatrix}
        \begin{bmatrix}
         x(t) \\
         \xi_1(t) \\
         \xi_3(t)
     \end{bmatrix} + \begin{bmatrix}
        0.25 I \\
        0 
        \end{bmatrix}w(t).
\end{array}
\right.
\]
Now, we can apply the same procedure as before. 
\begin{lstlisting}
% Open-loop system matrices and delays
A0 = [-3.5 -2 3 -1.5;3 -6.5 3 -1;4 -2 -7 2.5;-2.5 1.5 -3.5 1];
A1 = [-0.25 0.5 -0.25 0.25;0.5 -0.5 1 -0.5;...
       1 -1 0.5 -0.15;-0.15 -0.15 0.1 0.2];
B = [-1 2;-2 1;3 -2.5;-4  0]; C = [0 1 1 0;1 0 0 1];
tau1 = 1; tau2 = 5; n = 4; p = 2; q = 2;

% weighting functions
K1 = 0.25; a1 = 30; b1 = 2.5; K3 = 2; a3 = 125; b3 = 1000;

% Augmented system matrices
A0e=[A0 zeros(n,2*q);C -b1*eye(q) zeros(q);...
     C zeros(q) -b3*eye(q)];
A1e = zeros(n+2*q); A1e(1:n,1:n) = A1;
B1e = [B;zeros(2*q,p)]; B2e = [zeros(n,q);eye(q);zeros(q)];
C1e = [C zeros(q,2*q)];
C2e = [K1*C K1*(a1-b1)*eye(q) zeros(q);...
       K3*C zeros(q) K3*(a3-b3)*eye(q)];
D12 = eye(q); D22 = [K1*eye(q);zeros(q)];

sys = tds_create({A0e,A1e},[0 tau1],{B1e},tau2,{C1e},0,...
           'D12',{D12},'B2',{B2e},'C2',{C2e},'D22',{D22});

%% Define the mask and basis arguments 
mask = ss(ones(2),eye(2),ones(2),eye(2));
basis = ss(zeros(2),zeros(2),zeros(2),zeros(2));

%% Design a structured controller
nc = 2;
initial = ss([-0.2 0.5;-5 -8],diag([0 -2]),...
             [-0.2 -0.5;-6 -5],diag([0.01 -3]));
options_stabopt = tds_stabopt_options('nstart',1);
[K,cl] = tds_hiopt_dynamic(sys,nc,'options',options_stabopt,...
                  'initial',initial,'basis',basis,'mask',mask);
\end{lstlisting}
We obtain the following controller parameters
\[
\begin{array}{ll}
A_c = \begin{bmatrix}
-0.2215 & 0.6798 \\
-4.992 &  -7.975
\end{bmatrix}\text{, } &B_c = \begin{bmatrix}
0.07326 &  0 \\
0 & -2.139
\end{bmatrix}\text{, } \\[12pt] C_c = \begin{bmatrix}
-0.04723 & -0.07836 \\
-6.029 & -5.074
\end{bmatrix}\text{, and } &
D_c = \begin{bmatrix}
5.226 \times 10^{-4}& 0 \\
0 & -2.628 
\end{bmatrix}
\end{array}
\]
and the corresponding value of the objective function is equal to 10.7073. 

\end{example}

To conclude this section, we will discuss how the function \matlabfun{tds_hiopt_dynamic} can be used to obtain a delay-free approximation for a stable and strictly proper time-delay system. To this end we will reformulate the minimization of the mismatch error in \eqref{eq:MOR_mismatch}  as a controller design problem by defining appropriate control inputs and outputs.
\begin{example}
\label{example:MOR}
In this example we look for a delay-free approximation of the form \eqref{eq:MOR_approx} for the following time-delay system
\begin{equation}
\label{eq:MOR_sys}
\left\{
\begin{array}{rcl}
     \dot{x}(t) & = & A_0 x(t) + A_1 x(t-1) + B_2 w(t)\\
     z(t) & = & C_{2} x(t)
\end{array}
\right.
\end{equation}
with
\[
A_0 = \begin{bmatrix}
-3 & 1 \\
2 & -3
\end{bmatrix},\ A_1 = \begin{bmatrix}
-2 & 0.5 \\
0.7 & -1 
\end{bmatrix},\ 
B_2 = \begin{bmatrix}
0 \\
1
\end{bmatrix}
\text{ and }
C_2 = \begin{bmatrix}
1 & 0
\end{bmatrix},
\]
such that the approximation error in \eqref{eq:MOR_mismatch} is minimized. To be able to tackle this problem using \verb|tds_hiopt_dynamic|, we need to reformulate the approximation problem as a controller design problem. Let us therefore append \eqref{eq:MOR_sys} with the following control input and output channels:
\begin{equation}
\label{eq:MOR_extended_sys}
\left\{
\begin{array}{rcl}
     \dot{x}(t) & = & A_0 x(t) + A_1 x(t-1) + B_2 w(t)\\
     y(t) &=& w(t) \\
     z(t) & = & C_{2} x(t) - u(t).
\end{array}
\right.
\end{equation}
Eliminating the control signals $u$ and $y$, the dynamics of the closed loop of  \eqref{eq:MOR_extended_sys} and \eqref{eq:dynamic_output_feedback_controller} (with $D_c = 0$) can be described by the 
\[
\left\{
\begin{array}{rcl}
\begin{bmatrix}
\dot{x}(t) \\
\dot{x}_c(t)
\end{bmatrix} &=& \begin{bmatrix}
A_0 & 0 \\
0 & A_c
\end{bmatrix}\begin{bmatrix}
x(t) \\
x_c(t)
\end{bmatrix} + 
\begin{bmatrix}
A_1 & 0 \\
0 & 0
\end{bmatrix}\begin{bmatrix}
x(t-1) \\
x_c(t-1)
\end{bmatrix} + \begin{bmatrix}
B_2 \\
B_c
\end{bmatrix}w(t) \\ [10px]
z(t) &=& \begin{bmatrix}
C_2 & -C_c
\end{bmatrix}
\begin{bmatrix}
x(t) \\
x_c(t)
\end{bmatrix}.
\end{array}
\right.
\]
The corresponding transfer function from $w$ to $z$ is given by
\begin{multline*}
\begin{bmatrix}
C_{2} & -C_{c}
\end{bmatrix}
\left(\begin{bmatrix}
sI-A_0-A_1e^{-s} & 0 \\
0 & sI-A_c
\end{bmatrix}\right)^{-1}\begin{bmatrix}
B_2 \\
B_c
\end{bmatrix} \\
 = C_2 \left(sI-A_0-A_1e^{-s}\right)^{-1}B_2 - C_c\left(sI-A_c\right)^{-1} B_c.
\end{multline*}
The \hinfnorm{} of this closed-loop transfer function thus corresponds to the approximation error defined in \eqref{eq:MOR_mismatch}. The output feedback controller with $D_c = 0$ that minimizes the closed-loop \hinfnorm{} of \eqref{eq:MOR_extended_sys} thus corresponds to the dynamic system of the form \eqref{eq:MOR_approx} that best approximates the delay system in the sense of \eqref{eq:MOR_mismatch}. Below we compute a delay-free approximation for \eqref{eq:MOR_sys} with $n_a = 5$. 
\begin{lstlisting}
% Create a representation for the augmented system
A0 = [-3 1;2 -3]; A1 = [-2 0.5;0.7 -1];
B = [0;1]; C = [1 0];
sys = tds_create({A0 A1},[0 1],'D12',1,'B2',B,'C2',C,'D21',-1);

% Compute the approximationn
na = 5; % order of the delay-free approximatio
p = 1; q = 1; % dimensions of the input and output channels
% Enforce D_c = 0
mask = ss(ones(na),ones(na,p),ones(q,na),zeros(q,p));
basis = ss(zeros(na),zeros(na,p),zeros(q,na),zeros(q,p));
% Set initial controller
initial=ss([-0.8 0 -0.35 0.14 -0.1;-0.3 -0.5 -0.5 0.14 0.7;...
        0.18 0.21 -0.5 -0.3 -1.2;-0.04 -0.2 -0.14 -0.9 0.4;...
        -0.2 -0.86 1 -0.33 -0.4],[0.66;-0.5;0.04;0.2;-0.3],...
        [2 0 1 2 0.4],1);
options = tds_hiopt_options('nstart',1);
[approx,CL] = tds_hiopt_dynamic(sys,na,'options',options,...
          'initial',initial,'mask',mask,'basis',basis);
\end{lstlisting}
\Cref{fig:MOR} compares the original time-delay system and its delay-free approximation for $n_a = 5$. We see that the approximation matches the original system quite well and the approximation error $\| T(s) - T_{\mathrm{approx.}}(s) \|_{\hinf}$ is equal to 7.6316e-03.

\textbf{Note. } Due to the employed definition of the \hinfnorm{} for unstable systems \ie the \hinfnorm{} of an unstable system is equal to infinity, the obtained approximation is always stable.
\begin{figure}[!h]
    \centering
    \includegraphics[width=0.6\linewidth]{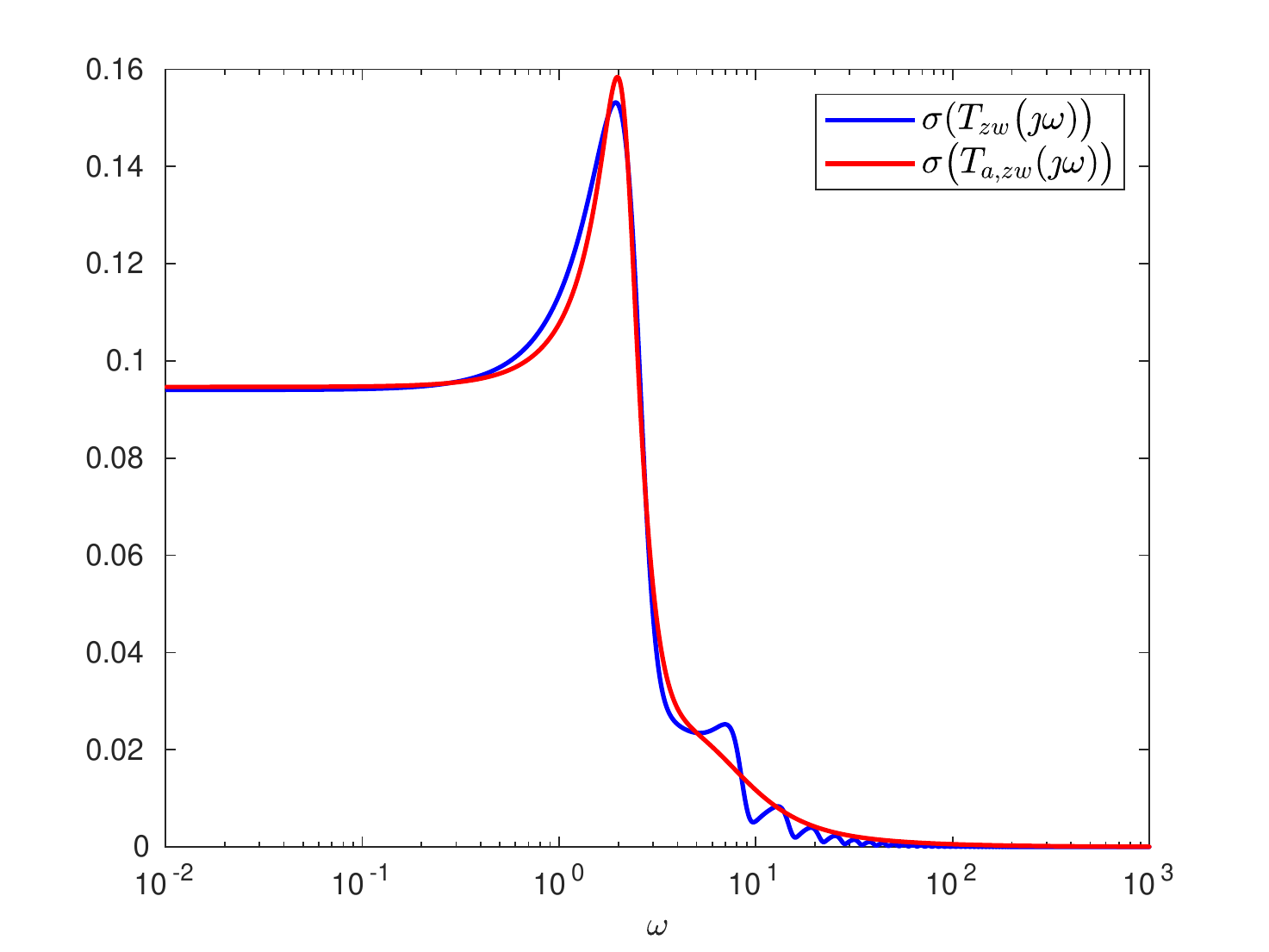}
    \caption{The magnitude frequency response of \eqref{eq:MOR_sys} and its delay-free approximation with $n_a=5$.}
    \label{fig:MOR}
\end{figure}

\end{example}
\subsection{Mixed performance criteria}
It is also possible to design controllers that minimize a combination of the strong spectral abscissa and the strong \hinfnorm{}. More specifically, we will consider the objective function
\begin{equation}
\label{eq:mixed_performance}
\alpha C + (1-\alpha) |||T_{zw}(s;\vec{\tau})|||_{\hinf}
\end{equation}
with $\alpha \in [0,1]$ a weighting coefficient that determines the trade-off between minimizing the strong spectral abscissa and the strong \hinfnorm{}. We will demonstrate how we can optimize this alternative objective function using \packageName{} in the following example.
\begin{example}
\label{example:mixed_performance}
Consider the following time-delay system
\begin{equation}
\label{eq:mixed_performance_ex}
\left\{
\begin{array}{rcl}
	\dot{x}(t) &=& \begin{bmatrix}
	1 & -2  & 4\\
	3 & 0.5 & -1 \\
	2 & 0.4 & -2
	\end{bmatrix} x(t) + \begin{bmatrix}
	1.5 & 0.3  & 2 \\
	0.7 & -0.8 & 0.4 \\
	0.5 & 0.4  & -0.9
	\end{bmatrix} x(t-1) \\[6mm] && \qquad\qquad\qquad \qquad + \begin{bmatrix}
	0.3 & 0.4 \\
	-0.7 & -0.5 \\
	0.7 & -0.1
	\end{bmatrix} u(t-0.1) + \begin{bmatrix}
	-0.7 \\
	-0.5 \\
	-0.3 
	\end{bmatrix} w(t)\\[7mm]
	y(t) &=& \begin{bmatrix}
	-1 & 0.3 & 0\\
	0.4 & 0.9 & 1
	\end{bmatrix} x(t) \\[4mm]
	z(t)&=& \begin{bmatrix}
	3 & -5 & -4
	\end{bmatrix} x(t).
\end{array} 
\right.
\end{equation}
The code below designs three static output feedback controllers for \eqref{eq:mixed_performance_ex} by minimizing the objective function in \eqref{eq:mixed_performance} for $\alpha = 1$ (\ie minimizing the  spectral abscissa), for $\alpha = 0$ (\ie minimizing the \hinfnorm{}), and for $\alpha = 0.5$ (\ie a trade-off between minimizing the spectral abscissa and the \hinfnorm{}).
\begin{lstlisting}
% Create a representation for the time-delay system
A0 = [1 -2 4;3 0.5 -1;-2 0.4 -2]; 
A1 = [1.5 0.3 2;0.7 -0.8 0.4;0.5 0.4 -0.9];
B1 = [0.3 0.4;-0.7 -0.5;0.7 -0.1]; B2 = [-0.7;-0.5;-0.3];
C1 = [-1 0.3 0;0.4 0.9 1]; C2 = [3 -5 -4];
sys=tds_create({A0,A1},[0 1],{B1},0.1,{C1},0,'B2',{B2},'C2',{C2});

% alpha = 1
options = tds_hiopt_options('nstart',1,'alpha',1);
[Dc1,cl1]=tds_hiopt_static(sys,'options',options,'initial',zeros(2));

% alpha = 0
options = tds_hiopt_options('nstart',1,'alpha',0);
[Dc2,cl2]=tds_hiopt_static(sys,'options',options,'initial',Dc1);

% alpha = 0.5
options = tds_hiopt_options('nstart',1,'alpha',0.5);
[Dc3,cl3]=tds_hiopt_static(sys,'options',options,'initial',Dc1);
\end{lstlisting}
The obtained controllers are
\begin{align}
	D_{c,1} &\, = \begin{bmatrix}
	8.4197 & -0.4036 \\
	4.3451 & 10.6842 
	\end{bmatrix}\text{,} \label{eq:ms_1}\\
	D_{c,2} &\, = \begin{bmatrix}
	7.0877 & \phantom{-}0.0571 \\
	6.5345 & 13.3927
	\end{bmatrix} \label{eq:ms_2}, \text{ and }\\
		D_{c,3} &\, = \begin{bmatrix}
	8.1324 & -0.7980 \\
	4.7536 & 11.2210 
	\end{bmatrix}. \label{eq:ms_3}
\end{align}
\Cref{tab:mixed_perf} gives the correspoding closed-loop spectral abscissa and \hinfnorm{}. As expected, when decreasing $\alpha$ more emphasis is put on minimizing the \hinfnorm{} at the cost of an increase in the spectral abscissa.

\begin{table}[!h]
    \centering
    \caption{The spectral abscissa and the \hinfnorm{} of the feedback interconnection of \eqref{eq:mixed_performance_ex} and the controllers in \eqref{eq:ms_1}, \eqref{eq:ms_2}, and \eqref{eq:ms_2}.}
    \label{tab:mixed_perf}
   	\begingroup
   	\renewcommand{\arraystretch}{1.2}
    \begin{tabular}{ccc}
    	\toprule
         & spectral abscissa & \hinfnorm{} \\\midrule
        $\alpha = 1$ & -0.8751 &  1.8061 \\
        $\alpha = 0$ & -0.6153 & 1.2908 \\
        $\alpha = 0.5$ & -0.8285 & 1.4869 \\\bottomrule
    \end{tabular}
	\endgroup
\end{table}

\end{example}
As mentioned before, the \hinfnorm{} corresponds to the reciprocal of the distance to instability with respect to a complex-valued perturbation acting on the zero-delay state matrix. From a practical point of view, the \hinfnorm{} thus only provides a crude approximation of the perturbation that is necessary to destabilize the system, as the perturbations on the model are typically real-valued and may act on all system matrices and even the delays. In the next section we will therefore consider a more realistic distance to instability.


\section{Pseudospectrum and the distance to instability}
\label{sec:robust_stability}
In this section we will consider a class of uncertain retarded time-delay systems with bounded perturbations on both the state matrices and the delays. These perturbations allow to account for mismatches between model and reality. For example, to describe a model parameter that is only known up to a certain precision (\eg due to an imprecise measuring device), we add a real-valued scalar perturbation to the model. Now instead of a single model, we obtain a collection of possible models. To examine the stability of this uncertain system for all possible parameter values, this section introduces the notions of the pseudospectral abscissa and the distance to stability.
\begin{example}[Inspired by \cite{borgioli2019novel}]
	\label{example:pseudo_definition}
	Consider the following simple time-delay model for a turning process
	\[
	\ddot{d}(t) + 2 \xi \omega\, \dot{d}(t) + \omega^2\, d(t) = \frac{k}{m}\,\big(d(t-\tau)-d(t)\big)
	\]
	with $\omega$ the natural angular frequency, $\xi$ the damping ratio, $m$ the modal mass, $k$ the cutting force coefficient, and $\tau$ the time of one revolution of the work-piece. This model can be transformed into a standard first-order delay differential equation of retarded type by considering the state vector $x(t) = \begin{bmatrix}
	d(t) & \dot{d}(t)
	\end{bmatrix}^{\top}$
	\begin{equation}
	\label{eq:turning_process}
	\dot{x}(t) = \begin{bmatrix}
	0 & 1 \\
	-\omega^2-\frac{k}{m} & -2\xi\omega
	\end{bmatrix}x(t) + \begin{bmatrix}
	0 & 0 \\
	\frac{k}{m} & 0
	\end{bmatrix}x(t-\tau).
	\end{equation}
	In this section we will examine the stability of \eqref{eq:turning_process} when the parameters $k$, $\xi$, and $\tau$ are uncertain. More specifically, we will assume that the parameter $k$ lies in the interval $[k-\Delta_k,k+\Delta_k]$, $\xi$ in $[\xi-\Delta_{\xi},\xi+\Delta_{\xi}]$, and $\tau$ in $[\tau-\Delta_{\tau},\tau+\Delta_{\tau}]$. To model this situation we need two perturbations ($\delta_1$ and $\delta_2$) acting on the state matrices and one perturbation acting on the delay value $\delta\tau_1$:
	\begin{multline}
	\label{eq:uncertain_dde}
	\small
	\dot{x}(t) = \left(\begin{bmatrix}
	0 & 1 \\
	{\small -\omega^2-\frac{k}{m}} & {\small -2\xi\omega}
	\end{bmatrix} + \begin{bmatrix}
	0 \\
	\frac{-1}{m}
	\end{bmatrix} \delta_1 
	\begin{bmatrix}
	\Delta_k &0
	\end{bmatrix} + 
	\begin{bmatrix}
	0\\
	-2\omega
	\end{bmatrix}
	\delta_2
	\begin{bmatrix}
	0 & \Delta_{\xi}
	\end{bmatrix}
	\right) x(t) + \\
	\left(\begin{bmatrix}
	0 & 0 \\
	\frac{k}{m} & 0
	\end{bmatrix}+ \begin{bmatrix}
	0 \\
	\frac{1}{m}
	\end{bmatrix} \delta_1 
	\begin{bmatrix}
	\Delta_k &0
	\end{bmatrix} \right)x\big(t-(\tau+ \Delta_\tau \delta\tau_1)\big)
	\end{multline}
	with $|\delta_1|\leq 1$, $|\delta_2|\leq 1$, and $|\delta\tau_1|\leq 1$. Notice that in this case the state matrix $A_0$ is affectected by both perturbations and that the perturbation $\delta_1$ affects both system matrices.
	\end{example}
Let us start by introducing the proper notation. Firstly, we will consider the set
\begin{equation}
\label{eq:matrix_perturbations}
\delta := (\delta_1,\dots,\delta_{L_M})
\end{equation}
which consists of $L_M$ matrix perturbations that can have aribrary dimensions and can either be real or complex-valued, \ie
\[
\delta_{l} \in \R^{\rho_l \times \varsigma_l} \text{ or } \delta_{l} \in \C^{\rho_l \times \varsigma_l} \text{ for } l=1,\dots,L_M.
\]
Similarly, we consider the set
\[
\delta\tau :=(\delta\tau_1,\dots,\delta\tau_{L_{D}})
\] which consists of $L_D$ real-valued delay perturbations. Further, we assume that these perturbations are bounded, \ie there exists a bound $\epsilon$ such that
\[
\|\delta_{l}\|_{2 / \mathrm{fro}} \leq \epsilon \text{ for } l = 1,\dots,L_{M}
\]
and
\[
|\delta\tau_l| \leq \epsilon \text{ for } l = 1,\dots,L_{D},
\]
in which $\|\delta_{l}\|_{2 / \mathrm{fro}}$ denotes that we can either use the spectral norm (the induced 2-norm) or the Frobenius norm (the element-wise two norm). (Furthermore, for different matrix perturbations, different norms may be used.) Now that we have introduced these perturbations, we can consider the following uncertain dealy differential equation:
\begin{equation}
\label{eq:uncertain_rdde}
\dot{x}(t) = \big(A_0+ \tilde{A}_{0}(\delta)\big)\,x(t) + \sum_{k=1}^{m} \big(A_k + \tilde{A}_{k}(\delta)\big)\,x\big(t-(h_{A,k}+\tilde{h}_{A,k}(\delta\tau))\big)
\end{equation}
in which the matrices $A_0,\dots,A_m$ all belong to $\R^{n\times n}$ and represent the nominal state matrices. Similarly, the scalars $h_{1,m}\dots,h_{A,m}$ represent the nominal delay values. The uncertain matrices $\tilde{A}_{0}(\delta),\dots,\tilde{A}_{m}(\delta)$ are defined as
\[
\tilde{A}_{k}(\delta) =  \textstyle\sum\limits_{j=1}^{N_{A_k}} G_{k,j}\  \delta_{\texttt{ind}_{A_k}[j]}\ H_{k,j}
\]
with $N_{A_k}$ the number of uncertain terms, $\texttt{ind}_{A_k}[j]$ the $j$\textsuperscript{th} element of the array $\texttt{ind}_{A_k}$ that indicates the index of the uncertainty in $\delta$ and $G_{k,j}$ and $H_{k,j}$ real-valued shape matrices of appropriate dimensions. (Note that this seemingly
cumbersome notation is necessary to account for the fact that one matrix may be affected by multiple
uncertainties and that one uncertainty
may affect multiple matrices.) Similarly, the uncertain delays $\tilde{h}_{A,k}(\delta\tau)$ are defined as
 \[
\tilde{h}_{A,k}(\delta\tau) = \max\left\{-h_{A,k}, \textstyle\sum\limits_{j=1}^{N_{\tau_k}} w_{k,j}\, \delta\tau_{\texttt{ind}_{h_{A,k}}[j]} \right\}
\]
with $N_{h_{A,k}}$ the number of uncertain terms, $\texttt{ind}_{h_{A,k}}[j]$ the $j$\textsuperscript{th} element of the array $\texttt{ind}_{\tau_k}$ that indicates the index of the uncertainty in $\delta\tau$ and $w_{k,j}$ a real-valued weighting coefficient.

Given the definitions for the perturbations above and the uncertain DDE \eqref{eq:uncertain_rdde}, the first question that we want to address is: ``\textit{Given a bound $\epsilon$, is the uncertain system \eqref{eq:uncertain_rdde} exponentially stable for all instances of the uncertainties smaller than $\epsilon$?}''. To answer this question, we will introduce the $\epsilon$-pseudospectral abscissa as the worst-case value of the spectral abscissa of \eqref{eq:uncertain_rdde} over all uncertainty values.
\begin{definition}
	The $\epsilon$-pseudospectral abscissa of \eqref{eq:uncertain_rdde} will be denoted by $c^{\mathrm{ps}}(\epsilon)$ and is defined as follows:
	\[
	c^{\mathrm{ps}}(\epsilon) := \max \{c(\delta,\delta\tau): \|\delta_l\| \leq \epsilon \text{ for }   l = 1,\dots,L_{M} \ \& \ |\delta \tau_l|\leq \epsilon  \text{ for } l = 1,\dots,L_{D} \}
	\]
	with $c(\delta,\delta\tau)$ the spectral abscissa of \eqref{eq:uncertain_rdde} for a given instance of the uncertainties.
\end{definition}
 The uncertain system \eqref{eq:uncertain_rdde} is thus exponentially stable for all instances of the uncertainties bounded by $\epsilon$ if (and only if) the $\epsilon$-pseudospectral abscissa is strictly negative. 

Secondly, we are interested in answering the following related question: \textit{``Given a set of uncertainties and an uncertain system of the form \eqref{eq:uncertain_rdde}, what is the largest $\epsilon$ such that \eqref{eq:uncertain_rdde} is exponentially stable for all instances of the uncertainties smaller than $\epsilon$?}'' or equivalently, ``\textit{What is the smallest $\epsilon$ such that there exists an instance of \eqref{eq:uncertain_rdde} with the norm uncertainties smaller than or equal to $\epsilon$ that is not exponentially stable?}''. This leads us to the notion of distance to instability.
\begin{definition}
	The distance to instability of \eqref{eq:uncertain_rdde} is defined as the smallest $\epsilon$ for which the spectral abscissa is positive, or in other words
	\[
	\dins = \min \{\epsilon\geq 0 : c^{ps}(\epsilon) \geq 0 \} .
	\]
\end{definition}
\textbf{Important:} Currently \packageName{} only supports the computation of the pseudospectral abscissa and the distance to instability for uncertain systems with real-valued (\verb|'r'|) uncertainties bounded in Frobenius norm (\verb|'f'|).
\begin{example}
	\label{example:pseudo_cont}
	In this example, we will retake the set-up in \Cref{example:pseudo_definition} using the following parameter values: $k=8\times 10^6\,\mathrm{N}\mathrm{m}^{-1}$, $\omega = 775\,\mathrm{s}^{-1}$ , $\tau=0.012\,\mathrm{s}$, $\xi = 0.05$, $m=50\,\textrm{kg}$, $\Delta_{k} = 1\times10^{6}\,\mathrm{N}\mathrm{m}^{-1}$, $\Delta_{\xi}=0.005$, and $\Delta_{\tau}=0.001\,\mathrm{s}$. \\

\noindent Let us start by representing the nominal system \eqref{eq:turning_process} in \packageName{}
\begin{lstlisting}
k=8e6; omega=775; xi=0.05; tau=0.008; m=50;
A0 = [0 1;-omega^2-k/m -2*xi*omega]; A1 = [0 0;k/m 0];
tds = tds_create({A0,A1},[0 tau]);
\end{lstlisting}
and computing its nominal spectral abscissa
\begin{lstlisting}
>> tds_strong_sa(tds,-30)

ans =

-16.3646	
\end{lstlisting}
We conclude that the system is exponentially stable. 

Next we will see how the uncertain RDDE \eqref{eq:uncertain_dde} can be represented in \packageName{}. We start by creating a representation for the uncertainties $\delta_1$ and $\delta_2$ using the function \linebreak \matlabfun{tds_create_delta}
\begin{lstlisting}
delta=tds_create_delta({1,1},{1,1},{'r','r'},{'f','f'});
\end{lstlisting}
This function takes four cell arrays of length $L_{M}$ (here equal to 2) as input. The first cell indicates the row dimension of the uncertainties, the second the column dimension, the third whether the uncertainty is real (\verb|'r'|) or complex-valued (\verb|'c'|), and the fourth whether its size should be measured using the spectral (\verb|'s'|) or Frobenius norm (\verb|'f'|). \\
Next, we create objects that represents the uncertain matrices $\tilde{A}_0(\delta)$ and $\tilde{A}_1(\delta)$ using the function \matlabfun{tds_uncertain_matrix}: 
\begin{lstlisting}
Delta_k = 1e6; Delta_xi = 0.005; 
G01 = [0;-1/m]; H01 = [Delta_k 0];
G02 = [0;-2*omega]; H02 = [0 Delta_xi];
uA0 = tds_uncertain_matrix({1,2},{G01,G02},{H01,H02});

G11 = [0;1/m]; H11 = [Delta_k 0];
uA1 = tds_uncertain_matrix({1},{G11},{H11});
\end{lstlisting}	 
The function \verb|tds_uncertain_matrix| takes three cell arrays as input argument: the first argument is the array $\texttt{ind}_{A_k}$, the second one is a cell array that stores the left shape matrices, and the third one is a cell array storing the right shape matrices.\\ Subsequently we create the uncertain delay term $\tilde{\tau}_1(\delta\tau) = \Delta_\tau \delta\tau_1$:
\begin{lstlisting}
Delta_tau = 0.001;
uhA1 = tds_uncertain_delay({1},{Delta_tau});
\end{lstlisting}
in which the function \verb|tds_uncertain_delay| takes two cell arrays as input: the first is the array $\texttt{ind}_{\tau_k}$ and the second is a cell array with the weights. Finally we use the method \verb|add_uncertainty| to add these uncertainties to the nominal time delay system \verb|tds|.
\begin{lstlisting}
utds = tds.add_uncertainty(delta,1,{uA0,uA1},{[],uhA1});
\end{lstlisting}
In this case, the method \verb|add_uncertainty| takes four input arguments: the set of matrix uncertainties $\delta$, the number of delay uncertainties (in this case 1) and two cell arrays of length $m_A$, with respectively the uncertain matrices and delays\footnote{If a term is not affected by the uncertainties, then one can just use an empty array instead of the uncertain matrix or delay.}. The function returns an \verb|utds_ss|-object that represents the uncertain delay differential equation of which we can compute the $\epsilon$-pseudospectral abscissa using the function \verb|tds_psa|.
\begin{lstlisting}
>> epsilon = 1;
>> tds_psa(utds,epsilon)

ans = 

20.7022
\end{lstlisting}
We thus conclude that the uncertain delay differential equation \eqref{eq:uncertain_dde} is not stable for all admissible uncertainty values. Let us therefore compute the corresponding distance to instability.
\begin{lstlisting}
tds_dins(utds,[0 1]);
\end{lstlisting}
We find that $r_{\mathrm{ins}} = 0.4125$, meaning that the uncertain system is stable for all $k$ in  $(k-r_{\mathrm{ins}}\Delta_k,k+r_{\mathrm{ins}}\Delta_k)$, $\xi$ in $(\xi-r_{\mathrm{ins}}\Delta_{\xi},\xi+r_{\mathrm{ins}}\Delta_{\xi})$, and $\tau$ in $(\tau-r_{\mathrm{ins}}\Delta_{\tau},\tau+r_{\mathrm{ins}}\Delta_{\tau})$.

\end{example}

\chapter*{Acknowledgments}
\addcontentsline{toc}{chapter}{\protect\numberline{}Acknowledgments}

The software package \packageName{} builds on twenty years of research in the group of Wim Michiels from the Numerical Analysis and Applied Mathematics (NUMA) section of KU Leuven, Belgium. The package integrates several methods and algorithms, some of which gave rise to specific software tools available from 
\url{https://twr.cs.kuleuven.be/research/software/delay-control/} 
Writing the package would not have been possible without the contribution of several (former) group members and collaborators.
Therefore, in the first place, the authors want to thank Suat Gumussoy, who strongly contributed to the methodology and software tools for $\mathcal{H}_{\infty}$ analysis and design. Also the adopted specification of systems and controllers in the package carries the stamp of Suat's work. Secondly, the authors want to acknowledge the contributions of Haik Silm from which this software package greatly benefited. Special thanks further goes to Zhen Wu (algorithm for automatically determining the resolution of the discretization such that all characteristic roots in a given half plane are captured), Francesco Borgioli (computation of real structured pseudospectral abscissa and stability radii), Joris Vanbiervliet (the first work on stabilization of time-delay systems via the minimization of the spectral abscissa) and Deesh Dileep (design of structurally constrained controllers). Dan Pilbauer, Adrian Saldanha and Tom{\'a}{\v{s}} Vyhl{\'{i}}dal contributed to the validation and first applications of the design tools to DDAE models, in the context of vibration control applications. \\

\noindent \packageName{} uses \verb|HANSO| (version 3.0) to solve the non-smooth, non-convex optimization problems arising in the context of controller-design. \verb|HANSO| is available for download from \url{https://cs.nyu.edu/~overton/software/hanso/}. \verb|HANSO| is based on the methods described in \cite{HANSO,HANSO2} and is licensed under the GNU General Public License version 3.0.

\printbibliography
\addcontentsline{toc}{chapter}{\protect\numberline{}References}%
\appendix
\chapter{Installation}
\label{sec:installation}
The \packageName{} package is available for downloaded from \link{}. To install the package, simply add the directory \verb|code| and all its subdirectories to your \matlab{} search path:
\begin{lstlisting}
>> addpath(genpath('code'));
\end{lstlisting}
\ \\
\noindent \textbf{Important.} To be able to use the controller design algorithms, such as \matlabfun{tds_stabopt_static} and \matlabfun{tds_stabopt_dynamic}, \verb|HANSO| (version 3.0) \footnote{For more information see \url{https://cs.nyu.edu/~overton/software/hanso/}.} must be present in your \matlab{} search path. \verb|HANSO| is included in the folder \verb|hanso3_0| and can be added to your \matlab{} search path using
\begin{lstlisting}
>> addpath(genpath('hanso3_0'));
\end{lstlisting}
To add \packageName{} and \verb|HANSO| permanently to your \matlab{}-path, use subsequently
\begin{lstlisting}
>> savepath
\end{lstlisting}
which saves the current search path. \\

\noindent \textbf{Important.} \packageName{} utilizes functionality from the MathWorks\textsuperscript{\textregistered}  Control System Toolbox\texttrademark, such as the \matlabfun{ss}-class and the \matlabfun{sigma}-function. If these functions are not present in your \matlab{} search path, \packageName{} will revert to a fallback implementation. Note however, both performance and accuracy might be reduced.
\chapter{Documentation}
\label{sec:documentation}
\section{\texorpdfstring{\texttt{tds\_roots}}{tds\_roots}}
\label{sec:tds_roots}


The function \matlabfun{tds_roots} computes the characteristic roots of a given delay differential equation or time-delay system in a specified right half-plane or rectangular region. To this end, it implements the method from \cite{roots}. First, a generalized eigenvalue problem of the form 
\begin{equation}
\label{eq:discrete_eigenvalue_problem}
\Sigma_N\,v  =  \lambda \Pi_N \,v
\end{equation}
with $\Sigma_N$ and $\Pi_N$ belonging to $\R^{(N+1)n\times (N+1)n}$, is constructed such that the eigenvalues of \eqref{eq:discrete_eigenvalue_problem} approximate the desired characteristic roots. More specifically, the generalized eigenvalue problem \eqref{eq:discrete_eigenvalue_problem} corresponds to a spectral discretization of the infinitesimal generator of the solution operator underlying the delay differential equation. (For more details on spectral discretisation methods for computing the characteristic roots of time-delay systems, we refer the interested reader to \cite{breda2014}.) The approximations for the characteristic roots of the time-delay system obtained by computing the eigenvalues of \eqref{eq:discrete_eigenvalue_problem} are subsequently improved by applying Newton's method. Note that the dimensions of the matrices $\Sigma_N$ and $\Pi_N$ depend on a parameter $N$, the degree of the spectral discretisation. This $N$ is related to the quality of the spectral approximation: the larger $N$, the better individual characteristic roots are approximated and the larger the region in the complex plane in which the characteristic roots are well approximated. However, the larger $N$, the larger the size of the generalized eigenvalue problem and hence the longer the computation time. An interesting feature of the method from \cite{roots} is that it includes an empirical strategy to find a suitable $N$ such that the desired characteristic roots are well-approximated, while keeping the generalized eigenvalue problem as small as possible. 

\textbf{Note:} As solving large eigenvalue problems is computationally costly, the maximal size of the generalized eigenvalue problem \eqref{eq:discrete_eigenvalue_problem} is limited. More specifically, if the inequality $N(n+1) > \matlabfun{options.max_size_evp}$ holds, then $N$ is lowered to the largest $N$ for which $N(n+1) \leq \matlabfun{options.max_size_evp}$ holds. However, lowering $N$ means that certain characteristic roots in the desired region are not sufficiently well approximated and might be missed by \matlabfun{tds_roots}. See also \Cref{example:warning}.

\subsection{Documentation}
\begin{itemize}
	\item \matlabfun{L = tds_roots(tds,region)} returns a vector \texttt{L} containing the characteristic roots of \matlabfun{tds} in the right half-plane
$
	\{z\in\C: \Re(z) \geq \text{ \texttt{region} }\}
$
	if \texttt{region} is a scalar and in the rectangular region
$
	[\texttt{region}(1),\,\texttt{region}(2)] \times \jmath\,[\texttt{region}(3),\,\texttt{region}(4)]
$
	if \matlabfun{region} is a vector (of length 4). 
	\item \matlabfun{[L,LD] = tds_roots(tds,region)} also returns a vector \texttt{LD} containing the eigenvalues of \eqref{eq:discrete_eigenvalue_problem}.
	\item \matlabfun{[L,LD,info] = tds_roots(tds,region)} also returns a structured array \matlabfun{info} containing the following information:
	\begin{itemize}
		\item \matlabfun{N} - the employed degree of the spectral discretisation.
		\item \matlabfun{N_original} - the degree for the spectral discretization deemed necessary  to capture all characteristic roots in the desired region (\matlabfun{N} is smaller than \matlabfun{N_original} if the maximal size of the generalized eigenvalue problem would be exceeded for \matlabfun{N_original}).
		\item \matlabfun{max_size_evp_enforced} - flag indicating that for \matlabfun{N_original}, the maximal size for the eigenvalue problem \eqref{eq:discrete_eigenvalue_problem} would be exceeded and that the degree of the spectral discretisation is therefore lowered to \matlabfun{N}. 
		
		\item \matlabfun{gamma_r_exceeds_one} - flag indicating that $\gamma(\text{ \texttt{region} })\geq 1$, meaning that the considered right half-plane contains infinitely many characteristic roots or that there exist infinitesimal delay perturbations such that it does. The default value $N =15$ is therefore used. See also the warning \matlabfun{Case: gamma(region) >= 1}.
		\item \matlabfun{newton_initial_guesses} - array with the initial guesses for Newton's method.
		\item \matlabfun{newton_final_values} - array of the same length with the values after Newton's method.
		\item \matlabfun{newton_residuals} - array of the same length with the norm of the residuals after the Newton corrections.
		\item \matlabfun{newton_unconverged_initial_guesses} - array containing the indices of the initial guesses in \matlabfun{newton_initial_guesses} for which Newton's method failed to converge.
		\item \matlabfun{newton_large_corrections} - array containing the indices for which the result after Newton's method lies far from the initial guess.
	\end{itemize} 
	\item \matlabfun{[..] = tds_roots(tds,region,options)} allows to specify additional options. 
\begin{itemize}
    \item \matlabfun{fix_N}: fixes the parameter $N$ of the generalized eigenvalue problem \eqref{eq:discrete_eigenvalue_problem}. Note that the constaint on the maximal size of the generalized eigenvalue problem is now ignored.
    \item \matlabfun{max_size_evp}: gives the maximal size of  \eqref{eq:discrete_eigenvalue_problem} (if \matlabfun{fix_N} is not specified).\\
    \hspace*{1em} \textit{Default value - } \texttt{600}.
    \item \matlabfun{newton_tol} tolerance on the norm of the residuals for Newton's method.\\
    \hspace*{1em} \textit{Default value - } \texttt{1e-10}.
    \item \matlabfun{newton_max_iter}: maximum number of Newton iterations for correcting a given root.  \\
    \hspace*{1em} \textit{Default value - } \texttt{20}.
    \item \matlabfun{commensurate_basic_delay}:  value of the basic delay $h$ in case of commensurate delays. May lead to a speed-up for computing \matlabfun{N} if the ratio $\tau_m/h$ is small.
     \item \matlabfun{quiet}: suppress all output (including warnings). \\
     \hspace*{1em} \textit{Default value - } \texttt{false}.
\end{itemize}
\end{itemize}
\subsection{Warnings}
The function \matlabfun{tds_roots} may give the following warnings.
\begin{itemize}
    \item \texttt{The provided time-delay system is an ODE/a DAE.} The characteristic function of the provided system does not contain delays. \packageName{} falls back to \matlabfun{eig} for computing the characteristic roots. Note that only the characteristic roots in the desired region are returned in \matlabfun{L}. 
    
    \item \texttt{Size of the discretized EVP would exceed its maximum value.} 
    The degree of the spectral discretisation needed to accurately capture all characteristic roots in the specified region (\matlabfun{N_original}) would result in an eigenvalue problem \eqref{eq:discrete_eigenvalue_problem} that is larger than \matlabfun{options.max_size_evp}. The degree of the spectral discretisation is therefore lowered to \matlabfun{N}.
    \item \matlabfun{Case: gamma(r) >= 1 (i.e., CD>r).} This warning is thrown when \matlabfun{region} is a scalar (one is interested in the characteristic roots in a given right half-plane) and $\gamma(\text{ \texttt{region} }) \geq 1$. This means that the desired right half-plane contains infinitely many characteristic roots or that there exist infinitesimal perturbations on the delays such that it does. As a consequence, the default value $N=15$ is used (lowered if $N(n+1)$ exceeds \linebreak\matlabfun{options.max_size_evp}, see warning above). This means that not all desired characteristic roots may be captured. The user can manually increase $N$ using the option \verb|fix_N| or specify a rectangular region to capture more of the desired characteristic root. See also \Cref{example:roots_NDDE_rhp}. 
\end{itemize}
\section{\texorpdfstring{\texttt{tds\_gamma\_r}}{tds\_gamma\_r} }
\label{sec:tds_gamma_r}
The function \matlabfun{tds_gamma_r} computes the quantity $\gamma(r)$ (as defined in \eqref{eq:gamma_r} for neutral DDEs and in \eqref{eq:gamma_r_ddae} for DDAEs). As mentioned in \Cref{proposition:strong_stability}, the quantity $\gamma(0)$ gives a necessary condition for strong stability (\ie stability is preserved for  sufficiently small delay perturbations). On the other hand, the inequality $\gamma(r) < 1$ guarantees that the right half-plane $\{\lambda \in \C: \Re(\lambda) > r \}$ only contains finitely many characteristic roots (even when infinitesimal delay perturbations are considered). To compute the quantity $\gamma(r)$ for a NDDE, the code will first obtain a prediction for the solution of the optimization problem in \eqref{eq:gamma_r} by restricting the search space for $\vec{\theta}$ to a grid in $\{0\} \times [0,2\pi)^{m_{H}-1}$ (note that $\theta_1$ can be fixed to zero). The resulting value is subsequently improved by solving the following overdetermined system of nonlinear equations in the unknowns $v\in \C^{n}$, $u \in \C^{n}$, $\lambda\in\C$, and $\theta \in \{0\} \times [0,2\pi)^{m_{H}-1}$
\begin{equation}
\label{eq:nonlinear_system_gamma_r}
\left\{
\begin{array}{l}
\left(H_1 e^{-r h_{H,1}} + \sum_{k=2}^{m_{H}} H_k e^{-r h_{H,k}} e^{\jmath \theta_k}\right) v - \lambda v= 0 \\
u^{H} \left(H_1 e^{-r h_{H,1}} + \sum_{k=2}^{m_{H}} H_k e^{-r h_{H,k}} e^{\jmath \theta_k}\right) - u^{H} \lambda = 0  \\
n(v) - 1 = 0 \\
u^{H} v - 1 = 0 \\
\Im\left(\bar{\lambda}(u^{H} H_{k} v) e^{-rh_{H,k}} e^{\jmath \theta_k}  \right) = 0, \text{ for } k = 2,\dots, m_{H}
\end{array}
\right.
\end{equation}
with $n(v) - 1 = 0$ a normalization constraint, using the \matlabfun{fsolve} function in Matlab. A solution $(v,u,\lambda,\theta)$ of \eqref{eq:nonlinear_system_gamma_r} corresponds to an eigenvalue  $\lambda$ of the matrix
\[
H_1 e^{-r h_{H,1}} + \sum_{k=2}^{m_{H}} H_k e^{-r h_{H,k}} e^{\jmath \theta_k}
\]
with $u$ and $v$, respectively, the corresponding left and right eigenvectors such that the derivatives of the modulus of this eigenvalue with respect to $\theta_2,\dots,\theta_{m_{H}}$ equals zero. The approach for computing the quantity $\gamma(r)$ of a DDAE is similar.

\subsection{Documentation}
\begin{itemize}
	\item \matlabfun{gammar = tds_gamma_r(tds,r)} computes the quantity $\gamma(\texttt{r})$ of \matlabfun{tds}. 
	\item \matlabfun{gammar = tds_gamma_r(tds,r,options)} allows to specify additional options. \matlabfun{options}  must be a structured array created using the function \matlabfun{tds_gamma_r_options}.  The available options are: 
	\begin{itemize}
		\item \matlabfun{quiet}: suppress all output (including warnings).\\
		\hspace*{1em} \textit{Default value - } \texttt{false}.
		\item \matlabfun{Ntheta}: the number of grid points in each direction. \\
		\hspace*{1em} \textit{Default value - } \texttt{10}.
	\end{itemize}
\end{itemize}
\section{\texorpdfstring{\texttt{tds\_CD}}{tds\_CD} }
\label{sec:tds_CD}
The function \matlabfun{tds_CD} computes the strong spectral abscissa of the delay difference equation underlying a delay differential equation. To compute this quantity we use the expression for $C_D$ in \Cref{proposition:strong_spectral_abscissa}. More specifically, we use \matlabfun{fsolve} to find a zero crossing of the function \eqref{eq:CD_zero_crossing}.
\subsection{Documentation}
\begin{itemize}
	\item \matlabfun{CD = tds_CD(tds)} computes the strong spectral abscissa of the delay difference equation associated with \matlabfun{tds}. 
	\item \matlabfun{CD = tds_CD(tds,options)} allows to specify additional options. \matlabfun{options} must be a structured array created using the function \matlabfun{tds_CD_options}. The available options are: 
	\begin{itemize}
		\item \matlabfun{tol}: tolerance \\
		\hspace*{1em} \textit{Default value - } \texttt{1e-10}
		\item \matlabfun{quiet}: suppress all output (including warnings) \\
		\hspace*{1em} \textit{Default value - } \texttt{false }
		\item \matlabfun{Ntheta}: the number of grid points in each direction \\
		\hspace*{1em} \textit{Default value - } \texttt{10} 
		\item \matlabfun{CD0}: initial estimate for the strong spectral abscissa of the underlying delay difference equation\\
		\hspace*{1em} \textit{Default value - } 0 
		
	\end{itemize}
	
\end{itemize}

\section{\texorpdfstring{\texttt{tds\_sa}}{tds\_sa} }
The function \matlabfun{tds_sa} computes the spectral abscissa (see \Cref{def:spectral_abscissa,def:spectral_abscissa_neutral}) of a time-delay system. To this ends it computes all characteristic roots in a given right half-plane using \matlabfun{tds_roots} and then returns the real part of the rightmost root.
\subsection{Documentation}
\begin{itemize}
	\item \matlabfun{C = tds_sa(tds,r)} computes the spectral abscissa of \matlabfun{tds} by computing its characteristic roots in the right half plane $\{z \in \C: \Re(z) \geq R\}$.
	\item \matlabfun{C = tds_sa(tds,r,options)} computes the spectral abscissa with the default options for the \matlabfun{tds_roots} replaced by \matlabfun{options}.  
	
\end{itemize}
\section{\texorpdfstring{\texttt{tds\_strong\_sa}}{tds\_strong\_sa} }
The function \matlabfun{tds_strong_sa} computes the strong spectral abscissa of a time-delay system. Recall from \Cref{def:strong_spectral_abscissa} that the strong spectral abscissa is defined as the smallest upper bound for the spectral abscissa which is insensitive to infinitesimal delay changes. Furthermore, remember that for a(n) (essentially) retarded time-delay system the strong spectral abscissa corresponds to the spectral abscissa and that for a(n) (essentially) neutral time-delay system the strong spectral abscissa is equal to the maximum of the spectral abscissa and the strong spectral abscissa of the associated difference equation (which can be computed using the function \matlabfun{tds_CD}). To compute the strong spectral abscissa, the code will therefore first determine the strong spectral abscissa of the associated delay difference equation (\texttt{CD}). Subsequently, it checks for characteristic roots in the right half-plane $\{z \in \C: \Re(z)>\texttt{CD}\}$.

\subsection{Documentation}
\begin{itemize}
	\item \matlabfun{C = tds_strong_sa(tds,r)} computes the strong spectral abscissa of \matlabfun{tds} by first computing \matlabfun{CD = tds_CD(tds)} and subsequently computing the characteristic roots of \matlabfun{tds} in the right half-plane $\{z \in \C: \Re(z)> \max(\texttt{CD},\texttt{r})\}$. 
	\item \matlabfun{C = tds_strong_sa(tds,r,options)} allows to specify additional \matlabfun{options}. The argument \matlabfun{options} should be a structure with the necessary fields. The available options are: 
	\begin{itemize}
		\item \matlabfun{roots}: a structure created using \matlabfun{tds_roots_options} containing the       options for \matlabfun{tds_roots}.
		\item CD - a structure created using \matlabfun{tds_CD_options} containing the options for \matlabfun{tds_CD}. 
	\end{itemize}
\end{itemize}
\section{\texorpdfstring{\texttt{tds\_tzeros}}{tds\_tzeros}}
The function \matlabfun{tds_tzeros} allows to compute the transmission zeros of a SISO time-delay system, \ie a point $z_{0}$ in the complex plane for which the transfer function becomes zero. To this end, the function exploits the fact that computing the transmission zeros of a SISO transfer function can be reformulated as computing the characteristic roots of a DDAE (see \Cref{subsec:tzeros}).

\subsection{Documentation}
\begin{itemize}
	\item \matlabfun{tzeros = tds_tzeros(tds,rect)} returns a vector \matlabfun{tzeros} containing the transmission zeros of the time-delay system \matlabfun{tds} in the rectangular region [\texttt{rect}(1) \texttt{rect}(2)] $\times\jmath$ [\texttt{rect}(3) \texttt{rect}(4)].  
	\item \matlabfun{tzeros = tds_tzeros(tds,rect,options)} allows to specify additional options created using the function \matlabfun{tds_roots_options}.
\end{itemize}

\section{\texorpdfstring{\texttt{tds\_stabopt\_static}}{tds\_stabopt\_static} and \texorpdfstring{\texttt{tds\_stabopt\_dynamic}}{tds\_stabopt\_dynamic}}
\label{sec:tds_stabopt}
The functions \matlabfun{tds_stabopt_static} and \matlabfun{tds_stabopt_dynamic} look for (strongly) stabilizing output feedback controller for a given time-delay system by minimizing a certain objective function (see below) in the entries of the controller matrices, \ie the matrices in \eqref{eq:static_output_feedback_controller} and \eqref{eq:dynamic_output_feedback_controller}. Below, we will briefly discuss the core working principles of these functions, for more details see \cite{vanbiervliet2008,stabilization_neutral,michiels2011spectrum}. For ease of notation, we will restrict our attention to the dynamic feedback control of the following retarded time-delay system
\[
\left\{
\begin{array}{rcl}
\dot{x}(t) &=& A_0\,x(t) + \sum_{k=1}^{m} A_k x(t-\tau_k) + B_{1,0}\,u(t) + \sum_{k=1}^{m} B_{1,k} \, u(t-\tau_k),\\[9pt]
    y(t) &=& C_{1,0}\,x(t) + \sum_{k=1}^{m} C_{1,k}\,x(t-\tau_k) + D_{11,0}\,u(t) + \sum_{k=1}^{m} D_{11,k} \,u(t-\tau_k).
\end{array}
\right.
\]
The discussion for static control or neutral and delay descriptor systems is however similar. The dynamics of the feedback interconnection of the system above with the \eqref{eq:dynamic_output_feedback_controller} can be described by
\begin{multline}
\label{eq:DDAE_cl2}
    \begin{bmatrix}
    I & 0&  0 & 0 \\
    0 & 0 & 0 & 0 \\
    0 & 0 & I & 0 \\
    0 & 0 & 0 & 0
    \end{bmatrix} \begin{bmatrix}
    \dot{x}(t) \\
    \dot{y}(t) \\
    \dot{x}_c(t) \\
    \dot{u}_u(t)
    \end{bmatrix} = \begin{bmatrix}
    A_0 & 0 & 0 & B_{1,0} \\
    C_{1,0} & -I & 0 & D_{11,0} \\
    0 & 0 & 0 & 0 \\
    0 & 0 & 0 & -I
    \end{bmatrix} \begin{bmatrix}
    x(t) \\
    y(t) \\
    x_c(t) \\
    u(t)
    \end{bmatrix} \ + \\[5px]  \sum_{k=1}^{m} \begin{bmatrix}
    A_k & 0 & 0 & B_{1,k} \\
    C_{1,k} & 0 & 0 & D_{11,k} \\
    0 & 0 & 0 & 0 \\
    0 & 0 & 0 & 0 
    \end{bmatrix}\begin{bmatrix}
    x(t-\tau_k) \\
    y(t-\tau_k) \\
    x_c(t-\tau_k) \\
    u(t-\tau_k)
    \end{bmatrix} +
    \begin{bmatrix}
    0 & 0 & 0 & 0 \\
    0 & 0 & 0 & 0 \\
    0 & B_c & A_c & 0 \\
    0 & D_c & C_c & 0 \\
    \end{bmatrix} \begin{bmatrix}
    x(t) \\
    y(t) \\
    x_c(t) \\
    u(t)
    \end{bmatrix}.
\end{multline}
In other words, the closed-loop system corresponds to a delay differential algebraic equation of the form
\begin{equation}
\label{eq:DDAE_cl}
E_{cl}\,\dot{x}_{cl}(t) =  \sum_{k=1}^{m_{cl}} A_{cl,k}(\mathbf{p})\ x_{cl}(t-\tau_{cl,k}) 
\end{equation}
in which $\mathbf{p}$ denotes the vector containing the free control parameters (\ie the optimization variables). In the case of an unstructured controller design problem, $\mathbf{p}$ consist of the entries of the matrices $A_c$, $B_c$, $C_c$ and $D_c$. For the design of a structured controller (see \Cref{subsec:structured_controllers}) only certain elements of these matrices are contained in $\mathbf{p}$. Note that the state matrices $A_{cl,1},\dots,A_{cl,m}$ depend affinely on the controller parameters in $\mathbf{p}$.

Next, we will give more information on the objective functions that are minimized. If \eqref{eq:DDAE_cl} is essentially retarded (see \Cref{subsec:stability_DDAE}), \texttt{tds\_stabopt\_dynamic} attempts to find a stabilizing controller by minimizing the spectral abscissa, or in other words, the objective function is given by
\[
f(\mathbf{p}) := c\left(E_{cl}, A_{1,cl}(\mathbf{p}),\dots,A_{m_{cl},cl}(\mathbf{p})\right).
\]
If \eqref{eq:DDAE_cl} is essentially neutral, closed-loop stability might be sensitive to infinitesimal delay perturbations (see \Cref{subsec:neutral_multiple_delays}). To guarantee strong exponential stability (as defined in \Cref{def:strong_stability}), we can consider either of the following two approaches (see also \Cref{subsec:neutral_stabilization}).
\begin{itemize}
    \item \textbf{Approach 1:} minimize the strong spectral abscissa of \eqref{eq:DDAE_cl}, or in other words, the objective function is given by
    \[
    f(\mathbf{p}) := C\left(E_{cl}, A_{0,cl}(\mathbf{p}),\dots,A_{k,cl}(\mathbf{p})\right).
    \]
    \item \textbf{Approach 2:} minimize the spectral abscissa of \eqref{eq:DDAE_cl} under the constraint that the inequality 
    \[
    \gamma_0(A_{0,cl}^{(22)}(\mathbf{p}),\dots,A_{m_{cl},cl}^{(22)}(\mathbf{p}))<1
    \]
    with $A_{0,cl}^{(22)},\dots,A_{m_{cl},cl}^{(22)}$ defined in a similar vain as in \Cref{subsec:stability_DDAE}, holds. To solve this constraint optimization problem, \packageName{} uses a log-barrier method, \ie a barrier function which grows to infinity as $\gamma(0)$ goes 1, is added to the cost function. More specifically, \packageName{} considers the following barrier function
    \begin{equation}
        \label{eq:barrier_function}
        -w_2\log\big(1-w_1-\gamma_0(A_{0,cl}^{(22)}(\mathbf{p}),\dots,A_{m_{cl},cl}^{(22)}(\mathbf{p}))\big)
    \end{equation}
    in which $0 \leq w_1 < 1$ dictates the acceptable region for $\gamma_0$ and $w_2> 0$ determines the weight of the barrier function in the overall cost function, \ie the larger $w_2$, the more emphasis is put on keeping $\gamma_0$ small. Note that in order to use this approach, an initial controller for which the inequality $\gamma_0 < 1-w_1$ holds, is required. \packageName{} implements therefore an initial optimization step in which $\gamma_0(E_{cl}, A_{0,cl}(\mathbf{p}),\dots,A_{k,cl}(\mathbf{p}))$ is minimized with respect to the control parameters. Once a controller found for which $\gamma_0<1-w_1$ the following extended cost function is minimized in $\mathbf{p}$:
    \begin{multline*}
    f(\mathbf{p}) := c\left(E_{cl}, A_{0,cl}(\mathbf{p}),\dots,A_{k,cl}(\mathbf{p})\right) \\ -w_2\log\big(1-w_1-\gamma_0(A_{0,cl}^{(22)}(\mathbf{p}),\dots,A_{m_{cl},cl}^{(22)}(\mathbf{p}))\big).
    \end{multline*}
\end{itemize}
In each case \packageName{} thus solves an unconstrained optimization problem of the form
\begin{equation}
\label{eq:obj_func}
\min_{\mathbf{p}} f(\mathbf{p}).
\end{equation}
The objective function $f$ is typically neither convex nor smooth. (However the object function is typically  almost everywhere differentiable.)
The \packageName{} therefore relies on HANSO v3.0 \cite{HANSO,HANSO2} for solving these nonsmooth, nonconvex optimization problems. 

\noindent \textbf{Important}. To avoid ending up in a local optimum of \eqref{eq:obj_func}, the optimization routine is restarted from different initial values for $\mathbf{p}$.
\subsection{Documentation}
\begin{itemize}
	\item \matlabfun{K = tds_stabopt_static(tds)} attempts to  design a strongly stablizing static output feedback gain matrix \texttt{K} for \texttt{tds}. 
	\item \matlabfun{K = tds_stabopt_static(tds,'key1',value1,'key2',value2,...)} allows to pass the following optional arguments using key-value pairs:
	\begin{itemize}
		\item \texttt{options} structured array created using \matlabfun{tds_stabopt_options}  containing additional options
		\item \texttt{initial} cell array containing the initial optimization values
		\item \texttt{mask} logical array indicating which entries of the feedback gain matrix are allowed to change (\texttt{true}) and which should remain fixed to their original value (\texttt{false})
		\item \texttt{basis} array specifying the values of the fixed entries of the feedback gain matrix
	\end{itemize}
	\item \matlabfun{[K,CL] = tds_stabopt_static(tds,...)} also returns a \matlabfun{tds_ss_ddae}-object that represents the closed-loop system.  
\end{itemize}
\begin{itemize}
	\item \matlabfun{K = tds_stabopt_dynamic(tds,nc)} attempts to  design a strongly stablizing dynamic output feedback controller \texttt{K} of order \texttt{nc} for \texttt{tds}.
	\item \matlabfun{K = tds_stabopt_dynamic(tds,nc,'key1',value1,'key2',value2,...)} allows to pass the following optional arguments using key-value pairs:
	\begin{itemize}
		\item \texttt{options} structured array created using \matlabfun{tds_stabopt_options}  containing additional options
		\item \texttt{initial} cell array containing the initial optimization values 
		\item \texttt{mask} an \texttt{ss}-object indicating which entries of the feedback gain matrix are allowed to change (\texttt{1}) and which should remain fixed to their original value (\texttt{0})
		\item \texttt{basis} an \texttt{ss}-object specifying the fixed entries of the feedback gain matrix
	\end{itemize}
			\item \matlabfun{[K,CL] = tds_stabopt_dynamic(tds,nc,...)} also returns a \matlabfun{tds_ss_ddae}-object that represents the closed-loop system. 
\end{itemize}

\subsection{Options}
The function \matlabfun{tds_stabopt_options} can be used to create a structured array specifying additional options for \matlabfun{tds_stabopt_static} and \matlabfun{tds_stabopt_dynamic}. 
\begin{itemize}
    \item \texttt{nstart}: number of starting points for the optimization procedure\\
    \hspace*{1em} \textit{Default value: } \texttt{5}.
    \item \texttt{roots}: options for \matlabfun{tds_roots} (see \Cref{sec:tds_roots})
    \item \texttt{Ntheta}: number of grid points in each direction for computing $\gamma_0$ (Approach 2, see \Cref{sec:tds_gamma_r}) or $C_D$ (Approach 1, see \Cref{sec:tds_CD}) \\
    \hspace*{1em} \textit{Default value - } \texttt{10}.
    \item \texttt{cpumax}: quit the BFGS-procedure if the CPU time (in seconds) exceeds the given value\\
    \hspace*{1em} \textit{Default value: } \texttt{inf}.
    \item \texttt{fvalquit}: quit the BFGS-procedure if the object values drops below the given value\\
    \hspace*{1em} \textit{Default value: } \texttt{-inf}.
    \item \matlabfun{gradient_sampling}: apply the gradient sampling algorithm from \cite{HANSO} after the BFGS-procedure. By default, gradient sampling is disabled as it is typically quite slow. \\
    \hspace*{1em} \textit{Default value: } \texttt{false}.
    \item \texttt{method}: for essentially neutral closed-loop systems select which cost function is used:
    \begin{itemize}
    	\item \verb|'CD'|: approach 1
    	\item \verb|'barrier'|: approach 2
    	\item \verb|'automatic'|: automatically select which approach is used based on the number of terms in the associated delay difference equation
    \end{itemize} 
    \hspace*{1em} \textit{Default value: } \verb|'automatic'|.
    \item \verb|w1|: gives the value of the constant $w_1$ in \eqref{eq:barrier_function}. \\
    \hspace*{1em} \textit{Default value: } \texttt{1e-3}.
    \item \verb|w2|: gives the value of the constant $w_2$ in \eqref{eq:barrier_function}. \\
    \hspace*{1em} \textit{Default value: } \texttt{1e-3}.
    \item \matlabfun{print_level}: 0 (no printing), 1 (minimal printing), 2 (verbose), 3 (debug)\\
    \hspace*{1em} \textit{Default value: } 2.

\end{itemize}

\subsection{Warnings}
The functions \matlabfun{tds_stabopt_static} and \matlabfun{tds_stabopt_dynamic} may give the following warnings.
\begin{itemize}
    \item \texttt{Warning: Resulting controller is not stabilizing.} This warning indicates that \matlabfun{tds_stabopt_static} or \matlabfun{tds_stabopt_dynamic} failed to find a (strongly) stabilizing controller. Note that this warning does not imply that there does not exist a (strongly) stabilizing controller. It merely indicates that starting from the chosen initial values, the optimization procedure ended up in local optima that do not correspond with a strictly negative (strong) spectral abscissa. A possible solution is to sample more initial values using the option \verb|'nstart'| and/or the optional argument \verb|'initial'|. If after sampling sufficiently many initial values, still no stabilizing controller is found, this might be an indication that there does not exists any. In this case, one might consider increasing the order of the controller.

\end{itemize}
 
\section{\texorpdfstring{\texttt{tds\_hinfnorm}}{tds\_hinfnorm}}
 \label{sec:tds_hinfnorm}
The function \matlabfun{tds_hinfnorm} computes the (strong) \hinfnorm{} of a time-delay system. To this end, it implements the method from \cite{gumussoy2011}. This method can be summerized as follows:

\begin{enumerate}
	\item Compute the strong \hinfnorm{} of the associated asymptotic transfer function by solving the optimization problem in \eqref{eq:strong_hinfn_asympt}. 
	\begin{itemize}
		\item \textbf{Prediction step.} As \eqref{eq:strong_hinfn_asympt} is generally not convex and thus may have multiple local optima, the object function is first sampled on a grid in the $\theta$-space with \verb|options.Ntheta| grid points in each direction.
		\item \textbf{Correction step}. Improve the obtained result using Newton's method.
	\end{itemize}
	\item Verify whether the maximal singular value function attains a higher value at a \textbf{finite frequency}.
	\begin{itemize}
		\item \textbf{Prediction step.} Form a descriptor system that approximates the time-delay system using a spectral discretisation of degree \verb|options.fixN|. Compute the \hinfnorm{} of this delay-free system using a level-set method. \\
		\textbf{Note.} In contrast to \verb|tds_roots|, no automatic selection of the discretization degree is performed.
		\item \textbf{Correction step}. Improve the obtained result using Newton's method.
	\end{itemize}  
\end{enumerate}   
\textbf{Important:} 
If the provided time-delay system is not strongly exponentially stable, the function \matlabfun{tds_hinfnorm} will return infinity.
\subsection{Documentation}
\begin{itemize}
	\item \matlabfun{[HINF] = TDS_HINFNORM(TDS)} returns the strong \hinfnorm{} of \matlabfun{TDS}.\\[1mm] \textbf{Note.} \matlabfun{HINF=Inf} when \matlabfun{TDS} is not strongly exponentially stable.
	\item \matlabfun{[HINF,WPEAK] = tds_hinfnorm(TDS)} also returns the angular frequency at which the H-infinity norm is attained. \matlabfun{WPEAK=Inf} when the strong H-infinity norm of \matlabfun{TDS} is equal to the strong H-infinity norm of the underlying asymptotic transfer function. \matlabfun{WPEAK=NaN} when \matlabfun{TDS} is not strongly exponentially stable.
	\item \matlabfun{[..] = tds_hinfnorm(TDS,OPTIONS)} allows to specify additional options (see below).
\end{itemize}
\subsection{Options}
The function \verb|tds_hinfnorm_options| can be used to create a structured array containing additional options. The following options are available:
\begin{itemize}
    \item \verb|Ntheta|: the number of grid points in each direction to predict the strong \hinfnorm{} of the asymptotic transfer function. \\
        \hspace*{1em} \textit{Default value: } 20.
    \item \verb|fix_N|: the degree spectral discretisation in the (finite frequency) prediction step. \\
        \hspace*{1em} \textit{Default value: } 20 
    \item \verb|omega_init|: vector containing the initial frequencies for the (finite frequency) prediction step. \\
        \hspace*{1em} \textit{Default value: } \verb|[0]|
    \item \verb|pred_tol|: the tolerance in the (finite frequency) prediction step. \\
    \hspace*{1em} \textit{Default value: } 1e-3
    \item \verb|newton_tol|: the tolerance on the residue after the (finite frequency) correction step. \\
   \hspace*{1em} \textit{Default value: } 1e-6 
\end{itemize}
\subsection{Warnings}
\begin{itemize}
    \item \verb|'Warning: correction step has nonconverging frequencies: %s\n'| The Newton correction for either the strong \hinfnorm{} of the asymptotic transfer function or the maximal singular value at a finite frequency did not converge. Try improving the prediction result by increasing the number of grid points (\verb|Ntheta|) or the degree of the spectral discretization (\verb|fixN|). 
    \item \verb|'Warning: norm is corrected more than .. in ..'|: A large Newton correction was needed after the prediction step (for either the strong \hinfnorm{} of the asymptotic transfer function $\omega=\infty$ or the maximal singular value at a finite frequency). This might be an indication that the prediction step was not sufficiently accurate. Try increasing the number of grid points (\verb|Ntheta|) or the degree of the spectral discretisation (\verb|fixN|)
\end{itemize}
\section{\texorpdfstring{\texttt{tds\_sigma}}{tds\_sigma}}
\label{sec:tds_sigma}
The function \matlabfun{tds_sigma} computes the singular values of the frequency response matrix of a time-delay system at the given angular frequencies.
\subsection{Documentation}
\begin{itemize}
	\item \matlabfun{SV = tds_sigma(TDS,W)} returns the singular values \matlabfun{SV} of the frequency response of \matlabfun{TDS} at the provided angular frequency vector \matlabfun{W}. The matrix \matlabfun{SV} has \matlabfun{length(W)} columns and the column \matlabfun{SV(:,k)} gives the singular values (in descending order) of the frequency response of \matlabfun{TDS} at the frequency \matlabfun{W(k)}. 
\end{itemize}
\section{\texorpdfstring{\texttt{tds\_hiopt\_static}}{tds\_hiopt\_static} and \texorpdfstring{\texttt{tds\_hiopt\_dynamic}}{tds\_hiopt\_dynamic}}
\label{sec:tds_hiopt}
The functions \matlabfun{tds_hiopt_static} and \matlabfun{tds_hiopt_dynamic} tune static and dynamic output feedback controllers for time-delay systems by minimizing the closed-loop \hinfnorm{} with respect to the controller parameters. Its working principles are quite similar to those of the functions \matlabfun{tds_stabopt_static} and \matlabfun{tds_stabopt_dynamic} (see \Cref{sec:tds_stabopt}). More specifically, its core working principles can be summarized as follows:
\begin{enumerate}
	\item Create a delay descriptor representation for the closed-loop system (depicted in \Cref{fig:hinf_control}).
	\item Take the user-provided initial values for the controller parameters or randomly generate them.
	\item Check whether the closed-loop system is strongly exponentially stable for the initial controller values. If not run \matlabfun{tds_stabopt_static} or \matlabfun{tds_stabopt_dynamic} with the entries of this controller as initial values to obtain a stabilizing starting point. If the result is not stabilizing, discard this initial values.
	\item For all stable starting points, minimize the closed-loop \hinfnorm{} with respect to the controller parameters using a BFGS algorithm with weak line search.
	\item Optional: improve the obtained optimum using gradient sampling.
\end{enumerate}
For more details we refer the interested reader to \cite{gumussoy2011}. \\

\noindent \textbf{Note.} The \packageName{} relies on HANSO v3.0 for the last two steps in the description above. 

\subsection{Documentation}
\begin{itemize}
	\item \matlabfun{K = tds_hiopt_static(tds)} designs a strongly stablizing static output feedback gain matrix \texttt{K} for \texttt{tds} by minimizing the closed-loop \hinfnorm{} with respect to the entries of \texttt{K}. 
	\item \matlabfun{K = tds_hiopt_static(tds,'key1',value1,'key2',value2,...)} allows to pass the following optional arguments using key-value pairs:
	\begin{itemize}
		\item \texttt{options} structured array created using \matlabfun{tds_hiopt_options}  containing additional options
		\item \texttt{initial} cell array containing the initial optimization values
		\item \texttt{mask} logical array indicating which entries of the feedback gain matrix are allowed to change (\texttt{true}) and which should remain fixed to their original value (\texttt{false})
		\item \texttt{basis} array specifying the values of the fixed entries of the feedback gain matrix
	\end{itemize}
	\item \matlabfun{[K,CL] = tds_hiopt_static(tds,...)} also returns a \matlabfun{tds_ss_ddae}-object that corresponds to the resulting closed-loop system.  
\end{itemize}
\begin{itemize}
	\item \matlabfun{K = tds_hiopt_dynamic(tds,nc)} designs a strongly stablizing dynamic output feedback controller \texttt{K} of order \texttt{nc} for \texttt{tds} by minimizing the closed-loop \hinfnorm{} with respect to the controller parameters. 
	\item \matlabfun{K = tds_hiopt_dynamic(tds,nc,'key1',value1,'key2',value2,...)} allows to pass the following optional arguments using key-value pairs:
	\begin{itemize}
		\item \texttt{options} structured array created using \matlabfun{tds_hiopt_options} containing additional options
		\item \texttt{initial} cell array containing the initial optimization values 
		\item \texttt{mask} an \texttt{ss}-object indicating which entries of the feedback gain matrix are allowed to change (\texttt{1}) and which should remain fixed to their original value (\texttt{0})
		\item \texttt{basis} an \texttt{ss}-object specifying the fixed entries of the feedback gain matrix
	\end{itemize}
	\item \matlabfun{[K,CL] = tds_hiopt_dynamic(tds,nc,...)} also returns a \matlabfun{tds_ss_ddae}-object that corresponds to the resulting closed-loop system. 
\end{itemize}

\subsection{Options}
The function \matlabfun{tds_hiopt_options} can be used to create a structured array specifying additional options for \matlabfun{tds_hiopt_static} and \matlabfun{tds_hiopt_dynamic}. 
\begin{itemize}
	\item \texttt{nstart}: number of starting points for the optimization procedure\\
	\hspace*{1em} \textit{Default value: } \texttt{5}.
	\item \texttt{cpumax}: quit the BFGS-procedure if the CPU time (in seconds) exceeds the given value\\
	\hspace*{1em} \textit{Default value: } \texttt{inf}.
	\item \texttt{fvalquit}: quit the BFGS-procedure if the object values drops below the given value\\
	\hspace*{1em} \textit{Default value: } \texttt{-inf}.
	\item \matlabfun{gradient_sampling}: apply the gradient sampling algorithm from \cite{HANSO} after the BFGS-procedure. By default, gradient sampling is disabled as it is typically quite slow. \\
	\hspace*{1em} \textit{Default value: } \texttt{false}.
	\item \matlabfun{print_level}: 0 (no printing), 1 (minimal printing), 2 (verbose), 3 (debug)\\
	\hspace*{1em} \textit{Default value: } 2.
	\item \texttt{alpha}: the value of $\alpha$ in \eqref{eq:mixed_performance} \\
	\hspace*{1em} \textit{Default value: } 0.
	\item \texttt{roots}: options for \matlabfun{tds_roots} (see \Cref{sec:tds_roots})
\end{itemize}

\subsection{Warnings and errors}
The functions \matlabfun{tds_hiopt_static} and \matlabfun{tds_hiopt_dynamic} may give the following warnings and errors.
\begin{itemize}
	\item \texttt{Failed to find a strongly stabilizing initial controller.} As the strong \hinfnorm{} of a system that is not strongly exponentially stable is equal to $\infty$, these two functions check for each starting point whether the resulting closed-loop system satisfies this stability requirement. If this is not the case, they will run \matlabfun{tds_stabopt_static} or \matlabfun{tds_stabopt_dynamic} starting the given starting point, to find a stabilizing initial controller. If for none of the starting points, a strongly stabilizing initial controller can be found, this error is thrown.
\end{itemize}
\section{\texorpdfstring{\texttt{tds\_create\_delta}}{tds\_create\_delta}}
The function \matlabfun{tds_create_delta} can be used to create a structured array that stores the relevant information of a set of matrix perturbations $\delta$, as defined in \eqref{eq:matrix_perturbations}, such as the dimensions, whether the perturbations are real or complex-valued and the norm in which the size of the perturbations should be measured.
\subsection{Documentation}
\begin{itemize}
	\item \matlabfun{delta = tds_create_delta(dim1,dim2,type,norm)} with \texttt{dim1}, \texttt{dim2}, \texttt{type} and \texttt{norm} cell arrays of equal length. The elements of \texttt{dim1} and \texttt{dim2} give respectively the first and second dimensions of the different perturbations. The elements of \texttt{type} indicate which the perturbations are real (\verb|'r'|) and complex-valued (\verb|'c'|). The elements of \texttt{norm} denote whether the spectral norm (\verb|'s'|) or the Frobenius norm (\verb|'f'|) should be used to measure the size of the perturbations. The return argument
	\texttt{delta} is a structured array storing the relevant information of the set of matrix perturbations.
\end{itemize}
\section{\texorpdfstring{\texttt{tds\_uncertain\_matrix}}{tds\_uncertain\_matrix} and \texorpdfstring{\texttt{tds\_uncertain\_delay}}{tds\_uncertain\_delay}}
The function \matlabfun{tds_uncertain_matrix} creates a structured array that stores all relevant information related to an uncertain matrix of the form:
\begin{equation}
\label{eq:uncertain_matrix}
	\tilde{A}_{k}(\delta) =  \textstyle\sum\limits_{j=1}^{N_{A_k}} G_{k,j}\  \delta_{\texttt{ind}_{A_k}[j]}\ H_{k,j}.
\end{equation}

\noindent The function \matlabfun{tds_uncertain_delay} creates a structured array that stores all relevant information related to an uncertain delay of the form:
 \begin{equation}
 \label{eq:uncertain_delay}
\tilde{h}_{A,k}(\delta\tau) = \max\left\{-h_{A,k}, \textstyle\sum\limits_{j=1}^{N_{\tau_k}} w_{k,j}\, \delta\tau_{\texttt{ind}_{h_{A,k}}[j]} \right\}.
\end{equation}

\subsection{Documentation}
\begin{itemize}
	\item \matlabfun{umat = tds_uncertain_matrix(ind,G,H)} creates a structured array \texttt{umat} that stores the relevant information of an uncertain matrix of the form \eqref{eq:uncertain_matrix} with \texttt{ind} a cell array storing the positions of the perturbations in the corresponding set of matrix perturbations (created using \matlabfun{tds_create_delta}), and \texttt{G} and \texttt{H} cell arrays containing the shape matrices.
	\item \matlabfun{udelay = tds_uncertain_delay(ind,w)} creates a structured array \texttt{udelay} that stores the relevant information of an uncertain delay of the form \eqref{eq:uncertain_delay} with \texttt{ind} a cell array storing the positions of the perturbations in the corresponding set of delay perturbations and \texttt{w} a cell array containing the weights.  
\end{itemize}
\section{\texorpdfstring{\texttt{tds\_psa}}{tds\_psa}}
The function \matlabfun{tds_psa} computes the $\epsilon$-pseudospectral abscissa of the uncertain retarded delay differential equation:
\begin{equation}
\label{eq:uncertain_sys}
\dot{x}(t) = \big(A_0+ \tilde{A}_{0}(\delta)\big)\,x(t) + \sum_{k=1}^{m} \big(A_k + \tilde{A}_{k}(\delta)\big)\,x\big(t-(h_{A,k}+\tilde{h}_{A,k}(\delta\tau))\big).
\end{equation}
Recall that the $\epsilon$-pseudospectral abscissa was defined as the worst-case value of the spectral abscissa over all allowable uncertainty values, \ie
\begin{equation}
\label{eq:psa}
c^{\mathrm{ps}}(\epsilon) := \max \{c(\delta,\delta\tau): \|\delta_l\| \leq \epsilon \text{ for }   l = 1,\dots,L_{M} \ \& \ |\delta \tau_l|\leq \epsilon  \text{ for } l = 1,\dots,L_{D} \}
\end{equation}
with $c(\delta,\delta\tau)$ the spectral abscissa of \eqref{eq:uncertain_sys} for a given instance of the uncertainties.
\subsection{Documentation}
\begin{itemize}
	\item \matlabfun{psa = tds_psa(utds,epsilon)} returns the \texttt{epsilon}-pseudospectral abscissa, as defined in \eqref{eq:psa}, of the uncertain retarded DDE represented by \texttt{utds}. 
\end{itemize}
\section{\texorpdfstring{\texttt{tds\_dins}}{tds\_dins}}
The function \matlabfun{tds_dins} computes the distance to instability of the uncertain RDDE \eqref{eq:uncertain_sys}. Recall that the distance to instability was defined as the smallest $\epsilon$ for which the $\epsilon$-pseudospectral abscissa is positive, \ie
\begin{equation}
\label{eq:distance_instability}
\dins = \min \{\epsilon\geq 0 : c^{ps}(\epsilon) \geq 0 \} .
\end{equation}
\subsection{Documentation}
\begin{itemize}
	\item \matlabfun{dins = tds_dins(utds,interval)} computes the distance to instability, as
	 defined in \eqref{eq:distance_instability}, of \texttt{utds} by finding the zero-crossing of the function $\epsilon \mapsto c^{\mathrm{ps}}(\epsilon)$ in the search interval specified by \texttt{interval}.
\end{itemize}

\end{document}